\documentclass[11pt]{amsart}

\usepackage{
    amsmath,
    amsfonts,
    amssymb,
    amsthm,
    amscd,
    booktabs,
    comment,
    enumitem,
    etoolbox,
    gensymb,    
    mathtools,
    mathdots
}
\usepackage[usenames,dvipsnames]{xcolor}
\usepackage[all]{xy}

\makeatletter
\@namedef{subjclassname@2020}{%
  \textup{2020} Mathematics Subject Classification}
\makeatother

\newtoggle{comments}
\newtoggle{details}
\newtoggle{detailsnote}

\iftoggle{comments}{%
  \newcommand{\acomments}[1]{
    \ \\
    {\color{red}
      \textbf{AS:} #1
    }
    \ \\
    }
  \newcommand{\question}[1]{
    \ \\
    {\color{blue}
      \textbf{Question:} #1
    }
    \ \\
    }
}{%
  \newcommand{\acomments}[1]{}
  \newcommand{\question}[1]{}
}

\iftoggle{details}{%
  \newcommand{\details}[1]{
      \ \\
      {\color{OliveGreen}
        \textbf{Details:} #1
      }
      \\
  }
}{%
  \newcommand{\details}[1]{}
}

\usepackage[T1]{fontenc}
\usepackage{bbm}                     
\usepackage[colorlinks=true, linkcolor=blue, citecolor=blue, urlcolor=blue, breaklinks=true]{hyperref}
\usepackage{dsfont}

\DeclareFontFamily{OT1}{pzc}{}
\DeclareFontShape{OT1}{pzc}{m}{it}{<-> s * [1.10] pzcmi7t}{}
\DeclareMathAlphabet{\mathpzc}{OT1}{pzc}{m}{it}

\usepackage[capitalize]{cleveref}   

\crefname{defin}{Definition}{Definitions}
\crefname{eg}{Example}{Examples}
\crefname{lem}{Lemma}{Lemmas}
\crefname{theo}{Theorem}{Theorems}
\crefname{equation}{}{}
\crefname{enumi}{}{}

\newtheorem{innercustomthm}{{\bf Theorem}}
\newenvironment{customthm}[1]
  {\renewcommand\theinnercustomthm{#1}\innercustomthm}
  {\endinnercustomthm}

\newcommand\go{\mathsf{I}}              

\usepackage{tikz}
\usetikzlibrary{arrows,patterns}
\usepackage{tikz-cd}

\tikzset{pinhead/.style={gray,fill=yellow!40!white}}
\newcommand{\pin}[3]{
    \path (#1) node[inner sep=1.6pt] (x) {} to (#2) node[rectangle,rounded corners,draw,pinhead,inner sep=2.5pt](y) {$\color{black}\scriptstyle#3$};
    \draw[Triangle Cap-,thick,gray] (x)--(y);
    \fill[black] (#1) circle (2pt);
}

\newcommand{\pinpin}[4]{
\path (#1) node[inner sep=1.6pt] (x) {} to (#3) node[rectangle,rounded corners,draw,pinhead,inner sep=2.5pt](y) {$\color{black}\scriptstyle#4$};
    \draw[Triangle Cap-,thick,gray] (x)--(#2)--(y);
    \fill[black] (#1) circle (2pt);
    \fill[black] (#2) circle (2pt);
}

\newcommand\singdot[2][red]{
    \filldraw[fill=white, draw=#1] (#2) circle (1.8pt)
}
\newcommand\multdot[4][red]{
    \filldraw[fill=white, draw=#1] (#2) circle (1.8pt) node[anchor=#3] {{\color{#1} \dotlabel{#4}}}
}

\newcommand{\dotlabel}[1]{$\scriptstyle{#1}$}

\newcommand{\xtoken}[1]{
    \filldraw[green] (#1) circle (1.5pt)
}

\newcommand{\bluetoken}[1]{
    \filldraw[blue] (#1) circle (1.5pt)
}

\newcommand\teleport[2]{
    \draw[blue] (#1) -- (#2);
    \filldraw[blue] (#1) circle (1.5pt);
    \filldraw[blue] (#2) circle (1.5pt);
}
\newcommand\xteleport[2]{
    \draw[green] (#1) -- (#2);
    \filldraw[green] (#1) circle (1.5pt);
    \filldraw[green] (#2) circle (1.5pt);
}
\tikzset{wei/.style={draw=red,double=red!40!white,double distance=1.5pt,thin}}
\tikzset{anchorbase/.style={>=To,baseline={([yshift=-0.5ex]current bounding box.center)}}}

\newcommand{\xtokstrand}[1][a]{
    \begin{tikzpicture}[centerzero]
        \draw (0,-0.2) -- (0,0.2);
        \xtoken{0,0};
    \end{tikzpicture}
}

\tikzset{anchorbase/.style={>=To,baseline={([yshift=-0.5ex]current bounding box.center)}}}
\tikzset{ 
    centerzero/.style={>=To,baseline={([yshift=-0.5ex](#1))}},
    centerzero/.default={0,0}
}
\tikzset{wipe/.style={white,line width=4pt}}

\newtheorem{theo}{Theorem}[section]

\newtheorem{prop}[theo]{Proposition}
\newtheorem{lem}[theo]{Lemma}
\newtheorem{cor}[theo]{Corollary}

\theoremstyle{definition}
\newtheorem{defn}[theo]{Definition}
\newtheorem{rmk}[theo]{Remark}
\newtheorem{egg}[theo]{Example}

\numberwithin{equation}{section}
\allowdisplaybreaks

\setenumerate[1]{label=(\alph*)}          

\begin{document}

\title{Tensor product degenerate spin affine Hecke superalgebras}

\author{Thomas Moran}
\address[T.M.]{
  Department of Mathematics and Statistics \\
  University of Ottawa \\
  Ottawa, ON K1N 6N5, Canada
}
\urladdr{\href{https://sites.google.com/view/tmoran/}{https://sites.google.com/view/tmoran/}}
\email{tmora083@uottawa.ca}

\begin{abstract}
We define a new class of superalgebras, called \emph{tensor product degenerate spin affine Hecke superalgebras}, and study their structure theory. We establish an isomorphism that relates the tensor product degenerate spin affine Hecke superalgebras to the tensor product degenerate affine Hecke--Clifford superalgebras. The relationship between these superalgebras is a tensor product analogue of the relationship between the degenerate spin affine Hecke superalgebras and degenerate affine Hecke--Clifford superalgebras.
\end{abstract}

\subjclass[2020]{20C08, 18M30, 18M05}

\keywords{String diagram, monoidal category, supercategory, Morita superequivalence}

\ifboolexpr{togl{comments} or togl{details}}{%
  {\color{magenta}DETAILS OR COMMENTS ON}
}{%
}

\maketitle
\thispagestyle{empty}

\tableofcontents

\section{Introduction}

\subsection{Degenerate spin affine Hecke superalgebras}
The spin (or projective) representations of the symmetric group $ S_{n} $ were first developed by Schur in \cite{Schur}. There, Schur introduced a double cover $ \tilde{S}_{n} $ of $ S_{n} $:
\begin{equation*}
1 \rightarrow \mathbb{Z}_{2} \rightarrow \tilde{S}_{n} \rightarrow S_{n} \rightarrow 1.
\end{equation*}
Write $ \mathbb{Z}_{2} = \{1,z\} $. Then a spin representation of $ S_{n} $ is equivalent to a representation of the algebra $ \mathbb{C}S_{n}^{-} := \mathbb{C}\tilde{S}_{n} / \langle z+1 \rangle $. The algebra $ \mathbb{C}S_{n}^{-} $ has generators $ T_{i} $, for $ 1 \leq i \leq n-1 $, subject to the relations
\begin{align*}
T_{i}^{2} &= 1, & 1 \leq i \leq n-1, 
\\ T_{i}T_{i+1}T_{i} &= T_{i+1}T_{i}T_{i+1}, & 1 \leq i \leq n-2,
\\ T_{i}T_{j} &= -T_{j}T_{i}, & 1 \leq i,j \leq n-1, \ |i-j| > 1.
\end{align*}
The algebra $ \mathbb{C}S_{n}^{-} $ is naturally a superalgebra by requiring each $ T_{i} $ to be odd. This superalgebra is related to the Hecke--Clifford superalgebra $ H_{n}(\operatorname{Cl}) $ via an isomorphism
\begin{equation*}
H_{n}(\operatorname{Cl}) \cong \operatorname{Cl}_{n} \otimes \mathbb{C}S_{n}^{-}.
\end{equation*}
It follows that $ \mathbb{C}S_{n}^{-} $ and $ H_{n}(\operatorname{Cl}) $ are Morita superequivalent (see Corollary~\ref{baguette345}). In \cite[§3.3]{Wang}, Wang defined the degenerate spin affine Hecke superalgebra $ \operatorname{SH}_{n}^{\operatorname{aff}} $, which is an affinization of the superalgebra $ \mathbb{C}S_{n}^{-} $. It was shown in \cite[Thm.~4.1]{Wang} that the degenerate spin affine Hecke superalgebra $ \operatorname{SH}_{n}^{\operatorname{aff}} $ is related to the degenerate affine Hecke--Clifford superalgebra $ H_{n}^{\operatorname{aff}}(\operatorname{Cl}) $ via an isomorphism
\begin{equation} \label{glebe}
H_{n}^{\operatorname{aff}}(\operatorname{Cl}) \cong \operatorname{Cl}_{n} \otimes \operatorname{SH}_{n}^{\operatorname{aff}}.
\end{equation}
It follows that $ \operatorname{SH}_{n}^{\operatorname{aff}} $ and $ H_{n}^{\operatorname{aff}}(\operatorname{Cl}) $ are Morita superequivalent. We set $ \operatorname{SH}_{n}^{Q} $ (resp. $ H_{n}^{Q}(\operatorname{Cl}) $) to be the cyclotomic quotient of $ \operatorname{SH}_{n}^{\operatorname{aff}} $ (resp. $ H_{n}^{\operatorname{aff}}(\operatorname{Cl}) $) associated to a polynomial $ Q \in \mathbb{C}[x^{2}] \setminus 0 $.

\subsection{Tensor product Hecke algebras}

In recent years, there has been significant interest in ``tensor product'' Hecke algebras. Loosely speaking, these algebras are obtained by introducing red strands, which then interact with the black strands corresponding to the original Hecke algebra. This graphical operation was first introduced by Webster \cite{Webster}, who defined the tensor product algebras, which were used to categorify tensor products of simple modules over a quantum group. Another example of such tensor product Hecke algebras is provided by the higher-level affine Hecke algebras (or the type \(F\) affine Hecke algebras) from \cite[Def.~2.7]{Maksimau-Stroppel} and \cite[Def.~5.5]{Webster2}. A further example is given by the higher-level affine wreath product superalgebras, which were defined by the author in \cite{Moran}. These superalgebras are dependent on a Frobenius superalgebra \(A\). Specializing \(A\) to be the two-dimensional Clifford superalgebra, the higher-level affine wreath product superalgebras yield a tensor product version of the degenerate affine Hecke--Clifford superalgebras.

The goal of this paper is to combine the above-mentioned approaches by defining tensor product versions of the degenerate spin affine Hecke superalgebras. Furthermore, motivated by \cref{glebe}, we show that these superalgebras are Morita superequivalent to the tensor product degenerate affine Hecke--Clifford superalgebras.

\subsection{Objects of study} 

We now give an overview of the main objects of study in this paper. We will explain in the following subsection how these objects are related. 

In this paper, we define the \emph{tensor product degenerate spin affine Hecke supercategory} $ \mathcal{T\!AS} $. The objects of $ \mathcal{T\!AS} $ are generated by the elements of the set $ (\mathbb{C}[x^{2}] \setminus 0) \cup \{\go\} $, where $ \go $ is a formal symbol. The morphisms are generated by 
\begin{equation} \label{hotcold}
    \begin{tikzpicture}[centerzero, thick]
        \draw[-] (-0.2,-0.2) -- (0.2,0.2);
        \draw[-] (0.2,-0.2) -- (-0.2,0.2);
    \end{tikzpicture}
      \ \colon
    \go \otimes \go \to \go \otimes \go, \qquad
 \begin{tikzpicture}[anchorbase, thick]
        \draw[-] (0,0) -- (0,0.6);
        \singdot{0,0.3};
\end{tikzpicture}
\ \colon
\go \to \go, \quad
\begin{tikzpicture}[centerzero, thick]
        \draw[-] (0.2,-0.2) -- (-0.2,0.2);
        \draw[-, wei] (-0.2,-0.2) -- (0.2,0.2);
        \node at (-0.2,-0.4) {\tiny $ Q $};
\end{tikzpicture}
\ \colon Q \otimes \go \to \go \otimes Q, \qquad
\begin{tikzpicture}[centerzero, thick]
        \draw[-] (-0.2,-0.2) -- (0.2,0.2);
        \draw[-,wei] (0.2,-0.2) -- (-0.2,0.2);
        \node at (0.2,-0.4) {\tiny $ Q $};
\end{tikzpicture}
\colon \go \otimes Q \to Q \otimes \go, 
\end{equation}
for all $ Q \in \mathbb{C}[x^{2}] \setminus 0 $, and they satisfy the relations \cref{spin1}, \cref{spin2}, and \cref{spin3,spin4,spin6,spin8}. Here, the red-black crossings are even, while the black-black crossing $ \begin{tikzpicture}[centerzero, thick]
        \draw[-] (-0.2,-0.2) -- (0.2,0.2);
        \draw[-] (0.2,-0.2) -- (-0.2,0.2);
    \end{tikzpicture} $ and dot $  \begin{tikzpicture}[anchorbase, thick]
        \draw[-] (0,-0.2) -- (0,0.2);
        \singdot{0,0};
\end{tikzpicture} $ are odd. The degenerate spin affine Hecke supercategory $ \mathcal{AS} $ (generated by $  \begin{tikzpicture}[centerzero, thick]
        \draw[-] (-0.2,-0.2) -- (0.2,0.2);
        \draw[-] (0.2,-0.2) -- (-0.2,0.2);
    \end{tikzpicture} $ and $ \begin{tikzpicture}[anchorbase, thick]
        \draw[-] (0,-0.2) -- (0,0.2);
        \singdot{0,0};
\end{tikzpicture} $) is a monoidal subcategory of $ \mathcal{T\!AS} $, and the odd polynomial supercategory $ \mathpzc{OPol} $ (generated by $ \begin{tikzpicture}[anchorbase, thick]
        \draw[-] (0,-0.2) -- (0,0.2);
        \singdot{0,0};
\end{tikzpicture} $) faithfully embeds into $ \mathcal{T\!AS} $. The \emph{tensor product degenerate spin affine Hecke superalgebra} $ \operatorname{SH}^{\operatorname{aff}}_{d,\mathbf{Q}} $ is defined as a path algebra of $ \mathcal{T\!AS} $, and it admits a natural cyclotomic quotient $ \operatorname{SH}^{\operatorname{cyc}}_{d,\mathbf{Q}} $. These superalgebras are dependent on an integer $ d \in \mathbb{N} $ and a word $ \mathbf{Q} $, where the entries in the word $ \mathbf{Q} $ are elements of $ \mathbb{C}[x^{2}] \setminus 0 $. If $ \mathbf{Q} = \varnothing $ is the empty word, then $ \operatorname{SH}^{\operatorname{aff}}_{d,\mathbf{Q}} \cong \operatorname{SH}^{\operatorname{aff}}_{d} $. If $ \mathbf{Q} = Q $ for some $ Q \in \mathbb{C}[x^{2}] \setminus 0 $ (that is, $ \mathbf{Q} $ is a word of length one), then $ \operatorname{SH}^{\operatorname{cyc}}_{d,\mathbf{Q}} = \operatorname{SH}^{\operatorname{cyc}}_{d,Q} \cong   \operatorname{SH}^{Q}_{d} $.

A second supercategory which we define in this paper is $ \mathcal{T\!AS}(\operatorname{Cl}) $. This supercategory is obtained from $ \mathcal{T\!AS} $ by adding an additional odd generating morphism $ \begin{tikzpicture}[centerzero, thick]
        \draw[-] (0,-0.2) -- (0,0.2);
        \xtoken{0,0};
    \end{tikzpicture} $, called the \emph{Clifford token}, which squares to the identity, and supercommutes with the generating morphisms \cref{hotcold}. The supercategory $ \mathcal{AS}(\operatorname{Cl}) $ (generated by $ \begin{tikzpicture}[centerzero, thick]
        \draw[-] (-0.2,-0.2) -- (0.2,0.2);
        \draw[-] (0.2,-0.2) -- (-0.2,0.2);
    \end{tikzpicture} $, $ \begin{tikzpicture}[anchorbase, thick]
        \draw[-] (0,-0.2) -- (0,0.2);
        \singdot{0,0};
\end{tikzpicture} $, and $ \begin{tikzpicture}[anchorbase, thick]
        \draw[-] (0,-0.2) -- (0,0.2);
        \xtoken{0,0};
\end{tikzpicture} $) is a monoidal subcategory of $ \mathcal{T\!AS}(\operatorname{Cl}) $, and the supercategory $ \mathpzc{OPol}(\operatorname{Cl}) $ (generated by $ \begin{tikzpicture}[anchorbase, thick]
        \draw[-] (0,-0.2) -- (0,0.2);
        \singdot{0,0};
\end{tikzpicture} $ and $ \begin{tikzpicture}[anchorbase, thick]
        \draw[-] (0,-0.2) -- (0,0.2);
        \xtoken{0,0};
\end{tikzpicture} $) faithfully embeds into $ \mathcal{T\!AS}(\operatorname{Cl}) $. The path algebras of $ \mathcal{T\!AS}(\operatorname{Cl}) $ are denoted by $ \operatorname{SH}^{\operatorname{aff}}_{d,\mathbf{Q}}(\operatorname{Cl}) $, and their cyclotomic quotients are denoted by $ \operatorname{SH}^{\operatorname{cyc}}_{d,\mathbf{Q}}(\operatorname{Cl}) $.

A third supercategory which we define in this paper is the \emph{tensor product degenerate affine Hecke--Clifford supercategory} $ \mathcal{T\!AH}(\operatorname{Cl}) $. The objects of $ \mathcal{T\!AH}(\operatorname{Cl}) $ are generated by the elements of the set $ (\mathbb{C}[x^{2}] \setminus 0) \cup \{\go\} $. The morphisms are generated by 
\begin{equation} \label{balcony}
    \begin{tikzpicture}[centerzero, thick]
        \draw[-] (-0.2,-0.2) -- (0.2,0.2);
        \draw[-] (0.2,-0.2) -- (-0.2,0.2);
    \end{tikzpicture}
      \ \colon
    \go \otimes \go \to \go \otimes \go,
    \quad
    \begin{tikzpicture}[anchorbase, thick]
        \draw[-] (0,-0.3) -- (0,0.3);
        \bluetoken{0,0};
    \end{tikzpicture}
    \colon \go \to \go, \quad
 \begin{tikzpicture}[anchorbase, thick]
        \draw[-] (0,0) -- (0,0.6);
        \singdot{0,0.3};
\end{tikzpicture}
\ \colon
\go \to \go, \quad
\begin{tikzpicture}[centerzero, thick]
        \draw[-] (0.2,-0.2) -- (-0.2,0.2);
        \draw[-, wei] (-0.2,-0.2) -- (0.2,0.2);
        \node at (-0.2,-0.4) {\tiny $ Q $};
\end{tikzpicture}
\ \colon Q \otimes \go \to \go \otimes Q, \quad
\begin{tikzpicture}[centerzero, thick]
        \draw[-] (-0.2,-0.2) -- (0.2,0.2);
        \draw[-,wei] (0.2,-0.2) -- (-0.2,0.2);
        \node at (0.2,-0.4) {\tiny $ Q $};
\end{tikzpicture}
\colon \go \otimes Q \to Q \otimes \go, 
\end{equation}
for all $ Q \in \mathbb{C}[x^{2}] \setminus 0 $, and they satisfy the relations \cref{Pol(Cl)}, \cref{Cliff_hw-1}, \cref{Cliff_hw0}, \cref{Cliff_hw1,Cliff_hw2,Cliff_hw3,Cliff_hw5,Cliff_green}. Here, the \emph{Clifford token} $  \begin{tikzpicture}[anchorbase, thick]
        \draw[-] (0,-0.2) -- (0,0.2);
        \bluetoken{0,0};
    \end{tikzpicture} $ is odd, and the other generating morphisms in \cref{balcony} are even. The degenerate affine Hecke--Clifford supercategory $ \mathcal{AH}(\operatorname{Cl}) $ (generated by $ \begin{tikzpicture}[centerzero, thick]
        \draw[-] (-0.2,-0.2) -- (0.2,0.2);
        \draw[-] (0.2,-0.2) -- (-0.2,0.2);
    \end{tikzpicture} $, $ \begin{tikzpicture}[anchorbase, thick]
        \draw[-] (0,-0.2) -- (0,0.2);
        \bluetoken{0,0};
    \end{tikzpicture} $, and $  \begin{tikzpicture}[anchorbase, thick]
        \draw[-] (0,-0.2) -- (0,0.2);
        \singdot{0,0};
\end{tikzpicture} $) is a monoidal subcategory of $ \mathcal{T\!AH}(\operatorname{Cl}) $, and the Clifford polynomial supercategory $ \mathpzc{Pol}(\operatorname{Cl}) $ (generated by $ \begin{tikzpicture}[anchorbase, thick]
        \draw[-] (0,-0.2) -- (0,0.2);
        \bluetoken{0,0};
    \end{tikzpicture} $ and $  \begin{tikzpicture}[anchorbase, thick]
        \draw[-] (0,-0.2) -- (0,0.2);
        \singdot{0,0};
\end{tikzpicture} $) faithfully embeds into $ \mathcal{T\!AH}(\operatorname{Cl}) $. The \emph{tensor product degenerate affine Hecke--Clifford superalgebra} $ H_{d,\mathbf{Q}}^{\operatorname{aff}}(\operatorname{Cl}) $ is defined as a path algebra of $ \mathcal{T\!AH}(\operatorname{Cl}) $, and it admits a cyclotomic quotient $ H_{d,\mathbf{Q}}^{\operatorname{cyc}}(\operatorname{Cl}) $. These superalgebras are dependent on an integer $ d \in \mathbb{N} $ and a word $ \mathbf{Q} $, where the entries in the word $ \mathbf{Q} $ are elements of $ \mathbb{C}[x^{2}] \setminus 0 $. If $ \mathbf{Q} = \varnothing $ is the empty word, then $ H_{d,\mathbf{Q}}^{\operatorname{aff}}(\operatorname{Cl}) \cong H_{d}^{\operatorname{aff}}(\operatorname{Cl}) $. If $ \mathbf{Q} = Q $ for some $ Q \in \mathbb{C}[x^{2}] \setminus 0 $ (that is, $ \mathbf{Q} $ is a word of length one), then $ H_{d,\mathbf{Q}}^{\operatorname{cyc}}(\operatorname{Cl}) = H_{d,Q}^{\operatorname{cyc}}(\operatorname{Cl})  \cong H_{d}^{Q}(\operatorname{Cl}) $.

\subsection{Main results} 

Our first main result is the following.

\begin{customthm} {\bf A} [Theorem~\ref{Morita_equiv2}]
We have an isomorphism of monoidal supercategories 
\begin{equation} \label{side_lamp}
\mathcal{F} \colon \mathcal{T\!AS}(\operatorname{Cl}) \xrightarrow{\cong} \mathcal{T\!AH}(\operatorname{Cl}).
\end{equation}
\end{customthm} 

The isomorphism \cref{side_lamp} is an extension of the isomorphisms $ \mathpzc{OPol}(\operatorname{Cl}) \cong \mathpzc{Pol}(\operatorname{Cl}) $ and $ \mathcal{AS}(\operatorname{Cl}) \cong \mathcal{AH}(\operatorname{Cl}) $ (see Proposition~\ref{Next} and Proposition~\ref{AS-AH-iso}). In particular, the above-mentioned supercategories fit together into the following commutative diagram:
\begin{equation}
\begin{tikzcd}
\mathpzc{OPol} \arrow[d] \arrow[r] & \mathcal{AS} \arrow[r] \arrow[d] & \mathcal{T\!AS} \arrow[d] \\
\mathpzc{OPol}(\operatorname{Cl}) \arrow[d, "\cong"] \arrow[r] & \mathcal{AS}(\operatorname{Cl}) \arrow[r] \arrow[d, "\cong"] & \mathcal{T\!AS}(\operatorname{Cl}) \arrow[d, "\cong"] \\
\mathpzc{Pol}(\operatorname{Cl}) \arrow[r] & \mathcal{AH}(\operatorname{Cl}) \arrow[r] & \mathcal{T\!AH}(\operatorname{Cl})
\end{tikzcd}
\end{equation}
Our next main result is the following.

\begin{customthm} {\bf B} [Theorem~\ref{Morita_result}]
Let $ d \in \mathbb{N} $, and let $ \mathbf{Q} $ be a word whose entries lie in $ \mathbb{C}[x^{2}] \setminus 0 $. Then we have superalgebra isomorphisms
\begin{align}
H_{d,\mathcal{F}(\mathbf{Q})}^{\operatorname{aff}}(\operatorname{Cl}) \cong \operatorname{SH}^{\operatorname{aff}}_{d,\mathbf{Q}}(\operatorname{Cl}) \cong \operatorname{Cl}_{d} \otimes \operatorname{SH}^{\operatorname{aff}}_{d,\mathbf{Q}}, \label{free}
\\ H_{d,\mathcal{F}(\mathbf{Q})}^{\operatorname{cyc}}(\operatorname{Cl}) \cong \operatorname{SH}^{\operatorname{cyc}}_{d,\mathbf{Q}}(\operatorname{Cl}) \cong \operatorname{Cl}_{d} \otimes \operatorname{SH}^{\operatorname{cyc}}_{d,\mathbf{Q}}.
\end{align}
In particular, the superalgebras $ \operatorname{SH}^{\operatorname{aff}}_{d,\mathbf{Q}} $ and $ H_{d,\mathcal{F}(\mathbf{Q})}^{\operatorname{aff}}(\operatorname{Cl}) $ are Morita superequivalent, and the superalgebras $ \operatorname{SH}^{\operatorname{cyc}}_{d,\mathbf{Q}} $ and $ H_{d,\mathcal{F}(\mathbf{Q})}^{\operatorname{cyc}}(\operatorname{Cl}) $ are Morita superequivalent. 
\end{customthm} 

If $ \mathbf{Q} = \varnothing $ is the empty word, then we recover that $ \operatorname{SH}^{\operatorname{aff}}_{d} $ and $ H_{d}^{\operatorname{aff}}(\operatorname{Cl}) $ are Morita superequivalent (see \cref{glebe}). If $ \mathbf{Q} = Q $ for some $ Q \in \mathbb{C}[x^{2}] \setminus 0 $ (that is, $ \mathbf{Q} $ is a word of length one), then it follows that $ \operatorname{SH}^{Q}_{d} $ and $ H_{d}^{\psi(Q)}(\operatorname{Cl}) $ are Morita superequivalent. (Here, $ \psi \colon \mathbb{C}[x^{2}] \setminus 0 \rightarrow \mathbb{C}[x^{2}] \setminus 0 $ is some map, whose definition is given in Section~\ref{Isomorphism1}.) This information can be summarized in the following diagrams (where $ A \sim_{M} B $ means that \(A\) and \(B\) are Morita superequivalent):
\begin{equation}
\begin{tikzcd}
\operatorname{SH}^{\operatorname{aff}}_{d,\mathbf{Q}} \arrow[r, "\sim_M", <->] \arrow[d, dashed, "\mathbf{Q}=\varnothing"'] 
  & H_{d,\mathcal{F}(\mathbf{Q})}^{\operatorname{aff}}(\operatorname{Cl}) \arrow[d, dashed, "\mathbf{Q}=\varnothing"] \\
\operatorname{SH}^{\operatorname{aff}}_{d} \arrow[r, "\sim_M", <->] 
  & H_{d}^{\operatorname{aff}}(\operatorname{Cl})
\end{tikzcd} \ , 
\qquad
\begin{tikzcd}
\operatorname{SH}^{\operatorname{cyc}}_{d,\mathbf{Q}} \arrow[r, "\sim_M", <->] \arrow[d, dashed, "\mathbf{Q}=Q"'] 
  & H_{d,\mathcal{F}(\mathbf{Q})}^{\operatorname{cyc}}(\operatorname{Cl}) \arrow[d, dashed, "\mathbf{Q}=Q"] \\
\operatorname{SH}^{Q}_{d} \arrow[r, "\sim_M", <->] 
  & H_{d}^{\psi(Q)}(\operatorname{Cl})
\end{tikzcd}
\end{equation}

\subsection{Directions of future research}

In a future paper, we will prove quantum analogues of the results in the current paper. Namely, we will define tensor product versions of the spin affine Hecke superalgebra \cite{Wang2} and affine Hecke--Clifford superalgebra \cite{Jones-Nazarov}, and we will prove that they are Morita superequivalent. This result will be a generalization of \cite[Thm.~5.1]{Wang2}. We now list some other potential directions of future research: 

\begin{enumerate}
\item[\textbullet] Savage and Stuart showed in \cite[Cor.~7.5]{Savage-Stuart} that the odd nilHecke superalgebra is Morita superequivalent to the Clifford nilHecke superalgebra. More generally, Kang, Kashiwara and Tsuchioka proved in \cite[Thm.~3.13]{Kang-Kashiwara-Tsuchioka} that the quiver Hecke superalgebra is Morita superequivalent to the quiver Hecke--Clifford superalgebra. We expect that one can find tensor product analogues of these results.
\item[\textbullet] In \cite{Song-XWang}, Song and Wang defined the affine web category $ \mathpzc{QWeb}^{\bullet} $ of type \(Q\). It should be possible to define a tensor product version of $ \mathpzc{QWeb}^{\bullet} $. The resulting category should then contain both $ \mathpzc{QWeb}^{\bullet} $ and $ \mathcal{T\!AH}(\operatorname{Cl}) $ as monoidal subcategories.
\end{enumerate}

\subsection*{Note on the arXiv version}

For the interested reader, the tex file of the arXiv version of this paper includes hidden details of some straightforward computations and arguments that are omitted in the pdf file. These details can be displayed by switching the \texttt{details} toggle to true in the tex file and recompiling.

\subsection*{Acknowledgements} I would like to thank Alistair Savage for his guidance, and for his very helpful comments and suggestions whilst working on this project.

This research was supported by the Natural Sciences and Engineering Research Council of Canada (NSERC), funding reference number RGPIN-2023-03842.

\section{Preliminaries} \label{Preliminaries_paper}

In this section, we recall some basic facts about superalgebras, Morita superequivalences, monoidal supercategories, and monoids. 

\subsection{Superalgebras}

Throughout this paper, our ground field is the complex numbers $ \mathbb{C} $. We define $ \mathbb{N} $ to be the set of non-negative integers, and we define $ \mathbb{N}_{+} $ to be the set of positive integers. All algebras and modules in this paper are associative superalgebras and supermodules. For a homogeneous element $ v $ in a superalgebra or supermodule, we let $\overline{v} \in \mathbb{Z}_{2} $ denote its parity. We say that two homogeneous elements $ a,b $ in a superalgebra $ A $ \emph{commute} if 
\begin{equation*}
ab = (-1)^{\bar{a}\bar{b}}ba.
\end{equation*} 
The \emph{center} $ Z(A) $ of \(A\) is the span of all homogeneous elements $ a \in A $ that commute with every homogeneous element $ b \in A $. The \emph{even center} of \(A\) is the set of those elements in $ Z(A) $ that are even. We say that an element $ a \in A $ is \emph{central} if $ a \in Z(A) $, and we say that \(A\) is \emph{commutative} if $ Z(A) = A $. An element $ a \in A $ is a \emph{left zero-divisor} if there exists a nonzero $ b \in A $ such that $ ab = 0 $. Similarly, $ a \in A $ is a \emph{right zero-divisor} if there exists a nonzero $ b \in A $ such that $ ba = 0 $. We say that an element $ a \in A $ is \emph{regular} if \(a\) is neither a left nor right zero-divisor in \(A\). So \(a\) is regular if and only if the maps $ A \rightarrow A $, $ b \mapsto ab $, and $ A \rightarrow A $, $ b \mapsto ba $, are injective.

\begin{defn} \label{opportunity}
We define $ \mathcal{E}_{A} $ to be the set of elements in $ A $ that are even, central, and regular. We also define the set $ \widehat{\mathcal{E}}_{A} := \mathcal{E}_{A} \cup \{\go\} $, where $ \go $ is a formal symbol.
\end{defn}

If $ B $ is another superalgebra, then multiplication in the algebra $ A \otimes B $ is defined by
\begin{equation*}
    (a \otimes b) (a' \otimes b') = (-1)^{\bar a' \bar b} aa' \otimes bb'
\end{equation*}
for homogeneous $ a,a' \in A $, $ b,b' \in B $. We have an isomorphism of superalgebras $ A \otimes B \cong B \otimes A $ given by $ a \otimes b \mapsto (-1)^{\bar{a}\bar{b}} (b \otimes a) $, $ a \in A $, $ b \in B $.

\subsection{Modules over superalgebras}

If \(A\) is a superalgebra, then an $ A $-\emph{supermodule homomorphism} between \(A\)-supermodules \(V\) and \(W\) is a (not necessarily homogeneous) $ \mathbb{C} $-linear map $ f : V \rightarrow W $ such that
\begin{equation*}
f(av) = (-1)^{\bar{f}\bar{a}} af(v), \quad a \in A, \ v \in V.
\end{equation*}
We define $ A\text{-smod} $ to be the supercategory consisting of $ A $-supermodules with $ A $-supermodule homomorphisms. (In this paper, a \emph{supercategory} means a category whose morphism spaces are vector superspaces and composition is parity-preserving.) We have the \emph{parity shift functor}
\begin{equation*}
\Pi \colon A\text{-smod} \rightarrow A\text{-smod}.
\end{equation*}
For an object $ V \in A\text{-smod} $, $ \Pi(V) $ has the same underlying vector space structure as $ V $ but with opposite $ \mathbb{Z}_{2} $-grading. The action of $ A $ on $ \Pi(V) $ is given by $ a \cdot v = (-1)^{\bar{a}}av $, where $ av $ is the action on $ V $.

\subsection{Morita superequivalence} \label{Morita_superequivalence}

\begin{defn}[{\cite[§7.2]{Wang-Zhao}}] \label{Morita_sup}
Let \(A\) and \(B\) be two superalgebras. Then we say that \(A\) and \(B\) are \emph{Morita superequivalent} if there exist superfunctors $ \mathfrak{F} \colon A\text{-smod} \rightarrow B\text{-smod} $ and $ \mathfrak{G} \colon B\text{-smod} \rightarrow A\text{-smod} $ such that one of the following holds:
\begin{enumerate}[label=\roman*)]
\item There exist natural isomorphisms $ \mathfrak{F} \circ \mathfrak{G} \cong \operatorname{Id} $ and $ \mathfrak{G} \circ \mathfrak{F} \cong \operatorname{Id} $. \label{one}
\item There exist natural isomorphisms $ \mathfrak{F} \circ \mathfrak{G} \cong \operatorname{Id} \oplus \Pi $ and $ \mathfrak{G} \circ \mathfrak{F} \cong \operatorname{Id} \oplus \Pi $.
\end{enumerate}
\end{defn}

Note that, in case \ref{one} above, the superalgebras \(A\) and \(B\) are Morita equivalent in the usual sense. The most important example of Morita superequivalence needed in this paper is given in Corollary~\ref{baguette345} below.

\begin{rmk}
In the literature, there are several different but closely related definitions of Morita superequivalence; for example, see \cite[§2.4]{Kang-Kashiwara-Tsuchioka}, \cite[§2.2]{Kleshchev-Livesey}, and \cite[§3.1]{Wang2}. In this paper, we choose to use the definition of Morita superequivalence given by Wang and Zhao in \cite[§7.2]{Wang-Zhao}.
\end{rmk}

\begin{defn}
Given $ n \in \mathbb{N} $, we define the \emph{Clifford superalgebra} $ \operatorname{Cl}_{n} $ to be the free associative $ \mathbb{C} $-superalgebra on the odd generators $ c_{1},\ldots, c_{n} $, subject to the relations
\begin{align}
c_{i}^{2} &= 1, & 1 \leq i \leq n, \label{Clifford1}
\\ c_{i}c_{j} &= -c_{j}c_{i}, & 1 \leq i,j \leq n, \ i \neq j. \label{Clifford2}
\end{align}
We set $ \operatorname{Cl} := \operatorname{Cl}_{1} $.
\end{defn}

We set $ U_{n} $ to be the \emph{Clifford supermodule}, which is the unique irreducible $ \operatorname{Cl}_{n} $-supermodule up to isomorphism (see \cite[2.10]{Brundan-Kleshchev}). The supermodule $ U_{n} $ has dimension $ 2^{n/2} $ if \(n\) is even, and has dimension $ 2^{(n+1)/2} $ if \(n\) is odd.

\begin{prop} \label{baguette}
Let \(A\) and \(B\) be two superalgebras, and assume that $ B \cong A \otimes \operatorname{Cl}_{n} $ for some integer $ n \in \mathbb{N} $. Define the exact superfunctors
\begin{align*}
&\mathfrak{F} \colon A\text{-smod} \rightarrow B\text{-smod}, \qquad \mathfrak{F} := - \otimes U_{n},
\\ &\mathfrak{G} \colon B\text{-smod} \rightarrow A\text{-smod}, \qquad \mathfrak{G} := \operatorname{Hom}_{\operatorname{Cl}_{n}}(U_{n}, -).
\end{align*}
Then the following holds:
\begin{enumerate}[label=\roman*)]
\item If $ n $ is even, then $ \mathfrak{F} \circ \mathfrak{G} \cong \operatorname{Id} $ and $ \mathfrak{G} \circ \mathfrak{F} \cong \operatorname{Id} $.
\item If $ n  $ is odd, then $ \mathfrak{F} \circ \mathfrak{G} \cong \operatorname{Id} \oplus \Pi $ and $ \mathfrak{G} \circ \mathfrak{F} \cong \operatorname{Id} \oplus \Pi $.
\end{enumerate}
\end{prop}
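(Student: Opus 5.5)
Throughout, the plan is to reduce everything to facts about $\operatorname{Cl}_n$-supermodules. The first step is to identify $B\text{-smod}$ with the supercategory of $A$-$\operatorname{Cl}_n$ ``bisupermodules''. Concretely, a $B$-supermodule $M$ is a superspace on which $A$ and $\operatorname{Cl}_n$ act with supercommuting actions, via $a \mapsto a\otimes 1$ and $c\mapsto 1\otimes c$. For $V\in A\text{-smod}$, the space $\mathfrak F(V)=V\otimes U_n$ carries the action $(a\otimes c)(v\otimes u)=(-1)^{\bar c\bar v}av\otimes cu$. For $M\in B\text{-smod}$, the space $\mathfrak G(M)=\operatorname{Hom}_{\operatorname{Cl}_n}(U_n,M)$ is an $A$-supermodule via $(a\cdot f)(u)=af(u)$, with the sign conventions for supermodule homomorphisms making this well defined, since $A$ supercommutes with $\operatorname{Cl}_n$ in $B$. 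Exactness of both functors is clear: $\mathfrak F$ is a tensor product over $\mathbb C$, and $\mathfrak G$ is exact because $\operatorname{Cl}_n$ is semisimple as a superalgebra, so $U_n$ is projective.

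Next I would record the structure of $\operatorname{Cl}_n$ and $E_n:=\operatorname{End}_{\operatorname{Cl}_n}(U_n)$, using \cite[2.10]{Brundan-Kleshchev}.
\begin{itemize}
\item If $n$ is even, $\operatorname{Cl}_n\cong \operatorname{End}(U_n)$ is a simple superalgebra of type M, $E_n=\mathbb C$, $U_n\not\cong\Pi U_n$, and every $\operatorname{Cl}_n$-supermodule is a direct sum of copies of $U_n$ and $\Pi U_n$.
\item If $n$ is odd, $U_n$ is of type Q. Then $E_n=\mathbb C\oplus\mathbb C J$ with $J$ odd and $J^2=\pm1$, and $U_n\cong\Pi U_n$.
\end{itemize}
In both cases, each $\operatorname{Cl}_n$-supermodule $M$ has a canonical evaluation isomorphism
\[
\operatorname{Hom}_{\operatorname{Cl}_n}(U_n,M)\otimes_{E_n}U_n\xrightarrow{\ \cong\ } M,
\]
and this map commutes with the $A$-action when $M$ is a $B$-supermodule.

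With these facts, case i) ($n$ even) is the standard Morita argument. For $\mathfrak G\circ\mathfrak F$, I would show that $v\mapsto(u\mapsto v\otimes u)$ (with signs) gives a natural isomorphism $V\cong\operatorname{Hom}_{\operatorname{Cl}_n}(U_n,V\otimes U_n)$. Indeed, $V\otimes U_n$ is a direct sum of copies of $U_n,\Pi U_n$ indexed by a homogeneous basis of $V$, and $\operatorname{End}_{\operatorname{Cl}_n}(U_n)=\mathbb C$. For $\mathfrak F\circ\mathfrak G$, the evaluation map $\operatorname{Hom}_{\operatorname{Cl}_n}(U_n,M)\otimes U_n\to M$ is an isomorphism of $\operatorname{Cl}_n$-supermodules by semisimplicity and $E_n=\mathbb C$. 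It is $A$-linear by construction, so it is a $B$-isomorphism. Naturality in $V$ and in $M$ is immediate from the formulas.

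Case ii) ($n$ odd) is the main obstacle. There, $E_n$ is two-dimensional, so these maps only identify things over $E_n$. My approach is to use the tensor-over-$\mathbb C$ versions and decompose with respect to $E_n\cong\operatorname{Cl}_1$. First, $\mathfrak G\mathfrak F(V)=\operatorname{Hom}_{\operatorname{Cl}_n}(U_n,V\otimes U_n)\cong V\otimes E_n$ as $A$-supermodules. Since $E_n=\mathbb C1\oplus\mathbb CJ$ with $J$ odd, this gives $V\otimes E_n\cong V\oplus\Pi V$ naturally. I need to check that the odd map $v\mapsto v\otimes J$ intertwines the twisted action on $\Pi V$; this is exactly the $(-1)^{\bar a}$ sign. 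Second, $\mathfrak F\mathfrak G(M)=\operatorname{Hom}_{\operatorname{Cl}_n}(U_n,M)\otimes_{\mathbb C}U_n$. This space surjects onto $\operatorname{Hom}_{\operatorname{Cl}_n}(U_n,M)\otimes_{E_n}U_n\cong M$. I would split the surjection using the idempotents $\tfrac12(1\otimes1\pm J\otimes J)$ in $E_n\otimes E_n^{\mathrm{op}}$, after normalizing $J$ so that $J^2$ has the right sign and these are idempotents. This decomposes the space into two summands, each isomorphic to $M$ or to $\Pi M$, giving $\mathfrak F\mathfrak G(M)\cong M\oplus\Pi M$. The difficulty is bookkeeping: the signs must be checked so that the complementary summand is $\Pi M$ rather than $M$, and so that the idempotents commute with the $A$- and $\operatorname{Cl}_n$-actions. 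An alternative, if the signs become unwieldy, is to reduce to $n=1$ via $\operatorname{Cl}_n\cong\operatorname{Cl}_{n-1}\otimes\operatorname{Cl}_1$ and apply case i). For $n=1$, the functors $U_1=\operatorname{Cl}_1$ and $\operatorname{Hom}_{\operatorname{Cl}_1}(\operatorname{Cl}_1,-)=$ restriction give $\mathfrak F\mathfrak G(M)=M\otimes\operatorname{Cl}_1\cong M\oplus\Pi M$ and $\mathfrak G\mathfrak F(V)=V\otimes\operatorname{Cl}_1\cong V\oplus\Pi V$ directly.
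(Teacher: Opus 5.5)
Your proposal is correct and is essentially the standard argument the paper has in mind when it omits the proof as analogous to \cite[Prop.~13.2.2]{Kleshchev} and \cite[Thm.~3.4]{Brundan-Kleshchev}. In both, everything reduces to the semisimplicity of $\operatorname{Cl}_n$ together with $\operatorname{End}_{\operatorname{Cl}_n}(U_n)$ being $\mathbb{C}$ (type M, $n$ even) or spanned by the identity and an odd invertible $J$ (type Q, $n$ odd); either of your routes for $\mathfrak{F}\circ\mathfrak{G}$ in the odd case (the splitting by $\tfrac12(1\pm J\otimes J)$, or the reduction to $n=1$) works once the signs are fixed consistently.
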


\begin{proof}
This is analogous to the proofs of \cite[Prop.~13.2.2]{Kleshchev}, \cite[Lem.~9.9]{Brundan-Kleshchev2}, and \cite[Thm.~3.4]{Brundan-Kleshchev}. Hence it will be omitted.
\end{proof}

\begin{cor} \label{baguette345}
Let \(A\) and \(B\) be two superalgebras, and assume that $ B \cong A \otimes \operatorname{Cl}_{n} $ for some integer $ n \in \mathbb{N} $. Then the superalgebras \(A\) and \(B\) are Morita superequivalent.
\end{cor}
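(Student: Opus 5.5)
This corollary follows directly from Proposition~\ref{baguette} together with Definition~\ref{Morita_sup}. We are given $B \cong A \otimes \operatorname{Cl}_{n}$. Fix such an isomorphism $\phi \colon A \otimes \operatorname{Cl}_{n} \to B$. Restriction along $\phi$ (and along $\phi^{-1}$) is an isomorphism of supercategories $B\text{-smod} \cong (A\otimes\operatorname{Cl}_n)\text{-smod}$. It commutes with the parity shift $\Pi$, since $\phi$ is parity-preserving and $\Pi$ is defined by the same sign rule on both sides. So we may replace $B$ by $A \otimes \operatorname{Cl}_{n}$ and take $\mathfrak{F}$ and $\mathfrak{G}$ as in Proposition~\ref{baguette}. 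Here a $B$-supermodule $M$ is viewed as a $\operatorname{Cl}_n$-supermodule through $1 \otimes \operatorname{Cl}_n$, and $\operatorname{Hom}_{\operatorname{Cl}_n}(U_n, M)$ carries the residual $A$-action.

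We then split into the two cases of Proposition~\ref{baguette}.

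\emph{Case $n$ even.} Part i) of the proposition gives natural isomorphisms $\mathfrak{F}\circ\mathfrak{G} \cong \operatorname{Id}$ and $\mathfrak{G}\circ\mathfrak{F}\cong\operatorname{Id}$. This is exactly condition i) of Definition~\ref{Morita_sup}.

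\emph{Case $n$ odd.} Part ii) gives $\mathfrak{F}\circ\mathfrak{G}\cong \operatorname{Id}\oplus\Pi$ and $\mathfrak{G}\circ\mathfrak{F}\cong\operatorname{Id}\oplus\Pi$. This is exactly condition ii) of Definition~\ref{Morita_sup}.

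In either case $A$ and $B$ are Morita superequivalent. The composites with the restriction isomorphism are still superfunctors, and the natural isomorphisms transport along the isomorphism of categories.

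There is essentially no obstacle here. The only point needing care is that the transport along $\phi$ respects $\Pi$ and the superfunctor structure, so that the isomorphism to $\operatorname{Id}\oplus\Pi$ on $(A\otimes\operatorname{Cl}_n)\text{-smod}$ becomes one on $B\text{-smod}$. This holds because $\phi$ is an even isomorphism. All the real content, namely the computation with the Clifford supermodule $U_n$ and its odd automorphism when $n$ is odd, sits in Proposition~\ref{baguette}, which we take as given.
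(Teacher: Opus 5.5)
Your argument is correct and matches the paper, which gives no separate proof and treats the corollary as an immediate consequence of Proposition~\ref{baguette}: even $n$ gives condition i) and odd $n$ gives condition ii) of Definition~\ref{Morita_sup}. Your transport along $\phi$ is harmless but unnecessary, since Proposition~\ref{baguette} already takes $B \cong A \otimes \operatorname{Cl}_{n}$ as its hypothesis and defines $\mathfrak{F}$ and $\mathfrak{G}$ directly with $B\text{-smod}$ as target and source.
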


\subsection{Monoidal supercategories}

In this paper, we will work with \emph{strict monoidal supercategories}, in the sense of \cite{Brundan-Ellis}. We refer the reader to \cite[§2]{Brundan-Savage-Webster2} for a summary of this topic well adapted to the current work, or to \cite{Brundan-Ellis} for a thorough treatment. We summarize here a few crucial properties that play an important role in the present paper.

In a \emph{strict monoidal supercategory}, morphisms satisfy the \emph{super interchange law}:
\begin{equation}\label{interchange}
    (f' \otimes g) \circ (f \otimes g')
    = (-1)^{\bar f \bar g} (f' \circ f) \otimes (g \circ g').
\end{equation}
We denote the unit object by $ \mathds{1} $ and the identity morphism of an object $X$ by $1_X$.  We will use the usual calculus of string diagrams, representing the horizontal composition $f \otimes g$ (resp.\ vertical composition $f \circ g$) of morphisms $f$ and $g$ diagrammatically by drawing $f$ to the left of $g$ (resp.\ drawing $f$ above $g$).  Care is needed with horizontal levels in such diagrams due to the signs arising from the super interchange law:
\begin{equation}\label{intlaw}
    \begin{tikzpicture}[anchorbase]
        \draw (-0.5,-0.5) -- (-0.5,0.5);
        \draw (0.5,-0.5) -- (0.5,0.5);
        \filldraw[fill=white,draw=black] (-0.5,0.15) circle (5pt);
        \filldraw[fill=white,draw=black] (0.5,-0.15) circle (5pt);
        \node at (-0.5,0.15) {$\scriptstyle{f}$};
        \node at (0.5,-0.15) {$\scriptstyle{g}$};
    \end{tikzpicture}
    \quad=\quad
    \begin{tikzpicture}[anchorbase]
        \draw (-0.5,-0.5) -- (-0.5,0.5);
        \draw (0.5,-0.5) -- (0.5,0.5);
        \filldraw[fill=white,draw=black] (-0.5,0) circle (5pt);
        \filldraw[fill=white,draw=black] (0.5,0) circle (5pt);
        \node at (-0.5,0) {$\scriptstyle{f}$};
        \node at (0.5,0) {$\scriptstyle{g}$};
    \end{tikzpicture}
    \quad=\quad
    (-1)^{\bar f\bar g}\
    \begin{tikzpicture}[anchorbase]
        \draw (-0.5,-0.5) -- (-0.5,0.5);
        \draw (0.5,-0.5) -- (0.5,0.5);
        \filldraw[fill=white,draw=black] (-0.5,-0.15) circle (5pt);
        \filldraw[fill=white,draw=black] (0.5,0.15) circle (5pt);
        \node at (-0.5,-0.15) {$\scriptstyle{f}$};
        \node at (0.5,0.15) {$\scriptstyle{g}$};
    \end{tikzpicture}
    \ .
\end{equation}
A strict monoidal supercategory $ \mathcal{C} $ with one generating object $\go$ gives rise to a \emph{tower of superalgebras}
\begin{equation*}
\operatorname{End}_{\mathcal{C}}(\go^{\otimes n}), \quad n \in \mathbb{N}.
\end{equation*}
We use this idea to introduce various families of superalgebras in an extremely efficient way, giving a presentation of $ \mathcal{C} $ with a small number of generating morphisms and relations.  If we then wish to have a presentation of the endomorphism algebras of $ \mathcal{C} $ as \emph{superalgebras}, we use the following result.

\begin{prop} \label{zebra}
Suppose $ \mathcal{C} $ is a strict $ \mathbb{C} $-linear monoidal supercategory with one generating object $\go$ and generating morphisms $f_i \in \operatorname{End}_{\mathcal{C}}(\go^{\otimes n_{i}}) $, $i \in I$, subject to the relations $R_j \in \operatorname{End}_{\mathcal{C}}(\go^{\otimes m_{j}})$, $j \in J$.  Then for a given $ n \in \mathbb{N} $, $\operatorname{End}_{\mathcal{C}}(\go^{\otimes n})$ is generated as a superalgebra by the elements
\begin{equation}
 1_{\go}^{\otimes k} \otimes f_{i} \otimes 1_{\go}^{\otimes (n-n_{i}-k)},\quad i \in I,\ 0 \leq k \leq n-n_{i},
\end{equation}
    subject to the relations
    \begin{equation} \label{zebra1}
        1_{\go}^{\otimes k} \otimes R_j \otimes 1_{\go}^{\otimes(n-m_{j}-k)},\quad j \in J,\ 0 \leq k \leq n-m_j,
    \end{equation}
    and the relations
    \begin{equation} \label{zebra2}
        \left( 1_{\go}^{\otimes k_{1}} \otimes f_i \otimes 1_{\go}^{\otimes k_2} \right)
        \left( 1_{\go}^{\otimes l_{1}} \otimes f_j \otimes 1_{\go}^{\otimes l_2} \right)
        - (-1)^{\overline{f_i} \ \overline{f_j}}
        \left( 1_{\go}^{\otimes l_{1}} \otimes f_j \otimes 1_{\go}^{\otimes l_2} \right)
        \left( 1_{\go}^{\otimes k_{1}} \otimes f_i \otimes 1_{\go}^{\otimes k_2} \right)
    \end{equation}
    for $i,j \in I$, $k_1+n_i+k_2 = n = l_1 + n_j + l_2$, $k_2 \geq n_j + l_2$.
\end{prop}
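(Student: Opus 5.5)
We have to prove Proposition~\ref{zebra}, the presentation of $\operatorname{End}_{\mathcal{C}}(\go^{\otimes n})$. The plan is to build a superalgebra $E_n$ from the stated generators and relations and show it is isomorphic to $\operatorname{End}_{\mathcal{C}}(\go^{\otimes n})$. So let $E_n$ be the free superalgebra on symbols $f_{i}^{(k)}$, one for each $i \in I$ and $0 \le k \le n-n_i$, with parity $\overline{f_i}$, modulo the relations \cref{zebra1} and \cref{zebra2}. Each relation $R_j$ is a linear combination of composites of tensor products of generators and identities. To write \cref{zebra1} in $E_n$, we first rewrite each such composite as a product of the elementary "padded" generators $1^{\otimes k}\otimes f_i\otimes 1^{\otimes l}$, by slicing the diagram into horizontal layers; the super interchange law \cref{interchange} produces the needed signs. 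Then there is a homomorphism $\Phi\colon E_n \to \operatorname{End}_{\mathcal{C}}(\go^{\otimes n})$ sending $f_i^{(k)} \mapsto 1_{\go}^{\otimes k}\otimes f_i\otimes 1_{\go}^{\otimes(n-n_i-k)}$. It is well defined because \cref{zebra1} holds in $\mathcal{C}$ by monoidality, and \cref{zebra2} holds by \cref{intlaw}: since $k_2 \ge n_j+l_2$, the two morphisms act on disjoint sets of strands, $f_i$ to the left of $f_j$.

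Next, $\Phi$ is surjective. In a strict monoidal supercategory presented by generators and relations, every morphism $\go^{\otimes n}\to\go^{\otimes n}$ is a linear combination of string diagrams. Each diagram is a vertical composite of the padded generators, obtained by generically perturbing heights so that there is one generator per horizontal slice, again using \cref{interchange}. Note that all the $f_i$ are endomorphisms and there is only one generating object, so every intermediate object is $\go^{\otimes n}$.

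The main step is injectivity. Here I would construct an inverse by defining a strict monoidal superfunctor out of $\mathcal{C}$. Form the strict monoidal supercategory $\mathcal{E}$ whose objects are $\go^{\otimes n}$, $n\in\mathbb{N}$, with $\operatorname{End}(\go^{\otimes n}) = E_n$ and no morphisms between different $n$. The tensor product $E_m \otimes E_n \to E_{m+n}$ is defined on generators by $f_i^{(k)}\otimes 1 \mapsto f_i^{(k)}$ and $1\otimes f_i^{(k)} \mapsto f_i^{(m+k)}$, with $a\otimes b \mapsto a\,b$ for $a\in E_m$, $b \in E_n$ understood via these images. The key check, which I expect to be the main obstacle, is that this map is well defined and satisfies the super interchange law. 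This works because \cref{zebra2} says exactly that left-shifted and right-shifted generators supercommute, so the images of $E_m$ and $E_n$ supercommute in $E_{m+n}$. That gives a superalgebra map from the super tensor product $E_m\otimes E_n$, and each relation \cref{zebra1} is sent to a shifted relation of the same form. Associativity and unitality are then immediate on generators.

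Finally, $\mathcal{E}$ contains the elements $f_i^{(0)}\in E_{n_i}$ satisfying the relations $R_j$ (the case $k=0$ of \cref{zebra1}). By the universal property of $\mathcal{C}$, there is a monoidal superfunctor $\Psi\colon\mathcal{C}\to\mathcal{E}$ with $f_i\mapsto f_i^{(0)}$. It sends $1^{\otimes k}\otimes f_i\otimes 1^{\otimes l}$ to $f_i^{(k)}$, so $\Psi\circ\Phi$ is the identity on generators of $E_n$. Hence $\Phi$ is injective, and so it is an isomorphism.
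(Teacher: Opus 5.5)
Your proposal is correct, but it takes a genuinely different route from the paper. The paper gives no argument of its own: it cites a ``super version'' of Theorems~5.2 and~5.4 of Liu, which are stated in the non-super setting, and only remarks that \cref{zebra2} corresponds to the super interchange law \cref{intlaw}.

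You prove the statement directly. Surjectivity of $\Phi$ comes from slicing diagrams into layers with one generator each. Injectivity comes from assembling the algebras $E_n$ into a strict monoidal supercategory $\mathcal{E}$ and using the universal property of $\mathcal{C}$ to produce a left inverse $\Psi$.

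What your route buys is that the super adaptation the paper takes on faith is actually checked, and it shows exactly where the signs enter. The condition $k_2\ge n_j+l_2$ in \cref{zebra2} says $l_1\ge k_1+n_i$, i.e.\ the two generators act on disjoint strands with $f_i$ on the left. So \cref{zebra2} is precisely what makes the left-shift map $E_m\to E_{m+n}$ and the right-shift map $E_n\to E_{m+n}$ have supercommuting images. Hence they induce a homomorphism from the super tensor product $E_m\otimes E_n$, and that is exactly the super interchange law in $\mathcal{E}$. Your construction also uses the special feature that every generator is an endomorphism, which is what lets $\mathcal{E}$ have zero Hom spaces between distinct objects. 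The paper's citation buys brevity at the cost of leaving the sign bookkeeping unverified.

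One point you should state explicitly, though it is not a gap. The relations \cref{zebra1} are only well defined modulo \cref{zebra2}, since rewriting $1_{\go}^{\otimes k}\otimes R_j\otimes 1_{\go}^{\otimes(n-m_j-k)}$ as a combination of products of padded generators involves choices of slicing. Your check that the elements $f_i^{(0)}$ satisfy the $R_j$ in $\mathcal{E}$ works because that rewriting used only the super interchange law, which holds in $\mathcal{E}$. Consequently $\Psi(R_j)$ is exactly the $k=0$ instance of \cref{zebra1} in $E_{m_j}$, which is zero.
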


\begin{proof}
This follows from a \emph{super} version of Theorems 5.2 and 5.4 in \cite{Liu}. Note that the relations \cref{zebra2} correspond to the super interchange law \cref{intlaw}.
\end{proof}

\subsection{Monoids}

For a given set \(X\), we define $ F(X) $ to be the free monoid on \(X\). That is, $ F(X) $ is the set of all words in \(X\), and the product of any two words is their concatenation. We denote the length of a word $ w \in F(X) $ by $ |w| $. A \emph{subword} of $ w $ is an element in $ F(X) $ which is obtained by deleting some (not necessarily consecutive) letters in \(w\).

\begin{defn} \label{snowing}
Let $ u = u_{1} \cdots u_{n} $ and $ v = v_{1} \cdots v_{m} $ be two words in $ F(X) $. Then a word $ w = w_1 \cdots w_{n+m} $ is said to be a \emph{$(u,v)$-shuffle} if there exists a partition $ \{1,\ldots,n+m\} = I \cup J $, where $ I = \{ i_{1} < \cdots < i_{n}\} $ and $ J = \{j_{1} < \cdots < j_{m}\} $, such that $ w_{i_{r}} = u_{r} $ and $w _{j_{s}} = v_{s} $ for all $ 1 \leq r \leq n $, $ 1 \leq s \leq m $. We define $ F(X)_{u,v} $ to be the set of all $ (u,v) $-shuffles in $ F(X) $.
\end{defn}

\begin{egg}
Let $ X = \{a,b,c\} $. Then the word $ abcb $ is an $ (ac,bb) $-shuffle. The words $ cbab $ and $ acbbc $ are not $ (ac,bb) $-shuffles.
\end{egg}

\begin{lem} \label{pencil_sharp_2}
Let $ X $ and $ Y $ be sets, and suppose that we have a monoid isomorphism $ h \colon F(X) \rightarrow F(Y) $. Let $ u,v \in F(X) $ be two words. Then we have
\begin{equation*} 
h(F(X)_{u,v}) = F(Y)_{h(u),h(v)}.
\end{equation*}
\end{lem}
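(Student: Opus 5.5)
The plan is to show the inclusion $h(F(X)_{u,v}) \subseteq F(Y)_{h(u),h(v)}$ for every monoid isomorphism $h$, and then obtain the reverse inclusion by applying the same statement to the inverse isomorphism $h^{-1} \colon F(Y) \rightarrow F(X)$.

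The key preliminary observation is that a monoid isomorphism between free monoids sends letters to letters, i.e.\ $h$ restricts to a bijection $X \rightarrow Y$ and preserves word length. To see this, note that $h$ sends the empty word to the empty word, since it is a monoid homomorphism, and hence sends nonempty words to nonempty words, as it is injective. The letters of $X$ are exactly the \emph{indecomposable} elements of $F(X)$, meaning the nonidentity elements that cannot be written as a product of two nonidentity elements. Indecomposability is preserved by isomorphisms, so $h(X) = Y$. Consequently, for $w = w_1 \cdots w_k$ we have $h(w) = h(w_1)\cdots h(w_k)$ with each $h(w_i) \in Y$, so $|h(w)| = |w|$ and the $i$-th letter of $h(w)$ is $h(w_i)$.

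With this in hand, let $w \in F(X)_{u,v}$ with $|u|=n$ and $|v|=m$, witnessed by a partition $\{1,\ldots,n+m\} = I \cup J$ as in Definition~\ref{snowing}. Then $h(w) = h(w_1)\cdots h(w_{n+m})$, $h(u) = h(u_1)\cdots h(u_n)$ and $h(v) = h(v_1)\cdots h(v_m)$. The same partition $I \cup J$ satisfies $h(w)_{i_r} = h(w_{i_r}) = h(u_r) = h(u)_r$, and similarly $h(w)_{j_s} = h(v)_s$. Hence $h(w) \in F(Y)_{h(u),h(v)}$.

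For the reverse inclusion, apply this argument to $h^{-1}$ with the words $h(u), h(v)$. This gives $h^{-1}(F(Y)_{h(u),h(v)}) \subseteq F(X)_{u,v}$, and applying $h$ to both sides yields the equality. The only step with any substance is the letters-to-letters claim via indecomposability; the rest is bookkeeping.
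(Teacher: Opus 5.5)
Your proposal is correct and complete. The paper calls the lemma clear and omits its proof; your argument supplies the justification it takes for granted: an isomorphism of free monoids permutes the indecomposable elements, so it restricts to a bijection $X \to Y$ and acts letter by letter. The same index partition then witnesses the shuffle, and $h^{-1}$ gives the reverse inclusion. In the paper's only use of the lemma, $\mathcal{F}$ is defined on generating objects by $\go \mapsto \go$ and $Q \mapsto \psi(Q)$, so letters go to letters by construction; your indecomposability step is what makes the lemma hold as stated, for an arbitrary monoid isomorphism.
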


\begin{proof}
This is clear, and so the proof will be omitted.
\end{proof}


\section{Tensor product degenerate spin affine Hecke superalgebras $ \operatorname{SH}_{d,\mathbf{Q}}^{\operatorname{aff}} $} \label{HLDSAHA_Section}

In this section, we define the tensor product degenerate spin affine Hecke superalgebras $ \operatorname{SH}_{d,\mathbf{Q}}^{\operatorname{aff}} $ and their cyclotomic quotients $ \operatorname{SH}_{d,\mathbf{Q}}^{\operatorname{cyc}} $. We begin by providing a recap of the degenerate spin affine Hecke superalgebras, which were first introduced by Wang in \cite{Wang}.

\subsection{Odd polynomial superalgebras} \label{partytime}

\begin{defn} \label{Odd_category}
We define the \emph{odd polynomial category} $ \mathpzc{OPol} $ to be the strict $ \mathbb{C} $-linear monoidal supercategory generated by one object $ \go $, and one odd morphism 
\begin{equation*}
\begin{tikzpicture}[centerzero, thick]
        \draw[-] (0,-0.2) -- (0,0.2);
        \singdot{0,0};
\end{tikzpicture} 
\ \colon \go \to \go.
\end{equation*}
We refer to the morphism $ \begin{tikzpicture}[centerzero, thick]
        \draw[-] (0,-0.2) -- (0,0.2);
        \singdot{0,0};
\end{tikzpicture} $ as a \emph{dot}. For $ n \in \mathbb{N} $, we define the \emph{odd polynomial superalgebra} to be 
\begin{equation*}
\operatorname{OPol}_{n} := \operatorname{End}_{\mathpzc{OPol}}(\go^{\otimes n}).
\end{equation*}
\end{defn}

Before stating the following proposition, note that throughout this paper, we number strands from \emph{left to right}.

\begin{prop} \label{oddrel_def}
Let $ n \in \mathbb{N}_{+} $. Then the odd polynomial superalgebra $ \operatorname{OPol}_{n} $ is isomorphic to the free associative superalgebra on the odd generators $ x_{1},\ldots,x_{n} $, modulo the relation
\begin{align} \label{oddrel}
x_{i}x_{j} &= -x_{j}x_{i}, & 1 \leq i,j \leq n, \ i \neq j.
\end{align}
Under this isomorphism, $ x_{i} $ corresponds to the dot on the \(i\)-th strand.
\end{prop}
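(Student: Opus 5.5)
This is a direct application of Proposition~\ref{zebra} to $\mathcal{C} = \mathpzc{OPol}$. Here the index set $I$ has a single element: the generating morphism $f$ is the dot, which is odd, with $n_f = 1$. There are no defining relations, so $J = \varnothing$.

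\emph{Generators.} For fixed $n \in \mathbb{N}_{+}$, Proposition~\ref{zebra} says that $\operatorname{OPol}_{n} = \operatorname{End}_{\mathpzc{OPol}}(\go^{\otimes n})$ is generated as a superalgebra by the elements
\[
x_{i} := 1_{\go}^{\otimes (i-1)} \otimes f \otimes 1_{\go}^{\otimes (n-i)}, \qquad 1 \leq i \leq n.
\]
This is the reindexing $k = i-1$, with $0 \leq k \leq n-1$. Each $x_i$ is the dot on the $i$-th strand, counting from the left, and it is odd because $f$ is odd.

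\emph{Relations.} Since $J$ is empty, the relations \cref{zebra1} contribute nothing. Only the interchange relations \cref{zebra2} remain. With $n_i = n_j = 1$, take $k_1 = i-1$ and $l_1 = j-1$. The condition $k_2 \geq n_j + l_2$ becomes $n - i \geq 1 + n - j$, that is, $j > i$. Because $\overline{f}\,\overline{f} = 1$, relation \cref{zebra2} reads
\[
x_i x_j + x_j x_i = 0 \qquad \text{for } i < j.
\]
This is exactly \cref{oddrel}, and the cases $i > j$ follow by symmetry. For $i = j$ there is no relation, so $x_i^2$ is not forced to vanish.

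\emph{Conclusion.} This gives a presentation matching the one in the statement, so the map sending $x_i$ to the dot on the $i$-th strand is a superalgebra isomorphism. The only point needing care is translating the index condition in \cref{zebra2} into $i < j$, and checking the sign: the super interchange law \cref{intlaw} gives $-1$ for two odd dots, which is what produces anticommutation rather than commutation.
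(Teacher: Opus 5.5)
Your proposal is correct and follows the same route as the paper, which simply states that the result follows from Proposition~\ref{zebra}. Your unpacking of that application is accurate: the condition $k_2 \geq n_j + l_2$ does reduce to $i<j$, and the sign $-(-1)^{\overline{f}\,\overline{f}} = +1$ is what produces the anticommutation relation \cref{oddrel}.
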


\begin{proof}
This follows from Proposition \ref{zebra}.
\end{proof}

In what follows, we identify $ \operatorname{OPol}_{n} $ with the superalgebra presented in Proposition~\ref{oddrel_def}. The superalgebra $ \operatorname{OPol}_{n} $ has basis $ \{x_{1}^{k_{1}}\cdots x_{n}^{k_{n}} : k_{1},\ldots, k_{n} \in \mathbb{N}\} $. We set $ x := x_{1} \in \operatorname{OPol}_{1} $. Recall that, for a superalgebra \(A\), we have the set $ \mathcal{E}_{A} $ from Definition \ref{opportunity}.

\begin{lem} \label{sitting}
The even center of $ \operatorname{OPol}_{1} $ is equal to $ \mathbb{C}[x^{2}] $. Furthermore, $ \mathcal{E}_{\operatorname{OPol}_{1}} = \mathbb{C}[x^{2}] \setminus 0 $.
\end{lem}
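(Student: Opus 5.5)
By Proposition~\ref{oddrel_def}, $\operatorname{OPol}_{1}$ is the free superalgebra on one odd generator $x$ with no relations, so $\operatorname{OPol}_{1} \cong \mathbb{C}[x]$ as an algebra, with basis $\{x^{k} : k \in \mathbb{N}\}$. The $\mathbb{Z}_2$-grading is given by $\overline{x^{k}} = k \bmod 2$. In particular, the even part is $\mathbb{C}[x^{2}]$ and the odd part is $x\mathbb{C}[x^{2}]$. I will compute the even center directly, then determine which even central elements are regular.

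\emph{Even center.} Because $\operatorname{OPol}_{1}$ is generated by $x$, an even element $a$ lies in $Z(\operatorname{OPol}_{1})$ if and only if it supercommutes with $x$. Since $a$ is even, this condition reads $ax = xa$. The underlying algebra $\mathbb{C}[x]$ is commutative, so every even element commutes with $x$. For the inclusion to be checked carefully, I will also verify the super condition against arbitrary homogeneous $b$. For even $a$ the required identity is $ab = (-1)^{0}ba = ba$, which holds by commutativity. Hence the even center is exactly the even part, namely $\mathbb{C}[x^{2}]$.

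For context (this is not needed for the statement), odd elements are not central. If $a = x^{2k+1}$, the super condition with $b = x$ would require $x^{2k+2} = -x^{2k+2}$, which fails. So $Z(\operatorname{OPol}_{1}) = \mathbb{C}[x^{2}]$ as well.

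\emph{Regularity.} The algebra $\mathbb{C}[x]$ is an integral domain. Consequently, every nonzero element is neither a left nor a right zero-divisor, while $0$ is a zero-divisor because $0 \cdot 1 = 0$. Therefore $\mathcal{E}_{\operatorname{OPol}_{1}}$, the set of even, central, regular elements, equals $\mathbb{C}[x^{2}] \setminus 0$.

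There is no real obstacle in this argument. The only point needing care is the sign bookkeeping in the definition of supercommuting, and that is trivial here because $a$ is even.
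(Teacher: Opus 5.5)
Your proof is correct and follows the same route as the paper. Even elements of the (ordinarily commutative) algebra $\mathbb{C}[x]$ supercommute with everything, so the even center is the even part $\mathbb{C}[x^{2}]$, and the regularity statement follows because $\operatorname{OPol}_{1}$ is a domain. Your side remark on odd elements only checks monomials, but it extends to any odd $a$, since supercommuting with $x$ forces $2ax=0$; in any case it is not needed.
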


\begin{proof}
The first statement is easy to see. The second statement then follows from the first statement and the fact that $ \operatorname{OPol}_{1} $ is a domain.
\end{proof}

\details{Every polynomial in $ x^{2} $ commutes with \(x\), and so lies in the even center of $ \operatorname{OPol}_{1} $. Conversely, every element in the even center of $ \operatorname{OPol}_{1} $ is even, and so is a polynomial in $ x^{2} $.}

Given an integer $ 1 \leq i \leq n $, there is an injective algebra homomorphism
\begin{equation} \label{homomorphism_in}
\operatorname{OPol}_{1} \rightarrow \operatorname{OPol}_{n}, \quad  x \mapsto x_{i}.
\end{equation}
We denote the image of an element $ f \in \operatorname{OPol}_{1} $ under this homomorphism by $ f_{i} $.

\begin{lem} \label{uncharted5678_1256}
Let $ n \in \mathbb{N} $, $ 1 \leq i \leq n $ and $ f \in \mathcal{E}_{\operatorname{OPol}_{1}} $. Then $ f_{i} \in \mathcal{E}_{\operatorname{OPol}_{n}} $.
\end{lem}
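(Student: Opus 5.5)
By Lemma~\ref{sitting}, $f = g(x^{2})$ for some nonzero polynomial $g \in \mathbb{C}[t]$. The image of $f$ in $\operatorname{OPol}_{n}$ under \cref{homomorphism_in} is therefore $f_{i} = g(x_{i}^{2})$. We verify the three defining properties of $\mathcal{E}_{\operatorname{OPol}_{n}}$ in turn: evenness, centrality, and regularity.

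\emph{Evenness.} The homomorphism \cref{homomorphism_in} is parity-preserving, since $x$ and $x_{i}$ are both odd. Hence $f_{i}$ is even.

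\emph{Centrality.} It suffices to show that $x_{i}^{2}$ is central, since then any polynomial in it is central. Because $x_{i}^{2}$ is even, we must show that it commutes in the ordinary sense with each generator $x_{j}$. For $j = i$ this is clear. For $j \neq i$, applying \cref{oddrel} twice gives
\[
x_{i}^{2}x_{j} = -x_{i}x_{j}x_{i} = x_{j}x_{i}^{2}.
\]
Hence $x_{i}^{2}$ is central.

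\emph{Regularity.} Since $f_{i}$ is central, left and right multiplication by $f_{i}$ agree, so it is enough to show that $b \mapsto f_{i}b$ is injective. Use the basis $\{x_{1}^{k_{1}}\cdots x_{n}^{k_{n}}\}$ of $\operatorname{OPol}_{n}$. Multiplying a monomial on the left by $x_{i}^{2}$ gives $x_{i}^{2}\,x_{1}^{k_{1}}\cdots x_{n}^{k_{n}} = x_{1}^{k_{1}}\cdots x_{i}^{k_{i}+2}\cdots x_{n}^{k_{n}}$. No sign appears: by centrality of $x_{i}^{2}$, it can be moved to the $i$-th position without cost. Thus left multiplication by $x_{i}^{2}$ sends basis vectors injectively to basis vectors.

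Now write $g(t) = \sum_{m=0}^{M} a_{m}t^{m}$ with $a_{M} \neq 0$, and let $b = \sum_{\mathbf{k}} \beta_{\mathbf{k}} x^{\mathbf{k}} \neq 0$. Among the monomials with $\beta_{\mathbf{k}} \neq 0$, choose one, say $\mathbf{k}^{\ast}$, whose exponent $k_{i}$ is maximal. In $f_{i}b$, consider the monomial obtained from $\mathbf{k}^{\ast}$ by adding $2M$ to the $i$-th exponent. The only contribution to this monomial comes from the term $a_{M}x_{i}^{2M}$ acting on $\mathbf{k}^{\ast}$. Indeed, any other contribution would need either a smaller power $x_{i}^{2m}$ with $m<M$, which would require a monomial in $b$ with $i$-th exponent larger than that of $\mathbf{k}^{\ast}$, contradicting maximality; or the same power $x_{i}^{2M}$ acting on a different monomial, which would produce a different monomial by injectivity on basis vectors. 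So the coefficient of this monomial in $f_{i}b$ is $a_{M}\beta_{\mathbf{k}^{\ast}} \neq 0$, and hence $f_{i}b \neq 0$.

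Equivalently, $\operatorname{OPol}_{n}$ is a free module over $\mathbb{C}[x_{i}^{2}]$ on the basis $\{x_{i}^{\epsilon}\prod_{j\neq i}x_{j}^{k_{j}} : \epsilon\in\{0,1\}\}$, and nonzero elements of a polynomial ring act injectively on a free module. The only point requiring care is the sign bookkeeping in the centrality step, which is handled by the displayed computation above; there is no serious obstacle.
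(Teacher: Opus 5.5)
Your proof is correct and takes the same route as the paper: you use Lemma~\ref{sitting} to write $f_{i}$ as a nonzero polynomial in $x_{i}^{2}$, then check that it is even, central and regular. The paper only asserts these three properties, so your sign computation for centrality and your leading-term (equivalently, freeness over $\mathbb{C}[x_{i}^{2}]$) argument for regularity supply the details it leaves to the reader.
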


\begin{proof}
Since $ f \in \mathbb{C}[x^{2}] \setminus 0 $ (see Lemma~\ref{sitting}), it follows that $ f_{i} \in \mathbb{C}[x_{i}^{2}] \setminus 0 $. This implies that $ f_{i} $ is even, central, and regular in $ \operatorname{OPol}_{n} $.
\end{proof}

We have an action of the symmetric group $ S_{n} $ on $ \operatorname{OPol}_{n} $, given by $ w(x_{i}) = x_{w(i)} $ for $ w \in S_{n} $, $ 1 \leq i \leq n $. We denote the action of \(w\) on $ f \in \operatorname{OPol}_{n} $ by $ w(f) $. 

\begin{defn}[{\cite[§2.2.1]{Ellis-Khovanov-Lauda}}] \label{oddDemazure}
We define the \emph{odd Demazure operators} to be the parity-reversing $ \mathbb{C} $-linear operators $ D_{i} \colon \operatorname{OPol}_{n} \rightarrow \operatorname{OPol}_{n} $, $ 1 \leq i \leq n-1 $, determined by
\begin{equation}
D_{i}(1) = 0, \quad
D_{i}(x_{j}) =
\begin{cases}
1, & \text{if } j \in \{i,i+1\},\\
0, & \text{otherwise},
\end{cases}
\label{oddDemazure_6757}
\end{equation}
and the twisted Leibniz rule
\begin{equation} \label{oddLeibnizrule}
D_{i}(fg) = D_{i}(f)g + (-1)^{\bar{f}} s_{i}(f)D_{i}(g), \qquad f,g \in \operatorname{OPol}_{n}.
\end{equation}
We set $ D = D_{1} \colon \operatorname{OPol}_{2} \rightarrow \operatorname{OPol}_{2} $.
\end{defn}

\subsection{Degenerate spin affine Hecke superalgebras}

\begin{defn} \label{BlackBear}
We define the \emph{degenerate spin affine Hecke supercategory} $ \mathcal{AS} $ to be the strict $ \mathbb{C} $-linear monoidal supercategory generated by one object $ \go $, and two odd morphisms
\begin{equation*}
\begin{tikzpicture}[centerzero, thick]
        \draw[-] (0,-0.2) -- (0,0.2);
        \singdot{0,0};
\end{tikzpicture} 
\ \colon \go \to \go,
\qquad
    \begin{tikzpicture}[centerzero, thick]
        \draw[-] (-0.2,-0.2) -- (0.2,0.2);
        \draw[-] (0.2,-0.2) -- (-0.2,0.2);
    \end{tikzpicture}
      \ \colon
    \go \otimes \go \to \go \otimes \go,
\end{equation*}
subject to the relations 
\begin{gather} 
\label{spin1}
\begin{tikzpicture}[anchorbase, thick]
        \draw[-] (0.2,-0.5) to[out=up,in=down] (-0.2,0) to[out=up,in=down] (0.2,0.5);
        \draw[-] (-0.2,-0.5) to[out=up,in=down] (0.2,0) to[out=up,in=down] (-0.2,0.5);
\end{tikzpicture}
\ =\
\begin{tikzpicture}[anchorbase, thick]
        \draw[-] (-0.2,-0.5) -- (-0.2,0.5);
        \draw[-] (0.2,-0.5) -- (0.2,0.5);
\end{tikzpicture}
\ ,\quad
\begin{tikzpicture}[anchorbase, thick]
        \draw[-] (0.4,-0.5) -- (-0.4,0.5);
        \draw[-] (0,-0.5) to[out=up, in=down] (-0.4,0) to[out=up,in=down] (0,0.5);
        \draw[-] (-0.4,-0.5) -- (0.4,0.5);
\end{tikzpicture}
\ =\
\begin{tikzpicture}[anchorbase, thick]
        \draw[-] (0.4,-0.5) -- (-0.4,0.5);
        \draw[-] (0,-0.5) to[out=up, in=down] (0.4,0) to[out=up,in=down] (0,0.5);
        \draw[-] (-0.4,-0.5) -- (0.4,0.5);
\end{tikzpicture} \ , \\
\label{spin2}
\begin{tikzpicture}[centerzero, thick]
        \draw[-] (0.3,-0.4) -- (-0.3,0.4);
        \draw[-] (-0.3,-0.4) -- (0.3,0.4);
        \singdot{-0.15,-0.2};
\end{tikzpicture}
\ + \
\begin{tikzpicture}[centerzero, thick]
        \draw[-] (-0.3,-0.4) -- (0.3,0.4);
        \draw[-] (0.3,-0.4) -- (-0.3,0.4);
        \singdot{0.171,0.228};
\end{tikzpicture}
\ = \ 
\begin{tikzpicture}[centerzero, thick]
        \draw (-0.2,-0.4) -- (-0.2,0.4);
        \draw (0.2,-0.4) -- (0.2,0.4);
\end{tikzpicture} \ .
\end{gather}
For $ n \in \mathbb{N} $, we define the \emph{degenerate spin affine Hecke superalgebra} to be
\begin{equation*}
\operatorname{SH}^{\operatorname{aff}}_{n} := \operatorname{End}_{\mathcal{AS}}(\go^{\otimes n}).
\end{equation*}
\end{defn}

By \cref{spin1} and \cref{spin2}, we have 
\begin{equation} \label{spin2.5}
\begin{tikzpicture}[centerzero, thick]
        \draw[-] (0.3,-0.4) -- (-0.3,0.4);
        \draw[-] (-0.3,-0.4) -- (0.3,0.4);
        \singdot{-0.15,0.2};
\end{tikzpicture}
\ + \
\begin{tikzpicture}[centerzero, thick]
        \draw[-] (-0.3,-0.4) -- (0.3,0.4);
        \draw[-] (0.3,-0.4) -- (-0.3,0.4);
        \singdot{0.171,-0.228};
\end{tikzpicture}
\ = \ 
\begin{tikzpicture}[centerzero]
        \draw (-0.2,-0.4) -- (-0.2,0.4);
        \draw (0.2,-0.4) -- (0.2,0.4);
\end{tikzpicture} \ .
\end{equation}

\begin{prop} \label{down}
The degenerate spin affine Hecke superalgebra $ \operatorname{SH}^{\operatorname{aff}}_{n} $ is isomorphic to the free associative superalgebra on the odd generators $ x_{1},\ldots, x_{n}, T_{1},\ldots, T_{n-1} $, subject to the relations \cref{oddrel} and
\begin{align}
T_{i}^{2} &= 1, & 1 \leq i \leq n-1, \label{upper1}
\\ T_{i}T_{i+1}T_{i} &= T_{i+1}T_{i}T_{i+1}, & 1 \leq i \leq n-2, \label{upper2}
\\ T_{i}T_{j} &= -T_{j}T_{i}, & 1 \leq i,j \leq n-1, \ |i-j| > 1, \label{upper3}
\\ T_{i}x_{i} &= -x_{i+1}T_{i} + 1, & 1 \leq i \leq n-1, \label{upper4}
\\ T_{i}x_{j} &= -x_{j}T_{i}, & 1 \leq i \leq n-1, \ 1 \leq j \leq n, \ j \neq i,i+1. \label{upper5}
\end{align}
Under this isomorphism, $ T_{i} $ corresponds to the crossing of the \(i\)-th and $ (i+1) $-th strands, and $ x_{i} $ corresponds to the dot on the \(i\)-th strand.
\end{prop}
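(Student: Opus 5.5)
The plan is to apply Proposition~\ref{zebra} to the presentation of $\mathcal{AS}$ in Definition~\ref{BlackBear}. The category $\mathcal{AS}$ has one generating object $\go$ and two odd generating morphisms: the dot $f_1 \in \operatorname{End}(\go)$ with $n_1 = 1$, and the crossing $f_2 \in \operatorname{End}(\go^{\otimes 2})$ with $n_2 = 2$. The relations are the two relations in \cref{spin1} (in $\operatorname{End}(\go^{\otimes 2})$ and $\operatorname{End}(\go^{\otimes 3})$) and the relation \cref{spin2} (in $\operatorname{End}(\go^{\otimes 2})$). By Proposition~\ref{zebra}, $\operatorname{SH}^{\operatorname{aff}}_n$ is generated by the elements $x_i := 1_{\go}^{\otimes(i-1)} \otimes f_1 \otimes 1_{\go}^{\otimes(n-i)}$ for $1 \le i \le n$ and $T_i := 1_{\go}^{\otimes(i-1)} \otimes f_2 \otimes 1_{\go}^{\otimes(n-i-1)}$ for $1 \le i \le n-1$. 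These are odd, since the generating morphisms are odd. It then remains to check that the relations \cref{zebra1} and \cref{zebra2} produce exactly \cref{oddrel} and \cref{upper1}--\cref{upper5}.

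For \cref{zebra1}, the first relation of \cref{spin1}, placed at position $i$, gives $T_i^2 = 1$, which is \cref{upper1}. The braid relation, placed at position $i$, gives \cref{upper2}. For \cref{spin2}, I read the first diagram from bottom to top: the dot on the left strand comes first, then the crossing, so it is $T_i x_i$. In the second diagram the dot sits at the top, on the strand ending at the right, so it is $x_{i+1} T_i$. The relation therefore reads $T_i x_i + x_{i+1} T_i = 1$, which is \cref{upper4}.

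For \cref{zebra2}, I treat the pairs of generators placed on disjoint strands one case at a time. Since all generators are odd, the sign $(-1)^{\overline{f_i}\,\overline{f_j}}$ equals $-1$ in every case, so each interchange relation is an anticommutation relation:
\begin{enumerate}[label=\roman*)]
\item two dots on different strands give \cref{oddrel};
\item two crossings on disjoint pairs of strands give \cref{upper3} for $|i-j|>1$;
\item a dot and a crossing on disjoint strands give \cref{upper5} for $j \neq i, i+1$.
\end{enumerate}
Proposition~\ref{zebra} only lists these relations with the first factor to the left of the second. Each relation has the form $ab = -ba$, though, so it is symmetric in the two factors, and the list is complete as stated.

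The only real obstacle is getting the conventions right: the ordering of the factors and the position of the dot when translating \cref{spin2}. Strands are numbered from left to right, vertical composition $f \circ g$ is drawn with $f$ on top, and the dot lies on the strand on which it is drawn at its height. This is what makes the second term $x_{i+1}T_i$ rather than, say, $T_i x_{i+1}$. With these conventions fixed, the correspondence between the generators and the diagrams stated in the proposition follows directly from the definition of $x_i$ and $T_i$.
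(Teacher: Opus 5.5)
Your proposal is correct and takes the same route as the paper, which simply cites Proposition~\ref{zebra}. Your case-by-case matching of \cref{zebra1} and \cref{zebra2} with \cref{oddrel} and \cref{upper1}--\cref{upper5} is exactly the verification that citation leaves implicit. This includes reading \cref{spin2} as $T_i x_i + x_{i+1} T_i = 1$, and noting that every interchange relation becomes an anticommutation because both generators are odd.
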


\begin{proof}
This follows from Proposition \ref{zebra}.
\end{proof}

In what follows, we identify $ \operatorname{SH}^{\operatorname{aff}}_{n} $ with the superalgebra presented in Proposition \ref{down}. For each $ w \in S_{n} $, choose a reduced expression $ w = s_{i_{1}}\cdots s_{i_{k}} $. Then we define
\begin{equation*}
T_{w} = T_{i_{1}}\cdots T_{i_{k}} \in \operatorname{SH}^{\operatorname{aff}}_{n}.
\end{equation*}
Up to a sign, $ T_{w} $ is independent of the choice of reduced expression.

\begin{prop}[{\cite[Prop.~3.4(1)]{Wang}}] \label{cell}
The superalgebra $ \operatorname{SH}^{\operatorname{aff}}_{n} $ has basis
\begin{equation*}
\{x_{1}^{k_{1}} \cdots x_{n}^{k_{n}} T_{w} : k_{1},\ldots,k_{n} \in \mathbb{N}, \ w \in S_{n}\}.
\end{equation*}
\end{prop}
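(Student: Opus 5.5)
The statement has two halves: the monomials $x_{1}^{k_{1}}\cdots x_{n}^{k_{n}}T_{w}$ span $\operatorname{SH}^{\operatorname{aff}}_{n}$, and they are linearly independent. I will prove spanning by a straightening argument inside the presentation of Proposition~\ref{down}, and independence by building a faithful-enough representation on $\operatorname{OPol}_{n}$.

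\textbf{Spanning.} Let $S$ be the span of the proposed basis. It suffices to show that $S$ contains $1$ and is stable under left multiplication by each generator $x_{j}$ and $T_{i}$.

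For $x_{j}$, stability is immediate from \cref{oddrel}: commuting $x_{j}$ past the other $x$'s only introduces signs.

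For $T_{i}$, I will first show that $T_{i}x_{1}^{k_{1}}\cdots x_{n}^{k_{n}}$ equals
\[
\pm\, s_{i}(x_{1}^{k_{1}}\cdots x_{n}^{k_{n}})\,T_{i}
\]
plus a combination of monomials of strictly lower total degree in the $x$'s. This follows from \cref{upper4}, \cref{upper5}, and the relation $T_{i}x_{i+1} = -x_{i}T_{i}+1$. That relation in turn comes from multiplying \cref{upper4} by $T_{i}$ on both sides and using \cref{upper1}.

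By induction on degree, $T_{i}S \subseteq \operatorname{span}\{x^{\mathbf{k}}T_{i}T_{w}\}$. It remains to see that $T_{i}T_{w} = \pm T_{s_{i}w}$ or $\pm T_{s_{i}w}$ after using $T_{i}^{2}=1$. This holds because the $T_{i}$ satisfy the relations of $\mathbb{C}S_{n}^{-}$, where $\{T_{w}\}$ is the standard spanning set: Matsumoto's theorem together with \cref{upper1}--\cref{upper3} shows $T_{w}$ is well defined up to sign.

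\textbf{Linear independence.} I will construct a representation of $\operatorname{SH}^{\operatorname{aff}}_{n}$ on $\operatorname{OPol}_{n}$ (or on an induced module) in which:
\begin{itemize}
\item $x_{j}$ acts by left multiplication;
\item $T_{i}$ acts by $f \mapsto (-1)^{\bar f} s_{i}(f) + D_{i}(f)$, up to a suitable sign convention, using the odd Demazure operators of Definition~\ref{oddDemazure}.
\end{itemize}
One checks \cref{upper4} from the twisted Leibniz rule \cref{oddLeibnizrule}. Checking \cref{upper1}--\cref{upper3} amounts to verifying that $D_{i}^{2}=0$, that $D_{i}s_{i}$ relates correctly to $s_{i}D_{i}$, and that the odd braid relations hold. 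These are known from \cite{Ellis-Khovanov-Lauda}, but the signs need care.

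Faithfulness alone does not separate the $T_{w}$ in $\operatorname{OPol}_{n}$. So I will instead use the cleaner route: induce from the subalgebra $\mathbb{C}S_{n}^{-}$. Concretely, take the space $\operatorname{OPol}_{n}\otimes \mathbb{C}S_{n}^{-}$, which has basis $x^{\mathbf{k}}\otimes T_{w}$. Define the $x_{j}$-action as left multiplication, and define the $T_{i}$-action by the straightening rule above, namely commuting $T_{i}$ past $x^{\mathbf{k}}$ using $s_{i}$ and $D_{i}$-correction terms.

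Once the relations are verified, acting on $1\otimes 1$ sends $x^{\mathbf{k}}T_{w}$ to $\pm x^{\mathbf{k}}\otimes T_{w}$. These images are independent, which gives independence of the monomials.

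The main obstacle is verifying the braid relation \cref{upper2} for the $T_{i}$-action. Equivalently, it amounts to the identity $T_{i}T_{i+1}T_{i}=T_{i+1}T_{i}T_{i+1}$ for the Demazure-twisted operators. I would first try to reduce this, via the Leibniz rule, to checking it on the generators $x_{j}$ and on $1$. Then I would compare the correction terms by degree, since both sides are determined by their behaviour under multiplication by $x_{j}$ together with their value on the unit.
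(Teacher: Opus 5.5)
Your argument is correct in outline and gives a self-contained proof. The paper itself gives no argument here; it simply imports the statement from Wang.

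\textbf{Your route versus the alternative.} You prove spanning by straightening via \cref{groot}. You prove independence with the explicit module $V=\operatorname{OPol}_{n}\otimes\mathbb{C}S_{n}^{-}$, where $x_{j}$ acts by left multiplication and $T_{i}(f\otimes u)=(-1)^{\bar f}s_{i}(f)\otimes T_{i}u+D_{i}(f)\otimes u$. Then $x^{\mathbf{k}}T_{w}\cdot(1\otimes 1)=x^{\mathbf{k}}\otimes T_{w}$. The only external input is Schur's fact that $\dim\mathbb{C}S_{n}^{-}=n!$, so that $\{T_{w}\}$ is a basis there. Cite it explicitly, since independence in $V$ rests on it.

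A different route uses only results the paper proves from presentations alone. Combine $\operatorname{SH}^{\operatorname{aff}}_{n}(\operatorname{Cl})\cong\operatorname{Cl}_{n}\otimes\operatorname{SH}^{\operatorname{aff}}_{n}$ with Proposition~\ref{AS-AH-iso} and Nazarov's PBW basis of $H^{\operatorname{aff}}_{n}(\operatorname{Cl})$. The isomorphism and its inverse both preserve the filtration by number of dots. Comparing dimensions of the filtered pieces, each $2^{n}n!\binom{n+m}{n}$, then forces the spanning set $\{c^{\epsilon}x^{\alpha}T_{w}\}$ to be independent. That route outsources the work to a known PBW theorem. Yours avoids it, at the cost of checking the defining relations on $V$ by hand.

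\textbf{Points to tighten.}

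First, the initial representation on $\operatorname{OPol}_{n}$ alone, with $T_{i}\colon f\mapsto(-1)^{\bar f}s_{i}(f)+D_{i}(f)$, does not exist for $n\geq 4$. It forces $T_{i}\cdot 1=1$ for all $i$, which contradicts \cref{upper3}. Also, an odd $T_{i}$ cannot fix an even vector in a supermodule. No sign convention repairs this, because $\mathbb{C}S_{n}^{-}$ has no one-dimensional module for $n\geq 4$. That, not a failure of faithfulness, is why you must use the regular module.

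Second, the spanning step needs no induction on degree, because \cref{groot} is exact. It gives $T_{i}x^{\mathbf{k}}T_{w}=\pm s_{i}(x^{\mathbf{k}})T_{i}T_{w}+D_{i}(x^{\mathbf{k}})T_{w}$, and the last term is already in $S$. Your displayed containment should therefore include $+\,S$.

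Third, on $V$ the relations \cref{upper1} and \cref{upper3} reduce to the following identities, each a short induction from \cref{oddLeibnizrule}:
\begin{itemize}
\item $D_{i}^{2}=0$ and $D_{i}s_{i}=s_{i}D_{i}$;
\item $D_{i}s_{j}=s_{j}D_{i}$ and $D_{i}D_{j}=-D_{j}D_{i}$ for $|i-j|>1$.
\end{itemize}

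Finally, your plan for \cref{upper2} works as stated. Put $L=T_{i}T_{i+1}T_{i}$, $R=T_{i+1}T_{i}T_{i+1}$ and $\sigma=s_{i}s_{i+1}s_{i}$. Use \cref{upper4}, \cref{upper5} and $T_{k}x_{k+1}=-x_{k}T_{k}+1$ on $V$. One finds $Lx_{j}+x_{\sigma(j)}L=Rx_{j}+x_{\sigma(j)}R$ for every $j$. Both sides equal $T_{i}T_{i+1}-1$ when $j=i$, $T_{i}T_{i+1}+T_{i+1}T_{i}$ when $j=i+1$, $T_{i+1}T_{i}-1$ when $j=i+2$, and $0$ otherwise. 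Hence $(L-R)(x_{j}v)=-x_{\sigma(j)}(L-R)(v)$, so $L-R$ is determined by its values on the vectors $1\otimes T_{w}$. There it gives $1\otimes(T_{i}T_{i+1}T_{i}-T_{i+1}T_{i}T_{i+1})T_{w}=0$.
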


\begin{prop}[{\cite[Prop.~3.4(3)]{Wang}}] \label{middle}
The even center of $ \operatorname{SH}^{\operatorname{aff}}_{n} $ is equal to $ \mathbb{C}[x_{1}^{2},\ldots,x_{n}^{2}]^{S_{n}} $.
\end{prop}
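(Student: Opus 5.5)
The plan is to prove the two inclusions separately, working throughout in the presentation of Proposition~\ref{down} and with the basis of Proposition~\ref{cell}.

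\emph{Step 1: symmetric polynomials in the $x_i^2$ are central.} Each $x_i^2$ is even and commutes with every $x_j$, since two sign changes cancel. By \cref{upper5}, $x_j^2$ commutes with $T_i$ for $j\neq i,i+1$. From \cref{upper4} and its mirror $T_ix_{i+1}=-x_iT_i+1$ (the algebraic form of \cref{spin2.5}), I will compute
\begin{equation*}
T_ix_i^2=x_{i+1}^2T_i-x_{i+1}+x_i,\qquad T_ix_{i+1}^2=x_i^2T_i-x_i+x_{i+1}.
\end{equation*}
Adding these shows that $T_i$ commutes with $x_i^2+x_{i+1}^2$. A similar short computation shows that $T_i$ commutes with $x_i^2x_{i+1}^2$: I will expand $T_i x_i^2 x_{i+1}^2$ using the two formulas above, and the correction terms should cancel. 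Since $\mathbb{C}[x_i^2,x_{i+1}^2]^{s_i}$ is generated by these two elements, and $\mathbb{C}[x_1^2,\ldots,x_n^2]^{S_n}$ is generated over $\mathbb{C}[x_j^2 : j\neq i,i+1]$ by elements of $\mathbb{C}[x_i^2,x_{i+1}^2]^{s_i}$, it follows that every $S_n$-symmetric polynomial in the $x_j^2$ commutes with all generators.

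\emph{Step 2: a triangularity formula.} Filter $\operatorname{SH}^{\operatorname{aff}}_n$ by the length of $w$ in the basis $\{x^{k}T_w\}$. Inducting on the number of dots, using \cref{upper4}, \cref{upper5} and \cref{oddrel}, I will show that for $p\in\operatorname{OPol}_n$ one has
\begin{equation*}
T_ip=\pm s_i(p)T_i+(\text{element of }\operatorname{OPol}_n).
\end{equation*}
For $p$ even in $\mathbb{C}[x_1^2,\ldots,x_n^2]$ the sign is $+$. Iterating this gives
\begin{equation*}
T_wp=w(p)T_w+(\text{terms }qT_v\text{ with }\ell(v)<\ell(w)).
\end{equation*}
Keeping track of the signs here is the main technical obstacle. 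I will sidestep most of it by only applying the formula to even $p\in\mathbb{C}[x_1^2,\ldots,x_n^2]$, where no signs arise.

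\emph{Step 3: the converse inclusion.} Let $z$ be even and central, and write $z=\sum_w f_wT_w$ with $f_w\in\operatorname{OPol}_n$. Suppose some $f_w\neq0$ with $w\neq1$, and choose such a $w$ of maximal length. Pick $j$ with $w^{-1}(j)\neq j$. By Step 2,
\begin{equation*}
0=x_j^2z-zx_j^2=f_w\bigl(x_j^2-x_{w^{-1}(j)}^2\bigr)T_w+(\text{terms of lower length or in other }T_{w'}\text{ with }\ell(w')\le\ell(w)).
\end{equation*}
The coefficient of $T_w$ is nonzero because $x_j^2-x_{w^{-1}(j)}^2$ is a nonzero even central element of the domain $\operatorname{OPol}_n$, hence regular. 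This contradicts Proposition~\ref{cell}, so $z=f\in\operatorname{OPol}_n$.

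Next, supercommutation of the even element $z$ with every odd $x_j$ forces each monomial in $z$ to have even exponent in every $x_j$. Indeed, $x_j x^k=(-1)^{\sum_{i\neq j}k_i}x^kx_j$, and since the total degree is even, this sign is $+1$ exactly when $k_j$ is even. Hence $z\in\mathbb{C}[x_1^2,\ldots,x_n^2]$. Finally, Step 2 gives $T_iz=s_i(z)T_i+g$ with $g\in\operatorname{OPol}_n$, while centrality gives $T_iz=zT_i$. Comparing the $T_i$-components in the basis of Proposition~\ref{cell} yields $s_i(z)=z$ for all $i$, so $z\in\mathbb{C}[x_1^2,\ldots,x_n^2]^{S_n}$.
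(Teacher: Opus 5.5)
Your proof is correct. The paper does not prove this statement; it imports it from Wang's Proposition~3.4(3), so there is no internal argument to compare against.

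What you give is the standard self-contained leading-term argument, the same one used for the center of the degenerate affine Hecke algebra. It fits the paper's toolkit closely:
\begin{itemize}
\item Your Step~2 is exactly \cref{groot}. The paper proves that formula directly from the relations, so you can simply quote it.
\item Your Step~3 uses the same ``triangularity plus regularity in $\operatorname{OPol}_n$'' mechanism that the paper uses for linear independence in Theorem~\ref{HL-basis_Spin}.
\item Its only external input is the PBW basis of Proposition~\ref{cell}, which is also taken from Wang. So your argument effectively reduces part~(3) of Wang's proposition to part~(1).
\item The fact that $\operatorname{OPol}_n$ is a domain, which you assert, follows because leading monomials in a monomial order multiply to $\pm$ a monomial.
\end{itemize}

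There is one small slip. With the paper's convention $w(x_i)=x_{w(i)}$, iterating \cref{groot} gives $T_w x_j^2 = x_{w(j)}^2 T_w + (\text{terms } qT_v \text{ with } \ell(v)<\ell(w))$. For example, $T_1T_2x_1^2 = x_2^2T_1T_2 - x_2T_2 + x_1T_2$, while $(s_1s_2)^{-1}(1)=3$. So the $T_w$-coefficient of $x_j^2z-zx_j^2$ is $f_w(x_j^2-x_{w(j)}^2)$, not $f_w(x_j^2-x_{w^{-1}(j)}^2)$. This is harmless, since $w(j)\neq j$ if and only if $w^{-1}(j)\neq j$, so your choice of $j$ still gives a nonzero coefficient.

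In Step~1, the precise fact you need is
\[
\mathbb{C}[x_1^2,\ldots,x_n^2]^{S_n}\subseteq \mathbb{C}[x_k^2 : k\neq i,i+1]\,[x_i^2+x_{i+1}^2,\ x_i^2x_{i+1}^2],
\]
which is what your wording intends. It suffices because the even elements commuting with $T_i$ form a subalgebra.
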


\begin{lem}
For all $ f \in \operatorname{OPol}_{n} $ and $ 1 \leq i \leq n-1 $, we have in $ \operatorname{SH}^{\operatorname{aff}}_{n} $ that
\begin{equation} \label{groot}
T_{i}f = (-1)^{\bar{f}}s_{i}(f)T_{i} + D_{i}(f).
\end{equation}
\end{lem}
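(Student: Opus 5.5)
We argue by induction on the length of $ f $ as a monomial, using linearity to reduce to monomials $ f = x_{j_{1}} x_{j_{2}} \cdots x_{j_{m}} $ in $ \operatorname{OPol}_{n} $. It is convenient to name the map
\begin{equation*}
\Phi(f) := T_{i}f - (-1)^{\bar{f}}s_{i}(f)T_{i}, \qquad f \in \operatorname{OPol}_{n} \text{ homogeneous},
\end{equation*}
and to show that $ \Phi $ satisfies the same initial values \cref{oddDemazure_6757} and the same twisted Leibniz rule \cref{oddLeibnizrule} as $ D_{i} $. Since both $ \Phi $ and $ D_{i} $ are then determined by these data on the monomial spanning set, they agree, which is exactly \cref{groot}.

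First we check the initial values. For $ f = 1 $ we get $ \Phi(1) = T_{i} - T_{i} = 0 $. For $ f = x_{j} $ with $ j \neq i,i+1 $, \cref{upper5} gives $ T_{i}x_{j} = -x_{j}T_{i} = -s_{i}(x_{j})T_{i} $, so $ \Phi(x_{j}) = 0 $. For $ f = x_{i} $, \cref{upper4} gives $ T_{i}x_{i} = -x_{i+1}T_{i} + 1 $, so $ \Phi(x_{i}) = 1 $. For $ f = x_{i+1} $ we need the companion relation $ T_{i}x_{i+1} = -x_{i}T_{i} + 1 $. To get it, multiply \cref{upper4} on the left and on the right by $ T_{i} $ and use \cref{upper1}: this gives $ x_{i}T_{i} = -T_{i}x_{i+1} + 1 $, which rearranges to the required identity (this is \cref{spin2.5}). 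Hence $ \Phi(x_{i+1}) = 1 $.

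Next we check the Leibniz rule. For homogeneous $ f,g $ we expand
\begin{equation*}
T_{i}fg = \bigl(\Phi(f) + (-1)^{\bar f}s_{i}(f)T_{i}\bigr)g = \Phi(f)g + (-1)^{\bar f}s_{i}(f)\bigl(\Phi(g) + (-1)^{\bar g}s_{i}(g)T_{i}\bigr).
\end{equation*}
Because $ s_{i} $ is an algebra automorphism of $ \operatorname{OPol}_{n} $ (it preserves \cref{oddrel}), we have $ s_{i}(f)s_{i}(g) = s_{i}(fg) $ and $ \overline{fg} = \bar f + \bar g $. The last term is therefore $ (-1)^{\overline{fg}}s_{i}(fg)T_{i} $, and we obtain $ \Phi(fg) = \Phi(f)g + (-1)^{\bar f}s_{i}(f)\Phi(g) $.

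Induction on the degree of a monomial now gives $ \Phi = D_{i} $ on all monomials, and by linearity on all of $ \operatorname{OPol}_{n} $. The only point needing care is that $ D_{i} $ is well defined, i.e. compatible with \cref{oddrel}. We take this as given from Definition~\ref{oddDemazure}; in any case it is not needed here, since we only compare the two maps along a fixed factorization of each monomial. The main potential obstacle is the sign bookkeeping in the Leibniz step together with deriving the $ x_{i+1} $ relation. Both are short once one uses that $ T_{i} $ is odd.
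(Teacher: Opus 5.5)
Your proposal is correct and follows essentially the same route as the paper: check \cref{groot} directly for polynomials of degree $0$ and $1$, then show that the identity is preserved under products by means of the twisted Leibniz rule \cref{oddLeibnizrule}, and conclude by induction. The only difference is that you explicitly derive the $x_{i+1}$ case $T_{i}x_{i+1} = -x_{i}T_{i} + 1$ from \cref{upper1} and \cref{upper4}, a case the paper simply calls clear.
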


\begin{proof}
The equation \cref{groot} clearly holds for $ f $ of polynomial degree equal to \(0\) or \(1\). Now suppose that \cref{groot} holds for $ f,g \in \operatorname{OPol}_{n} $. Then
\begin{multline*}
T_{i}(fg) = (-1)^{\bar{f}}s_{i}(f)T_{i}g + D_{i}(f)g \\ = (-1)^{\bar{f}+\bar{g}}s_{i}(fg)T_{i} + (-1)^{\bar{f}}s_{i}(f)D_{i}(g) + D_{i}(f)g \ \stackrel{\mathclap{\cref{oddLeibnizrule}}}{=} \ (-1)^{\overline{fg}}s_{i}(fg)T_{i} + D_{i}(fg),
\end{multline*}
and so the result follows by induction.
\end{proof}

\subsection{Tensor product degenerate spin affine Hecke superalgebras}

In this section, we define the tensor product degenerate spin affine Hecke superalgebras. Recall from Definition \ref{opportunity} that, for a superalgebra \(A\), the set $ \widehat{\mathcal{E}}_{A} $ is equal to $ \mathcal{E}_{A} \cup \{\go\} $.

\begin{defn} \label{HLDSAHC}
We define the \emph{tensor product degenerate spin affine Hecke supercategory} $ \mathcal{T\!AS} $ to be the strict $ \mathbb{C} $-linear monoidal supercategory defined as follows. The objects are generated by the elements in the set $ \widehat{\mathcal{E}}_{\operatorname{OPol}_{1}} $. The morphisms are generated by
\begin{gather}
    \begin{tikzpicture}[centerzero, thick]
        \draw[-] (-0.2,-0.2) -- (0.2,0.2);
        \draw[-] (0.2,-0.2) -- (-0.2,0.2);
    \end{tikzpicture}
      \ \colon
    \go \otimes \go \to \go \otimes \go, \qquad
 \begin{tikzpicture}[anchorbase, thick]
        \draw[-] (0,0) -- (0,0.6);
        \singdot{0,0.3};
\end{tikzpicture}
\ \colon
\go \to \go, \label{velocity1} \\
\begin{tikzpicture}[centerzero, thick]
        \draw[-] (0.2,-0.2) -- (-0.2,0.2);
        \draw[-, wei] (-0.2,-0.2) -- (0.2,0.2);
        \node at (-0.2,-0.4) {\tiny $ Q $};
\end{tikzpicture}
\ \colon Q \otimes \go \to \go \otimes Q, \qquad
\begin{tikzpicture}[centerzero, thick]
        \draw[-] (-0.2,-0.2) -- (0.2,0.2);
        \draw[-,wei] (0.2,-0.2) -- (-0.2,0.2);
        \node at (0.2,-0.4) {\tiny $ Q $};
\end{tikzpicture}
\colon \go \otimes Q \to Q \otimes \go, \label{velocity2}
\end{gather}
for all $ Q \in \mathcal{E}_{\operatorname{OPol}_{1}} $. The generating morphisms in \cref{velocity1} are odd, and the generating morphisms in \cref{velocity2} are even. Before stating the relations, we say a bit more about our diagrammatic conventions. When a dot is labelled by a multiplicity, we mean that we are taking its power under vertical composition. If $ f = \sum_{r\geq 0} \lambda_{r} x^{r} \in \operatorname{OPol}_{1} $, $ \lambda_{r} \in \mathbb{C} $, then we pin \(f\) to a string by defining
\begin{equation*}
    \begin{tikzpicture}[centerzero, thick]
        \draw[-] (0,-0.5) -- (0,0.5);
             \pin{0,0}{-.7,0}{f};
    \end{tikzpicture}
    = \begin{tikzpicture}[centerzero, thick]
        \draw[-] (0,-0.5) -- (0,0.5);
             \pin{0,0}{.7,0}{f};
    \end{tikzpicture}\
    :=  
\sum_{r\geq 0} \lambda_{r}
\begin{tikzpicture}[centerzero, thick]
        \draw[-] (-0.3,-0.5) -- (-0.3,0.5);
        \multdot{-0.3,0}{east}{r};
\end{tikzpicture} \ .
\end{equation*}
We define $ x := x_{1} \in \operatorname{OPol}_{2} $ and $ y := x_{2} \in \operatorname{OPol}_{2} $. If $ f = \sum_{r,s \geq 0} \lambda_{r,s} x^{r}y^{s} \in \operatorname{OPol}_{2} $, $\lambda_{r,s} \in \mathbb{C} $, then we define
\begin{equation*} 
\begin{tikzpicture}[centerzero, thick]
        \draw[-] (-0.3,-0.5) -- (-0.3,0.5);
        \draw[-] (0.3,-0.5) -- (0.3,0.5);
        \pinpin{-0.3,0}{0.3,0}{-1,0}{f};
\end{tikzpicture} 
=
\begin{tikzpicture}[centerzero, thick]
        \draw[-] (-0.3,-0.5) -- (-0.3,0.5);
        \draw[-] (0.3,-0.5) -- (0.3,0.5);
        \pinpin{0.3,0}{-0.3,0}{1,0}{f};
\end{tikzpicture} 
:=
\sum_{r,s \geq 0} \lambda_{r,s}
\begin{tikzpicture}[centerzero, thick]
        \draw[-] (-0.3,-0.5) -- (-0.3,0.5);
        \draw[-] (0.3,-0.5) -- (0.3,0.5);
        \multdot{-0.3,0}{east}{r};
        \multdot{0.3,0}{west}{s};
\end{tikzpicture} \ .
\end{equation*}
The relations on the morphisms are given by \cref{spin1}, \cref{spin2}, and 
\begin{gather}
\begin{tikzpicture}[centerzero, thick]
        \draw[-] (-0.3,-0.4) -- (0.3,0.4);
        \draw[-, wei] (0.3,-0.4) -- (-0.3,0.4);
        \singdot{-0.15,-0.2};
        \node at (0.3,-0.6) {\tiny $ Q $};
\end{tikzpicture}
= 
\begin{tikzpicture}[centerzero, thick]
        \draw[-] (-0.3,-0.4) -- (0.3,0.4);
        \draw[-, wei] (0.3,-0.4) -- (-0.3,0.4);
        \singdot{0.15,0.2};
        \node at (0.3,-0.6) {\tiny $ Q $};
\end{tikzpicture}
\ , \qquad
\begin{tikzpicture}[centerzero, thick]
        \draw[-] (0.3,-0.4) -- (-0.3,0.4);
        \draw[-, wei] (-0.3,-0.4) -- (0.3,0.4);
        \singdot{0.15,-0.2};
        \node at (-0.3,-0.6) {\tiny $ Q $};
\end{tikzpicture}
= 
\begin{tikzpicture}[centerzero, thick]
        \draw[-] (0.3,-0.4) -- (-0.3,0.4);
        \draw[-, wei] (-0.3,-0.4) -- (0.3,0.4);
        \singdot{-0.15,0.2};
        \node at (-0.3,-0.6) {\tiny $ Q $};
\end{tikzpicture} \ , \label{spin3}
\\ 
\begin{tikzpicture}[centerzero, thick]
        \draw[-] (-0.2,-0.5) to[out=up,in=down] (0.2,0) to[out=up,in=down] (-0.2,0.5);
        \draw[-, wei] (0.2,-0.5) to[out=up,in=down] (-0.2,0) to[out=up,in=down] (0.2,0.5);
        \node at (0.2,-0.7) {\tiny $ Q $};
\end{tikzpicture}
=
\begin{tikzpicture}[centerzero, thick]
        \pin{-0.2,0}{-1,0}{Q};
        \draw[-] (-0.2,-0.5) -- (-0.2,0.5);
        \draw[-, wei] (0.2,-0.5) -- (0.2,0.5);
        \node at (0.2,-0.7) {\tiny $ Q $};
\end{tikzpicture} \ , \label{spin4}
\qquad 
\begin{tikzpicture}[centerzero, thick]
        \draw[-] (0.2,-0.5) to[out=up,in=down] (-0.2,0) to[out=up,in=down] (0.2,0.5);
        \draw[-, wei] (-0.2,-0.5) to[out=up,in=down] (0.2,0) to[out=up,in=down] (-0.2,0.5);
        \node at (-0.2,-0.7) {\tiny $ Q $};
\end{tikzpicture}
\ = \
\begin{tikzpicture}[centerzero, thick]
        \draw[-, wei] (-0.2,-0.5) -- (-0.2,0.5);
        \draw[-] (0.2,-0.5) -- (0.2,0.5);
        \node at (-0.2,-0.7) {\tiny $ Q $};
        \pin{0.2,0}{1,0}{Q};
\end{tikzpicture} \ , 
\\ 
\begin{tikzpicture}[centerzero, thick]
        \draw[-] (0.4,-0.5) -- (-0.4,0.5);
        \draw[-] (0,-0.5) to[out=up, in=down] (-0.4,0) to[out=up,in=down] (0,0.5);
        \draw[-, wei] (-0.4,-0.5) -- (0.4,0.5);
        \node at (-0.4,-0.7) {\tiny $ Q $};
\end{tikzpicture}
\ =\
\begin{tikzpicture}[centerzero, thick]
        \draw[-] (0.4,-0.5) -- (-0.4,0.5);
        \draw[-] (0,-0.5) to[out=up, in=down] (0.4,0) to[out=up,in=down] (0,0.5);
        \draw[-, wei] (-0.4,-0.5) -- (0.4,0.5);
        \node at (-0.4,-0.7) {\tiny $ Q $};
\end{tikzpicture} \ , \label{spin6}
\qquad
\begin{tikzpicture}[centerzero, thick]
        \draw[-] (0,-0.5) to[out=up, in=down] (-0.4,0) to[out=up,in=down] (0,0.5);
        \draw[-] (-0.4,-0.5) -- (0.4,0.5);
        \draw[-, wei] (0.4,-0.5) -- (-0.4,0.5);
        \node at (0.4,-0.7) {\tiny $ Q $};
\end{tikzpicture}
\ =\
\begin{tikzpicture}[centerzero, thick]
        \draw[wipe] (0,-0.5) to[out=up, in=down] (0.4,0) to[out=up,in=down] (0,0.5);
        \draw[-] (0,-0.5) to[out=up, in=down] (0.4,0) to[out=up,in=down] (0,0.5);
        \draw[-] (-0.4,-0.5) -- (0.4,0.5);
        \draw[-, wei] (0.4,-0.5) -- (-0.4,0.5);
        \node at (0.4,-0.7) {\tiny $ Q $};
\end{tikzpicture}
\ , 
\\ \label{spin8}
\begin{tikzpicture}[centerzero, thick]
        \draw[-] (0.4,-0.5) -- (-0.4,0.5);
        \draw[-] (-0.4,-0.5) -- (0.4,0.5);
        \node at (0,-0.7) {\tiny $ Q $};
        \draw[-, wei] (0,-0.5) to[out=up, in=down] (-0.4,0) to[out=up,in=down] (0,0.5);
\end{tikzpicture}
\ =\
\begin{tikzpicture}[centerzero, thick]
        \draw[-] (2,-0.5) -- (1.2,0.5);
        \draw[-] (1.2,-0.5) -- (2,0.5);
        \node at (1.6,-0.7) {\tiny $ Q $};
        \draw[-, wei] (1.6,-0.5) to[out=up, in=down] (2,0) to[out=up,in=down] (1.6,0.5);
\end{tikzpicture}
\ + \
\begin{tikzpicture}[centerzero, thick]
        \node at (0,-0.7) {\tiny $ Q $};
        \draw[-] (-0.6,-0.5) -- (-0.6,0.5);
        \draw[-, wei] (0,-0.5) -- (0,0.5);
        \draw[-] (0.6,-0.5) -- (0.6,0.5);
        \pinpin{0.6,0}{-0.6,0}{1.8,0}{D(Q_{1})};
\end{tikzpicture} \ ,
\end{gather}
for all $ Q \in \mathcal{E}_{\operatorname{OPol}_{1}} $. This concludes the definition of $ \mathcal{T\!AS} $.
\end{defn}

Recall that, for a given set \(X\), $ F(X) $ is the free monoid on \(X\). Then the set of objects in $ \mathcal{T\!AS} $ is $ F(\widehat{\mathcal{E}}_{\operatorname{OPol}_{1}}) $. 

\begin{defn} \label{fluffy_spin}
Given an integer $ d \in \mathbb{N} $ and a word $ \mathbf{Q} \in F(\mathcal{E}_{\operatorname{OPol}_{1}}) $, we define $ \Gamma_{d,\mathbf{Q}} $ to be the set of $ (\go^{d},\mathbf{Q}) $-shuffles in $ F(\widehat{\mathcal{E}}_{\operatorname{OPol}_{1}}) $. That is, 
\begin{equation} \label{fluffy_5678}
\Gamma_{d,\mathbf{Q}} = F(\widehat{\mathcal{E}}_{\operatorname{OPol}_{1}})_{\go^{d},\mathbf{Q}},
\end{equation}
where $ F(\widehat{\mathcal{E}}_{\operatorname{OPol}_{1}})_{\go^{d},\mathbf{Q}} $ is given as in Definition~\ref{snowing}. 
\end{defn}

\begin{defn} \label{HLDSAHA}
Let $ d \in \mathbb{N}$ and $ \mathbf{Q} \in F(\mathcal{E}_{\operatorname{OPol}_{1}}) $. Then we define the $ (d,\mathbf{Q}) $-\emph{degenerate spin affine Hecke superalgebra} to be 
\begin{equation} \label{HLDSAHA22}
\operatorname{SH}^{\operatorname{aff}}_{d,\mathbf{Q}} := \operatorname{End}_{\operatorname{Add}(\mathcal{T\!AS})}\left(\bigoplus_{\mathbf{i} \in \Gamma_{d,\mathbf{Q}}}\mathbf{i}\right).
\end{equation}
Here, $ \operatorname{Add}(\mathcal{T\!AS}) $ is the additive envelope of $ \mathcal{T\!AS} $. 
\end{defn}

We will often refer to the superalgebra $ \operatorname{SH}^{\operatorname{aff}}_{d,\mathbf{Q}} $ as a \emph{tensor product degenerate spin affine Hecke superalgebra}. An arbitrary element in $ \operatorname{SH}^{\operatorname{aff}}_{d,\mathbf{Q}} $ is a $ \mathbb{C} $-linear combination of diagrams containing dots and crossings. The parameter \(d\) corresponds to the number of black strands in the diagrams of $ \operatorname{SH}^{\operatorname{aff}}_{d,\mathbf{Q}} $, and the length $ |\mathbf{Q}| $ of $ \mathbf{Q} $ corresponds to the number of red strands. The product of two diagrams in $ \operatorname{SH}^{\operatorname{aff}}_{d,\mathbf{Q}} $ is their vertical composition if this is defined, and if their vertical composition is not defined, then the product is zero.

\begin{egg}
Suppose $ d = 2 $ and $ \mathbf{Q} = QQ' $ for some $ Q, Q' \in \mathcal{E}_{\operatorname{OPol}_{1}} $. Then two such elements of $ \operatorname{SH}^{\operatorname{aff}}_{d,\mathbf{Q}} $ are given below:
\begin{equation*} 
E_{1} = 
\begin{tikzpicture}[centerzero, thick]
         \draw[-] (0.6,-0.7) to[out=up, in=down] (0.9,0) to[out=up, in=down] (0.3,0.7);
         \draw[-] (0.9,-0.7) to[out=up, in=down] (0,0.7);
         \draw[-,wei] (0,-0.7) to[out=up, in=down] (0.6,0.7);
         \draw[-,wei] (0.3,-0.7) to[out=up, in=down] (0.9,0.7);
         \singdot{0.9,0};
         \node at (0,-0.9) {\tiny $ Q $};
         \node at (0.3,-0.9) {\tiny $ Q' $};
\end{tikzpicture}
+ 3
\begin{tikzpicture}[centerzero, thick]
         \draw[-] (0,-0.7) to[out=up, in=down] (0.3,0.7);
         \draw[-,wei] (0.3,-0.7) to[out=up, in=down] (-0.3,0.7);
         \draw[-,wei] (0.6,-0.7) -- (0.6,0.7);
         \draw[-] (0.9,-0.7) -- (0.9,0.7);
         \singdot{0.9,-0.2};
         \node at (0.3,-0.9) {\tiny $ Q $};
         \node at (0.6,-0.9) {\tiny $ Q' $};
\end{tikzpicture} \ , \quad 
E_{2} = 
\begin{tikzpicture}[centerzero, thick]
         \draw[-] (0.6,-0.7) to[out=up, in=down] (0,0.7);
         \draw[-] (0,-0.7) to[out=up, in=down] (0.9,0.7);
         \draw[-,wei] (0.9,-0.7) to[out=up, in=down] (0.6,0.7);
         \draw[-,wei] (0.3,-0.7) to[out=up, in=down] (0,0) to[out=up, in=down] (0.3,0.7);
         \node at (0.3,-0.9) {\tiny $ Q $};
         \node at (0.9,-0.9) {\tiny $ Q' $};
\end{tikzpicture} \ .
\end{equation*}
We then have in $ \operatorname{SH}^{\operatorname{aff}}_{d,\mathbf{Q}} $ that
\begin{equation*}
E_{1}E_{2} = 3 \
\begin{tikzpicture}[centerzero, thick]
         \draw[-] (0.6,-0.7) to[out=up, in=down] (0,7/30) to[out=up, in=down] (0.3,0.7);
         \draw[-] (0,-0.7) to[out=up, in=down] (0.9,0.7);
         \draw[-,wei] (0.9,-0.7) to[out=up, in=down] (0.6,0.7);
         \draw[-,wei] (0.3,-0.7) to[out=up, in=down] (0,-7/30) to[out=up, in=down] (0.3,7/30) to[out=up, in=down] (0,0.7);
         \node at (0.3,-0.9) {\tiny $ Q $};
         \node at (0.9,-0.9) {\tiny $ Q' $};
         \singdot{0.8,0.35};
\end{tikzpicture}
, \quad
E_{2}E_{1} = 0.
\end{equation*}
\end{egg}

\begin{rmk} 
Suppose that $ \mathbf{Q} \in F(\mathcal{E}_{\operatorname{OPol}_{1}}) $ is the empty word. Then it will follow from the basis theorem (Theorem~\ref{HL-basis_Spin}) that, in this case, $ \operatorname{SH}^{\operatorname{aff}}_{d,\mathbf{Q}} $ is isomorphic to the degenerate spin affine Hecke superalgebra $ \operatorname{SH}^{\operatorname{aff}}_{d} $ from Definition~\ref{BlackBear}.
\end{rmk}

\subsection{A monoidal functor}

We have a natural functor $ \mathcal{AS} \rightarrow \mathcal{T\!AS} $ that sends the generating objects and generating morphisms in $ \mathcal{AS} $ to the ones of the same name in $ \mathcal{T\!AS} $. The following theorem provides a left inverse for this functor.

\begin{theo} \label{bright_light}
There is a strict $ \mathbb{C} $-linear monoidal functor
\begin{equation*}
\Omega \colon \mathcal{T\!AS} \rightarrow \mathcal{AS}
\end{equation*}
which sends $ \go $ to $ \go $ and $ Q \in \mathcal{E}_{\operatorname{OPol}_{1}} $ to $ \mathds{1} $ (the unit object of $ \mathcal{AS} $), and sends the generating morphisms as follows:
\begin{gather*}
\Omega \left(\begin{tikzpicture}[centerzero, thick]
      \draw[-] (0.6,-0.4) -- (0.6,0.4);
      \singdot{0.6,0};
\end{tikzpicture}\right) = 
\begin{tikzpicture}[centerzero, thick]
      \draw[-] (0.6,-0.4) -- (0.6,0.4);
      \singdot{0.6,0};
\end{tikzpicture} \ ,
\quad 
\Omega \left(\begin{tikzpicture}[centerzero, thick]
      \draw[-] (0.9,-0.4) -- (0.3,0.4);
      \draw[-] (0.3,-0.4) -- (0.9,0.4);
\end{tikzpicture}\right) 
=
\begin{tikzpicture}[centerzero, thick]
      \draw[-] (0.9,-0.4) -- (0.3,0.4);
      \draw[-] (0.3,-0.4) -- (0.9,0.4);
\end{tikzpicture} \ , 
\quad
\Omega \left(\begin{tikzpicture}[centerzero, thick]
      \draw[-] (0.9,-0.4) -- (0.3,0.4);
      \draw[-,wei] (0.3,-0.4) -- (0.9,0.4);
      \node at (0.3,-0.6) {\tiny $ Q $};
\end{tikzpicture}\right) 
=
\begin{tikzpicture}[centerzero, thick]
      \draw[-] (0.6,-0.4) -- (0.6,0.4);
\end{tikzpicture} \ ,
\quad 
\Omega \left(\begin{tikzpicture}[centerzero, thick]
      \draw[-] (0.3,-0.4) -- (0.9,0.4);
      \draw[-,wei] (0.9,-0.4) -- (0.3,0.4);
      \node at (0.9,-0.6) {\tiny $ Q $};
\end{tikzpicture}\right) 
= 
\begin{tikzpicture}[centerzero, thick]
        \draw[-] (0,-0.4) -- (0,0.4);
             \pin{0,0}{.7,0}{Q};
\end{tikzpicture} \ ,
\end{gather*}
for all $ Q \in \mathcal{E}_{\operatorname{OPol}_{1}} $. 
\end{theo}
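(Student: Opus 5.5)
Since $\mathcal{T\!AS}$ is given by generators and relations as a strict $\mathbb{C}$-linear monoidal supercategory, a strict monoidal functor out of it is the same as an assignment of objects and of morphisms to the generators that preserves parity and sends every defining relation to a relation holding in the target. We prescribe $\Omega$ on generating objects by $\go \mapsto \go$ and $Q \mapsto \mathds{1}$, so a word $\mathbf{i}$ goes to $\go^{\otimes m}$, where $m$ is the number of $\go$'s in $\mathbf{i}$. The proposed images of the generating morphisms have the right sources and targets: both red-black crossings become morphisms $\go \to \go$. They also have the right parities. The dot and the black crossing are odd on both sides. The images of the red-black crossings are $1_{\go}$ and the pinned polynomial $Q$, and both are even because $Q \in \mathcal{E}_{\operatorname{OPol}_1} = \mathbb{C}[x^2]\setminus 0$ by Lemma~\ref{sitting}. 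What remains is to check that each relation \cref{spin1}, \cref{spin2}, \cref{spin3}--\cref{spin8} is sent to a true identity in $\mathcal{AS}$.

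The relations \cref{spin1} and \cref{spin2} involve only black strands, and $\Omega$ sends them to the identical relations in $\mathcal{AS}$. For \cref{spin3}, the first relation becomes, after deleting the red strand, a dot followed by pinned $Q$ on the left side and pinned $Q$ followed by a dot on the right side. These agree because $Q$ is a polynomial in $x^2$, so $xQ = Qx$ in $\operatorname{OPol}_1$. The second relation in \cref{spin3} becomes $x\cdot 1 = 1 \cdot x$, which is trivial. For \cref{spin4}, both double crossings map to the composite of $1_{\go}$ and pinned $Q$, in either order, which is exactly the right-hand side, namely $Q$ pinned on the black strand. For the two braid relations in \cref{spin6}, the red strand disappears. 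Each red-black crossing becomes either an identity or a pinned $Q$ on the strand that passes through it. Both sides then reduce to the black crossing with some even central polynomial decorations, and the task is to check that these decorations sit on the same strand. In the first relation the red strand runs from bottom left to top right and crosses the two black strands, becoming identities, so both sides equal the black crossing. In the second relation the red strand runs from bottom right to top left, and each black strand it crosses picks up a pinned $Q$. On one side the decoration is $Q$ on both strands below the crossing; on the other side it is $Q$ on both strands above. These agree because $T(Q_1Q_2) = (Q_2Q_1)T$ by \cref{groot}, since $Q_1Q_2$ is symmetric with zero odd Demazure image.

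The main obstacle is \cref{spin8}. Here the red strand sits between two black strands and bends left on one side and right on the other, so each side picks up a pinned $Q$ on exactly one black strand, on different sides of the crossing and possibly at different heights. After applying $\Omega$, and keeping track of the super interchange signs (none arise, since $Q$ is even), the relation should become
\[
T Q_1 = Q_2 T + D(Q_1)
\]
or its mirror variant, with the appropriate placement of $Q$. This is exactly \cref{groot} with $f = Q_1$ and $(-1)^{\bar f} = 1$, noting $s_1(Q_1) = Q_2$. The remaining care is to identify from the diagram which strand carries $Q$ above and below the crossing, and to confirm that the extra term in \cref{spin8}, $D(Q_1)$ pinned across the two black strands, matches the Demazure term in \cref{groot}. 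Once all relations are verified, the universal property yields $\Omega$. It is a left inverse of $\mathcal{AS}\to\mathcal{T\!AS}$ because it fixes the generators of $\mathcal{AS}$.
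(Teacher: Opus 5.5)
Your proposal is correct and matches the paper's proof. Both arguments check the defining relations, and the only non-trivial cases are the second relation of \cref{spin6} and \cref{spin8}. The latter reads exactly $TQ_1 = Q_2T + D(Q_1)$, i.e.\ \cref{groot} with $f=Q_1$; your first reading is the right one, and no mirror variant is needed.

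The one small difference is in \cref{spin6}. The paper gets $T_1Q_1Q_2 = Q_1Q_2T_1$ from Proposition~\ref{middle}, whereas you assert $D(Q_1Q_2)=0$ without proof. That assertion is true: $D$ kills $x_1^2+x_2^2$ and $x_1^2x_2^2$, hence by the twisted Leibniz rule all of $\mathbb{C}[x_1^2,x_2^2]^{S_2}$. Via \cref{groot} it is equivalent to that centrality.
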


\begin{proof}
To prove the existence of $ \Omega $, it suffices to verify the relations \cref{spin1}, \cref{spin2}, and \cref{spin3,spin4,spin6,spin8}. Here, we will only check \cref{spin8} and the second relation of \cref{spin6}, since the others are straightforward. For the rest of the proof, we fix an element $ Q \in \mathcal{E}_{\operatorname{OPol}_{1}} $. For the second relation of \cref{spin6}, we compute that
\begin{align*}
\Omega \left(\begin{tikzpicture}[centerzero, thick]
        \draw[wipe] (0,-0.5) to[out=up, in=down] (0.4,0) to[out=up,in=down] (0,0.5);
        \draw[-] (0,-0.5) to[out=up, in=down] (0.4,0) to[out=up,in=down] (0,0.5);
        \draw[-] (-0.4,-0.5) -- (0.4,0.5);
        \draw[-, wei] (0.4,-0.5) -- (-0.4,0.5);
        \node at (0.4,-0.7) {\tiny $ Q $};
\end{tikzpicture}\right)
= 
\begin{tikzpicture}[centerzero, thick]
      \draw[-] (0.9,-0.4) -- (0.3,0.4);
      \draw[-] (0.3,-0.4) -- (0.9,0.4);
      \pin{0.45,-0.2}{-0.2,-0.2}{Q};
      \pin{0.75,-0.2}{1.4,-0.2}{Q};
\end{tikzpicture}
= 
\begin{tikzpicture}[centerzero, thick]
      \draw[-] (0.9,-0.4) -- (0.3,0.4);
      \draw[-] (0.3,-0.4) -- (0.9,0.4);
      \pin{0.45,0.2}{-0.2,0.2}{Q};
      \pin{0.75,0.2}{1.4,0.2}{Q};
\end{tikzpicture}
= 
\Omega \left(\begin{tikzpicture}[centerzero, thick]
        \draw[-] (0,-0.5) to[out=up, in=down] (-0.4,0) to[out=up,in=down] (0,0.5);
        \draw[-] (-0.4,-0.5) -- (0.4,0.5);
        \draw[-, wei] (0.4,-0.5) -- (-0.4,0.5);
        \node at (0.4,-0.7) {\tiny $ Q $};
\end{tikzpicture}\right),
\end{align*}
where the second equality can be seen as follows. Since $ Q \in \mathcal{E}_{\operatorname{OPol}_{1}} $, we have by Lemma \ref{sitting} that $ Q \in \mathbb{C}[x^{2}] $. Hence $ Q_{1} \in \mathbb{C}[x_{1}^{2}] $ and $ Q_{2} \in \mathbb{C}[x_{2}^{2}] $, and so $ Q_{1}Q_{2} \in \mathbb{C}[x_{1}^{2}, x_{2}^{2}] $. Furthermore, $ Q_{1}Q_{2} $ is symmetric, and so Proposition~\ref{middle} yields that $ Q_{1}Q_{2} $ lies in the even center of $ \operatorname{SH}_{2}^{\operatorname{aff}} $. In particular, $ Q_{1}Q_{2} $ and $ T_{1} $ commute.
\\ \indent For \cref{spin8}, we compute that
\begin{equation*}
\Omega \left(\begin{tikzpicture}[centerzero, thick]
        \draw[-] (0.4,-0.5) -- (-0.4,0.5);
        \draw[-] (-0.4,-0.5) -- (0.4,0.5);
        \node at (0,-0.7) {\tiny $ Q $};
        \draw[-, wei] (0,-0.5) to[out=up, in=down] (-0.4,0) to[out=up,in=down] (0,0.5);
\end{tikzpicture}\right)
= \begin{tikzpicture}[centerzero, thick]
      \draw[-] (0.9,-0.4) -- (0.3,0.4);
      \draw[-] (0.3,-0.4) -- (0.9,0.4);
      \pin{0.45,-0.2}{-0.2,-0.2}{Q};
\end{tikzpicture}
\quad \stackrel{\mathclap{\cref{groot}}}{=} \quad \begin{tikzpicture}[centerzero, thick]
      \draw[-] (0.9,-0.4) -- (0.3,0.4);
      \draw[-] (0.3,-0.4) -- (0.9,0.4);
      \pin{0.75,0.2}{1.4,0.2}{Q};
\end{tikzpicture}
+ 
\begin{tikzpicture}[centerzero, thick]
      \draw[-] (0.7,-0.4) -- (0.7,0.4);
      \draw[-] (0.3,-0.4) -- (0.3,0.4);
        \pinpin{0.3,0}{0.7,0}{1.8,0}{D(Q_{1})};
\end{tikzpicture} 
\ = \Omega \left(\begin{tikzpicture}[centerzero, thick]
        \draw[-] (2,-0.5) -- (1.2,0.5);
        \draw[-] (1.2,-0.5) -- (2,0.5);
        \node at (1.6,-0.7) {\tiny $ Q $};
        \draw[-, wei] (1.6,-0.5) to[out=up, in=down] (2,0) to[out=up,in=down] (1.6,0.5);
\end{tikzpicture}
\ + \
\begin{tikzpicture}[centerzero, thick]
        \node at (0,-0.7) {\tiny $ Q $};
        \draw[-] (-0.6,-0.5) -- (-0.6,0.5);
        \draw[-, wei] (0,-0.5) -- (0,0.5);
        \draw[-] (0.6,-0.5) -- (0.6,0.5);
        \pinpin{0.6,0}{-0.6,0}{1.8,0}{D(Q_{1})};
\end{tikzpicture}\right). \qedhere
\end{equation*}
\end{proof}

\begin{rmk}
Using a similar proof to Theorem \ref{bright_light}, one can show that there is a strict $ \mathbb{C} $-linear monoidal functor $ \Omega' \colon \mathcal{T\!AS} \rightarrow \mathcal{AS} $ which sends $ \go $ to $ \go $ and $ Q \in \mathcal{E}_{\operatorname{OPol}_{1}} $ to $ \mathds{1} $, and sends the generating morphisms as follows:
\begin{gather*}
\Omega' \left(\begin{tikzpicture}[centerzero, thick]
      \draw[-] (0.6,-0.4) -- (0.6,0.4);
      \singdot{0.6,0};
\end{tikzpicture}\right) = 
\begin{tikzpicture}[centerzero, thick]
      \draw[-] (0.6,-0.4) -- (0.6,0.4);
      \singdot{0.6,0};
\end{tikzpicture} \ ,
\quad 
\Omega' \left(\begin{tikzpicture}[centerzero, thick]
      \draw[-] (0.9,-0.4) -- (0.3,0.4);
      \draw[-] (0.3,-0.4) -- (0.9,0.4);
\end{tikzpicture}\right) 
=
\begin{tikzpicture}[centerzero, thick]
      \draw[-] (0.9,-0.4) -- (0.3,0.4);
      \draw[-] (0.3,-0.4) -- (0.9,0.4);
\end{tikzpicture} \ , 
\quad
\Omega' \left(\begin{tikzpicture}[centerzero, thick]
      \draw[-] (0.9,-0.4) -- (0.3,0.4);
      \draw[-,wei] (0.3,-0.4) -- (0.9,0.4);
      \node at (0.3,-0.6) {\tiny $ Q $};
\end{tikzpicture}\right) 
=
\begin{tikzpicture}[centerzero, thick]
        \draw[-] (0,-0.4) -- (0,0.4);
             \pin{0,0}{.7,0}{Q};
\end{tikzpicture}  \ ,
\quad 
\Omega' \left(\begin{tikzpicture}[centerzero, thick]
      \draw[-] (0.3,-0.4) -- (0.9,0.4);
      \draw[-,wei] (0.9,-0.4) -- (0.3,0.4);
      \node at (0.9,-0.6) {\tiny $ Q $};
\end{tikzpicture}\right) 
=
\begin{tikzpicture}[centerzero, thick]
      \draw[-] (0.6,-0.4) -- (0.6,0.4);
\end{tikzpicture}\ ,
\end{gather*}
for all $ Q \in \mathcal{E}_{\operatorname{OPol}_{1}} $. 
\end{rmk}

The functor $ \Omega $ is clearly surjective on objects, and we will obtain in Corollary \ref{herd_Spin} that $ \Omega $ is faithful. However, the functor $ \Omega $ is not full (and so is not an equivalence of supercategories). Indeed, let $ Q $ be any element in $ \mathcal{E}_{\operatorname{OPol}_{1}} $ (for example, one could take $ Q = 1 $). Then it follows from the forms of the generating morphisms of $ \mathcal{T\!AS} $ in \cref{velocity1} and \cref{velocity2} that $ \operatorname{Hom}_{\mathcal{T\!AS}}(\mathds{1},Q) = 0 $. Thus the induced map
\begin{equation*}
\operatorname{Hom}_{\mathcal{T\!AS}}(\mathds{1},Q) \rightarrow \operatorname{End}_{\mathcal{AS}}(\mathds{1}) \cong \mathbb{C}, \quad r \mapsto \Omega(r)
\end{equation*}
is not surjective.

\subsection{Basis theorem} \label{keeper}

In this section, we find bases of the morphism spaces of $ \mathcal{T\!AS} $. 

\begin{lem} \label{checkmate}
Let $ \mathbf{i}, \mathbf{j} \in F(\widehat{\mathcal{E}}_{\operatorname{OPol}_{1}}) $ be two objects in $ \mathcal{T\!AS} $. Then $ \operatorname{Hom}_{\mathcal{T\!AS}}(\mathbf{i},\mathbf{j}) = 0 $ unless $ \mathbf{i}, \mathbf{j} \in \Gamma_{d,\mathbf{Q}} $ for some $ d \in \mathbb{N} $, $ \mathbf{Q} \in F(\mathcal{E}_{\operatorname{OPol}_{1}}) $.
\end{lem}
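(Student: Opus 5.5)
The plan is to identify an invariant of objects that every generating morphism preserves, and then show that two words with the same invariant lie in a common $\Gamma_{d,\mathbf{Q}}$. Every object $\mathbf{i} \in F(\widehat{\mathcal{E}}_{\operatorname{OPol}_{1}})$ is a word. For such a word, let $d(\mathbf{i})$ be the number of occurrences of $\go$, and let $\mathbf{Q}(\mathbf{i}) \in F(\mathcal{E}_{\operatorname{OPol}_{1}})$ be the subword obtained by deleting every occurrence of $\go$. By Definition~\ref{snowing}, $\mathbf{i}$ is a $(\go^{d(\mathbf{i})},\mathbf{Q}(\mathbf{i}))$-shuffle, so $\mathbf{i} \in \Gamma_{d(\mathbf{i}),\mathbf{Q}(\mathbf{i})}$. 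Conversely, any $\mathbf{i} \in \Gamma_{d,\mathbf{Q}}$ satisfies $d(\mathbf{i})=d$ and $\mathbf{Q}(\mathbf{i})=\mathbf{Q}$. The lemma is therefore equivalent to the following claim: if $\operatorname{Hom}_{\mathcal{T\!AS}}(\mathbf{i},\mathbf{j}) \neq 0$, then $d(\mathbf{i})=d(\mathbf{j})$ and $\mathbf{Q}(\mathbf{i})=\mathbf{Q}(\mathbf{j})$.

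To prove the claim, I would use that $\mathcal{T\!AS}$ is presented by generators and relations. Its morphism spaces $\operatorname{Hom}_{\mathcal{T\!AS}}(\mathbf{i},\mathbf{j})$ are spanned by composites of tensor products of identity morphisms $1_{\mathbf{k}}$ with the generating morphisms \cref{velocity1} and \cref{velocity2}. Imposing relations only passes to quotients, so any space that contains no such composite is zero. It is thus enough to check that each such building block $1_{\mathbf{k}} \otimes g \otimes 1_{\mathbf{l}}$ has source and target with the same pair $(d,\mathbf{Q})$. Invariance under composition is then automatic.

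Next I would inspect the generators. The dot $\go \to \go$ and the black crossing $\go\go \to \go\go$ have equal source and target. The red–black crossings $Q\go \to \go Q$ and $\go Q \to Q\go$ each contain exactly one $\go$, and deleting it leaves the word $Q$ on both sides. Tensoring with identities on $\mathbf{k}$ and $\mathbf{l}$ adds the same letters on both sides. The count $d$ is additive under concatenation, and $\mathbf{Q}(\mathbf{k}\,\mathbf{a}\,\mathbf{l}) = \mathbf{Q}(\mathbf{k})\mathbf{Q}(\mathbf{a})\mathbf{Q}(\mathbf{l})$, so both invariants are preserved. The identity $1_{\mathbf{i}}$ trivially preserves both.

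There is no real obstacle here. The only point that needs stating with care is the formal justification that morphisms in a monoidal category given by a presentation are linear combinations of such words in the generators, with zero Hom space when no such word exists. I would cite this as standard, in the same spirit as Proposition~\ref{zebra}, rather than reprove it.
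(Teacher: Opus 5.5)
Your proposal is correct and is the same argument as the paper's, which simply says the lemma follows from the forms of the generating morphisms in \cref{velocity1} and \cref{velocity2}. Your invariants $d(\mathbf{i})$ and $\mathbf{Q}(\mathbf{i})$, together with the observation that the sets $\Gamma_{d,\mathbf{Q}}$ partition all words, make that one-line remark explicit.
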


\begin{proof}
This follows from the forms of the generating morphisms of $ \mathcal{T\!AS} $ in \cref{velocity1} and \cref{velocity2}.
\end{proof}

In light of Lemma~\ref{checkmate}, for the rest of this section, we fix an integer $ d \in \mathbb{N} $ and a word $ \mathbf{Q} \in F(\mathcal{E}_{\operatorname{OPol}_{1}}) $. We aim to find a basis of $ \operatorname{Hom}_{\mathcal{T\!AS}}(\mathbf{i}, \mathbf{j}) $ for all $ \mathbf{i}, \mathbf{j} \in \Gamma_{d,\mathbf{Q}} $.

\begin{defn} \label{Generalized_crossings}
For each $ \mathbf{i}, \mathbf{j} \in \Gamma_{d,\mathbf{Q}} $, and each permutation $ w \in S_{d} $, we choose a diagram $ T_{\mathbf{j},w,\mathbf{i}} \in \operatorname{Hom}_{\mathcal{T\!AS}}(\mathbf{i}, \mathbf{j}) $ with the following properties:
\begin{enumerate}
\item[\textbullet] The diagram $ T_{\mathbf{j},w,\mathbf{i}} $ is dotless.
\item[\textbullet]  The sequence of transpositions on the black strands of $ T_{\mathbf{j},w,\mathbf{i}} $ is a reduced expression for \(w\). 
\item[\textbullet] Each pair of red and black strands cross at most once.
\end{enumerate}
\end{defn}

We note that such a choice of diagram with the above properties is not unique. However, we see no reason to prefer one choice over another.

\begin{egg}
Suppose $ d = 3 $ and $ \mathbf{Q} = QQ' $ for some $ Q,Q' \in \mathcal{E}_{\operatorname{OPol}_{1}} $. Let $ \mathbf{i} := \go Q \go Q' \go \in \Gamma_{d,\mathbf{Q}} $ and $ \mathbf{j} := \go Q Q' \go \go \in \Gamma_{d,\mathbf{Q}} $. Set $ w = s_{2}s_{1} \in S_{3} $. Then two possible choices for the element $ T_{\mathbf{j},w,\mathbf{i}} $ are given below:
\begin{equation*}
\begin{tikzpicture}[centerzero, thick]
       \draw[-] (0.8,-0.7) to[out=up, in=down] (0,0.7);
       \draw[-] (1.6,-0.7) to[out=up, in=down] (1.2,0.7);    
       \draw[-] (0,-0.7) to[out=up, in=down] (1.6,0.7);
       \draw[-,wei] (0.4,-0.7) to[out=up, in=down] (0.7,0) to[out=up, in=down] (0.4,0.7);
       \draw[-,wei] (1.2,-0.7) to[out=up, in=down] (0.8,0.7);
       \node at (0.4,-0.85) {\tiny $ Q $};
       \node at (1.2,-0.85) {\tiny $ Q' $};
\end{tikzpicture} \ , 
\qquad
\begin{tikzpicture}[centerzero, thick]
       \draw[-] (0.8,-0.7) to[out=up, in=down] (0,0.7);
       \draw[-] (1.6,-0.7) to[out=up, in=down] (1.2,0.7);
       \draw[-] (0,-0.7) to[out=up, in=down] (1.6,0.7);
       \draw[-,wei] (0.4,-0.7) to[out=up, in=down] (0.2,0) to[out=up, in=down] (0.4,0.7);
       \draw[-,wei] (1.2,-0.7) to[out=up, in=down] (0.8,0.7);
       \node at (0.4,-0.85) {\tiny $ Q $};
       \node at (1.2,-0.85) {\tiny $ Q' $};
\end{tikzpicture} \ .
\end{equation*}
\end{egg}

For $ \mathbf{i} \in \Gamma_{d,\mathbf{Q}} $ and $ 1 \leq i \leq d $, we set $ x_{i,\mathbf{i}} \in \operatorname{End}_{\mathcal{T\!AS}}(\mathbf{i}) $ to be a dot on the $ i $-th black strand. Then, if $ \alpha = (\alpha_{1},\ldots,\alpha_{d}) \in \mathbb{N}^{d} $, we set 
\begin{equation*}
\mathbf{x}_{\mathbf{i}}^{\alpha} = (x_{1,\mathbf{i}})^{\alpha_{1}}\cdots (x_{d,\mathbf{i}})^{\alpha_{d}} \in \operatorname{End}_{\mathcal{T\!AS}}(\mathbf{i}).
\end{equation*}
We define the following subset of $ \operatorname{Hom}_{\mathcal{T\!AS}}(\mathbf{i}, \mathbf{j}) $:
\begin{align}
_{\mathbf{j}} \mathcal{B}_{\mathbf{i}} := \{\mathbf{x}_{\mathbf{j}}^{\alpha}T_{\mathbf{j},w,\mathbf{i}} : \alpha \in \mathbb{N}^{d}, \ w \in S_{d} \}.
\end{align}

\begin{lem} \label{greenpen}
Let $ \mathbf{i}, \mathbf{j} \in \Gamma_{d,\mathbf{Q}} $. Then the set $ _{\mathbf{j}} \mathcal{B}_{\mathbf{i}} $ spans $ \operatorname{Hom}_{\mathcal{T\!AS}}(\mathbf{i}, \mathbf{j}) $.
\end{lem}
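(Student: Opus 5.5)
The plan is the standard straightening argument for diagrammatic basis theorems. Every morphism in $\operatorname{Hom}_{\mathcal{T\!AS}}(\mathbf{i},\mathbf{j})$ is a linear combination of diagrams built from dots, black-black crossings and red-black crossings. Red-red crossings do not exist, so the red strands keep their relative order. I will show that each such diagram $D$ lies in the span $V$ of $_{\mathbf{j}}\mathcal{B}_{\mathbf{i}}$, by induction on a pair of statistics ordered lexicographically. The first is the number $c(D)$ of crossings of $D$ (black-black plus red-black). The second is the total number of dots.

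First, dots are moved to the top. The relation \cref{spin3} lets a dot pass through a red-black crossing freely. Up to sign, the super interchange law \cref{intlaw} lets a dot pass any generator on another strand. By \cref{spin2} and \cref{spin2.5}, sliding a dot through a black-black crossing costs a sign plus a correction term in which that crossing is replaced by two vertical strands, so the correction has fewer crossings. Hence, modulo terms with smaller $c$, we have $D \equiv \pm\, \mathbf{x}_{\mathbf{j}}^{\alpha} D'$ with $D'$ dotless. It therefore suffices to treat dotless $D$.

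For a dotless diagram $D$, let $w\in S_d$ be the permutation induced on the black strands. I claim that
\[
D \equiv \pm T_{\mathbf{j},w,\mathbf{i}} \pmod{\text{diagrams with fewer crossings}},
\]
or else $D$ lies in the span of diagrams with fewer crossings. The standard route is a Matsumoto/Reidemeister-type argument for diagrams in which red strands are allowed to isotope.

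There are two ways $D$ can fail to be minimal. First, two black strands may cross twice, or a red and a black strand may cross twice. Then, using braid-type moves, one brings the two crossings to be adjacent. The moves available are \cref{spin1} (second relation), the triple-point relations \cref{spin6}, and \cref{spin8}; the last one creates a correction term with fewer crossings, in which dots appear via $D(Q_1)$. One then applies the quadratic relations. The relation $T^2=1$ from \cref{spin1} removes two crossings. The relation \cref{spin4} replaces a red-black bigon by a vertical pair with $Q$ pinned to the black strand, which is a diagram with two fewer crossings, possibly carrying dots. Second, $D$ may be reduced in this sense. Then any two reduced diagrams with the same endpoints and the same $w$ are related by braid moves and the moves \cref{spin6}, \cref{spin8}, together with far-commutation via \cref{intlaw}. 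These moves are applied modulo lower-crossing terms and signs, as in the classical Matsumoto theorem for type $A$ extended to strands carrying colors, so $D \equiv \pm T_{\mathbf{j},w,\mathbf{i}}$.

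Every correction term has strictly fewer crossings, after redistributing its dots to the top by the first step, which does not increase crossings. So the induction closes. The base case consists of diagrams with no crossings except those forced. These are exactly the elements $\mathbf{x}^\alpha_{\mathbf{j}} T_{\mathbf{j},w,\mathbf{i}}$, where $T$ uses the minimal number of red-black crossings.

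The main obstacle is the combinatorial step showing that any non-reduced diagram can be brought, by the available local moves, to a configuration containing a bigon, and that reduced diagrams with the same data are connected by these moves. The subtle points are the red-black-black triple points, in both orientations, and keeping track of which strands may cross which. I would handle this by fixing the positions of the red strands via the isotopy-type relations \cref{spin6}, \cref{spin8}. This reduces the problem to the word problem in $S_d$ for the black strands, combined with the observation that each red-black pair crossing twice can be straightened by pulling the red strand across the intervening crossings.
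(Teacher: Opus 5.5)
Your proposal is correct and follows essentially the same route as the paper. Both argue by induction on the number of crossings: slide dots to the top using the dot-crossing relations, remove any pair of strands crossing twice (the paper cites the analogue of Webster's Lemma~4.10(3) for this, which is the bigon argument you sketch), and then connect a reduced diagram to $T_{\mathbf{j},w,\mathbf{i}}$ using the braid relation and the red-black-black triple-point relations, absorbing the lower-crossing correction terms by induction. Your appeal to a colored Matsumoto-type theorem for the reduced case just spells out a step the paper leaves implicit.
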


\begin{proof}
We prove this by induction on the number of crossings. If \(E\) is a diagram with no crossings, then \(E\) contains only dots, from which it clearly follows that \(E\) lies in the span of $ _{\mathbf{j}} \mathcal{B}_{\mathbf{i}} $.
\\ \indent Now let $ k > 0 $, and let \(E\) be a diagram with \(k\) crossings. First, by using the relations \cref{spin2}, \cref{spin2.5}, and \cref{spin3}, one can move the dots to the top of \(E\). Note that this process may create additional terms with fewer than \(k\) crossings, but these terms all lie in the span of $ _{\mathbf{j}} \mathcal{B}_{\mathbf{i}} $ by the induction hypothesis. If there is a pair of strands in \(E\) that cross more than once, then \(E\) can be written as a $ \mathbb{C} $-linear combination of diagrams with fewer than $ k $ crossings (the proof of this is analogous to the proof of \cite[Lem.~4.10(3)]{Webster}). So we may from here on assume that any two strands in \(E\) cross at most once. In this case, the sequence of black strands in \(E\) corresponds to some reduced expression for some element $ w \in S_{d} $. We can now apply the relations \cref{spin6}, \cref{spin8}, and the second relation in \cref{spin1} to get from \(E\) to $ T_{\mathbf{j},w,\mathbf{i}} $. Note again that this process may create additional terms with fewer crossings (due to \cref{spin8}), but these terms all lie in the span of $ _{\mathbf{j}} \mathcal{B}_{\mathbf{i}} $ by the induction hypothesis. Thus we have that \(E\) lies in the span of $ _{\mathbf{j}} \mathcal{B}_{\mathbf{i}} $.
\end{proof}

Given elements $ \mathbf{i}, \mathbf{j} \in \Gamma_{d,\mathbf{Q}} $, the functor $ \Omega $ from Theorem~\ref{bright_light} induces a $ \mathbb{C} $-linear map 
\begin{equation*}
\Omega \colon \operatorname{Hom}_{\mathcal{T\!AS}}(\mathbf{i}, \mathbf{j}) \rightarrow \operatorname{SH}_{d}^{\operatorname{aff}}.
\end{equation*}

\begin{lem} \label{cactus2_1123}
Let $ \mathbf{i}, \mathbf{j} \in \Gamma_{d,\mathbf{Q}} $ and $ w \in S_{d} $. Then
\begin{equation} \label{cactus_1123}
\Omega(T_{\mathbf{j},w,\mathbf{i}}) = h_{w,w}T_{w} + \sum_{\substack{u \in S_{d} \\ L(u) < L(w)}} h_{u,w}T_{u}
\end{equation}
for some $ h_{u,w} \in \operatorname{OPol}_{d}$, where $ h_{w,w} $ is regular in $ \operatorname{OPol}_{d} $. Here, \(L\) is the length function on $ S_{d} $.
\end{lem}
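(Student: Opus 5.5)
We write $T_{\mathbf{j},w,\mathbf{i}}$ as a vertical composite of elementary pieces and apply $\Omega$ piece by piece. Reading from bottom to top, $T_{\mathbf{j},w,\mathbf{i}}$ is a product of black–black crossings $s_{i_1},\dots,s_{i_k}$ (forming a reduced word for $w$, so $k=L(w)$), interspersed with red–black crossings. Under $\Omega$, a red–black crossing of the first kind goes to an identity, while one of the second kind goes to a pin $Q$ on the corresponding black strand, i.e.\ to $Q_m\in\mathbb{C}[x_m^2]\setminus 0$ for the appropriate position $m$. Hence
\begin{equation*}
\Omega(T_{\mathbf{j},w,\mathbf{i}}) = g_0 T_{i_1} g_1 T_{i_2} \cdots T_{i_k} g_k
\end{equation*}
(up to sign, and with the product ordered according to the convention that composition is drawn bottom to top). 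Each $g_r$ is a product of elements of the form $Q_m$ with $Q\in\mathcal{E}_{\operatorname{OPol}_1}$. In particular, each $g_r$ is a nonzero element of $\mathbb{C}[x_1^2,\dots,x_d^2]$, and so it is even and regular in $\operatorname{OPol}_d$ by Lemma~\ref{uncharted5678_1256}.

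Next we move all the $g_r$ to the left using \cref{groot}: $T_i f = s_i(f)T_i + D_i(f)$ for even $f$. We induct on $k$. Writing $P_{k-1} := g_0T_{i_1}\cdots g_{k-2}T_{i_{k-1}}$, the induction hypothesis gives $P_{k-1} = h\,T_{w'} + (\text{lower terms})$, where $w' = s_{i_1}\cdots s_{i_{k-1}}$, $h$ is regular, and the lower terms are $\operatorname{OPol}_d$-combinations of $T_u$ with $L(u)<k-1$. Then $P_{k-1}g_{k-1}T_{i_k}g_k$ is handled by commuting $g_{k-1}$ and then $g_k$ leftward through the $T$'s. Each such commutation yields $w'(\cdot)$-twisted versions of the $g$'s, together with Demazure correction terms in which one $T$ is deleted. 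Those correction terms, and the products $T_u T_{i_k}$ for $L(u)<k-1$, all expand by Proposition~\ref{cell} (using \cref{groot} again together with \cref{upper1}–\cref{upper3}) into combinations of $T_v$ with $L(v)<k$. This requires the standard fact that $T_uT_i$ equals $\pm T_{us_i}$, which has length at most $L(u)+1$. Since $w'$ and $w's_{i_k}=w$ are reduced, the top term is $\pm\, h\, w'(g_{k-1}) \, w(g_k)\, T_w$.

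Finally we check regularity of $h_{w,w}$. It is, up to sign, a product of elements of the form $\sigma(g_r)$ with $\sigma\in S_d$. Each $\sigma(g_r)$ is a nonzero element of $\mathbb{C}[x_1^2,\dots,x_d^2]$. This subalgebra is central in $\operatorname{OPol}_d$, and $\operatorname{OPol}_d$ has no zero divisors there: by the monomial basis, multiplication by a nonzero even polynomial in the $x_i^2$ is injective. So the product is regular, as in Lemma~\ref{uncharted5678_1256}.

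The main obstacle is the bookkeeping of the lower-order terms. We must make sure that every term produced by the Demazure corrections and by reordering lands in the span of $T_u$ with $L(u)<L(w)$ with coefficients on the left in $\operatorname{OPol}_d$. It suffices to note that the left $\operatorname{OPol}_d$-span of $\{T_u : L(u)\le n\}$ is stable under right multiplication by $\operatorname{OPol}_d$, by \cref{groot}. This filtration argument is what we would write out carefully; signs are irrelevant to the statement.
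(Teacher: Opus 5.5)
Your proposal is correct and follows essentially the same route as the paper: apply $\Omega$ to get a reduced word of black crossings decorated with pins labelled by elements of $\mathcal{E}_{\operatorname{OPol}_{1}}$, slide the pins to the top using \cref{groot} at the cost of terms with fewer crossings, and observe that $h_{w,w}$ is a product of (permuted) pins, each regular by Lemma~\ref{uncharted5678_1256}. Your filtration argument, namely that the left $\operatorname{OPol}_{d}$-span of $\{T_u : L(u)\le n\}$ is stable under right multiplication by $\operatorname{OPol}_d$ and sent into the next filtration level by right multiplication with $T_i$, supplies the lower-order-term bookkeeping that the paper leaves implicit.
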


\begin{proof}
Using the definition of $ \Omega $ from Theorem \ref{bright_light}, we see that $ \Omega(T_{\mathbf{j},w,\mathbf{i}}) $ is a diagram whose sequence of transpositions on the black strands is a reduced expression for \(w\), where the strands carry pins, and each such pin is labelled by an element of $ \mathcal{E}_{\operatorname{OPol}_{1}} $. For example, if $ Q,Q' \in \mathcal{E}_{\operatorname{OPol}_{1}} $, then 
\begin{equation*}
\Omega\left(\begin{tikzpicture}[centerzero, thick]
       \draw[-] (0,-0.7) to[out=up, in=down] (1.6,0.7);
       \draw[-] (0.8,-0.7) to[out=up, in=down] (0,0.7);
       \draw[-,wei] (0.4,-0.7) to[out=up, in=down] (0.2,0) to[out=up, in=down] (0.4,0.7);
       \draw[-,wei] (1.2,-0.7) to[out=up, in=down] (0.8,0.7);
       \draw[-] (1.6,-0.7) to[out=up, in=down] (1.2,0.7);
       \node at (0.4,-0.85) {\tiny $ Q $};
       \node at (1.2,-0.85) {\tiny $ Q' $};
\end{tikzpicture}\right)
=
\begin{tikzpicture}[centerzero, thick]
       \draw[-] (0,-0.7) to[out=up, in=down] (1.6,0.7);
       \draw[-] (0.8,-0.7) to[out=up, in=down] (0,0.7);
       \draw[-] (1.6,-0.7) to[out=up, in=down] (1.2,0.7);
       \pin{0.2,-0.28}{-0.6,-0.28}{Q};
       \pin{1,0.1}{0.6,0.8}{Q'};
\end{tikzpicture} \ .
\end{equation*}
Next, we inductively apply \cref{groot} to slide the pins to the top of the diagram, at the cost of adding a linear combination of diagrams with fewer than $ L(w) $ crossings. It then follows that $ \Omega(T_{\mathbf{j},w,\mathbf{i}}) $ is equal to an expansion of the form given in \cref{cactus_1123}. 

It remains to show that $ h_{w,w} $ is a regular element of $ \operatorname{OPol}_{d} $. By the process described above, $ h_{w,w} $ is some composition of pins, each labelled by an element of $ \mathcal{E}_{\operatorname{OPol}_{1}} $. By Lemma~\ref{uncharted5678_1256}, each such pin is a regular element of $ \operatorname{OPol}_{d} $. Thus, since $ h_{w,w} $ is a product of regular elements, we obtain that $ h_{w,w} $ is itself regular.
\end{proof}

\begin{theo} \label{HL-basis_Spin}
Let $ \mathbf{i}, \mathbf{j} \in \Gamma_{d,\mathbf{Q}} $. Then the set $ _{\mathbf{j}} \mathcal{B}_{\mathbf{i}} $ is a $ \mathbb{C} $-basis of $ \operatorname{Hom}_{\mathcal{T\!AS}}(\mathbf{i}, \mathbf{j}) $.
\end{theo}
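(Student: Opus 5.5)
Since Lemma~\ref{greenpen} shows that $ _{\mathbf{j}} \mathcal{B}_{\mathbf{i}} $ spans $ \operatorname{Hom}_{\mathcal{T\!AS}}(\mathbf{i},\mathbf{j}) $, only linear independence remains. The plan is to push a putative linear relation into $ \operatorname{SH}_{d}^{\operatorname{aff}} $ via the functor $ \Omega $ of Theorem~\ref{bright_light}. There we can use Wang's basis (Proposition~\ref{cell}) together with the triangularity of Lemma~\ref{cactus2_1123}.

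First, suppose $ \sum_{\alpha,w} \lambda_{\alpha,w}\, \mathbf{x}_{\mathbf{j}}^{\alpha} T_{\mathbf{j},w,\mathbf{i}} = 0 $ with finitely many nonzero $ \lambda_{\alpha,w} \in \mathbb{C} $. Group the terms by $ w $, and set $ p_{w} := \sum_{\alpha} \lambda_{\alpha,w} x_{1}^{\alpha_{1}}\cdots x_{d}^{\alpha_{d}} \in \operatorname{OPol}_{d} $. The functor $ \Omega $ sends a dot on the $ k $-th black strand of $ \mathbf{j} $ to $ x_{k} $, since red strands are sent to $ \mathds{1} $ and black strands keep their left-to-right order. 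Hence $ \Omega(\mathbf{x}_{\mathbf{j}}^{\alpha}) = x_{1}^{\alpha_{1}}\cdots x_{d}^{\alpha_{d}} $. Applying $ \Omega $ therefore gives
\begin{equation*}
0 = \sum_{w \in S_{d}} p_{w}\, \Omega(T_{\mathbf{j},w,\mathbf{i}}) = \sum_{w \in S_{d}} p_{w}\Big( h_{w,w}T_{w} + \sum_{L(u)<L(w)} h_{u,w} T_{u}\Big)
\end{equation*}
in $ \operatorname{SH}_{d}^{\operatorname{aff}} $.

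Next, suppose not all $ p_{w} $ are zero, and choose $ w_{0} $ of maximal length among those $ w $ with $ p_{w} \neq 0 $. Proposition~\ref{cell} says that $ \operatorname{SH}^{\operatorname{aff}}_{d} $ is a free left $ \operatorname{OPol}_{d} $-module with basis $ \{T_{u}\}_{u \in S_{d}} $. This holds because $ \operatorname{OPol}_{d} $ has monomial basis $ x^{\alpha} $, and the $ T_{u} $ are fixed up to sign. Take the coefficient of $ T_{w_{0}} $ in the displayed sum. Any term $ p_{w} h_{u,w} T_{u} $ with $ u = w_{0} $ has $ w = w_{0} $ or $ L(w) > L(w_{0}) $, and in the latter case $ p_{w} = 0 $. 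The same holds for the leading terms $ p_{w}h_{w,w}T_{w} $ with $ w \neq w_{0} $. So the coefficient of $ T_{w_{0}} $ is exactly $ p_{w_{0}} h_{w_{0},w_{0}} $, and this must vanish.

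Finally, Lemma~\ref{cactus2_1123} says $ h_{w_{0},w_{0}} $ is regular in $ \operatorname{OPol}_{d} $, so right multiplication by it is injective. This forces $ p_{w_{0}} = 0 $, a contradiction. Thus every $ p_{w} = 0 $, and since the monomials form a basis of $ \operatorname{OPol}_{d} $, every $ \lambda_{\alpha,w} = 0 $.

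The main point needing care is the coefficient extraction. One must check that $ \Omega(\mathbf{x}_{\mathbf{j}}^{\alpha}) $ is the monomial placed to the left of $ \Omega(T_{\mathbf{j},w,\mathbf{i}}) $ with no sign ambiguity that could spoil the independence. A global sign on each term is harmless, because it can be absorbed into $ \lambda_{\alpha,w} $. One must also check that the $ \operatorname{OPol}_{d} $-coefficients $ h_{u,w} $ appear on the left, which is exactly the form given in \cref{cactus_1123}.
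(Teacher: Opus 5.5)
Your proof is correct and follows the paper's argument. You get spanning from Lemma~\ref{greenpen}, push a dependence relation through $\Omega$, choose $w$ of maximal length with nonzero coefficient, extract the $T_{w}$-coefficient using Proposition~\ref{cell} and the triangularity in Lemma~\ref{cactus2_1123}, and cancel the regular element $h_{w,w}$. The only cosmetic difference is that you start from scalar coefficients and group them into $p_{w} \in \operatorname{OPol}_{d}$, whereas the paper states the claim directly as freeness of $\operatorname{Hom}_{\mathcal{T\!AS}}(\mathbf{i},\mathbf{j})$ as a left $\operatorname{OPol}_{d}$-supermodule with basis $\{T_{\mathbf{j},w,\mathbf{i}}\}$.
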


\begin{proof}
The space $ \operatorname{Hom}_{\mathcal{T\!AS}}(\mathbf{i}, \mathbf{j}) $ is a left $ \operatorname{OPol}_{d} $-supermodule, with action given by 
\begin{equation*}
x_{i} \cdot y = x_{i,\mathbf{j}}y, \qquad 1 \leq i \leq d, \ y \in  \operatorname{Hom}_{\mathcal{T\!AS}}(\mathbf{i}, \mathbf{j}).
\end{equation*}
Then in order to prove the theorem, it suffices to show that $ \operatorname{Hom}_{\mathcal{T\!AS}}(\mathbf{i}, \mathbf{j}) $ is free as a left $ \operatorname{OPol}_{d} $-supermodule, with basis $ \{T_{\mathbf{j},w,\mathbf{i}} : w \in S_{d}\} $. By Lemma~\ref{greenpen}, the set $ \{T_{\mathbf{j},w,\mathbf{i}} : w \in S_{d}\} $ spans $ \operatorname{Hom}_{\mathcal{T\!AS}}(\mathbf{i}, \mathbf{j}) $ (as a left $ \operatorname{OPol}_{d} $-supermodule), and so it remains to show linear independence. Thus we assume that we have a nontrivial linear dependence equation
\begin{equation} \label{depequation3975_11}
\sum_{w \in S_{d}} f_{w}T_{\mathbf{j},w,\mathbf{i}} = 0,
\end{equation}
where $ f_{w} \in \operatorname{OPol}_{d} $. Choose an element $ v \in S_{d} $ of maximal length such that $ f_{v} \neq 0 $. Then \cref{depequation3975_11} becomes
\begin{equation} \label{depequation3976_11}
\sum_{\substack{w \in S_{d} \\ L(w) \leq L(v)}} f_{w}T_{\mathbf{j},w,\mathbf{i}} = 0.
\end{equation}
Then computing the image of \cref{depequation3976_11} under $ \Omega $, and using \cref{cactus_1123}, we obtain that
\begin{equation} \label{nook3975_11}
\sum_{\substack{w \in S_{d} \\ L(w) \leq L(v)}} f_{w}\left( h_{w,w}T_{w} + \sum_{\substack{u \in S_{d} \\ L(u) < L(w)}} h_{u,w}T_{u}\right) = 0.
\end{equation}
Then \cref{nook3975_11} and Proposition \ref{cell} imply that $ f_{v}h_{v,v} = 0 $. Since $ h_{v,v} $ is regular (see Lemma \ref{cactus2_1123}), we obtain that $ f_{v} = 0 $, which is a contradiction. The result now follows.
\end{proof}

\begin{cor} \label{herd_Spin}
The functor $ \Omega $ is faithful.
\end{cor}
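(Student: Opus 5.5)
To prove that $\Omega$ is faithful, I will show that for every pair of objects $\mathbf{i},\mathbf{j}$ of $\mathcal{T\!AS}$ the induced linear map $\Omega\colon \operatorname{Hom}_{\mathcal{T\!AS}}(\mathbf{i},\mathbf{j})\to \operatorname{Hom}_{\mathcal{AS}}(\Omega(\mathbf{i}),\Omega(\mathbf{j}))$ is injective. The proof splits into a trivial case and a main case.

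\emph{Trivial case.} By Lemma~\ref{checkmate}, the Hom space is zero unless $\mathbf{i},\mathbf{j}\in\Gamma_{d,\mathbf{Q}}$ for some $d\in\mathbb{N}$ and some $\mathbf{Q}\in F(\mathcal{E}_{\operatorname{OPol}_{1}})$. In the zero case there is nothing to prove.

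\emph{Main case.} Fix $\mathbf{i},\mathbf{j}\in\Gamma_{d,\mathbf{Q}}$. Here $\Omega$ lands in $\operatorname{SH}^{\operatorname{aff}}_{d}$. I will use the argument already contained in the proof of Theorem~\ref{HL-basis_Spin}.

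\begin{enumerate}
\item By Theorem~\ref{HL-basis_Spin}, every morphism $y\in\operatorname{Hom}_{\mathcal{T\!AS}}(\mathbf{i},\mathbf{j})$ can be written uniquely as $y=\sum_{w\in S_d} f_w T_{\mathbf{j},w,\mathbf{i}}$ with $f_w\in\operatorname{OPol}_d$, where $f_w$ acts through dots at the top.
\item Suppose $\Omega(y)=0$ but $y\neq 0$. Choose $v\in S_d$ of maximal length with $f_v\neq 0$.
\item The functor $\Omega$ sends dots to dots, so $\Omega(f_w\,T_{\mathbf{j},w,\mathbf{i}})=f_w\,\Omega(T_{\mathbf{j},w,\mathbf{i}})$. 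Expanding each term using \cref{cactus_1123} from Lemma~\ref{cactus2_1123} gives
\[
0=\Omega(y)=\sum_{w} f_w\Big(h_{w,w}T_w+\sum_{L(u)<L(w)}h_{u,w}T_u\Big).
\]
\item By the basis of Proposition~\ref{cell}, $\operatorname{SH}^{\operatorname{aff}}_{d}$ is a free left $\operatorname{OPol}_d$-module on $\{T_w\}$, so I can read off the coefficient of $T_v$. Any $w\neq v$ with $L(w)\le L(v)$ contributes only to $T_u$ with $u=w$ or $L(u)<L(w)\le L(v)$, never to $T_v$. Hence the coefficient of $T_v$ is $f_v h_{v,v}=0$.
\item Since $h_{v,v}$ is regular, $f_v=0$, a contradiction. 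Therefore $y=0$.
\end{enumerate}

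\emph{Main subtlety.} The key step is checking that $\Omega$ commutes with the left $\operatorname{OPol}_d$-action by top dots. This holds because $\Omega$ maps a dot on the $i$-th black strand to the dot on the $i$-th strand of $\operatorname{SH}^{\operatorname{aff}}_d$, and red strands are deleted. Once this is in place, the linear-independence argument of Theorem~\ref{HL-basis_Spin} is precisely the statement that $\Omega$ is injective on each Hom space, i.e.\ that $\Omega$ is faithful.
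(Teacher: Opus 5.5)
Your proposal is correct and follows the paper's own route: the paper simply observes that the linear-independence step in the proof of Theorem~\ref{HL-basis_Spin} already shows that $\Omega$ sends no nonzero combination $\sum_w f_w T_{\mathbf{j},w,\mathbf{i}}$ to zero, which is exactly your argument. Combined with spanning and Lemma~\ref{checkmate}, this gives faithfulness; in your step~1 only the spanning statement (Lemma~\ref{greenpen}) is actually needed, not uniqueness.
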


\begin{proof}
This follows from the proof of Theorem \ref{HL-basis_Spin}.
\end{proof}

Given an integer $ n \in \mathbb{N}_{+} $ and a $ \mathbb{C} $-superalgebra \(A\), we define $ M_{n}(A) $ to be the superalgebra of $ n \times n $ matrices with entries in \(A\).

\begin{cor}
Let $ n := |\Gamma_{d,\mathbf{Q}}| $. Then the superalgebra $ \operatorname{SH}^{\operatorname{aff}}_{d,\mathbf{Q}} $ is isomorphic to a subalgebra of $ M_{n}(\operatorname{SH}^{\operatorname{aff}}_{d}) $.
\end{cor}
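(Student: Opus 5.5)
The plan is to build the embedding from the functor $\Omega$ of Theorem~\ref{bright_light}, using its faithfulness (Corollary~\ref{herd_Spin}). Write $\Gamma_{d,\mathbf{Q}} = \{\mathbf{i}_1,\ldots,\mathbf{i}_n\}$. By Definition~\ref{HLDSAHA}, an element of $\operatorname{SH}^{\operatorname{aff}}_{d,\mathbf{Q}}$ is a matrix $(a_{rs})_{r,s}$ with $a_{rs} \in \operatorname{Hom}_{\mathcal{T\!AS}}(\mathbf{i}_s,\mathbf{i}_r)$. Multiplication is matrix multiplication, using composition in $\mathcal{T\!AS}$.

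Since $\Omega$ sends every red object $Q$ to $\mathds{1}$ and $\go$ to $\go$, it sends each $\mathbf{i}_r$ to $\go^{\otimes d}$. Hence $\Omega$ gives linear maps $\operatorname{Hom}_{\mathcal{T\!AS}}(\mathbf{i}_s,\mathbf{i}_r) \to \operatorname{End}_{\mathcal{AS}}(\go^{\otimes d}) = \operatorname{SH}^{\operatorname{aff}}_d$. We define
\begin{equation*}
\Phi \colon \operatorname{SH}^{\operatorname{aff}}_{d,\mathbf{Q}} \to M_n(\operatorname{SH}^{\operatorname{aff}}_d), \qquad (a_{rs}) \mapsto (\Omega(a_{rs})).
\end{equation*}

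We then check three properties of $\Phi$.

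\begin{enumerate}
\item[\textbullet] \emph{Parity.} $\Phi$ preserves parity because $\Omega$ is a superfunctor. The crossing and dot stay odd, and red--black crossings go to even morphisms: an identity or a pin labelled by $Q \in \mathbb{C}[x^2]$.
\item[\textbullet] \emph{Multiplicativity.} $\Phi$ is multiplicative because $\Omega$ is a functor. We have $\Omega(\sum_t a_{rt}b_{ts}) = \sum_t \Omega(a_{rt})\Omega(b_{ts})$, which matches matrix multiplication in $M_n(\operatorname{SH}^{\operatorname{aff}}_d)$. The identity is sent to the identity matrix. Products of non-composable diagrams are zero on both sides, since they are simply absent from the matrix formula.
\item[\textbullet] \emph{Injectivity.} $\Phi$ is injective entrywise, because $\Omega$ is faithful on each Hom space by Corollary~\ref{herd_Spin}.
\end{enumerate}

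Thus $\Phi$ is an injective superalgebra homomorphism, and its image is the desired subalgebra.

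The only point needing care is whether faithfulness was really established on every $\operatorname{Hom}(\mathbf{i},\mathbf{j})$, and not just on endomorphism spaces. I would re-read the proof of Theorem~\ref{HL-basis_Spin}. It shows that $\Omega$ maps the basis ${}_{\mathbf{j}}\mathcal{B}_{\mathbf{i}}$ to a linearly independent set in $\operatorname{SH}^{\operatorname{aff}}_d$. The reason is that the leading terms $f_w h_{w,w} T_w$ cannot cancel, by Proposition~\ref{cell} and the regularity of $h_{w,w}$. This holds for arbitrary $\mathbf{i},\mathbf{j} \in \Gamma_{d,\mathbf{Q}}$, so faithfulness holds entrywise and the injectivity of $\Phi$ follows.
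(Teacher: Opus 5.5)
Your proposal is correct and matches the paper's proof. The paper also uses the faithful functor $\Omega$ (Corollary~\ref{herd_Spin}) to get an injective superalgebra homomorphism $\operatorname{End}_{\operatorname{Add}(\mathcal{T\!AS})}\bigl(\bigoplus_{\mathbf{i} \in \Gamma_{d,\mathbf{Q}}}\mathbf{i}\bigr) \to \operatorname{End}_{\operatorname{Add}(\mathcal{AS})}\bigl(\bigoplus_{i=1}^{n}\go^{\otimes d}\bigr) = M_{n}(\operatorname{SH}^{\operatorname{aff}}_{d})$, and your check that faithfulness holds on every $\operatorname{Hom}_{\mathcal{T\!AS}}(\mathbf{i},\mathbf{j})$ is exactly what the proof of Theorem~\ref{HL-basis_Spin} supplies.
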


\begin{proof}
The faithful functor $ \Omega \colon \mathcal{T\!AS} \rightarrow \mathcal{AS} $ induces an injective superalgebra homomorphism
\begin{equation*}
\operatorname{SH}_{d,\mathbf{Q}}^{\operatorname{aff}} \quad \stackrel{\mathclap{\cref{HLDSAHA22}}}{=} \quad \operatorname{End}_{\operatorname{Add}(\mathcal{T\!AS})}\left(\bigoplus_{\mathbf{i} \in \Gamma_{d,\mathbf{Q}}}\mathbf{i}\right) \rightarrow \operatorname{End}_{\operatorname{Add}(\mathcal{AS})}\left(\bigoplus_{i=1}^{n} \go^{\otimes d}\right) = M_{n}(\operatorname{SH}^{\operatorname{aff}}_{d}).
\end{equation*}
The result now follows.
\end{proof}

\subsection{Description of the even center} \label{h-affwreathcenter}

In this section, we fix an integer $ d \in \mathbb{N}_{+} $ and a word $ \mathbf{Q} \in F(\mathcal{E}_{\operatorname{OPol}_{1}}) $. We aim to find the even center of the superalgebra $ \operatorname{SH}^{\operatorname{aff}}_{d,\mathbf{Q}} $. Set $ \boldsymbol{\omega} := \mathbf{Q}\go^{d} \in \Gamma_{d,\mathbf{Q}} $ to be the concatenation of the words $ \mathbf{Q} $ and $ \go^{d} $.

\begin{lem} \label{salt3}
We have an isomorphism of superalgebras $ \operatorname{End}_{\mathcal{T\!AS}}(\boldsymbol{\omega}) \cong \operatorname{SH}^{\operatorname{aff}}_{d} $.
\end{lem}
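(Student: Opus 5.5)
The isomorphism will come from the functor $\Omega$ of Theorem~\ref{bright_light}, restricted to endomorphisms of $\boldsymbol{\omega} = \mathbf{Q}\go^{d}$. Since $\Omega$ is a monoidal functor sending each $Q$ to $\mathds{1}$ and $\go$ to $\go$, we have $\Omega(\boldsymbol{\omega}) = \go^{\otimes d}$. Hence $\Omega$ gives a homomorphism of superalgebras
\begin{equation*}
\Omega \colon \operatorname{End}_{\mathcal{T\!AS}}(\boldsymbol{\omega}) \rightarrow \operatorname{End}_{\mathcal{AS}}(\go^{\otimes d}) = \operatorname{SH}^{\operatorname{aff}}_{d}.
\end{equation*}
By Corollary~\ref{herd_Spin}, $\Omega$ is faithful, so this map is injective. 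It remains to prove surjectivity.

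For surjectivity I would exhibit preimages of the generators $x_{1},\ldots,x_{d},T_{1},\ldots,T_{d-1}$ of $\operatorname{SH}^{\operatorname{aff}}_{d}$ from Proposition~\ref{down}. Take the element of $\operatorname{End}_{\mathcal{T\!AS}}(\boldsymbol{\omega})$ given by the identity on all red strands tensored with a dot on the $i$-th black strand. Since $\Omega$ sends red identity strands to the identity of $\mathds{1}$, this element maps to $x_{i}$. Similarly, the identity on red strands tensored with the black-black crossing of strands $i$ and $i+1$ maps to $T_{i}$. These elements live entirely to the right of the red strands, so no red-black crossings are involved and no pins are introduced. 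The image of the restricted $\Omega$ is a subalgebra containing all the generators, so it is all of $\operatorname{SH}^{\operatorname{aff}}_{d}$.

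As a cross-check, which could also serve as an alternative route, one can compare bases. By Theorem~\ref{HL-basis_Spin} with $\mathbf{i} = \mathbf{j} = \boldsymbol{\omega}$, we may choose every $T_{\boldsymbol{\omega},w,\boldsymbol{\omega}}$ with no red-black crossings at all. This is allowed because each red-black pair crosses at most once, and a strand that starts and ends on the same side of a red strand must cross it an even number of times, hence zero times. Then $\Omega$ sends the basis ${}_{\boldsymbol{\omega}}\mathcal{B}_{\boldsymbol{\omega}}$ bijectively onto the basis of Proposition~\ref{cell}, up to signs coming from the choice of reduced expressions. The only point needing care is the parity argument that each $T_{\boldsymbol{\omega},w,\boldsymbol{\omega}}$ avoids red-black crossings. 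I do not expect any step to be a serious obstacle.
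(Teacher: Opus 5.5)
Your proposal is correct. Your main route differs mildly from the paper's, and your ``cross-check'' is essentially the paper's own proof.

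The paper works in the opposite direction. It uses the homomorphism $\operatorname{SH}^{\operatorname{aff}}_{d} \to \operatorname{End}_{\mathcal{T\!AS}}(\boldsymbol{\omega})$ that adds $|\mathbf{Q}|$ red strands on the left. It then checks bijectivity directly from Proposition~\ref{cell} and Theorem~\ref{HL-basis_Spin}: this map carries the basis $\{x^{\alpha}T_{w}\}$ onto the basis ${}_{\boldsymbol{\omega}}\mathcal{B}_{\boldsymbol{\omega}}$, with the natural choice of $T_{\boldsymbol{\omega},w,\boldsymbol{\omega}}$.

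Your restriction of $\Omega$ is exactly the inverse of that map. The preimages you exhibit for $x_i$ and $T_i$ are their images under the ``add red strands'' map, and $\Omega$ is a left inverse of $\mathcal{AS}\to\mathcal{T\!AS}$. So surjectivity is automatic, and the real content is injectivity. You take that from Corollary~\ref{herd_Spin} rather than from a direct basis comparison. This spares you from identifying $T_{\boldsymbol{\omega},w,\boldsymbol{\omega}}$ or arguing about red--black crossings. However, faithfulness of $\Omega$ is itself extracted from the triangularity argument in the proof of Theorem~\ref{HL-basis_Spin}, so the underlying input is the same.

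On the parity remark: every admissible $T_{\boldsymbol{\omega},w,\boldsymbol{\omega}}$ automatically has no red--black crossings, not merely a suitable choice of one. This does not affect your argument.
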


\begin{proof}
There is an obvious superalgebra homomorphism $ \operatorname{SH}^{\operatorname{aff}}_{d} \rightarrow \operatorname{End}_{\mathcal{T\!AS}}(\boldsymbol{\omega}) $ that adds $ |\mathbf{Q}| $ red strands to the left of each diagram in $ \operatorname{SH}^{\operatorname{aff}}_{d} $. It then follows from Proposition~\ref{cell} and Theorem~\ref{HL-basis_Spin} that this map is bijective.
\end{proof}

By Theorem~\ref{HL-basis_Spin}, we have an injective superalgebra homomorphism
\begin{equation} \label{lifeguard1890}
\operatorname{OPol}_{d} \rightarrow 
\operatorname{SH}^{\operatorname{aff}}_{d,\mathbf{Q}}, \quad x_{i} \mapsto \sum_{\mathbf{i} \in \Gamma_{d,\mathbf{Q}}} x_{i,\mathbf{i}}.
\end{equation}
In what follows, we view $ \operatorname{OPol}_{d} $ as a subalgebra of $ \operatorname{SH}^{\operatorname{aff}}_{d,\mathbf{Q}} $ under this superalgebra homomorphism. We also view $ \mathbb{C}[x_{1}^{2},\ldots,x_{d}^{2}] $ and $ \mathbb{C}[x_{1}^{2},\ldots,x_{d}^{2}]^{S_{d}} $ as subalgebras of $ \operatorname{SH}^{\operatorname{aff}}_{d,\mathbf{Q}} $ under the inclusions 
\begin{equation*}
\mathbb{C}[x_{1}^{2},\ldots,x_{d}^{2}]^{S_{d}} \subseteq \mathbb{C}[x_{1}^{2},\ldots,x_{d}^{2}] \subseteq \operatorname{OPol}_{d} \subseteq \operatorname{SH}^{\operatorname{aff}}_{d,\mathbf{Q}}.
\end{equation*}

\begin{prop} \label{Higher_Spin_center}
The even center of $ \operatorname{SH}^{\operatorname{aff}}_{d,\mathbf{Q}} $ is equal to $ \mathbb{C}[x_{1}^{2},\ldots,x_{d}^{2}]^{S_{d}} $.
\end{prop}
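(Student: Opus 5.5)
We prove the two inclusions separately.

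\emph{Symmetric even polynomials are central.} Let $ f \in \mathbb{C}[x_{1}^{2},\ldots,x_{d}^{2}]^{S_{d}} $. It is even, so it suffices to check that $ f $ commutes with every generating diagram placed in the idempotent-truncated pieces $ \operatorname{Hom}_{\mathcal{T\!AS}}(\mathbf{i},\mathbf{j}) $. Here $ f $ stands for $ \sum_{\mathbf{i}} f(x_{1,\mathbf{i}},\ldots,x_{d,\mathbf{i}}) $.
\begin{enumerate}
\item[\textbullet] Dots: $ f $ commutes with dots because even polynomials in the $ x_{i}^{2} $ commute with all $ x_{j} $ in $ \operatorname{OPol}_{d} $.
\item[\textbullet] Black--black crossings: $ f $ commutes with these by \cref{groot}. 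Since $ f $ is $ s_{i} $-invariant and even, $ D_{i}(f)=0 $, because $ D_{i} $ kills $ x_{i}^{2}+x_{i+1}^{2} $ and $ x_{i}^{2}x_{i+1}^{2} $ and is $ s_{i} $-twisted-linear over invariants. Alternatively, one can argue locally in $ \operatorname{SH}^{\operatorname{aff}}_{2} $ and invoke Proposition~\ref{middle}.
\item[\textbullet] Red--black crossings: by \cref{spin3}, dots pass through red strands unchanged, and the crossing preserves the numbering of black strands. Hence $ f $ slides through.
\end{enumerate}
Since every element of $ \operatorname{SH}^{\operatorname{aff}}_{d,\mathbf{Q}} $ is a combination of composites of such pieces, $ f $ is central.

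\emph{Every even central element is a symmetric even polynomial.} Let $ z = \sum_{\mathbf{i},\mathbf{j}} z_{\mathbf{j},\mathbf{i}} $ be even central, where $ z_{\mathbf{j},\mathbf{i}} \in \operatorname{Hom}(\mathbf{i},\mathbf{j}) $. Commuting with the idempotents $ 1_{\mathbf{i}} $ shows that $ z = \sum_{\mathbf{i}} z_{\mathbf{i}} $ with $ z_{\mathbf{i}} $ even central in $ \operatorname{End}(\mathbf{i}) $.

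First take $ \mathbf{i} = \boldsymbol{\omega} $. By Lemma~\ref{salt3}, $ \operatorname{End}(\boldsymbol{\omega}) \cong \operatorname{SH}^{\operatorname{aff}}_{d} $, so Proposition~\ref{middle} gives $ z_{\boldsymbol{\omega}} = f(x_{1,\boldsymbol{\omega}},\ldots) $ for some $ f \in \mathbb{C}[x_{1}^{2},\ldots,x_{d}^{2}]^{S_{d}} $.

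Replace $ z $ by $ z' := z - f $, which is even central by the first part and satisfies $ z'_{\boldsymbol{\omega}} = 0 $. We must show $ z'_{\mathbf{i}} = 0 $ for every $ \mathbf{i} $. Fix $ \mathbf{i} $ and choose $ T := T_{\boldsymbol{\omega},e,\mathbf{i}} \in \operatorname{Hom}(\mathbf{i},\boldsymbol{\omega}) $, a dotless diagram moving all red strands to the left. Centrality gives
\begin{equation*}
T z'_{\mathbf{i}} = \pm z'_{\boldsymbol{\omega}} T = 0 .
\end{equation*}

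It remains to show that left multiplication by $ T $ is injective on $ \operatorname{End}(\mathbf{i}) $. This is the main obstacle. The plan is to use the faithful functor $ \Omega $ (Corollary~\ref{herd_Spin}). In $ \mathcal{AS} $, $ \Omega(T) $ is a product of pins $ Q $ on black strands, since $ w=e $ means no black crossings. Any pins that arise are regular even elements of $ \operatorname{OPol}_{d} $ (Lemma~\ref{uncharted5678_1256}) and are central in $ \operatorname{OPol}_{d} $.

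We then need regularity of $ h := \Omega(T) $ as a left multiplier in $ \operatorname{SH}^{\operatorname{aff}}_{d} $. Use the basis of Proposition~\ref{cell}: write $ y = \sum_{w} g_{w} T_{w} $ and note that $ hy = \sum_{w} (h g_{w}) T_{w} $. Since $ h $ is regular in $ \operatorname{OPol}_{d} $, $ hy = 0 $ forces all $ g_{w} = 0 $. Therefore
\begin{equation*}
\Omega(T)\,\Omega(z'_{\mathbf{i}}) = 0 \;\Longrightarrow\; \Omega(z'_{\mathbf{i}}) = 0 \;\Longrightarrow\; z'_{\mathbf{i}} = 0 ,
\end{equation*}
the last step by faithfulness of $ \Omega $.

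Hence $ z = f $, proving the reverse inclusion. The delicate points are sign bookkeeping, since $ T $ is even so no sign actually appears, and making sure the pins produced by $ \Omega $ multiply on the left without reordering black strands, which holds because $ w = e $.
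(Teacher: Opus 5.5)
Your proof is correct and follows essentially the same route as the paper. You show that symmetric even polynomials commute with each generator, identify $z1_{\boldsymbol{\omega}}$ using Lemma~\ref{salt3} and Proposition~\ref{middle}, and then transport this to every $\mathbf{i}$ by composing with a dotless diagram that moves the red strands, together with an injectivity statement. The only difference is which side you compose on. The paper composes on the right with $T_{\mathbf{i},1,\boldsymbol{\omega}}$ and cites Theorem~\ref{HL-basis_Spin} for injectivity; since $\Omega$ sends that diagram to the identity, faithfulness of $\Omega$ gives injectivity at once. You compose on the left with $T_{\boldsymbol{\omega},e,\mathbf{i}}$, whose image under $\Omega$ is a product of pins, so you need the extra regularity argument via Proposition~\ref{cell}, which you supply correctly.
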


\begin{proof}
The fact that $ \mathbb{C}[x_{1}^{2},\ldots,x_{d}^{2}]^{S_{d}} $ is contained in the even center of $ \operatorname{SH}^{\operatorname{aff}}_{d,\mathbf{Q}} $ is straightforward to show. Conversely, suppose that \(z\) lies in the even center of $ \operatorname{SH}^{\operatorname{aff}}_{d,\mathbf{Q}} $. Then $ z1_{\boldsymbol{\omega}} $ is an element of the even center of $ \operatorname{End}_{\mathcal{T\!AS}}(\boldsymbol{\omega}) $. Since the isomorphism of Lemma~\ref{salt3} restricts to an isomorphism between the even center of $ \operatorname{SH}^{\operatorname{aff}}_{d} $ and the even center of $ \operatorname{End}_{\mathcal{T\!AS}}(\boldsymbol{\omega}) $, we obtain by Proposition~\ref{middle} that $ z1_{\boldsymbol{\omega}} = f1_{\boldsymbol{\omega}} $ for some $ f \in \mathbb{C}[x_{1}^{2},\ldots,x_{d}^{2}]^{S_{d}} $. Let $ \mathbf{i} \in \Gamma_{d,\mathbf{Q}} $, and set $ E = T_{\mathbf{i},1,\boldsymbol{\omega}} $ (see Definition~\ref{Generalized_crossings}). Then 
\begin{equation*}
z1_{\mathbf{i}}E = Ez1_{\boldsymbol{\omega}} = Ef1_{\boldsymbol{\omega}} = f1_{\mathbf{i}}E.
\end{equation*}
This implies that $ z1_{\mathbf{i}} = f1_{\mathbf{i}} $, since the map $ \operatorname{End}_{\mathcal{T\!AS}}(\mathbf{i}) \rightarrow \operatorname{Hom}_{\mathcal{T\!AS}}(\boldsymbol{\omega},\mathbf{i}) $, $ y \mapsto yE $ is injective by Theorem~\ref{HL-basis_Spin}. Therefore,
\begin{equation*}
z = \sum_{\mathbf{i} \in \Gamma_{d,\mathbf{Q}}} z1_{\mathbf{i}} = \sum_{\mathbf{i} \in \Gamma_{d,\mathbf{Q}}} f1_{\mathbf{i}} = f \in \mathbb{C}[x_{1}^{2},\ldots,x_{d}^{2}]^{S_{d}}. \qedhere
\end{equation*}
\end{proof}

\details{We briefly explain here why $ \mathbb{C}[x_{1}^{2},\ldots,x_{d}^{2}]^{S_{d}} $ is contained in the even center of $ \operatorname{SH}^{\operatorname{aff}}_{d,\mathbf{Q}} $. Let $ z \in \mathbb{C}[x_{1}^{2},\ldots,x_{d}^{2}]^{S_{d}} $. Then we must show that $ z $ commutes with the identity morphisms $ 1_{\mathbf{i}} $, the dots $ x_{i,\mathbf{i}} $, the red-black crossing, and the black-black crossings (since $ \operatorname{SH}^{\operatorname{aff}}_{d,\mathbf{Q}} $ is generated by these elements). It is clear that $ z1_{\mathbf{i}} = 1_{\mathbf{i}}z $ and $ zx_{i,\mathbf{i}} = x_{i,\mathbf{i}}z $. Next, we have by \cref{spin3} that \(z\) commutes with red-black crossings. Finally, \(z\) commutes with black-black crossings by Proposition~\ref{middle}.}

\begin{rmk}
Let $ \mathbf{Q}, \mathbf{Q}' \in F(\mathcal{E}_{\operatorname{OPol}_{1}}) $. Then, by Proposition~\ref{Higher_Spin_center}, the even center of $ \operatorname{SH}^{\operatorname{aff}}_{d,\mathbf{Q}} $ is isomorphic to the even center of $ \operatorname{SH}^{\operatorname{aff}}_{d,\mathbf{Q}'} $.
\end{rmk}

\subsection{Cyclotomic quotients} \label{Cyclotomic_quotients}

In this section, we define cyclotomic quotients of the tensor product degenerate spin affine Hecke superalgebras. Given a word $ \mathbf{i} \in F(\widehat{\mathcal{E}}_{\operatorname{OPol}_{1}}) $ of length $ |\mathbf{i}| \geq 1 $, we define $ \mathbf{i}_{1} \in \widehat{\mathcal{E}}_{\operatorname{OPol}_{1}} $ to be the first entry in the word $ \mathbf{i} $.

\begin{defn} \label{spin_cyclotomic_quotient}
Let $ d \in \mathbb{N}_{+} $ and $ \mathbf{Q} \in F(\mathcal{E}_{\operatorname{OPol}_{1}}) $. Then we define the \emph{cyclotomic $ (d,\mathbf{Q}) $-degenerate spin Hecke superalgebra} $ \operatorname{SH}^{\operatorname{cyc}}_{d,\mathbf{Q}} $ to be the quotient of $ \operatorname{SH}^{\operatorname{aff}}_{d,\mathbf{Q}} $ by the two-sided ideal generated by the elements in the set $ \{1_{\mathbf{i}} : \mathbf{i} \in \Gamma_{d,\mathbf{Q}}, \ \mathbf{i}_{1} = \go \} $.
\end{defn}

In other words, any diagram in $ \operatorname{SH}^{\operatorname{aff}}_{d,\mathbf{Q}} $ that has a piece of black strand to the left of all the red strands is equal to \(0\) in the quotient $ \operatorname{SH}^{\operatorname{cyc}}_{d,\mathbf{Q}} $. 

\begin{egg}
Suppose $ d = 2 $ and $ \mathbf{Q} = QQ' $ for some $ Q, Q' \in \mathcal{E}_{\operatorname{OPol}_{1}} $. Consider the elements $ E_{1}, E_{2} \in \operatorname{SH}^{\operatorname{aff}}_{d,\mathbf{Q}} $ given below:
\begin{equation*} 
E_{1} = 
\begin{tikzpicture}[centerzero, thick]
         \draw[-] (0.6,-0.7) to[out=up, in=down] (0.9,0) to[out=up, in=down] (0.3,0.7);
         \draw[-] (0.9,-0.7) to[out=up, in=down] (0,0.7);
         \draw[-,wei] (0,-0.7) to[out=up, in=down] (0.6,0.7);
         \draw[-,wei] (0.3,-0.7) to[out=up, in=down] (0.9,0.7);
         \singdot{0.9,0};
         \node at (0,-0.9) {\tiny $ Q $};
         \node at (0.3,-0.9) {\tiny $ Q' $};
\end{tikzpicture}
+ 3
\begin{tikzpicture}[centerzero, thick]
         \draw[-] (0,-0.7) to[out=up, in=down] (0.3,0.7);
         \draw[-,wei] (0.3,-0.7) to[out=up, in=down] (-0.3,0.7);
         \draw[-,wei] (0.6,-0.7) -- (0.6,0.7);
         \draw[-] (0.9,-0.7) -- (0.9,0.7);
         \singdot{0.9,-0.2};
         \node at (0.3,-0.9) {\tiny $ Q $};
         \node at (0.6,-0.9) {\tiny $ Q' $};
\end{tikzpicture} \ , \quad 
E_{2} = 
\begin{tikzpicture}[centerzero, thick]
         \draw[-] (0.6,-0.7) to[out=up, in=down] (-0.3,0) to[out=up, in=down] (0.6,0.7);
         \draw[-] (0.9,-0.7) -- (0.9,0.7);
         \draw[-,wei] (0,-0.7) -- (0,0.7);
         \draw[-,wei] (0.3,-0.7) -- (0.3,0.7);
         \node at (0,-0.9) {\tiny $ Q $};
         \node at (0.3,-0.9) {\tiny $ Q' $};
\end{tikzpicture} \ .
\end{equation*}
Then both $ E_{1} $ and $ E_{2} $ lie in the kernel of the projection map $ \operatorname{SH}^{\operatorname{aff}}_{d,\mathbf{Q}} \rightarrow \operatorname{SH}^{\operatorname{cyc}}_{d,\mathbf{Q}} $.
\end{egg}

We can also define cyclotomic quotients of the degenerate spin affine Hecke superalgebras from Definition~\ref{BlackBear}.

\begin{defn}
Let $ n \in \mathbb{N}_{+} $ and $ Q \in \mathcal{E}_{\operatorname{OPol}_{1}} $. Then we define the \emph{cyclotomic degenerate spin Hecke superalgebra} $ \operatorname{SH}^{Q}_{n} $ to be the quotient of $ \operatorname{SH}^{\operatorname{aff}}_{n} $ by the two-sided ideal generated by the element $ Q_{1} \in \operatorname{OPol}_{n} $.
\end{defn}

The following proposition says that, when $ \mathbf{Q} \in F(\mathcal{E}_{\operatorname{OPol}_{1}}) $ is a word of length one, the cyclotomic $ (d,\mathbf{Q}) $-degenerate spin Hecke superalgebra is isomorphic to a cyclotomic degenerate spin Hecke superalgebra.

\begin{prop} \label{frontlines}
Let $ d \in \mathbb{N}_{+} $ and $ Q \in \mathcal{E}_{\operatorname{OPol}_{1}} $. Then we have an isomorphism of superalgebras 
\begin{equation*}
\operatorname{SH}^{\operatorname{cyc}}_{d,Q}  \cong \operatorname{SH}^{Q}_{d}.
\end{equation*}
\end{prop}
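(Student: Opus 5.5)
We construct mutually inverse superalgebra homomorphisms between $\operatorname{SH}^{\operatorname{cyc}}_{d,Q}$ and $\operatorname{SH}^{Q}_{d}$. Set $\boldsymbol{\omega} = Q\go^{d} \in \Gamma_{d,Q}$. The first step is to show that $1_{\boldsymbol{\omega}}$ is the identity of $\operatorname{SH}^{\operatorname{cyc}}_{d,Q}$, so that $\operatorname{SH}^{\operatorname{cyc}}_{d,Q} = 1_{\boldsymbol{\omega}}\operatorname{SH}^{\operatorname{cyc}}_{d,Q}1_{\boldsymbol{\omega}}$. Indeed, $\Gamma_{d,Q}$ consists of the words $\go^{k}Q\go^{d-k}$ for $0 \le k \le d$. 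The only one with first letter not equal to $\go$ is $\boldsymbol{\omega}$ (the case $k=0$). Hence in the quotient every $1_{\mathbf{i}}$ with $\mathbf{i} \neq \boldsymbol{\omega}$ vanishes, and $1 = \sum_{\mathbf{i}} 1_{\mathbf{i}} = 1_{\boldsymbol{\omega}}$. Consequently $\operatorname{SH}^{\operatorname{cyc}}_{d,Q}$ is a quotient of $\operatorname{End}_{\mathcal{T\!AS}}(\boldsymbol{\omega})$, which is identified with $\operatorname{SH}^{\operatorname{aff}}_{d}$ by Lemma~\ref{salt3}.

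Next we identify the kernel of the resulting surjection $\pi\colon \operatorname{SH}^{\operatorname{aff}}_{d} \cong \operatorname{End}_{\mathcal{T\!AS}}(\boldsymbol{\omega}) \to \operatorname{SH}^{\operatorname{cyc}}_{d,Q}$. It is the set $I$ of elements $1_{\boldsymbol{\omega}}a1_{\boldsymbol{\omega}}$ with $a$ in the ideal generated by the $1_{\mathbf{i}}$, $\mathbf{i}\neq\boldsymbol{\omega}$. This is the span of composites $\boldsymbol{\omega} \to \mathbf{i} \to \boldsymbol{\omega}$ factoring through some $\mathbf{i}$ with $\mathbf{i}_{1} = \go$.

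\emph{$Q_{1} \in I$.} By the first relation of \cref{spin4}, the double crossing of the red strand with the first black strand equals the pin $Q$ on that black strand (after adding the remaining black strands on the right). This composite factors through $\go Q\go^{d-1}$, so $Q_1 \in I$, and hence the ideal $\langle Q_{1}\rangle$ is contained in $I$.

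\emph{$I \subseteq \langle Q_{1}\rangle$.} Any composite $\boldsymbol{\omega} \to \mathbf{i} \to \boldsymbol{\omega}$ through such an $\mathbf{i}$ factors through $\go Q\go^{d-1}$, since one can first pull the red strand rightward past the first black strand. More precisely, by Theorem~\ref{HL-basis_Spin} every morphism $\boldsymbol{\omega}\to\mathbf{i}$ is a combination of $\mathbf{x}^{\alpha}T_{\mathbf{i},w,\boldsymbol{\omega}}$, and we may choose these diagrams so that the red strand first crosses the leftmost black strand at the bottom. Then every such composite has the form $b\circ(\text{red-black crossing }\boldsymbol{\omega}\to\go Q\go^{d-1})$. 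Symmetrically, every morphism $\mathbf{i}\to\boldsymbol{\omega}$ ends with the crossing $\go Q\go^{d-1}\to\boldsymbol{\omega}$. So $I$ is spanned by elements $y\,\sigma' g\sigma\, y'$, where $\sigma,\sigma'$ are these single crossings and $g \in \operatorname{End}(\go Q\go^{d-1})$. Writing $g$ in the basis of Theorem~\ref{HL-basis_Spin} and using the relations \cref{spin3}, \cref{spin6} and \cref{spin8}, slide $\sigma'$ down through $g$ toward $\sigma$. Each time $\sigma'$ meets $\sigma$, relation \cref{spin4} produces a factor $Q_{1}$. The correction terms from \cref{spin8} involve the red strand with fewer crossings, and an induction on the number of crossings handles them.

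This last step is the main obstacle. The goal is to show that $\sigma' g \sigma \in \operatorname{SH}^{\operatorname{aff}}_{d} Q_{1}\operatorname{SH}^{\operatorname{aff}}_{d}$ for every $g$. To do this we take $g = \mathbf{x}^{\alpha}T$ with $T$ chosen so that the red strand stays in position $2$ as long as possible, and track the red strand explicitly. A cleaner alternative would be to use the functor $\Omega$: it sends $\sigma$ to the identity and $\sigma'$ to a pin $Q_{1}$, so $\Omega(\sigma' g\sigma) = Q_{1}\,\Omega(g)$. Combining this with faithfulness (Corollary~\ref{herd_Spin}) and the compatibility of $\Omega$ with the identification of Lemma~\ref{salt3} (under which $\Omega$ restricts to the identity on $\operatorname{End}(\boldsymbol{\omega})$), this gives $\sigma' g\sigma = Q_{1}\Omega(g) \in \langle Q_{1}\rangle$ directly. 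So we try this route first.

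Together these give $I = \langle Q_{1}\rangle$, and hence $\operatorname{SH}^{\operatorname{cyc}}_{d,Q} \cong \operatorname{SH}^{\operatorname{aff}}_{d}/\langle Q_{1}\rangle = \operatorname{SH}^{Q}_{d}$.
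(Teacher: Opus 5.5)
Your argument is correct if you take the $\Omega$ route, which is all you need. For the inclusion $I\subseteq\langle Q_{1}\rangle$ it differs genuinely from the paper's.

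\emph{Where you agree with the paper.} The paper's proof is only sketched there, following Webster's Thm.~4.18. It gets surjectivity of $\operatorname{SH}^{\operatorname{aff}}_{d}\to\operatorname{SH}^{\operatorname{cyc}}_{d,Q}$ from the basis theorem, and $\langle Q_{1}\rangle\subseteq\ker$ from \cref{spin4}, as you do. Your observation that $1=1_{\boldsymbol{\omega}}$ in the quotient is a slicker way to get surjectivity.

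\emph{The paper's reverse inclusion.} It argues diagrammatically. Take a diagram in $\operatorname{End}_{\mathcal{T\!AS}}(\boldsymbol{\omega})$ with a piece of black strand to the left of the red strand. After clearing dots with \cref{spin3}, it removes red-black bigons with \cref{spin4} and triangles with the red strand on the right with \cref{spin8}. The induction is on half the number of red-black crossings plus the number of black-black crossings left of the red strand.

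\emph{Your reverse inclusion.} Let $\iota$ be the red-strand-adding bijection of Lemma~\ref{salt3}. Since $\Omega\circ\iota=\mathrm{id}$, every $z\in\operatorname{End}_{\mathcal{T\!AS}}(\boldsymbol{\omega})$ satisfies $z=\iota(\Omega(z))$. Every composite through some $\mathbf{i}\neq\boldsymbol{\omega}$ contains the crossing $\sigma'$, which $\Omega$ turns into $Q_{1}$.

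\emph{Comparison.} Your route is shorter and uses no input beyond what Lemma~\ref{salt3} already requires. It also avoids the planar claim, asserted without proof in the paper's induction, that every such diagram contains a bigon or a suitable triangle. The paper's local reduction, in turn, parallels the general tensor-product arguments of Webster and Moran and does not rely on a functor with $\Omega$'s explicit behaviour on red-black crossings. You can drop the sliding argument entirely.

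Two corrections to the write-up:
\begin{itemize}
\item The relation giving $Q_{1}\in I$ is the \emph{second} relation of \cref{spin4} (red strand on the left), not the first.
\item Your stated reason for the factorization is false. The diagrams $T_{\mathbf{i},w,\boldsymbol{\omega}}$ cannot in general be chosen with a red-black crossing at the very bottom. For $d=2$, $\mathbf{i}=\go Q\go$ and $w=s_{1}$, the red strand must cross the second black strand. Since each red-black pair crosses at most once, it cannot cross the first black strand, so the black-black crossing must occur below any red crossing.
\end{itemize}
What you actually use is still true, for a simpler reason. In any diagram $\mathbf{i}\to\boldsymbol{\omega}$, the topmost layer that moves the red strand moves it from position $2$ to position $1$, so it is $\sigma'$. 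Everything above that layer lies in $\operatorname{End}_{\mathcal{T\!AS}}(\boldsymbol{\omega})$. This top factorization is all the $\Omega$ argument requires; the bottom factor $\sigma$ plays no role.
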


\begin{proof}
The proof of this result is analogous to the proofs of \cite[Thm.~4.18]{Webster} and \cite[Thm.~5.34]{Moran}. Hence it will be omitted.
\end{proof}

\details{We have a non-unital injective superalgebra homomorphism $ \varphi \colon \operatorname{SH}_{d}^{\operatorname{aff}} \rightarrow \operatorname{SH}_{d,Q}^{\operatorname{aff}} $ that adds a single red strand to the left of each diagram. Composing with the projection map $ p \colon \operatorname{SH}_{d,Q}^{\operatorname{aff}} \rightarrow \operatorname{SH}_{d,Q}^{\operatorname{cyc}} $ yields a unital superalgebra homomorphism $ \phi \colon \operatorname{SH}_{d}^{\operatorname{aff}} \rightarrow \operatorname{SH}_{d,Q}^{\operatorname{cyc}} $. So we have the following commutative diagram of maps:
\begin{equation*}
\begin{tikzpicture}[auto]
  \node (A) at (0,0) {$ \operatorname{SH}_{d}^{\operatorname{aff}} $};
  \node (B) at (2.5,0) {$ \operatorname{SH}_{d,Q}^{\operatorname{aff}} $};
  \node (C) at (5,0) {$ \operatorname{SH}_{d,Q}^{\operatorname{cyc}} $};
  \draw[->] (A) -- node[above] {$ \varphi $} (B);
  \draw[->] (B) -- node[above] {$ p $} (C);
  \draw[->, bend left=40] (A) to node[above] {$ \phi $} (C);
\end{tikzpicture}
\end{equation*}
By Theorem \ref{HL-basis_Spin}, there is a spanning set of $ \operatorname{SH}_{d,Q}^{\operatorname{cyc}} $ that lies in the image of $ \phi $. This gives that $ \phi $ is surjective. Set $ \mathcal{I} $ to be the two-sided ideal of $ \operatorname{SH}_{d}^{\operatorname{aff}} $ generated by $ Q_{1} $. Then in order to prove the result, it only remains to show that the kernel of $ \phi $ is equal to $ \mathcal{I} $. That the latter is contained in the former follows by an application of \cref{spin4}. Namely, we have
\begin{equation*}
\phi\left(\begin{tikzpicture}[centerzero, thick]
        \draw[-] (0.3,-0.5) -- (0.3,0.5);
        \draw[-] (0.8,-0.5) -- (0.8,0.5);
        \node at (1.4,0) {$ \cdots $};
        \pin{0.3,0}{-0.5,0}{Q};
        \draw[-] (2,-0.5) -- (2,0.5);
\end{tikzpicture}\ \right)
\ =
\begin{tikzpicture}[centerzero, thick]
        \draw[wei] (-0.9,-0.5) -- (-0.9,0.5);
        \draw[-] (0.3,-0.5) -- (0.3,0.5);
        \draw[-] (0.8,-0.5) -- (0.8,0.5);
        \node at (1.4,0) {$ \cdots $};
        \node at (-0.9,-0.7) {\tiny $ Q $};
        \pin{0.3,0}{-0.5,0}{Q};
        \draw[-] (2,-0.5) -- (2,0.5);
\end{tikzpicture}
\ =
\begin{tikzpicture}[centerzero, thick]
        \draw[-] (0.2,-0.5) to[out=up,in=down] (-0.2,0) to[out=up,in=down] (0.2,0.5);
        \draw[-, wei] (-0.2,-0.5) to[out=up,in=down] (0.2,0) to[out=up,in=down] (-0.2,0.5);
        \draw[-] (0.7,-0.5) -- (0.7,0.5);
        \draw[-] (1.9,-0.5) -- (1.9,0.5);
        \node at (1.3,0) {$ \cdots $};        
        \node at (-0.2,-0.7) {\tiny $ Q $};
\end{tikzpicture}
\ = 0.
\end{equation*}
On the other hand, suppose $ y \in \operatorname{ker}(\phi) $. Then by the injectivity of $ \varphi $, it suffices to show that $ \varphi(y) \in \varphi(\mathcal{I}) $.
\\ \indent Note first that since $ y \in \operatorname{ker}(\phi) $, and $ \phi = p \circ \varphi $, we have that $ \varphi(y) $ is in the kernel of the projection map \(p\). So $ \varphi(y) $ is a $ \mathbb{C} $-linear combination of diagrams with a piece of black strand to the left of the red strand. If $ E $ is such a diagram, then we prove by induction that $ E \in \varphi(\mathcal{I}) $, from which it will follow by $ \mathbb{C} $-linearity that $ \varphi(y) \in \varphi(\mathcal{I}) $. The value $ c $ that we induct on is half the number of red-black crossings plus the number of black-black crossings to the left of the red strand in \(E\). For the base case, if $ c = 1 $, then there is a single black strand in $ E $ which crosses over the red strand and immediately crosses back:
\begin{equation*}
\begin{tikzpicture}[centerzero, thick]
        \draw[-] (0.2,-0.5) to[out=up,in=down] (-0.2,0) to[out=up,in=down] (0.2,0.5);
        \draw[-, wei] (-0.2,-0.5) to[out=up,in=down] (0.2,0) to[out=up,in=down] (-0.2,0.5);
        \node at (-0.2,-0.7) {\tiny $ Q $};
\end{tikzpicture} \ .
\end{equation*}
Note that this double crossing may contain some dots on the black strand, but these can be moved out by the relation \cref{spin3}. Then an application of \cref{spin4} shows that $ E \in \varphi(\mathcal{I}) $, giving that the base case holds. Next, if $ c > 1 $, then \(E\) either has a red-black double crossing or a red-black triangle, where the red strand is on the right:
\begin{equation*}
\begin{tikzpicture}[centerzero, thick]
        \draw[-] (0.2,-0.5) to[out=up,in=down] (-0.2,0) to[out=up,in=down] (0.2,0.5);
        \draw[-, wei] (-0.2,-0.5) to[out=up,in=down] (0.2,0) to[out=up,in=down] (-0.2,0.5);
        \node at (-0.2,-0.7) {\tiny $ Q $};
\end{tikzpicture} \ ,
\qquad 
\begin{tikzpicture}[centerzero, thick]
        \draw[-] (2,-0.5) -- (1.2,0.5);
        \draw[-] (1.2,-0.5) -- (2,0.5);
        \node at (1.6,-0.7) {\tiny $ Q $};
        \draw[-, wei] (1.6,-0.5) to[out=up, in=down] (2,0) to[out=up,in=down] (1.6,0.5);
\end{tikzpicture} \ .
\end{equation*}
Again, this double crossing or triangle may contain some dots on the black strands, but these can be moved out by the relation \cref{spin3}. Then applying \cref{spin4} if \(E\) has a double crossing, or applying \cref{spin8} if \(E\) has a triangle, shows that \(E\) can be written as a $ \mathbb{C} $-linear combination of diagrams with smaller \(c\). Thus by the induction hypothesis, we have $ E \in \varphi(\mathcal{I}) $.}

\section{The superalgebras $ \operatorname{SH}_{d,\mathbf{Q}}^{\operatorname{aff}}(\operatorname{Cl}) $} \label{higher-level_CSHA}

In this section, we define the superalgebras $ \operatorname{SH}_{d,\mathbf{Q}}^{\operatorname{aff}}(\operatorname{Cl}) $ and their cyclotomic quotients $ \operatorname{SH}_{d,\mathbf{Q}}^{\operatorname{cyc}}(\operatorname{Cl}) $. The main result of this section is Proposition~\ref{greenleaves}, which says that $ \operatorname{SH}_{d,\mathbf{Q}}^{\operatorname{aff}}(\operatorname{Cl}) $ is isomorphic to $ \operatorname{Cl}_{d} \otimes \operatorname{SH}_{d,\mathbf{Q}}^{\operatorname{aff}} $, and $ \operatorname{SH}_{d,\mathbf{Q}}^{\operatorname{cyc}}(\operatorname{Cl}) $ is isomorphic to $ \operatorname{Cl}_{d} \otimes \operatorname{SH}_{d,\mathbf{Q}}^{\operatorname{cyc}} $.

\subsection{The superalgebras $ \operatorname{OPol}_{n}(\operatorname{Cl}) $}

\begin{defn} \label{Odd_category_Cl}
We define the supercategory $ \mathpzc{OPol}(\operatorname{Cl}) $ to be the strict $ \mathbb{C} $-linear monoidal supercategory generated by one object $ \go $, and two odd morphisms
\begin{equation*}
\begin{tikzpicture}[centerzero, thick]
        \draw[-] (0,-0.2) -- (0,0.2);
        \singdot{0,0};
\end{tikzpicture} 
\ \colon \go \to \go, \qquad 
\begin{tikzpicture}[centerzero, thick]
        \draw[-] (0,-0.2) -- (0,0.2);
        \xtoken{0,0};
    \end{tikzpicture} \colon \go \to \go \ ,
\end{equation*}
with defining relations given by
\begin{equation} \label{spinCl123}
\begin{tikzpicture}[anchorbase, thick]
        \draw[-] (0,0) -- (0,0.7);
        \xtoken{0,0.2};
        \xtoken{0,0.45};
\end{tikzpicture}
=
\begin{tikzpicture}[anchorbase, thick]
        \draw[-] (0,0) -- (0,0.7);
\end{tikzpicture} \ ,
\quad 
\begin{tikzpicture}[centerzero, thick]
        \draw[-] (0,-0.4) -- (0,0.4);
        \xtoken{0,0.15};
        \singdot{0,-0.15};
\end{tikzpicture}
= -
\begin{tikzpicture}[centerzero, thick]
        \draw[-] (0,-0.4) -- (0,0.4);
        \xtoken{0,-0.15};
        \singdot{0,0.15};
\end{tikzpicture} \ .
\end{equation}
We refer to the morphism $ \begin{tikzpicture}[centerzero, thick]
        \draw[-] (0,-0.2) -- (0,0.2);
        \xtoken{0,0};
\end{tikzpicture} $ as the \emph{Clifford token}. For $ n \in \mathbb{N} $, we define the superalgebra
\begin{equation*}
\operatorname{OPol}_{n}(\operatorname{Cl}) := \operatorname{End}_{\mathpzc{OPol}(\operatorname{Cl})}(\go^{\otimes n}).
\end{equation*}
\end{defn}

\begin{prop} \label{grassroots}
Let $ n \in \mathbb{N}_{+} $. Then $ \operatorname{OPol}_{n}(\operatorname{Cl}) $ is isomorphic to the free associative superalgebra on the odd generators $ c_{1}, \ldots, c_{n}, x_{1}, \ldots, x_{n} $, subject to the relations \cref{Clifford1}, \cref{Clifford2}, \cref{oddrel}, and
\begin{align}
c_{i}x_{j} &= -x_{j}c_{i}, & 1 \leq i,j \leq n. \label{grassroots2}
\end{align}
Under this isomorphism, $ c_{i} $ corresponds to the Clifford token on the $ i $-th strand, and $ x_{i} $ corresponds to the dot on the $ i $-th strand.
\end{prop}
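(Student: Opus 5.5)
The plan is to apply Proposition~\ref{zebra} directly to the presentation of $ \mathpzc{OPol}(\operatorname{Cl}) $ in Definition~\ref{Odd_category_Cl}, exactly as was done for Proposition~\ref{oddrel_def}. The category has one generating object $ \go $ and two generating morphisms $ f_{1} $ (the dot) and $ f_{2} $ (the Clifford token), both odd endomorphisms of $ \go $, so $ n_{1} = n_{2} = 1 $. Hence, for each $ 0 \leq k \leq n-1 $, Proposition~\ref{zebra} produces the generators $ 1_{\go}^{\otimes k} \otimes f_{i} \otimes 1_{\go}^{\otimes(n-k-1)} $. I name the dot on the $(k+1)$-th strand $ x_{k+1} $ and the token on the $(k+1)$-th strand $ c_{k+1} $; all of these are odd.

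Next I would translate the two families of relations.

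\textbf{Relations \cref{zebra1}.} The two relations in \cref{spinCl123} live in $ \operatorname{End}(\go) $, so $ m_{j} = 1 $. Placing them on the $i$-th strand gives $ c_{i}^{2} = 1 $, which is \cref{Clifford1}, and $ c_{i}x_{i} = -x_{i}c_{i} $, which is the diagonal case $ i = j $ of \cref{grassroots2}.

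\textbf{Relations \cref{zebra2}.} These come from the super interchange law applied to generators on distinct strands. Since every generator is odd, the sign is always $ -1 $. For $ i \neq j $ they give:
\begin{itemize}
\item $ x_{i}x_{j} = -x_{j}x_{i} $, which is \cref{oddrel};
\item $ c_{i}c_{j} = -c_{j}c_{i} $, which is \cref{Clifford2};
\item $ c_{i}x_{j} = -x_{j}c_{i} $, which is the off-diagonal case of \cref{grassroots2}.
\end{itemize}
One technical point needs care. Proposition~\ref{zebra} lists \cref{zebra2} only for pairs in which the first generator sits strictly to the left of the second (the condition $ k_{2} \geq n_{j} + l_{2} $). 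To get $ c_{i}x_{j} = -x_{j}c_{i} $ for $ i > j $, I take the mixed pair with the token to the right, and rearrange that relation using the fact that each generator is odd.

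The main obstacle is merely this bookkeeping of which ordered pairs of generators Proposition~\ref{zebra} provides. Because each relation is a supercommutation with a symmetric sign, every ordered version is equivalent to the unordered one, so nothing is lost. Combining the two families gives exactly the relations \cref{Clifford1}, \cref{Clifford2}, \cref{oddrel} and \cref{grassroots2}. The isomorphism sends $ c_{i} $ to the token on strand $ i $ and $ x_{i} $ to the dot on strand $ i $, as claimed.
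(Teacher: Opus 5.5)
Your proposal is correct and is the paper's argument: the paper's proof is just the citation of Proposition~\ref{zebra}. Your translation of \cref{spinCl123} and of the odd--odd interchange relations into \cref{Clifford1}, \cref{Clifford2}, \cref{oddrel} and \cref{grassroots2} spells that citation out (and the ordering issue is milder than you suggest, since \cref{zebra2} ranges over all ordered pairs of generator types, so both mixed token--dot orderings are supplied directly).
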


\begin{proof}
This follows from Proposition \ref{zebra}.
\end{proof}

From here onwards, we identify $ \operatorname{OPol}_{n}(\operatorname{Cl}) $ with the algebra presented in Proposition \ref{grassroots}. It follows that we have an isomorphism of superalgebras
\begin{equation}
\operatorname{OPol}_{n}(\operatorname{Cl}) \cong \operatorname{Cl}_{n} \otimes \operatorname{OPol}_{n}.
\end{equation}

\begin{lem} \label{center_Odd(Cl)}
The even center of $ \operatorname{OPol}_{1}(\operatorname{Cl}) $ is equal to $ \mathbb{C}[x^{2}] $. Furthermore, 
\begin{equation} \label{center_Odd(Cl)2}
\mathcal{E}_{\operatorname{OPol}_{1}(\operatorname{Cl})} = \mathbb{C}[x^{2}] \setminus 0 = \mathcal{E}_{\operatorname{OPol}_{1}}.
\end{equation}
\end{lem}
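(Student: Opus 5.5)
We first determine the even center of $ \operatorname{OPol}_{1}(\operatorname{Cl}) $ by working with the explicit basis coming from Proposition~\ref{grassroots}. Since $ \operatorname{OPol}_{1}(\operatorname{Cl}) \cong \operatorname{Cl}_{1} \otimes \operatorname{OPol}_{1} $, this superalgebra has basis $ \{x^{k}, c x^{k} : k \in \mathbb{N}\} $, where $ c := c_{1} $. The parity of $ x^{k} $ is $ k \bmod 2 $, and the parity of $ cx^{k} $ is $ (k+1) \bmod 2 $. Hence an even element has the form
\begin{equation*}
z = \sum_{k} a_{k} x^{2k} + \sum_{k} b_{k} c x^{2k+1}.
\end{equation*}

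Next we test centrality against the generators. By \cref{grassroots2}, $ x^{2} $ commutes with both $ c $ and $ x $, so $ \mathbb{C}[x^{2}] $ lies in the even center. Conversely, it suffices to show that each $ b_{k} $ vanishes. Since $ z $ is even, it must satisfy $ zc = cz $. We compute
\begin{equation*}
c \cdot cx^{2k+1} = x^{2k+1}, \qquad cx^{2k+1} \cdot c = -c^{2}x^{2k+1} = -x^{2k+1}.
\end{equation*}
The sign in the second computation comes from moving $ c $ past an odd power of $ x $. Comparing coefficients of $ x^{2k+1} $ in the basis then forces $ b_{k} = -b_{k} $, so $ b_{k} = 0 $. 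This proves that the even center is $ \mathbb{C}[x^{2}] $.

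For the set $ \mathcal{E}_{\operatorname{OPol}_{1}(\operatorname{Cl})} $, we must show that every nonzero $ f \in \mathbb{C}[x^{2}] $ is regular, while $ 0 $ is not. The main point is that $ \operatorname{OPol}_{1}(\operatorname{Cl}) $ is not a domain, since, for example, $ (1+c)(1-c) = 0 $. So we cannot argue as in Lemma~\ref{sitting}, and instead use the freeness of $ \operatorname{OPol}_{1}(\operatorname{Cl}) $ over $ \mathbb{C}[x] $. Write an arbitrary element as $ g = p(x) + c\,q(x) $. Then $ fg = fp + c\,(fq) $, because $ f $ is even and commutes with $ c $. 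Since $ \mathbb{C}[x] $ is a domain, $ fg = 0 $ forces $ p = q = 0 $. Right multiplication is handled in the same way, since $ f $ is central. Finally, $ 0 $ is not regular, so $ \mathcal{E}_{\operatorname{OPol}_{1}(\operatorname{Cl})} = \mathbb{C}[x^{2}] \setminus 0 $, and this equals $ \mathcal{E}_{\operatorname{OPol}_{1}} $ by Lemma~\ref{sitting}.

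We expect the only delicate step to be this regularity check, precisely because $ \operatorname{OPol}_{1}(\operatorname{Cl}) $ has zero-divisors and the domain argument of Lemma~\ref{sitting} does not carry over.
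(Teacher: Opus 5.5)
Your proof is correct and follows essentially the same basis-coefficient argument as the paper. The only difference in the first part is the test element: the paper's detailed version checks commutation with $x$, while you check it with $c$; either forces the coefficients of $c\,x^{2k+1}$ to vanish. For the second part, your explicit regularity check via $g = p(x) + c\,q(x)$ fills in a step the paper compresses into a citation of Lemma~\ref{sitting}, which by itself only gives regularity in $\operatorname{OPol}_{1}$, not in $\operatorname{OPol}_{1}(\operatorname{Cl})$, which is not a domain.
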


\begin{proof}
The first statement is straightforward to check. The equation \cref{center_Odd(Cl)2} then follows from the first statement, Lemma \ref{sitting}, and the definition of $ \mathcal{E}_{\operatorname{OPol}_{1}(\operatorname{Cl})} $.
\end{proof}

\details{We prove the first statement here. Let $ f $ be a polynomial in $ x^{2} $ with coefficients in $ \mathbb{C} $. Then it is clear that \(f\) commutes with $ c $ and $ x $, from which we obtain that \(f\) lies in the even center of $ \operatorname{OPol}_{1}(\operatorname{Cl}) $. For the reverse inclusion, if \(f\) lies in the even center of $ \operatorname{OPol}_{1}(\operatorname{Cl}) $, then we may write that $ f = \sum_{k\geq 0} a_{(k)}x^{k} $ for some $ a_{(k)} \in \operatorname{Cl} $ of parity $ k $. If \(k\) is even, then $ a_{(k)} $ must be even, and so $ a_{(k)} \in \mathbb{C} $ in this case. Thus it remains to show that $ a_{(k)} = 0 $ for all odd $ k $. We compute that
\begin{equation} \label{vroom}
0 = fx-xf = \sum_{k\geq 0} (1-(-1)^{k})a_{(k)}x^{k+1}.
\end{equation}
If $ k $ is odd, then we obtain by \cref{vroom} that $ a_{(k)} = 0 $ as required.}

\subsection{The superalgebras $ \operatorname{SH}_{n}^{\operatorname{aff}}(\operatorname{Cl}) $}

\begin{defn} \label{S(Cl)}
We define the supercategory $ \mathcal{AS}(\operatorname{Cl}) $ to be the strict $ \mathbb{C} $-linear monoidal supercategory generated by one object $ \go $, and three odd morphisms
\begin{gather}
    \begin{tikzpicture}[centerzero, thick]
        \draw[-] (-0.2,-0.2) -- (0.2,0.2);
        \draw[-] (0.2,-0.2) -- (-0.2,0.2);
    \end{tikzpicture}
      \ \colon
    \go \otimes \go \to \go \otimes \go, \qquad
 \begin{tikzpicture}[anchorbase, thick]
        \draw[-] (0,-0.2) -- (0,0.2);
        \singdot{0,0};
\end{tikzpicture}
\ \colon
\go \to \go, \qquad
    \begin{tikzpicture}[centerzero, thick]
        \draw[-] (0,-0.2) -- (0,0.2);
        \xtoken{0,0};
    \end{tikzpicture} \colon \go \to \go \ .
\end{gather}
The relations on the morphisms are given by \cref{spin1}, \cref{spin2}, \cref{spinCl123}, and 
\begin{gather} \label{spinCl1}
\begin{tikzpicture}[centerzero, thick]
        \draw[-] (0.3,-0.4) -- (-0.3,0.4);
        \draw[-] (-0.3,-0.4) -- (0.3,0.4);
        \xtoken{-0.15,-0.2};
\end{tikzpicture}
= -
\begin{tikzpicture}[centerzero, thick]
        \draw[-] (0.3,-0.4) -- (-0.3,0.4);
        \draw[-] (-0.3,-0.4) -- (0.3,0.4);
        \xtoken{-0.15,0.2};
\end{tikzpicture} \ ,
\quad
\begin{tikzpicture}[centerzero, thick]
        \draw[-] (0.3,-0.4) -- (-0.3,0.4);
        \draw[-] (-0.3,-0.4) -- (0.3,0.4);
        \xtoken{0.15,-0.2};
\end{tikzpicture}
= -
\begin{tikzpicture}[centerzero, thick]
        \draw[-] (0.3,-0.4) -- (-0.3,0.4);
        \draw[-] (-0.3,-0.4) -- (0.3,0.4);
        \xtoken{0.15,0.2};
\end{tikzpicture} \ .
\end{gather}
For $ n \in \mathbb{N} $, we define the superalgebra
\begin{equation*}
\operatorname{SH}^{\operatorname{aff}}_{n}(\operatorname{Cl}) := \operatorname{End}_{\mathcal{AS}(\operatorname{Cl})}(\go^{\otimes n}).
\end{equation*}
\end{defn}

It follows from Proposition \ref{zebra} that there is a superalgebra isomorphism
\begin{equation*}
\operatorname{SH}^{\operatorname{aff}}_{n}(\operatorname{Cl}) \cong \operatorname{Cl}_{n} \otimes \operatorname{SH}^{\operatorname{aff}}_{n}.
\end{equation*}

\subsection{The superalgebras $ \operatorname{SH}^{\operatorname{aff}}_{d,\mathbf{Q}}(\operatorname{Cl}) $}

\begin{defn} \label{LAS(Cl)}
We set $ \mathcal{T\!AS}(\operatorname{Cl}) $ to be the strict $ \mathbb{C} $-linear monoidal supercategory defined as follows. The objects are generated by the elements in the set $ \widehat{\mathcal{E}}_{\operatorname{OPol}_{1}} $, and the morphisms are generated by
\begin{gather}
    \begin{tikzpicture}[centerzero, thick]
        \draw[-] (-0.2,-0.2) -- (0.2,0.2);
        \draw[-] (0.2,-0.2) -- (-0.2,0.2);
    \end{tikzpicture}
      \ \colon
    \go \otimes \go \to \go \otimes \go, \qquad
 \begin{tikzpicture}[anchorbase, thick]
        \draw[-] (0,-0.2) -- (0,0.2);
        \singdot{0,0};
\end{tikzpicture}
\ \colon
\go \to \go, \qquad
    \begin{tikzpicture}[centerzero, thick]
        \draw[-] (0,-0.2) -- (0,0.2);
        \xtoken{0,0};
    \end{tikzpicture} \colon \go \to \go \ ,
\label{tuxedo1} \\
\begin{tikzpicture}[centerzero, thick]
        \draw[-] (0.2,-0.2) -- (-0.2,0.2);
        \draw[-, wei] (-0.2,-0.2) -- (0.2,0.2);
        \node at (-0.2,-0.4) {\tiny $ Q $};
\end{tikzpicture}
\ \colon Q \otimes \go \to \go \otimes Q, \qquad
\begin{tikzpicture}[centerzero, thick]
        \draw[-] (-0.2,-0.2) -- (0.2,0.2);
        \draw[-,wei] (0.2,-0.2) -- (-0.2,0.2);
        \node at (0.2,-0.4) {\tiny $ Q $};
\end{tikzpicture}
\colon \go \otimes Q \to Q \otimes \go, \label{tuxedo2}
\end{gather}
for all $ Q \in \mathcal{E}_{\operatorname{OPol}_{1}} $. The generating morphisms in \cref{tuxedo1} are odd, and the generating morphisms in \cref{tuxedo2} are even. The relations on the morphisms are given by \cref{spin1}, \cref{spin2}, \cref{spin3,spin4,spin6,spin8}, \cref{spinCl123}, \cref{spinCl1}, and 
\begin{gather} 
\label{spinCl2}
\begin{tikzpicture}[centerzero, thick]
        \draw[-] (-0.3,-0.4) -- (0.3,0.4);
        \draw[-, wei] (0.3,-0.4) -- (-0.3,0.4);
        \xtoken{-0.15,-0.2};
        \node at (0.3,-0.6) {\tiny $ Q $};
\end{tikzpicture}
= 
\begin{tikzpicture}[centerzero, thick]
        \draw[-] (-0.3,-0.4) -- (0.3,0.4);
        \draw[-, wei] (0.3,-0.4) -- (-0.3,0.4);
        \xtoken{0.15,0.2};
        \node at (0.3,-0.6) {\tiny $ Q $};
\end{tikzpicture}
\ , \quad
\begin{tikzpicture}[centerzero, thick]
        \draw[-] (0.3,-0.4) -- (-0.3,0.4);
        \draw[-, wei] (-0.3,-0.4) -- (0.3,0.4);
        \xtoken{0.15,-0.2};
        \node at (-0.3,-0.6) {\tiny $ Q $};
\end{tikzpicture}
= 
\begin{tikzpicture}[centerzero, thick]
        \draw[-] (0.3,-0.4) -- (-0.3,0.4);
        \draw[-, wei] (-0.3,-0.4) -- (0.3,0.4);
        \xtoken{-0.15,0.2};
        \node at (-0.3,-0.6) {\tiny $ Q $};
\end{tikzpicture} \ , \qquad Q \in \mathcal{E}_{\operatorname{OPol}_{1}}.
\end{gather}
\end{defn}

The set of objects in $ \mathcal{T\!AS}(\operatorname{Cl}) $ is $ F(\widehat{\mathcal{E}}_{\operatorname{OPol}_{1}}) $. Recall that, for $ d \in \mathbb{N} $ and $ \mathbf{Q} \in F(\mathcal{E}_{\operatorname{OPol}_{1}}) $, we have the set $ \Gamma_{d,\mathbf{Q}} $ from Definition \ref{fluffy_spin}.

\begin{defn} \label{HLDSAHA_Cliff}
Let $ d \in \mathbb{N} $ and $ \mathbf{Q} \in F(\mathcal{E}_{\operatorname{OPol}_{1}}) $. Then we define the superalgebra
\begin{equation} \label{HLDSAHA22_Cliff}
\operatorname{SH}^{\operatorname{aff}}_{d,\mathbf{Q}}(\operatorname{Cl}) := \operatorname{End}_{\operatorname{Add}(\mathcal{T\!AS}(\operatorname{Cl}))}\left(\bigoplus_{\mathbf{i} \in \Gamma_{d,\mathbf{Q}}}\mathbf{i}\right).
\end{equation}
Here, $ \operatorname{Add}(\mathcal{T\!AS}(\operatorname{Cl})) $ is the additive envelope of $ \mathcal{T\!AS}(\operatorname{Cl}) $. 
\end{defn}

We next define cyclotomic quotients of the superalgebras $ \operatorname{SH}_{d,\mathbf{Q}}^{\operatorname{aff}}(\operatorname{Cl}) $.

\begin{defn} 
Let $ d \in \mathbb{N}_{+} $ and $ \mathbf{Q} \in F(\mathcal{E}_{\operatorname{OPol}_{1}}) $. Then we define the superalgebra $ \operatorname{SH}^{\operatorname{cyc}}_{d,\mathbf{Q}}(\operatorname{Cl}) $ to be the quotient of $ \operatorname{SH}^{\operatorname{aff}}_{d,\mathbf{Q}}(\operatorname{Cl}) $ by the two-sided ideal generated by the elements in the set $ \{1_{\mathbf{i}} : \mathbf{i} \in \Gamma_{d,\mathbf{Q}}, \ \mathbf{i}_{1} = \go \} $.
\end{defn}

\subsection{Basis result}

In this section, we fix an integer $ d \in \mathbb{N} $ and a word $ \mathbf{Q} \in F(\mathcal{E}_{\operatorname{OPol}_{1}}) $. We will state a basis of $ \operatorname{Hom}_{\mathcal{T\!AS}(\operatorname{Cl})}(\mathbf{i}, \mathbf{j}) $ for all $ \mathbf{i}, \mathbf{j} \in \Gamma_{d,\mathbf{Q}} $. There is a monoidal functor $ \mathcal{T\!AS} \rightarrow \mathcal{T\!AS}(\operatorname{Cl}) $ which sends the generating objects and morphisms in $ \mathcal{T\!AS} $ to the objects and morphisms in $ \mathcal{T\!AS}(\operatorname{Cl}) $ of the same name. This functor induces a $ \mathbb{C} $-linear map
\begin{equation*}
\operatorname{Hom}_{\mathcal{T\!AS}}(\mathbf{i},\mathbf{j}) \rightarrow \operatorname{Hom}_{\mathcal{T\!AS}(\operatorname{Cl})}(\mathbf{i},\mathbf{j})
\end{equation*}
for each $ \mathbf{i}, \mathbf{j} \in \Gamma_{d,\mathbf{Q}} $. We use this map to view the morphism $ T_{\mathbf{j},w,\mathbf{i}} $ (see Definition \ref{Generalized_crossings}) as a morphism in $ \operatorname{Hom}_{\mathcal{T\!AS}(\operatorname{Cl})}(\mathbf{i},\mathbf{j}) $. If $ \alpha = (\alpha_{1},\ldots,\alpha_{d}) \in \mathbb{N}^{d} $, then we set $ \mathbf{c}_{\mathbf{i}}^{\alpha} = (c_{1,\mathbf{i}})^{\alpha_{1}}\cdots (c_{d,\mathbf{i}})^{\alpha_{d}} $, where $ c_{i,\mathbf{i}} \in \operatorname{End}_{\mathcal{T\!AS}(\operatorname{Cl})}(\mathbf{i}) $, for $ 1 \leq i \leq d $, is the Clifford token on the \(i\)-th black strand. We now define the following subset of $ \operatorname{Hom}_{\mathcal{T\!AS}(\operatorname{Cl})}(\mathbf{i}, \mathbf{j}) $:
\begin{align*}
_{\mathbf{j}} \mathcal{C}_{\mathbf{i}} := \{\mathbf{c}_{\mathbf{j}}^{\alpha}\mathbf{x}_{\mathbf{j}}^{\beta}T_{\mathbf{j},w,\mathbf{i}} : \alpha \in (\{0,1\})^{d}, \ \beta \in \mathbb{N}^{d}, \ w \in S_{d} \}.
\end{align*}

\begin{prop} \label{HL-basis_Spin-Cl}
Let $ \mathbf{i}, \mathbf{j} \in \Gamma_{d,\mathbf{Q}} $. Then the set $ _{\mathbf{j}} \mathcal{C}_{\mathbf{i}} $ is a $ \mathbb{C} $-basis of $ \operatorname{Hom}_{\mathcal{T\!AS}(\operatorname{Cl})}(\mathbf{i}, \mathbf{j}) $.
\end{prop}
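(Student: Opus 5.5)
The plan is to prove spanning and linear independence separately. For linear independence we reduce to Theorem~\ref{HL-basis_Spin} by building a functor that realizes $\mathcal{T\!AS}(\operatorname{Cl})$ as $\operatorname{Cl}$-tokens tensored with $\mathcal{T\!AS}$.

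\emph{Spanning.} Let $E$ be a diagram in $\operatorname{Hom}_{\mathcal{T\!AS}(\operatorname{Cl})}(\mathbf{i},\mathbf{j})$. The relations \cref{spinCl1} (Clifford tokens pass through black-black crossings up to sign) and \cref{spinCl2} (tokens pass through red-black crossings) let us move every Clifford token to the top of $E$. The super interchange law and the anticommutation in \cref{spinCl123} then move all tokens above all dots. Since \cref{spinCl1} and \cref{spinCl2} produce no correction terms, this only changes $E$ by a sign. Using \cref{spinCl123} and the fact that tokens on distinct strands anticommute, the tokens reduce to $\pm\mathbf{c}_{\mathbf{j}}^{\alpha}$ with $\alpha \in \{0,1\}^d$. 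What remains underneath is a token-free diagram, which is the image of a morphism of $\mathcal{T\!AS}$. By Lemma~\ref{greenpen} that morphism lies in the span of ${}_{\mathbf{j}}\mathcal{B}_{\mathbf{i}}$, so $E$ lies in the span of ${}_{\mathbf{j}}\mathcal{C}_{\mathbf{i}}$.

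\emph{Linear independence.} We construct a faithful-enough representation. Consider the supercategory $\mathcal{D}$ with objects $F(\widehat{\mathcal{E}}_{\operatorname{OPol}_1})$ and morphisms
\[
\operatorname{Hom}_{\mathcal{D}}(\mathbf{i},\mathbf{j}) := \operatorname{Cl}_{d} \otimes \operatorname{Hom}_{\mathcal{T\!AS}}(\mathbf{i},\mathbf{j}),
\]
where $d$ is the number of $\go$'s, with the super tensor-product composition. We define a functor $\mathcal{T\!AS}(\operatorname{Cl}) \to \mathcal{D}$, or more concretely an algebra map $\operatorname{SH}^{\operatorname{aff}}_{d,\mathbf{Q}}(\operatorname{Cl}) \to \operatorname{Cl}_d \otimes \operatorname{SH}^{\operatorname{aff}}_{d,\mathbf{Q}}$, on generators as follows:
\begin{itemize}
\item the Clifford token on the $i$-th black strand of $\mathbf{i}$ goes to $c_i \otimes 1_{\mathbf{i}}$;
\item a dot goes to $1 \otimes x_{i,\mathbf{i}}$;
\item red-black crossings go to $1 \otimes (\text{crossing})$;
\item a black-black crossing swapping strands $k,k+1$ goes to $\tfrac{1}{\sqrt{2}}(c_k - c_{k+1}) \otimes (\text{crossing})$, or a similar twisted form.
\end{itemize}

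The main obstacle is choosing these images so that all relations hold. Naively sending the crossing to $1\otimes(\text{crossing})$ fails: it gives $c_k$ and the crossing supercommuting, while \cref{spinCl1} says the token is carried along the strand with a sign. The fix is to let the crossing carry the permutation action on the Clifford generators. This amounts to defining instead a representation of $\operatorname{Cl}_d \rtimes$ (the $\mathcal{T\!AS}$ data) on the superspace
\[
\operatorname{Cl}_d \otimes \bigoplus_{\mathbf{i}} \operatorname{Hom}_{\mathcal{T\!AS}}(\boldsymbol{\omega},\mathbf{i}),
\]
where crossings act by permuting Clifford factors. We then check \cref{spinCl1}, \cref{spinCl2} and the $\mathcal{T\!AS}$ relations directly, using $\operatorname{OPol}_{n}(\operatorname{Cl}) \cong \operatorname{Cl}_{n} \otimes \operatorname{OPol}_{n}$ for the dot-token relations.

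Given such a representation, we apply it to a linear dependence $\sum \lambda\, \mathbf{c}^{\alpha}_{\mathbf{j}}\mathbf{x}^{\beta}_{\mathbf{j}}T_{\mathbf{j},w,\mathbf{i}} = 0$ and evaluate on a suitable vector. The images of the tokens $\mathbf{c}^\alpha$ are linearly independent in $\operatorname{Cl}_d$, and they sit in a tensor factor separate from the $\mathcal{T\!AS}$ part. So the dependence splits into one relation per $\alpha$ among the elements $\mathbf{x}^{\beta}_{\mathbf{j}}T_{\mathbf{j},w,\mathbf{i}}$, and each such relation is trivial by Theorem~\ref{HL-basis_Spin}.

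An alternative for this last step mirrors the proof of Theorem~\ref{HL-basis_Spin}. Extend $\Omega$ to $\Omega_{\operatorname{Cl}}\colon \mathcal{T\!AS}(\operatorname{Cl}) \to \mathcal{AS}(\operatorname{Cl})$, sending the token to the token. Then use the triangularity of Lemma~\ref{cactus2_1123} together with the basis $\{c^\alpha x^\beta T_w\}$ of $\operatorname{SH}^{\operatorname{aff}}_d(\operatorname{Cl}) \cong \operatorname{Cl}_d \otimes \operatorname{SH}^{\operatorname{aff}}_d$, which comes from Proposition~\ref{cell}. The leading coefficients $h_{w,w}$ are regular central even elements in $\mathbb{C}[x_1^2,\dots,x_d^2]$, so they remain regular in $\operatorname{Cl}_d\otimes\operatorname{OPol}_d$. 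This lets us cancel them and conclude. We would pursue this route first, as it avoids constructing the representation $\mathcal{D}$.
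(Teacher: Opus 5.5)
The route you say you would actually pursue is correct and is essentially the paper's omitted argument. You extend $\Omega$ to $\mathcal{T\!AS}(\operatorname{Cl})\to\mathcal{AS}(\operatorname{Cl})$ with the token going to the token, and use the triangular expansion of Lemma~\ref{cactus2_1123} inside $\operatorname{SH}^{\operatorname{aff}}_{d}(\operatorname{Cl})\cong\operatorname{Cl}_d\otimes\operatorname{SH}^{\operatorname{aff}}_d$, which is free over $\operatorname{OPol}_d(\operatorname{Cl})$ on $\{T_w\}$. You then cancel the leading coefficient $h_{v,v}$, which stays regular in $\operatorname{OPol}_d(\operatorname{Cl})$ for exactly the reason you give. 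Your spanning argument is a valid shortcut for the paper's rerun of the crossing induction inside $\mathcal{T\!AS}(\operatorname{Cl})$: push all tokens to the top via \cref{spinCl1}, \cref{spinCl2}, \cref{spinCl123}, which create no correction terms, then apply Lemma~\ref{greenpen} to the token-free remainder.

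Your first route, however, rests on a misreading of \cref{spinCl1}. In that relation the token sits at the left (resp.\ right) end both below and above the crossing. So it keeps its horizontal position and just anticommutes with the crossing: writing $T$ for the crossing of black strands $k,k+1$, we have $Tc_k=-c_kT$ and $Tc_{k+1}=-c_{k+1}T$. This is precisely what the proof of Proposition~\ref{greenleaves} uses. A token travelling along its strand is the Hecke--Clifford behaviour of the third relation in \cref{Cliff_hw-1}.

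Consequently the naive assignment (crossing $\mapsto 1\otimes$ crossing, token $\mapsto c_i\otimes 1$) is the correct one, and your proposed fixes fail. The element $\tfrac{1}{\sqrt{2}}(c_k-c_{k+1})\otimes(\text{crossing})$ is even although the crossing is odd, squares to $-1$, and violates \cref{spinCl1}. A crossing that permutes the Clifford factors contradicts \cref{spinCl1} outright.

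With the naive assignment, route one in fact gives the quickest independence proof. The basis element $\mathbf{c}^{\alpha}_{\mathbf{j}}\mathbf{x}^{\beta}_{\mathbf{j}}T_{\mathbf{j},w,\mathbf{i}}$ maps to $\pm c^{\alpha}\otimes\mathbf{x}^{\beta}_{\mathbf{j}}T_{\mathbf{j},w,\mathbf{i}}$, and these are independent by Theorem~\ref{HL-basis_Spin}. To obtain the map from the presentation, though, you would need to realize the target as an honest strict monoidal supercategory. One option is a suitable subcategory of the tensor product of $\mathcal{T\!AS}$ with the monoidal supercategory generated by one object and an odd token squaring to the identity. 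This is necessary because $\operatorname{SH}^{\operatorname{aff}}_{d,\mathbf{Q}}(\operatorname{Cl})$ itself is not given by generators and relations.
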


\begin{proof}
This proposition can be shown by applying similar arguments to Section~\ref{keeper}. We omit the details here.
\end{proof}

\details{We explain here how one obtains linear independence of the elements in $ _{\mathbf{j}} \mathcal{C}_{\mathbf{i}} $. There is a strict $ \mathbb{C} $-linear monoidal functor
\begin{equation*}
\Phi \colon \mathcal{T\!AS}(\operatorname{Cl}) \rightarrow \mathcal{AS}(\operatorname{Cl})
\end{equation*}
which sends $ \go $ to $ \go $ and $ Q \in \mathcal{E}_{\operatorname{OPol}_{1}} $ to $ \mathds{1} $ (the unit object of $ \mathcal{AS}(\operatorname{Cl}) $), and sends the generating morphisms as follows:
\begin{gather*}
\Phi \left(\begin{tikzpicture}[centerzero, thick]
      \draw[-] (0.6,-0.4) -- (0.6,0.4);
      \xtoken{0.6,0};
\end{tikzpicture}\right) = \begin{tikzpicture}[centerzero, thick]
      \draw[-] (0.6,-0.4) -- (0.6,0.4);
      \xtoken{0.6,0};
\end{tikzpicture} \ , 
\quad 
\Phi \left(\begin{tikzpicture}[centerzero, thick]
      \draw[-] (0.6,-0.4) -- (0.6,0.4);
      \singdot{0.6,0};
\end{tikzpicture}\right) = 
\begin{tikzpicture}[centerzero, thick]
      \draw[-] (0.6,-0.4) -- (0.6,0.4);
      \singdot{0.6,0};
\end{tikzpicture} \ ,
\quad 
\Phi \left(\begin{tikzpicture}[centerzero, thick]
      \draw[-] (0.9,-0.4) -- (0.3,0.4);
      \draw[-] (0.3,-0.4) -- (0.9,0.4);
\end{tikzpicture}\right) 
=
\begin{tikzpicture}[centerzero, thick]
      \draw[-] (0.9,-0.4) -- (0.3,0.4);
      \draw[-] (0.3,-0.4) -- (0.9,0.4);
\end{tikzpicture} \ ,
\\ \Phi\left(\begin{tikzpicture}[centerzero, thick]
      \draw[-] (0.9,-0.4) -- (0.3,0.4);
      \draw[-,wei] (0.3,-0.4) -- (0.9,0.4);
      \node at (0.3,-0.6) {\tiny $ Q $};
\end{tikzpicture}\right) 
=
\begin{tikzpicture}[centerzero, thick]
      \draw[-] (0.6,-0.4) -- (0.6,0.4);
\end{tikzpicture} \ ,
\quad 
\Phi \left(\begin{tikzpicture}[centerzero, thick]
      \draw[-] (0.3,-0.4) -- (0.9,0.4);
      \draw[-,wei] (0.9,-0.4) -- (0.3,0.4);
      \node at (0.9,-0.6) {\tiny $ Q $};
\end{tikzpicture}\right) 
= 
\begin{tikzpicture}[centerzero, thick]
        \draw[-] (0,-0.5) -- (0,0.5);
             \pin{0,0}{.7,0}{Q};
\end{tikzpicture} \ ,
\end{gather*}
for all $ Q \in \mathcal{E}_{\operatorname{OPol}_{1}} $. This functor restricts to a $ \mathbb{C} $-linear map 
\begin{equation*}
\Phi \colon \operatorname{Hom}_{\mathcal{T\!AS}(\operatorname{Cl})}(\mathbf{i}, \mathbf{j}) \rightarrow \operatorname{SH}_{d}^{\operatorname{aff}}(\operatorname{Cl}).
\end{equation*}
Then
\begin{equation} \label{cactus_quantum445}
\Phi(T_{\mathbf{j},w,\mathbf{i}}) = h_{w,w}T_{w} + \sum_{\substack{u \in S_{d} \\ L(u) < L(w)}} h_{u,w}T_{u}
\end{equation}
for some $ h_{u,w} \in \operatorname{OPol}_{d}(\operatorname{Cl}) $, where $ h_{w,w} $ is regular in $ \operatorname{OPol}_{d}(\operatorname{Cl}) $. Now assume that there is a linear dependence equation
\begin{equation} \label{depequation39756_quantum}
\sum_{w \in S_{d}} f_{w}T_{\mathbf{j},w,\mathbf{i}} = 0,
\end{equation}
where $ f_{w} \in \operatorname{OPol}_{d}(\operatorname{Cl}) $. Then, by using \cref{cactus_quantum445}, one can compute the image of \cref{depequation39756_quantum} under $ \Phi $ and then deduce $ f_{w} = 0 $ for all $ w \in S_{d} $.}

\details{We prove here that the set $ _{\mathbf{j}} \mathcal{C}_{\mathbf{i}} $ spans  $ \operatorname{Hom}_{\mathcal{T\!AS}(\operatorname{Cl})}(\mathbf{i}, \mathbf{j}) $. We prove this by induction on the number of crossings. If \(E\) is a diagram with no crossings, then \(E\) contains only Clifford tokens and dots, from which it clearly follows that \(E\) lies in the span of $ _{\mathbf{j}} \mathcal{C}_{\mathbf{i}} $.
\\ \indent Now let $ k > 0 $, and let \(E\) be a diagram with \(k\) crossings. First, by using the relations \cref{spin2}, \cref{spin2.5}, \cref{spin3}, \cref{spinCl1}, and \cref{spinCl2}, one can move the Clifford tokens and dots to the top of \(E\). Note that this process may create additional terms with fewer than \(k\) crossings, but these terms all lie in the span of $ _{\mathbf{j}} \mathcal{C}_{\mathbf{i}} $ by the induction hypothesis. If there is a pair of strands in \(E\) that cross more than once, then \(E\) can be written as a $ \mathbb{C} $-linear combination of diagrams with fewer than $ k $ crossings (the proof of this is analogous to the proof of \cite[Lem.~4.10(3)]{Webster}). So we may from here on assume that any two strands in \(E\) cross at most once. In this case, the sequence of black strands on \(E\) corresponds to some reduced expression for some element $ w \in S_{d} $. We can now apply the relations \cref{spin6}, \cref{spin8}, and the second relation in \cref{spin1} to get from \(E\) to $ T_{\mathbf{j},w,\mathbf{i}} $. Note again that this process may create additional terms with fewer crossings (due to \cref{spin8}), but these terms all lie in the span of $ _{\mathbf{j}} \mathcal{C}_{\mathbf{i}} $ by the induction hypothesis. Thus we have that \(E\) lies in the span of $ _{\mathbf{j}} \mathcal{C}_{\mathbf{i}} $.}

\subsection{Relation to the tensor product degenerate spin affine Hecke algebras}

\begin{prop} \label{greenleaves}
Let $ d \in \mathbb{N} $ and $ \mathbf{Q} \in F(\mathcal{E}_{\operatorname{OPol}_{1}}) $. Then we have superalgebra isomorphisms
\begin{align}
\operatorname{SH}^{\operatorname{aff}}_{d,\mathbf{Q}}(\operatorname{Cl}) \cong \operatorname{Cl}_{d} \otimes \operatorname{SH}^{\operatorname{aff}}_{d,\mathbf{Q}}, \label{isomorphism_1}
\\ \operatorname{SH}^{\operatorname{cyc}}_{d,\mathbf{Q}}(\operatorname{Cl}) \cong \operatorname{Cl}_{d} \otimes \operatorname{SH}^{\operatorname{cyc}}_{d,\mathbf{Q}}. \label{isomorphism_2}
\end{align}
\end{prop}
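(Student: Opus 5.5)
We construct an explicit superalgebra homomorphism
\[
\Theta \colon \operatorname{Cl}_{d} \otimes \operatorname{SH}^{\operatorname{aff}}_{d,\mathbf{Q}} \rightarrow \operatorname{SH}^{\operatorname{aff}}_{d,\mathbf{Q}}(\operatorname{Cl})
\]
and show that it is bijective by comparing bases. On the second factor, $\Theta$ is induced by the monoidal functor $\mathcal{T\!AS} \rightarrow \mathcal{T\!AS}(\operatorname{Cl})$, which sends each diagram to the diagram of the same name. On the first factor, we send $c_{i} \mapsto \sum_{\mathbf{i} \in \Gamma_{d,\mathbf{Q}}} c_{i,\mathbf{i}}$, where $c_{i,\mathbf{i}}$ is the Clifford token on the $i$-th black strand of $\mathbf{i}$.

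To see that $\Theta$ is a superalgebra homomorphism, we check three things.
\begin{enumerate}
\item[(i)] The elements $c_{i} := \sum_{\mathbf{i}} c_{i,\mathbf{i}}$ satisfy \cref{Clifford1} and \cref{Clifford2}. This follows from the first relation of \cref{spinCl123} and the super interchange law \cref{interchange} for tokens on different strands.
\item[(ii)] Each $c_{i}$ supercommutes with the image of every diagram of $\operatorname{SH}^{\operatorname{aff}}_{d,\mathbf{Q}}$. We only need this on generators: the identities $1_{\mathbf{i}}$, the dots, the black-black crossings, and the red-black crossings.
\item[(iii)] Once (ii) holds, $a \otimes b \mapsto a b$ respects the sign rule of the tensor product of superalgebras.
\end{enumerate}

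Check (ii) is the main obstacle, and we handle it generator by generator.
\begin{enumerate}
\item[\textbullet] \emph{Dots.} Supercommutation is the second relation of \cref{spinCl123}, together with the interchange law for dots on other strands.
\item[\textbullet] \emph{Black-black crossings.} A token moves through a crossing with a sign by \cref{spinCl1}. Because both the token and the crossing are odd, this sign is exactly the one required by supercommutation. Note also that the crossing changes which strand is the $i$-th one.
\item[\textbullet] \emph{Red-black crossings.} These are even, and \cref{spinCl2} lets a token slide through with no sign.
\end{enumerate}
The subtlety in the crossing cases is that $c_{i}$ is defined strand-by-position: the $i$-th black strand at the top of a crossing need not be the $i$-th at the bottom. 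So the correct statement is that $c_{i}$ is carried to $c_{s_{j}(i)}$ across a black-black crossing $T_{j}$, with the appropriate sign. This means $\operatorname{Cl}_{d}$ is normalized rather than centralized by the crossings.

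To get a genuine tensor product decomposition, we therefore follow the standard trick from Wang's isomorphism \cref{glebe}. We keep the Clifford generators $c_{i}$ but replace the generators of $\operatorname{SH}^{\operatorname{aff}}_{d,\mathbf{Q}}$ by twisted elements that supercommute with all $c_{i}$:
\begin{enumerate}
\item[\textbullet] each black-black crossing $T_{j}$ is replaced by $\frac{1}{\sqrt{-2}}(c_{j}-c_{j+1})T_{j}$, or the variant with whatever normalization the relations force;
\item[\textbullet] each dot $x_{i}$ is replaced by a twist of the form $c_{i}x_{i}$, up to a scalar;
\item[\textbullet] red-black crossings are left unchanged, since tokens already slide through them freely.
\end{enumerate}
One checks that these twisted elements satisfy \cref{spin1}, \cref{spin2}, and \cref{spin3,spin4,spin6,spin8}. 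This is where care is needed. In \cref{spin4} the pin $Q \in \mathbb{C}[x^{2}]$ is unaffected, because $(c x)^{2}$ is a scalar multiple of $x^{2}$ with sign absorbed by the normalization. In \cref{spin8} one must verify that $D(Q_{1})$ transforms consistently, which should reduce to Wang's computation in the $\mathbf{Q}=\varnothing$ case. Proposition~\ref{zebra} reduces all of this to local checks on the generating morphisms.

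Having $\Theta$, we prove bijectivity by bases. By Theorem~\ref{HL-basis_Spin}, $\operatorname{Cl}_{d} \otimes \operatorname{SH}^{\operatorname{aff}}_{d,\mathbf{Q}}$ has basis $c^{\alpha} \otimes \mathbf{x}^{\beta}_{\mathbf{j}} T_{\mathbf{j},w,\mathbf{i}}$. Under $\Theta$ these are sent, up to a triangular change with invertible diagonal (powers of tokens and scalars), to the basis $_{\mathbf{j}}\mathcal{C}_{\mathbf{i}}$ of Proposition~\ref{HL-basis_Spin-Cl}. This gives \cref{isomorphism_1}.

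For \cref{isomorphism_2}, $\Theta$ maps $\operatorname{Cl}_{d} \otimes J$ onto the ideal generated by $\{1_{\mathbf{i}} : \mathbf{i}_{1}=\go\}$ in $\operatorname{SH}^{\operatorname{aff}}_{d,\mathbf{Q}}(\operatorname{Cl})$, where $J$ is the cyclotomic ideal of $\operatorname{SH}^{\operatorname{aff}}_{d,\mathbf{Q}}$. This holds because the idempotents $1_{\mathbf{i}}$ lie in the image of $1 \otimes \operatorname{SH}^{\operatorname{aff}}_{d,\mathbf{Q}}$, and the tokens are in the image. Passing to quotients and using exactness of $\operatorname{Cl}_{d} \otimes -$ then gives \cref{isomorphism_2}.
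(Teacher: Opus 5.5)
There is a genuine error, and it comes from a misreading of \cref{spinCl1}. In that relation the token at the lower-left end of the crossing equals minus the token at the upper-left end. These two positions lie on \emph{different} strands, and the same holds for the right-hand ends. So in $\mathcal{T\!AS}(\operatorname{Cl})$ the green token passes through a black--black crossing by position: $c_{i}T_{j} = -T_{j}c_{i}$ for all $i$ and $j$, using the super interchange law when $i \notin \{j,j+1\}$. There is no permutation $c_{i} \mapsto c_{s_{j}(i)}$. That strand-following behaviour belongs to the blue token of $\mathcal{AH}(\operatorname{Cl})$; see the third relation in \cref{Cliff_hw-1}.

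Your first-round check was therefore already complete. The element $\sum_{\mathbf{i}} c_{i,\mathbf{i}}$ supercommutes with identities, dots, black--black crossings and red--black crossings, so the untwisted map $y \otimes z \mapsto \phi(y)\iota(z)$ is a superalgebra homomorphism. It sends $c^{\alpha}\otimes \mathbf{x}^{\beta}_{\mathbf{j}}T_{\mathbf{j},w,\mathbf{i}}$ directly to $\mathbf{c}^{\alpha}_{\mathbf{j}}\mathbf{x}^{\beta}_{\mathbf{j}}T_{\mathbf{j},w,\mathbf{i}}$. Theorem~\ref{HL-basis_Spin} and Proposition~\ref{HL-basis_Spin-Cl} then give bijectivity without any triangularity argument. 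This is exactly the paper's proof, and your cyclotomic step also matches it.

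The twisted construction you substitute does not work, for three reasons.
\begin{enumerate}
\item[\textbullet] \emph{Parity.} The elements $\frac{1}{\sqrt{-2}}(c_{j}-c_{j+1})T_{j}$ and $c_{i}x_{i}$ are even, whereas $T_{j}$ and $x_{i}$ are odd in $\operatorname{SH}^{\operatorname{aff}}_{d,\mathbf{Q}}$. No superalgebra homomorphism can send odd generators to even elements, and no rescaling fixes this.
\item[\textbullet] \emph{Supercommutation.} The twisted elements no longer supercommute with the Clifford generators, which was the whole purpose of the twist. For example, $c_{i}(c_{i}x_{i}) = x_{i}$ while $(c_{i}x_{i})c_{i} = -x_{i}$.
\item[\textbullet] \emph{Relations.} They violate \cref{spin2}. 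With $\tilde T_{j}=\lambda(c_{j}-c_{j+1})T_{j}$ and $\tilde x_{k}=\mu c_{k}x_{k}$, one gets $\tilde T_{j}\tilde x_{j}+\tilde x_{j+1}\tilde T_{j} = \lambda\mu(1+c_{j}c_{j+1})(2x_{j+1}T_{j}-1) \neq 1$.
\end{enumerate}

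The Wang-type twist is the right tool only for passing between the spin side and the Hecke--Clifford side. There it appears as the functors $\Phi$ and $\mathcal{F}$ of Proposition~\ref{AS-AH-iso} and Theorem~\ref{Morita_equiv2}. Inside $\mathcal{T\!AS}(\operatorname{Cl})$ it is unnecessary and breaks the structure. If you delete the ``subtlety'' paragraph and the twist, what remains is a correct proof.
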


\begin{proof}
The functor $ \mathcal{T\!AS} \rightarrow \mathcal{T\!AS}(\operatorname{Cl}) $, which sends the generating objects and morphisms in $ \mathcal{T\!AS} $ to the objects and morphisms in $ \mathcal{T\!AS}(\operatorname{Cl}) $ of the same name, induces a superalgebra homomorphism $ \iota \colon \operatorname{SH}^{\operatorname{aff}}_{d,\mathbf{Q}} \rightarrow \operatorname{SH}^{\operatorname{aff}}_{d,\mathbf{Q}}(\operatorname{Cl}) $. Furthermore, we have a superalgebra homomorphism 
\begin{equation*}
\phi \colon \operatorname{Cl}_{d} \rightarrow \operatorname{SH}^{\operatorname{aff}}_{d,\mathbf{Q}}(\operatorname{Cl}), \quad c_{i} \mapsto \sum_{\mathbf{i} \in \Gamma_{d,\mathbf{Q}}} c_{i,\mathbf{i}}, \qquad 1 \leq i \leq d,
\end{equation*}
and so we obtain an induced $ \mathbb{C} $-linear map 
\begin{equation*}
\varphi \colon \operatorname{Cl}_{d} \otimes \operatorname{SH}^{\operatorname{aff}}_{d,\mathbf{Q}} \rightarrow \operatorname{SH}^{\operatorname{aff}}_{d,\mathbf{Q}}(\operatorname{Cl}), \quad y \otimes z \mapsto \phi(y)\iota(z).
\end{equation*}
It follows from Theorem \ref{HL-basis_Spin} and Proposition \ref{HL-basis_Spin-Cl} that $ \varphi $ is bijective. We next prove that $ \varphi $ is a superalgebra homomorphism. To do this, we must show that
\begin{equation} \label{commuteproof}
\phi(y)\iota(z) = (-1)^{\bar{y}\bar{z}}\iota(z)\phi(y)
\end{equation}
for all homogeneous $ y \in \operatorname{Cl}_{d}, \ z \in \operatorname{SH}^{\operatorname{aff}}_{d,\mathbf{Q}} $. We may assume that $ z $ is either an identity morphism, a dot, a red-black crossing, or a black-black crossing (since $ \operatorname{SH}^{\operatorname{aff}}_{d,\mathbf{Q}} $ is generated by these elements). Furthermore, we may assume that $ y = c_{i} $ for some $ 1 \leq i \leq d $. Then it is clear that \cref{commuteproof} holds when $ z $ is an identity morphism, and by \cref{spinCl123}, it also holds when \(z\) is a dot. Furthermore, it follows from \cref{spinCl1} that \cref{commuteproof} holds when \(z\) is a black-black crossing. Finally, we have by \cref{spinCl2} that \cref{commuteproof} holds when \(z\) is a red-black crossing. Thus we now obtain the isomorphism \cref{isomorphism_1}.

Set $ F $ to be the quotient of $ \operatorname{Cl}_{d} \otimes \operatorname{SH}^{\operatorname{aff}}_{d,\mathbf{Q}} $ by the two-sided ideal generated by the elements in the set $ \{1 \otimes 1_{\mathbf{i}} : \mathbf{i} \in \Gamma_{d,\mathbf{Q}}, \ \mathbf{i}_{1} = \go \} $. Then $ F $ is isomorphic to $ \operatorname{Cl}_{d} \otimes \operatorname{SH}^{\operatorname{cyc}}_{d,\mathbf{Q}} $. Furthermore, since $ \varphi(1 \otimes 1_{\mathbf{i}}) = 1_{\mathbf{i}} $ and $ \varphi^{-1}(1_{\mathbf{i}}) = 1 \otimes 1_{\mathbf{i}} $ for all $ \mathbf{i} \in \Gamma_{d,\mathbf{Q}} $, we have that $ \varphi $ and $ \varphi^{-1} $ induce an isomorphism $ \operatorname{SH}^{\operatorname{cyc}}_{d,\mathbf{Q}}(\operatorname{Cl}) \cong F $. We now obtain the isomorphism \cref{isomorphism_2}.
\end{proof}

\section{Tensor product degenerate affine Hecke--Clifford superalgebras $ H_{d,\mathbf{Q}}^{\operatorname{aff}}(\operatorname{Cl}) $} \label{HLDAHCA_Section}

In this section, we define the tensor product degenerate affine Hecke--Clifford superalgebras $ H_{d,\mathbf{Q}}^{\operatorname{aff}}(\operatorname{Cl}) $ and their cyclotomic quotients $ H_{d,\mathbf{Q}}^{\operatorname{cyc}}(\operatorname{Cl}) $. We start by providing a recap of the degenerate affine Hecke--Clifford superalgebras, which were first defined by Nazarov in \cite{Nazarov}.

\subsection{Clifford polynomial superalgebras}

\begin{defn} \label{Clifford_polynomial_category}
We define the \emph{Clifford polynomial supercategory} $ \mathpzc{Pol}(\operatorname{Cl}) $ to be the strict $ \mathbb{C} $-linear monoidal supercategory generated by one object $ \go $, and the morphisms
\begin{equation*}
\begin{tikzpicture}[anchorbase, thick]
        \draw[-] (0,0) -- (0,0.7);
        \bluetoken{0,0.35};    
\end{tikzpicture} \colon \go \rightarrow \go, 
\qquad 
\begin{tikzpicture}[anchorbase, thick]
        \draw[-] (0,0) -- (0,0.7);
        \singdot{0,0.35};    
\end{tikzpicture} \colon \go \rightarrow \go,
\end{equation*}
subject to the relations 
\begin{equation} \label{Pol(Cl)}
\begin{tikzpicture}[anchorbase, thick]
        \draw[-] (0,0) -- (0,0.7);
        \bluetoken{0,0.2};
        \bluetoken{0,0.45};
\end{tikzpicture}
=
\begin{tikzpicture}[anchorbase, thick]
        \draw[-] (0,0) -- (0,0.7);
\end{tikzpicture} \ ,
\qquad 
\begin{tikzpicture}[centerzero, thick]
        \draw[-] (0,-0.4) -- (0,0.4);
        \bluetoken{0,0.15};
        \singdot{0,-0.15};
\end{tikzpicture}
= -
\begin{tikzpicture}[centerzero, thick]
        \draw[-] (0,-0.4) -- (0,0.4);
        \bluetoken{0,-0.15};
        \singdot{0,0.15};
\end{tikzpicture} \ .
\end{equation}
The dot is even, and the morphism $ \begin{tikzpicture}[centerzero, thick]
        \draw[-] (0,-0.2) -- (0,0.2);
        \bluetoken{0,0};
\end{tikzpicture} $, which we refer to as the \emph{Clifford token}, is odd. Given $ n \in \mathbb{N} $, we define the \emph{Clifford polynomial superalgebra} to be 
\begin{equation*}
\operatorname{Pol}_{n}(\operatorname{Cl}) := \operatorname{End}_{\mathpzc{Pol}(\operatorname{Cl})}(\go^{\otimes n}).
\end{equation*}
\end{defn}

\begin{prop} \label{racing}
Let $ n \in \mathbb{N}_{+} $. Then the Clifford polynomial superalgebra $ \operatorname{Pol}_{n}(\operatorname{Cl}) $ is isomorphic to the free associative superalgebra generated by the elements $ c_{1},\ldots, c_{n} $, $ x_{1},\ldots,x_{n} $, subject to the relations 
\begin{align}
c_{i}^{2} &= 1, \quad c_{i}x_{i} = -x_{i}c_{i}, & 1 \leq i \leq n, \label{tophat1}
\\ c_{i}c_{j} &= -c_{j}c_{i}, \quad c_{i}x_{j} = x_{j}c_{i}, & 1 \leq i,j \leq n, \ i \neq j, \label{tophat3}
\\ x_{i}x_{j} &= x_{j}x_{i}, & 1 \leq i,j \leq n. \label{tophat5}
\end{align}
Here, the $ c_{i} $ are odd and the $ x_{i} $ are even. Under this isomorphism, $ c_{i} $ corresponds to the Clifford token on the \(i\)-th strand, and $ x_{i} $ corresponds to the dot on the \(i\)-th strand.
\end{prop}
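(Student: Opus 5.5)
This is a direct application of Proposition~\ref{zebra}, exactly as for Propositions~\ref{oddrel_def} and~\ref{grassroots}. The category $\mathpzc{Pol}(\operatorname{Cl})$ has one generating object $\go$. Its two generating morphisms both lie in $\operatorname{End}(\go)$ (so $n_i = 1$): the odd Clifford token and the even dot. The defining relations \cref{Pol(Cl)} also live in $\operatorname{End}(\go)$ (so $m_j = 1$). Proposition~\ref{zebra} then says that $\operatorname{Pol}_n(\operatorname{Cl})$ is generated by the elements $1_{\go}^{\otimes (i-1)} \otimes (\text{token}) \otimes 1_{\go}^{\otimes (n-i)}$ and $1_{\go}^{\otimes (i-1)} \otimes (\text{dot}) \otimes 1_{\go}^{\otimes (n-i)}$, for $1 \leq i \leq n$. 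We name these $c_i$ and $x_i$; they have parities $\bar c_i = 1$ and $\bar x_i = 0$.

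Next I would translate the two families of relations from Proposition~\ref{zebra}.
\begin{enumerate}
\item[(i)] The relations \cref{zebra1} are the images of \cref{Pol(Cl)} placed on the $i$-th strand. The first gives $c_i^2 = 1$. For the second, the diagram on the left has the dot below the token, so it reads $c_i x_i$, and the relation gives $c_i x_i = -x_i c_i$. Together these are \cref{tophat1}.
\item[(ii)] The relations \cref{zebra2} are the super interchange relations between generators on distinct strands $i \neq j$. With the given parities they read
\begin{equation*}
c_i c_j = (-1)^{1\cdot 1} c_j c_i, \qquad c_i x_j = (-1)^{1 \cdot 0} x_j c_i, \qquad x_i x_j = (-1)^{0 \cdot 0} x_j x_i .
\end{equation*}
These are exactly the remaining relations in \cref{tophat3} and \cref{tophat5}.
\end{enumerate}
For $i = j$, relation \cref{tophat5} is trivial. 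Hence the presentation supplied by Proposition~\ref{zebra} coincides with the stated one, and under the identification $c_i$ is the token on the $i$-th strand and $x_i$ is the dot on the $i$-th strand.

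The only point needing care is the sign bookkeeping in (ii). Proposition~\ref{zebra} produces relations only for pairs with $k_2 \geq n_j + l_2$, i.e.\ with the first generator on a strand strictly to the left of the second. I would therefore check that the relations for $i < j$, combined with their rearrangements, yield all the relations for $i \neq j$ in both orders. This holds because each relation is symmetric up to the sign $(-1)^{\bar f_i \bar f_j}$, and that sign squares to $1$.
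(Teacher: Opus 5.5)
Your proposal is correct and follows the paper's proof, which simply cites Proposition~\ref{zebra}; your unpacking of \cref{zebra1} and \cref{zebra2}, including reading the second relation of \cref{Pol(Cl)} as $c_i x_i = -x_i c_i$, is accurate. One small point of precision: for $i > j$, the relation $c_i x_j = x_j c_i$ comes from the (dot, token) instance of \cref{zebra2}, with the dot on the left strand, and not from rearranging a (token, dot) instance. This case is still covered, because Proposition~\ref{zebra} ranges over all ordered pairs of generating morphisms.
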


\begin{proof}
This follows from Proposition \ref{zebra}.
\end{proof}

In what follows, we identify $ \operatorname{Pol}_{n}(\operatorname{Cl}) $ with the superalgebra presented in Proposition~\ref{racing}. The superalgebra $ \operatorname{Pol}_{n}(\operatorname{Cl}) $ has basis
\begin{equation*}
\{c_{1}^{k_{1}}\cdots c_{n}^{k_{n}}x_{1}^{r_{1}}\cdots x_{n}^{r_{n}} : k_{i} \in \{0,1\}, \ r_{i} \in \mathbb{N}\}.
\end{equation*}

\begin{lem} \label{Pol1(Cl)_even_center}
The even center of $ \operatorname{Pol}_{1}(\operatorname{Cl}) $ is equal to $ \mathbb{C}[x^{2}] $. Furthermore, we have $ \mathcal{E}_{\operatorname{Pol}_{1}(\operatorname{Cl})} = \mathbb{C}[x^{2}] \setminus 0 $.
\end{lem}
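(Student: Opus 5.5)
The proof computes the even center of $\operatorname{Pol}_{1}(\operatorname{Cl})$ directly from the basis $\{c^{k}x^{r} : k \in \{0,1\},\ r \in \mathbb{N}\}$, and then identifies the even, central, regular elements.

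First, polynomials in $x^{2}$ are central. They are even because $x$ is even. They commute with $x$ trivially. They commute with $c$ because, by \cref{tophat1}, $cx^{2} = -xcx = x^{2}c$.

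Conversely, let $z$ be even and central. Since $c$ is odd and $x$ is even, the even part of $\operatorname{Pol}_{1}(\operatorname{Cl})$ is spanned by the monomials $x^{r}$. So $z = f(x)$ for some $f \in \mathbb{C}[x]$. The relation $cx = -xc$ gives $c f(x) = f(-x) c$. Centrality of $z$ means $cz = zc$, so $(f(-x) - f(x))c = 0$. Multiplying on the right by $c$ and using $c^{2} = 1$ gives $f(-x) = f(x)$. The set $\{x^{r}\}$ is linearly independent, so $f$ is even, that is, $f \in \mathbb{C}[x^{2}]$. This proves the first statement.

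For the second statement, $\mathcal{E}_{\operatorname{Pol}_{1}(\operatorname{Cl})}$ is contained in $\mathbb{C}[x^{2}]$ by the first statement, and $0$ is not regular. It remains to show that every nonzero $f \in \mathbb{C}[x^{2}]$ is regular. Take $b = g_{0}(x) + c\,g_{1}(x)$; every element can be written this way using the basis. Since $f$ commutes with $c$, we get $fb = f g_{0} + c\,(f g_{1})$. If $fb = 0$, uniqueness of this decomposition gives $f g_{0} = 0$ and $f g_{1} = 0$ in $\mathbb{C}[x]$. Since $\mathbb{C}[x]$ is a domain, $g_{0} = g_{1} = 0$. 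Right multiplication is handled the same way, again because $f$ is central.

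There is no real obstacle. The only point needing care is the uniqueness of the decomposition $g_{0} + c\,g_{1}$, which follows from the stated basis of $\operatorname{Pol}_{n}(\operatorname{Cl})$.
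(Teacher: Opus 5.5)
Your proof is correct and follows essentially the same route as the paper: you write an even central element as a polynomial in $x$ and use $cx=-xc$ to force the odd-degree coefficients to vanish. Your explicit check that nonzero elements of $\mathbb{C}[x^{2}]$ are regular, via the decomposition $g_{0}+c\,g_{1}$, supplies a detail the paper leaves implicit when it says the second statement follows from the first and the definition.
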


\begin{proof}
The first statement is straightforward to show. The second statement then follows from the first statement and the definition of $ \mathcal{E}_{\operatorname{Pol}_{1}(\operatorname{Cl})} $ (see Definition~\ref{opportunity}).
\end{proof}

\details{We show the first statement here. It is clear that any polynomial in $ x^{2} $ with coefficients in $ \mathbb{C} $ will commute with $ c $ and $ x $, and so each such polynomial lies in the even center of $ \operatorname{Pol}_{1}(\operatorname{Cl}) $. Conversely, let \(f\) be an element in the even center of $ \operatorname{Pol}_{1}(\operatorname{Cl}) $. Then we may write that $ f = \sum_{k \geq 0} \lambda_{k}x^{k} $ for some $ \lambda_{k} \in \mathbb{C} $. We compute that
\begin{equation*}
0 = cf - fc = \sum_{k \geq 0} (1-(-1)^{k})\lambda_{k}cx^{k},
\end{equation*}
from which we obtain that $ \lambda_{k} = 0 $ for all odd $ k \geq 0 $.}

We have an action of the symmetric group $ S_{n} $ on $ \operatorname{Pol}_{n}(\operatorname{Cl}) $, given by $ w(c_{i}) = c_{w(i)} $ and $ w(x_{i}) = x_{w(i)} $ for $ w \in S_{n} $, $ 1 \leq i \leq n $. We denote the action of \(w\) on $ f \in \operatorname{Pol}_{n}(\operatorname{Cl}) $ by $ w(f) $. We next define the Clifford Demazure operators. These are special cases of Savage's deformed divided difference operators \cite[§4.1]{Savage}.

\begin{defn} \label{Cliff_Demazure}
We define the \emph{Clifford Demazure operators} to be the $ \mathbb{C} $-linear operators $ \partial_{i} \colon \operatorname{Pol}_{n}(\operatorname{Cl}) \rightarrow \operatorname{Pol}_{n}(\operatorname{Cl}) $, $ 1 \leq i \leq n-1 $, determined by the condition
\begin{equation} \label{Cliff_leibniz33}
\partial_{i}(\mathbf{a}) = 0, \quad \partial_{i}(x_{j}) = 
\begin{cases}
1+c_{i}c_{i+1} &\text{if} \ j = i,
\\ -1+c_{i}c_{i+1}  &\text{if} \ j = i+1,
\\ 0 &\text{otherwise},
\end{cases}
\end{equation}
for $ \mathbf{a} \in \operatorname{Cl}_{n} $, and the twisted Leibniz rule
\begin{equation} \label{Cliff_leibniz}
\partial_{i}(fg) = \partial_{i}(f)g + s_{i}(f)\partial_{i}(g), \quad f,g \in \operatorname{Pol}_{n}(\operatorname{Cl}).
\end{equation}
We set $ \partial = \partial_{1} \colon \operatorname{Pol}_{2}(\operatorname{Cl}) \rightarrow \operatorname{Pol}_{2}(\operatorname{Cl}) $.
\end{defn}

\subsection{Degenerate affine Hecke--Clifford algebras} 

\begin{defn} \label{DSAHCSdef}
We define the \emph{degenerate affine Hecke--Clifford supercategory} (or \emph{degenerate affine Sergeev supercategory}) $ \mathcal{AH}(\operatorname{Cl}) $ to be the strict $ \mathbb{C} $-linear monoidal supercategory generated by one object $ \go $, and the morphisms
\begin{gather}
    \begin{tikzpicture}[centerzero, thick]
        \draw[-] (-0.2,-0.2) -- (0.2,0.2);
        \draw[-] (0.2,-0.2) -- (-0.2,0.2);
    \end{tikzpicture}
      \ \colon
    \go \otimes \go \to \go \otimes \go,
    \quad
    \begin{tikzpicture}[anchorbase, thick]
        \draw[-] (0,-0.3) -- (0,0.3);
        \bluetoken{0,0};
    \end{tikzpicture}
    \colon \go \to \go, \quad
 \begin{tikzpicture}[anchorbase, thick]
        \draw[-] (0,0) -- (0,0.6);
        \singdot{0,0.3};
\end{tikzpicture}
\ \colon
\go \to \go.
\end{gather}
The crossing and dot are even, and the Clifford token is odd. The relations on the morphisms are given by \cref{Pol(Cl)} and
\begin{gather}
\label{Cliff_hw-1}
\begin{tikzpicture}[anchorbase, thick]
        \draw[-] (0.2,-0.5) to[out=up,in=down] (-0.2,0) to[out=up,in=down] (0.2,0.5);
        \draw[-] (-0.2,-0.5) to[out=up,in=down] (0.2,0) to[out=up,in=down] (-0.2,0.5);
\end{tikzpicture}
\ =\
\begin{tikzpicture}[anchorbase, thick]
        \draw[-] (-0.2,-0.5) -- (-0.2,0.5);
        \draw[-] (0.2,-0.5) -- (0.2,0.5);
\end{tikzpicture}
\ ,\quad
\begin{tikzpicture}[anchorbase, thick]
        \draw[-] (0.4,-0.5) -- (-0.4,0.5);
        \draw[-] (0,-0.5) to[out=up, in=down] (-0.4,0) to[out=up,in=down] (0,0.5);
        \draw[-] (-0.4,-0.5) -- (0.4,0.5);
\end{tikzpicture}
\ =\
\begin{tikzpicture}[anchorbase, thick]
        \draw[-] (0.4,-0.5) -- (-0.4,0.5);
        \draw[-] (0,-0.5) to[out=up, in=down] (0.4,0) to[out=up,in=down] (0,0.5);
        \draw[-] (-0.4,-0.5) -- (0.4,0.5);
\end{tikzpicture}
\ ,\quad
\begin{tikzpicture}[centerzero, thick]
        \draw[-] (0.3,-0.4) -- (-0.3,0.4);
        \draw[-] (-0.3,-0.4) -- (0.3,0.4);
        \bluetoken{-0.15,-0.2};
\end{tikzpicture}
\ =\
\begin{tikzpicture}[centerzero, thick]
        \draw[-] (0.3,-0.4) -- (-0.3,0.4);
        \draw[-] (-0.3,-0.4) -- (0.3,0.4);
        \bluetoken{0.15,0.2};
\end{tikzpicture} \ ,
\\ \label{Cliff_hw0}
\begin{tikzpicture}[centerzero, thick]
        \draw[-] (0.3,-0.4) -- (-0.3,0.4);
        \draw[-] (-0.3,-0.4) -- (0.3,0.4);
        \singdot{-0.15,-0.2};
\end{tikzpicture}
\ =\
\begin{tikzpicture}[centerzero, thick]
        \draw[-] (-0.3,-0.4) -- (0.3,0.4);
        \draw[-] (0.3,-0.4) -- (-0.3,0.4);
        \singdot{0.171,0.228};
\end{tikzpicture}
\ - \ 
\begin{tikzpicture}[centerzero]
        \draw (-0.2,-0.4) -- (-0.2,0.4);
        \draw (0.2,-0.4) -- (0.2,0.4);
\end{tikzpicture} 
-
\begin{tikzpicture}[centerzero]
        \draw (-0.2,-0.4) -- (-0.2,0.4);
        \draw (0.2,-0.4) -- (0.2,0.4);
        \bluetoken{-0.2,0.1};
        \bluetoken{0.2,-0.1};
\end{tikzpicture} \ .
\end{gather}
For $ n \in \mathbb{N} $, we define the \emph{degenerate affine Hecke--Clifford superalgebra} (or \emph{degenerate affine Sergeev superalgebra}) to be 
\begin{equation*}
H_{n}^{\operatorname{aff}}(\operatorname{Cl}) := \operatorname{End}_{\mathcal{AH}(\operatorname{Cl})}(\go^{\otimes n}).
\end{equation*}
\end{defn}

It follows from \cref{Cliff_hw-1} and \cref{Cliff_hw0} that
\begin{equation} \label{Cliff_hw-5}
\begin{tikzpicture}[centerzero, thick]
        \draw[-] (0.3,-0.4) -- (-0.3,0.4);
        \draw[-] (-0.3,-0.4) -- (0.3,0.4);
        \bluetoken{0.15,-0.2};
\end{tikzpicture}
\ =\
\begin{tikzpicture}[centerzero, thick]
        \draw[-] (0.3,-0.4) -- (-0.3,0.4);
        \draw[-] (-0.3,-0.4) -- (0.3,0.4);
        \bluetoken{-0.15,0.2};
\end{tikzpicture} \ ,
\quad 
\begin{tikzpicture}[centerzero, thick]
        \draw[-] (0.3,-0.4) -- (-0.3,0.4);
        \draw[-] (-0.3,-0.4) -- (0.3,0.4);
        \singdot{-0.15,0.2};
\end{tikzpicture}
\ =\
\begin{tikzpicture}[centerzero, thick]
        \draw[-] (-0.3,-0.4) -- (0.3,0.4);
        \draw[-] (0.3,-0.4) -- (-0.3,0.4);
        \singdot{0.171,-0.228};
\end{tikzpicture}
\ - \ 
\begin{tikzpicture}[centerzero]
        \draw (-0.2,-0.4) -- (-0.2,0.4);
        \draw (0.2,-0.4) -- (0.2,0.4);
\end{tikzpicture} 
\ + \
\begin{tikzpicture}[centerzero]
        \draw (-0.2,-0.4) -- (-0.2,0.4);
        \draw (0.2,-0.4) -- (0.2,0.4);
        \bluetoken{-0.2,0.1};
        \bluetoken{0.2,-0.1};
\end{tikzpicture}
\end{equation}

%

\subsection{Tensor product degenerate affine Hecke--Clifford superalgebras}

\begin{defn} \label{HLDAHCC}
We define the \emph{tensor product degenerate affine Hecke--Clifford supercategory} (or the \emph{tensor product degenerate affine Sergeev supercategory}) $ \mathcal{T\!AH}(\operatorname{Cl}) $ to be the strict $ \mathbb{C} $-linear monoidal supercategory defined as follows. The objects are generated by the elements in the set $ \widehat{\mathcal{E}}_{\operatorname{Pol}_{1}(\operatorname{Cl})} $. The morphisms are generated by
\begin{gather}
    \begin{tikzpicture}[centerzero, thick]
        \draw[-] (-0.2,-0.2) -- (0.2,0.2);
        \draw[-] (0.2,-0.2) -- (-0.2,0.2);
    \end{tikzpicture}
      \ \colon
    \go \otimes \go \to \go \otimes \go,
    \quad
    \begin{tikzpicture}[anchorbase, thick]
        \draw[-] (0,-0.3) -- (0,0.3);
        \bluetoken{0,0};
    \end{tikzpicture}
    \colon \go \to \go, \quad
 \begin{tikzpicture}[anchorbase, thick]
        \draw[-] (0,0) -- (0,0.6);
        \singdot{0,0.3};
\end{tikzpicture}
\ \colon
\go \to \go, \\
\begin{tikzpicture}[centerzero, thick]
        \draw[-] (0.2,-0.2) -- (-0.2,0.2);
        \draw[-, wei] (-0.2,-0.2) -- (0.2,0.2);
        \node at (-0.2,-0.4) {\tiny $ Q $};
\end{tikzpicture}
\ \colon Q \otimes \go \to \go \otimes Q, \qquad
\begin{tikzpicture}[centerzero, thick]
        \draw[-] (-0.2,-0.2) -- (0.2,0.2);
        \draw[-,wei] (0.2,-0.2) -- (-0.2,0.2);
        \node at (0.2,-0.4) {\tiny $ Q $};
\end{tikzpicture}
\colon \go \otimes Q \to Q \otimes \go, 
\end{gather}
for all $ Q \in \mathcal{E}_{\operatorname{Pol}_{1}(\operatorname{Cl})} $. The crossings are even, the dot is even, and the Clifford token is odd. The relations on the morphisms are given by \cref{Pol(Cl)}, \cref{Cliff_hw-1}, \cref{Cliff_hw0}, and
\begin{gather}
\label{Cliff_hw1}
\begin{tikzpicture}[centerzero, thick]
        \draw[-] (-0.3,-0.4) -- (0.3,0.4);
        \draw[-, wei] (0.3,-0.4) -- (-0.3,0.4);
        \bluetoken{-0.15,-0.2};
        \node at (0.3,-0.6) {\tiny $ Q $};
\end{tikzpicture}
= 
\begin{tikzpicture}[centerzero, thick]
        \draw[-] (-0.3,-0.4) -- (0.3,0.4);
        \draw[-, wei] (0.3,-0.4) -- (-0.3,0.4);
        \bluetoken{0.15,0.2};
        \node at (0.3,-0.6) {\tiny $ Q $};
\end{tikzpicture}
\ , \qquad
\begin{tikzpicture}[centerzero, thick]
        \draw[-] (0.3,-0.4) -- (-0.3,0.4);
        \draw[-, wei] (-0.3,-0.4) -- (0.3,0.4);
        \bluetoken{0.15,-0.2};
        \node at (-0.3,-0.6) {\tiny $ Q $};
\end{tikzpicture}
= 
\begin{tikzpicture}[centerzero, thick]
        \draw[-] (0.3,-0.4) -- (-0.3,0.4);
        \draw[-, wei] (-0.3,-0.4) -- (0.3,0.4);
        \bluetoken{-0.15,0.2};
        \node at (-0.3,-0.6) {\tiny $ Q $};
\end{tikzpicture} \ , 
\\ \label{Cliff_hw2}
\begin{tikzpicture}[centerzero, thick]
        \draw[-] (-0.3,-0.4) -- (0.3,0.4);
        \draw[-, wei] (0.3,-0.4) -- (-0.3,0.4);
        \singdot{-0.15,-0.2};
        \node at (0.3,-0.6) {\tiny $ Q $};
\end{tikzpicture}
= 
\begin{tikzpicture}[centerzero, thick]
        \draw[-] (-0.3,-0.4) -- (0.3,0.4);
        \draw[-, wei] (0.3,-0.4) -- (-0.3,0.4);
        \singdot{0.15,0.2};
        \node at (0.3,-0.6) {\tiny $ Q $};
\end{tikzpicture}
\ , \qquad
\begin{tikzpicture}[centerzero, thick]
        \draw[-] (0.3,-0.4) -- (-0.3,0.4);
        \draw[-, wei] (-0.3,-0.4) -- (0.3,0.4);
        \singdot{0.15,-0.2};
        \node at (-0.3,-0.6) {\tiny $ Q $};
\end{tikzpicture}
= 
\begin{tikzpicture}[centerzero, thick]
        \draw[-] (0.3,-0.4) -- (-0.3,0.4);
        \draw[-, wei] (-0.3,-0.4) -- (0.3,0.4);
        \singdot{-0.15,0.2};
        \node at (-0.3,-0.6) {\tiny $ Q $};
\end{tikzpicture} \ , 
\\ \label{Cliff_hw3}
\begin{tikzpicture}[centerzero, thick]
        \draw[-] (-0.2,-0.5) to[out=up,in=down] (0.2,0) to[out=up,in=down] (-0.2,0.5);
        \draw[-, wei] (0.2,-0.5) to[out=up,in=down] (-0.2,0) to[out=up,in=down] (0.2,0.5);
        \node at (0.2,-0.7) {\tiny $ Q $};
\end{tikzpicture}
=
\begin{tikzpicture}[centerzero, thick]
        \pin{-0.2,0}{-1,0}{Q};
        \draw[-] (-0.2,-0.5) -- (-0.2,0.5);
        \draw[-, wei] (0.2,-0.5) -- (0.2,0.5);
        \node at (0.2,-0.7) {\tiny $ Q $};
\end{tikzpicture} \ , 
\qquad
\begin{tikzpicture}[centerzero, thick]
        \draw[-] (0.2,-0.5) to[out=up,in=down] (-0.2,0) to[out=up,in=down] (0.2,0.5);
        \draw[-, wei] (-0.2,-0.5) to[out=up,in=down] (0.2,0) to[out=up,in=down] (-0.2,0.5);
        \node at (-0.2,-0.7) {\tiny $ Q $};
\end{tikzpicture}
\ = \
\begin{tikzpicture}[centerzero, thick]
        \draw[-, wei] (-0.2,-0.5) -- (-0.2,0.5);
        \draw[-] (0.2,-0.5) -- (0.2,0.5);
        \node at (-0.2,-0.7) {\tiny $ Q $};
        \pin{0.2,0}{1,0}{Q};
\end{tikzpicture} \ , 
\\ \label{Cliff_hw5}
\begin{tikzpicture}[centerzero, thick]
        \draw[-] (0.4,-0.5) -- (-0.4,0.5);
        \draw[-] (0,-0.5) to[out=up, in=down] (-0.4,0) to[out=up,in=down] (0,0.5);
        \draw[-, wei] (-0.4,-0.5) -- (0.4,0.5);
        \node at (-0.4,-0.7) {\tiny $ Q $};
\end{tikzpicture}
\ =\
\begin{tikzpicture}[centerzero, thick]
        \draw[-] (0.4,-0.5) -- (-0.4,0.5);
        \draw[-] (0,-0.5) to[out=up, in=down] (0.4,0) to[out=up,in=down] (0,0.5);
        \draw[-, wei] (-0.4,-0.5) -- (0.4,0.5);
        \node at (-0.4,-0.7) {\tiny $ Q $};
\end{tikzpicture} \ , 
\qquad
\begin{tikzpicture}[centerzero, thick]
        \draw[-] (0,-0.5) to[out=up, in=down] (-0.4,0) to[out=up,in=down] (0,0.5);
        \draw[-] (-0.4,-0.5) -- (0.4,0.5);
        \draw[-, wei] (0.4,-0.5) -- (-0.4,0.5);
        \node at (0.4,-0.7) {\tiny $ Q $};
\end{tikzpicture}
\ =\
\begin{tikzpicture}[centerzero, thick]
        \draw[wipe] (0,-0.5) to[out=up, in=down] (0.4,0) to[out=up,in=down] (0,0.5);
        \draw[-] (0,-0.5) to[out=up, in=down] (0.4,0) to[out=up,in=down] (0,0.5);
        \draw[-] (-0.4,-0.5) -- (0.4,0.5);
        \draw[-, wei] (0.4,-0.5) -- (-0.4,0.5);
        \node at (0.4,-0.7) {\tiny $ Q $};
\end{tikzpicture}
\ , 
\\ \label{Cliff_green}
\begin{tikzpicture}[centerzero, thick]
        \draw[-] (0.4,-0.5) -- (-0.4,0.5);
        \draw[-] (-0.4,-0.5) -- (0.4,0.5);
        \node at (0,-0.7) {\tiny $ Q $};
        \draw[-, wei] (0,-0.5) to[out=up, in=down] (-0.4,0) to[out=up,in=down] (0,0.5);
\end{tikzpicture}
\ =\
\begin{tikzpicture}[centerzero, thick]
        \draw[-] (2,-0.5) -- (1.2,0.5);
        \draw[-] (1.2,-0.5) -- (2,0.5);
        \node at (1.6,-0.7) {\tiny $ Q $};
        \draw[-, wei] (1.6,-0.5) to[out=up, in=down] (2,0) to[out=up,in=down] (1.6,0.5);
\end{tikzpicture}
\ - \
\begin{tikzpicture}[centerzero, thick]
        \node at (0,-0.7) {\tiny $ Q $};
        \draw[-] (-0.6,-0.5) -- (-0.6,0.5);
        \draw[-, wei] (0,-0.5) -- (0,0.5);
        \draw[-] (0.6,-0.5) -- (0.6,0.5);
        \pinpin{0.6,0}{-0.6,0}{1.8,0}{\partial(Q_{1})};
\end{tikzpicture} \ ,
\end{gather}
for all $ Q \in \mathcal{E}_{\operatorname{Pol}_{1}(\operatorname{Cl})} $. This concludes the definition of $ \mathcal{T\!AH}(\operatorname{Cl}) $.
\end{defn}

Recall again that, for a given set \(X\), $ F(X) $ is the free monoid on \(X\). Then the set of objects in $ \mathcal{T\!AH}(\operatorname{Cl}) $ is $ F(\widehat{\mathcal{E}}_{\operatorname{Pol}_{1}(\operatorname{Cl})}) $. 

\begin{defn} 
Given an integer $ d \in \mathbb{N} $ and a word $ \mathbf{Q} \in F(\mathcal{E}_{\operatorname{Pol}_{1}(\operatorname{Cl})}) $, we define $ \Lambda_{d,\mathbf{Q}} $ to be the set of $ (\go^{d},\mathbf{Q}) $-shuffles in $ F(\widehat{\mathcal{E}}_{\operatorname{Pol}_{1}(\operatorname{Cl})}) $. In other words, 
\begin{equation} \label{Lambda_def}
\Lambda_{d,\mathbf{Q}} = F(\widehat{\mathcal{E}}_{\operatorname{Pol}_{1}(\operatorname{Cl})})_{\go^{d},\mathbf{Q}},
\end{equation}
where $ F(\widehat{\mathcal{E}}_{\operatorname{Pol}_{1}(\operatorname{Cl})})_{\go^{d},\mathbf{Q}} $ is given as in Definition~\ref{snowing}.
\end{defn}

\begin{defn} 
Let $ d \in \mathbb{N} $ and $ \mathbf{Q} \in F(\mathcal{E}_{\operatorname{Pol}_{1}(\operatorname{Cl})}) $. Then we define the $ (d,\mathbf{Q}) $-\emph{degenerate affine Hecke--Clifford superalgebra} (or the $ (d,\mathbf{Q}) $-\emph{degenerate affine Sergeev superalgebra}) to be 
\begin{equation} \label{higher_Clifford_Hecke_def}
H_{d,\mathbf{Q}}^{\operatorname{aff}}(\operatorname{Cl}) := \operatorname{End}_{\operatorname{Add}(\mathcal{T\!AH}(\operatorname{Cl}))}\left(\bigoplus_{\mathbf{i} \in \Lambda_{d,\mathbf{Q}}} \mathbf{i}\right).
\end{equation}
Here, $ \operatorname{Add}(\mathcal{T\!AH}(\operatorname{Cl})) $ is the additive envelope of $ \mathcal{T\!AH}(\operatorname{Cl}) $. 
\end{defn}

We will often refer to the superalgebra $ H_{d,\mathbf{Q}}^{\operatorname{aff}}(\operatorname{Cl}) $ as a \emph{tensor product degenerate affine Hecke--Clifford superalgebra}. This superalgebra is a special case of the higher-level affine wreath product algebra introduced in \cite[Def.~5.5]{Moran} taking the Frobenius superalgebra, denoted \(A\) there, to be the Clifford superalgebra $ \operatorname{Cl} $ with trace map $ \operatorname{tr} \colon \operatorname{Cl} \rightarrow \mathbb{C} $, $ 1 \mapsto 1 $, $ c \mapsto 0 $. A basis of $ H_{d,\mathbf{Q}}^{\operatorname{aff}}(\operatorname{Cl}) $ is given via \cite[Thm.~5.18]{Moran}, but we will not state this result here, since it will not be needed in the current paper. 

\subsection{Cyclotomic quotients}

We next define cyclotomic quotients of the tensor product degenerate affine Hecke--Clifford superalgebra.

\begin{defn} \label{spin_cyclotomic_quotient_Clifford}
Let $ d \in \mathbb{N}_{+} $ and $ \mathbf{Q} \in F(\mathcal{E}_{\operatorname{Pol}_{1}(\operatorname{Cl})}) $. Then we define the \emph{cyclotomic $ (d,\mathbf{Q}) $-Hecke--Clifford superalgebra} $ H_{d,\mathbf{Q}}^{\operatorname{cyc}}(\operatorname{Cl}) $ to be the quotient of $ H_{d,\mathbf{Q}}^{\operatorname{aff}}(\operatorname{Cl}) $ by the two-sided ideal generated by the elements in the set $ \{1_{\mathbf{i}} : \mathbf{i} \in \Lambda_{d,\mathbf{Q}}, \ \mathbf{i}_{1} = \go \} $.
\end{defn}

We can also define cyclotomic quotients of the degenerate affine Hecke--Clifford superalgebras $ H_{n}^{\operatorname{aff}}(\operatorname{Cl}) $.

\begin{defn}
Let $ n \in \mathbb{N}_{+} $ and $ Q \in \mathcal{E}_{\operatorname{Pol}_{1}(\operatorname{Cl})} $. Then we define the \emph{cyclotomic Hecke--Clifford superalgebra} $ H_{n}^{Q}(\operatorname{Cl}) $ to be the quotient of $ H_{n}^{\operatorname{aff}}(\operatorname{Cl}) $ by the two-sided ideal generated by the element $ Q_{1} \in \operatorname{Pol}_{n}(\operatorname{Cl}) $.
\end{defn}

The following proposition says that, when $ \mathbf{Q} \in F(\mathcal{E}_{\operatorname{Pol}_{1}(\operatorname{Cl})}) $ is a word of length one, the cyclotomic $ (d,\mathbf{Q}) $-Hecke--Clifford superalgebra is isomorphic to a cyclotomic Hecke--Clifford superalgebra. It is an analogue of Proposition~\ref{frontlines}.

\begin{prop} \label{Cliff_cyclotomic_result}
Let $ d \in \mathbb{N}_{+} $ and $ Q \in \mathcal{E}_{\operatorname{Pol}_{1}(\operatorname{Cl})} $. Then we have an isomorphism of superalgebras
\begin{equation*}
H_{d,Q}^{\operatorname{cyc}}(\operatorname{Cl}) \cong H_{d}^{Q}(\operatorname{Cl}).
\end{equation*}
\end{prop}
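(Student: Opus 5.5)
Mirroring the (hidden) argument for Proposition~\ref{frontlines}, I would first construct a unital superalgebra homomorphism $\phi \colon H_{d}^{\operatorname{aff}}(\operatorname{Cl}) \to H_{d,Q}^{\operatorname{cyc}}(\operatorname{Cl})$. Let $\varphi \colon H_{d}^{\operatorname{aff}}(\operatorname{Cl}) \to H_{d,Q}^{\operatorname{aff}}(\operatorname{Cl})$ be the non-unital homomorphism that adds a single red strand labelled $Q$ to the left of each diagram, so that $\varphi$ lands in $\operatorname{End}(Q\go^{d})$. Set $\phi = p \circ \varphi$, where $p$ is the projection onto the cyclotomic quotient. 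Since every idempotent $1_{\mathbf{i}}$ with $\mathbf{i}_{1} = \go$ is killed by $p$, and $Q\go^{d}$ is the only element of $\Lambda_{d,Q}$ beginning with $Q$, we get $p(1_{Q\go^{d}}) = 1$. Hence $\phi$ is unital.

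\textbf{Surjectivity.} I would use a spanning set of $H_{d,Q}^{\operatorname{aff}}(\operatorname{Cl})$ analogous to ${}_{\mathbf{j}}\mathcal{C}_{\mathbf{i}}$, consisting of Clifford tokens and dots on top of generalized crossings $T_{\mathbf{j},w,\mathbf{i}}$. Such a spanning set follows from \cite[Thm.~5.18]{Moran}, or directly by the crossing-reduction argument of Lemma~\ref{greenpen}, using \cref{Cliff_hw1,Cliff_hw2,Cliff_hw3,Cliff_hw5,Cliff_green} and \cref{Cliff_hw0}. Modulo the cyclotomic ideal, only diagrams with $\mathbf{i} = \mathbf{j} = Q\go^{d}$ survive. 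Any other source or target starts with a black strand, so the diagram factors through an idempotent $1_{\mathbf{i}}$ with $\mathbf{i}_{1} = \go$. For $\mathbf{i} = \mathbf{j} = Q\go^{d}$, choose $T_{\mathbf{j},w,\mathbf{i}}$ with the red strand vertical; this choice is allowed since every red–black pair then crosses zero times. Those diagrams lie in the image of $\varphi$, so $\phi$ is surjective.

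\textbf{Kernel.} Let $\mathcal{I}$ be the two-sided ideal of $H_{d}^{\operatorname{aff}}(\operatorname{Cl})$ generated by $Q_{1}$. The inclusion $\mathcal{I} \subseteq \ker\phi$ follows from the first relation in \cref{Cliff_hw3}. Indeed, $\varphi(Q_{1})$ equals the diagram in which the first black strand crosses to the left of the red strand and back. That diagram factors through an idempotent beginning with $\go$, so it dies in the quotient.

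For the reverse inclusion, take $y \in \ker\phi$. Then $\varphi(y)$ lies in $\ker p$, so it is a linear combination of diagrams $E \in \operatorname{End}(Q\go^{d})$ that contain a piece of black strand to the left of the red strand. By injectivity of $\varphi$, it suffices to show each such $E$ lies in $\varphi(\mathcal{I})$. I would induct on $c$, defined as half the number of red–black crossings plus the number of black–black crossings occurring to the left of the red strand.

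\begin{enumerate}
\item[\textbullet] Move dots and Clifford tokens away from the relevant region using \cref{Cliff_hw1} and \cref{Cliff_hw2}. Red–black crossings commute with both dots and tokens, so no correction terms arise here.
\item[\textbullet] Locate either a red–black bigon with the red strand on the right, or a red–black–black triangle. The planar combinatorics giving one of these is as in Proposition~\ref{frontlines}.
\item[\textbullet] A bigon is resolved by \cref{Cliff_hw3} into $Q_{1}$ pinned on a strand. A triangle is resolved by \cref{Cliff_green} into a triangle of smaller $c$ minus a term with the red strand straight and $\partial(Q_{1})$ pinned. The correction term has fewer crossings to the left of the red strand, so the induction applies to it.
\item[\textbullet] Once a $Q$-pin sits on the first black strand just to the right of the red strand, the diagram is $\varphi$ of an element of the form $aQ_{1}b$, hence lies in $\varphi(\mathcal{I})$.
\end{enumerate}

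The main obstacle is making this induction rigorous. One must confirm that every diagram with black to the left of red contains a reducible bigon or triangle. One must also check that the $\partial(Q_{1})$ correction term, and any terms created while sliding dots past black–black crossings via \cref{Cliff_hw0}, strictly decrease $c$ or already lie in $\varphi(\mathcal{I})$. The Clifford terms in \cref{Cliff_hw0} need extra care, since they add tokens rather than simply removing crossings. I would handle them by proving the statement for all diagrams in the larger span of diagrams decorated with tokens, which \cref{Cliff_hw1} lets us treat uniformly.
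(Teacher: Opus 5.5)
The paper does not argue diagrammatically here. It identifies $H^{\operatorname{aff}}_{d,\mathbf{Q}}(\operatorname{Cl})$ with the higher-level affine wreath product algebra of \cite{Moran} for $A=\operatorname{Cl}$ and quotes \cite[Thm.~5.34]{Moran}. Your direct argument is a legitimate alternative, and its first half is sound: unitality of $\phi=p\circ\varphi$, surjectivity via the spanning set, and $\mathcal{I}\subseteq\ker\phi$. Two small corrections there. The relation that rewrites $\varphi(Q_{1})$ as a diagram passing through $\go Q\go^{d-1}$ is the \emph{second} relation of \cref{Cliff_hw3}, not the first. Injectivity of $\varphi$ also needs justification, either from the basis theorem or from a left inverse analogous to $\Omega$.

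The reverse inclusion does not go through as you sketch it, because the dichotomy ``an empty red--black bigon or a triangle resolvable by \cref{Cliff_green}'' is false. Take $d=2$ and read upward from $Q\go\go$:
\begin{enumerate}
\item[(1)] red crosses $b_{1}$, then red crosses $b_{2}$;
\item[(2)] $b_{1}$ and $b_{2}$ cross twice;
\item[(3)] red crosses back over $b_{2}$, then over $b_{1}$.
\end{enumerate}
Each excursion of a black strand to the left of red encloses a piece of the other black strand, so there is no empty bigon. The region adjacent to red never has the ``enter, cross, exit'' shape of \cref{Cliff_green}. The only reducible local pictures are the two triangles in which red crosses both black strands (the shapes of \cref{Cliff_hw5}) and the black--black lens. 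So your induction needs at least \cref{Cliff_hw5}, which has no correction terms and lowers $c$ by one. It possibly also needs \cref{Cliff_hw-1} together with the \cref{Cliff_hw0} correction terms. Even then, you still owe the combinatorial lemma that some reducible picture always exists.

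You can bypass the planar combinatorics entirely. Set $\boldsymbol{\omega}:=Q\go^{d}$. Every element of $1_{\boldsymbol{\omega}}(\ker p)1_{\boldsymbol{\omega}}$ is a linear combination of composites $a\circ b$ with $b\in\operatorname{Hom}(\boldsymbol{\omega},\mathbf{i})$ and $a\in\operatorname{Hom}(\mathbf{i},\boldsymbol{\omega})$, where $\mathbf{i}=\go^{k}Q\go^{d-k}$ and $k\geq 1$. The spanning set holds for any admissible choice of generalized crossings, so expand $b$ and $a$ using
\begin{enumerate}
\item[] $T_{\mathbf{i},w,\boldsymbol{\omega}}=R\circ\varphi(\sigma_{w})$ and $T_{\boldsymbol{\omega},w',\mathbf{i}}=\varphi(\sigma_{w'})\circ R'$.
\end{enumerate}
Here $\sigma_{w}$ is a dotless black diagram for a reduced expression of $w$, $R$ moves the red strand rightward past the first $k$ black strands, and $R'$ moves it back. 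Tokens and dots slide through $R$ with no correction terms by \cref{Cliff_hw1} and \cref{Cliff_hw2}. Repeated use of the second relation of \cref{Cliff_hw3}, innermost bigon first, gives $R'\circ R=\varphi(Q_{1}Q_{2}\cdots Q_{k})\in\varphi(\mathcal{I})$. Hence every $a\circ b$ lies in $\varphi(\mathcal{I})$. This completes your direct route, modulo only the spanning/basis theorem \cite[Thm.~5.18]{Moran} that you already use for surjectivity.
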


\begin{proof}
This is a special case of \cite[Thm.~5.34]{Moran}.
\end{proof}

\section{The isomorphisms} \label{Morita_superequivalence123}

In this section, we prove the main result of this paper. We show that tensor product degenerate spin affine Hecke superalgebras are Morita superequivalent to tensor product degenerate affine Hecke--Clifford superalgebras. This result is given in Theorem~\ref{Morita_result}.

\subsection{Isomorphism of $ \mathpzc{OPol}(\operatorname{Cl}) $ and $ \mathpzc{Pol}(\operatorname{Cl}) $} \label{Isomorphism1}

Recall that we have the supercategories $ \mathpzc{OPol}(\operatorname{Cl}) $ and $ \mathpzc{Pol}(\operatorname{Cl}) $ from Definition \ref{Odd_category_Cl} and Definition \ref{Clifford_polynomial_category}.

\begin{prop} \label{Next}
We have an isomorphism of monoidal supercategories $ \Psi \colon \mathpzc{OPol}(\operatorname{Cl}) \rightarrow \mathpzc{Pol}(\operatorname{Cl}) $ that sends $ \go $ to $ \go $, and sends the generating morphisms as follows:
\begin{equation} \label{joker_1}
\Psi\left( \begin{tikzpicture}[anchorbase, thick]
        \draw[-] (0,0) -- (0,0.7);
        \xtoken{0,0.35};
\end{tikzpicture}\right)
=
\begin{tikzpicture}[anchorbase, thick]
        \draw[-] (0,0) -- (0,0.7);
         \bluetoken{0,0.35};   
\end{tikzpicture} \ , 
\qquad 
\Psi\left( \begin{tikzpicture}[anchorbase, thick]
        \draw[-] (0,0) -- (0,0.7);
        \singdot{0,0.35};
\end{tikzpicture}\right)
= \frac{1}{\sqrt{-2}} \
\begin{tikzpicture}[anchorbase, thick]
        \draw[-] (0,0) -- (0,0.7);
        \singdot{0,0.2};
        \bluetoken{0,0.5}; 
\end{tikzpicture} \ .
\end{equation}
Its inverse $ \Psi^{-1} \colon \mathpzc{Pol}(\operatorname{Cl}) \rightarrow \mathpzc{OPol}(\operatorname{Cl}) $ sends $ \go $ to $ \go $, and sends the generating morphisms as follows:
\begin{equation} \label{joker_2}
\Psi^{-1}\left( \begin{tikzpicture}[anchorbase, thick]
        \draw[-] (0,0) -- (0,0.7);
        \bluetoken{0,0.35}; 
\end{tikzpicture}\right)
=
\begin{tikzpicture}[anchorbase, thick]
        \draw[-] (0,0) -- (0,0.7);
        \xtoken{0,0.35};
\end{tikzpicture} \ , 
\qquad 
\Psi^{-1}\left(\begin{tikzpicture}[anchorbase, thick]
        \draw[-] (0,0) -- (0,0.7);
        \singdot{0,0.35};
\end{tikzpicture}\right)
= \sqrt{-2} \
\begin{tikzpicture}[anchorbase, thick]
        \draw[-] (0,0) -- (0,0.7);
        \singdot{0,0.2};
        \xtoken{0,0.5};
\end{tikzpicture} \ .
\end{equation}
\end{prop}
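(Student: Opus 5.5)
Since $\mathpzc{OPol}(\operatorname{Cl})$ and $\mathpzc{Pol}(\operatorname{Cl})$ are both strict monoidal supercategories with one generating object $\go$ given by generators and relations, the plan is to show that $\Psi$ and $\Psi^{-1}$ are well-defined strict monoidal superfunctors and that they are mutually inverse on generating morphisms. For well-definedness, we only need that each assignment in \cref{joker_1} and \cref{joker_2} preserves parity and sends defining relations to valid relations. Parity is immediate: in $\mathpzc{Pol}(\operatorname{Cl})$ the dot is even and the token is odd, so the image of the odd dot, $\frac{1}{\sqrt{-2}}$ times token composed with dot, is odd. Symmetrically, $\sqrt{-2}$ times (green token)$\circ$(odd dot) is even, matching the even dot of $\mathpzc{Pol}(\operatorname{Cl})$.

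Next we check the relations, working in the algebras $\operatorname{OPol}_1(\operatorname{Cl})$ and $\operatorname{Pol}_1(\operatorname{Cl})$ of Propositions~\ref{grassroots} and \ref{racing}. Since both categories are monoidally generated by endomorphisms of a single strand, the only defining relations are the one-strand relations \cref{spinCl123} and \cref{Pol(Cl)}. The super interchange law holds automatically in any strict monoidal supercategory, so no multi-strand relations need to be checked.

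For $\Psi$, write $c$ for the token and $x$ for the dot in $\operatorname{Pol}_1(\operatorname{Cl})$, and let $X=\frac{1}{\sqrt{-2}}cx$ be the image of the odd dot. Then $c^2=1$ is preserved trivially. For the anticommutation relation, compute
\[ cX=\tfrac{1}{\sqrt{-2}}c\,c x=\tfrac{1}{\sqrt{-2}}x, \qquad Xc=\tfrac{1}{\sqrt{-2}}cxc=-\tfrac{1}{\sqrt{-2}}x, \]
using $xc=-cx$. Hence $cX=-Xc$, as required.

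For $\Psi^{-1}$, write $c$ and $x$ for the token and odd dot in $\operatorname{OPol}_1(\operatorname{Cl})$, and let $Y=\sqrt{-2}\,cx$. Then
\[ cY=\sqrt{-2}\,x, \qquad Yc=\sqrt{-2}\,cxc=-\sqrt{-2}\,x, \]
so $cY=-Yc$, and $c^2=1$ is again preserved.

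For the composites, $\Psi^{-1}\Psi$ fixes the token and sends the odd dot $x$ to
\[ \tfrac{1}{\sqrt{-2}}\,c\,(\sqrt{-2}\,cx)=c^2x=x. \]
Likewise $\Psi\Psi^{-1}$ sends the even dot $x$ to
\[ \sqrt{-2}\,c\cdot\tfrac{1}{\sqrt{-2}}cx=x. \]
Since both composites are identity on the generating object and on the generating morphisms, they are identity functors, and $\Psi$ is an isomorphism.

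The only delicate point is the order and sign conventions for composition in the diagrams: in \cref{joker_1} the token sits above the dot, so the image is $c\circ x$, and we must confirm this matches the ordering used above. If the other order were used, we would get $xc=-cx$ instead, which changes only a global sign. That sign still squares to the identity composite, because $(xc)(xc)$-type products also reduce to $\pm$ the correct element with the same constant. So the argument is robust, but we will fix the convention $c\circ x$ and carry it through consistently.
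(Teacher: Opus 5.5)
Your proposal is correct and follows the paper's (omitted) argument. You check that the assigned images have the right parity and satisfy the one-strand relations \cref{spinCl123} and \cref{Pol(Cl)}; the only nontrivial check is the token--dot anticommutation, which you handle just as the paper does via $c\,(cx)=x=-(cx)\,c$. You then observe that both composites fix the generators.

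Your closing remark on conventions is unnecessary, since the diagrams fix the image as $c\circ x$ (token above dot). Also, the sign reasoning there is only valid if $\Psi$ and $\Psi^{-1}$ are flipped together. Mixing the two orders would give $\Psi^{-1}\Psi(x)=-x$.
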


\begin{proof}
It is straightforward to check that the given maps respect the defining relations and are mutually inverse. We omit the details here.
\end{proof}

\details{For the existence of $ \Psi $, it suffices just to check the two relations in \cref{spinCl123}. The first relation in \cref{spinCl123} follows immediately from \cref{Pol(Cl)}, and for the second relation, we compute that
\begin{equation*}
\Psi \left(\begin{tikzpicture}[centerzero, thick]
        \draw[-] (0,-0.4) -- (0,0.4);
        \xtoken{0,0.15};
        \singdot{0,-0.15};
\end{tikzpicture}\right)
= \frac{1}{\sqrt{-2}} 
\ 
\begin{tikzpicture}[centerzero, thick]
        \draw[-] (0,-0.4) -- (0,0.4);
        \bluetoken{0,0.2};
        \bluetoken{0,0};
        \singdot{0,-0.2};
\end{tikzpicture}
\ \stackrel{\mathclap{\cref{Pol(Cl)}}}{=} \ -\frac{1}{\sqrt{-2}} 
\ 
\begin{tikzpicture}[centerzero, thick]
        \draw[-] (0,-0.4) -- (0,0.4);
        \bluetoken{0,0.2};
        \bluetoken{0,-0.2};
        \singdot{0,0};
\end{tikzpicture}
=
\Psi \left(- \ \begin{tikzpicture}[centerzero, thick]
        \draw[-] (0,-0.4) -- (0,0.4);
        \xtoken{0,-0.15};
        \singdot{0,0.15};
\end{tikzpicture}\right).
\end{equation*}
For the existence of $ \Psi^{-1} $, it suffices just to check the two relations in \cref{Pol(Cl)}. The first relation in \cref{Pol(Cl)} follows immediately from \cref{spinCl123}, and for the second relation, we compute that 
\begin{equation*}
\Psi^{-1} \left(\begin{tikzpicture}[centerzero, thick]
        \draw[-] (0,-0.4) -- (0,0.4);
        \bluetoken{0,0.15};
        \singdot{0,-0.15};
\end{tikzpicture}\right)
= \sqrt{-2}
\ 
\begin{tikzpicture}[centerzero, thick]
        \draw[-] (0,-0.4) -- (0,0.4);
        \xtoken{0,0.2};
        \xtoken{0,0};
        \singdot{0,-0.2};
\end{tikzpicture}
\ \stackrel{\mathclap{\cref{spinCl123}}}{=} \ -\sqrt{-2}
\ 
\begin{tikzpicture}[centerzero, thick]
        \draw[-] (0,-0.4) -- (0,0.4);
        \xtoken{0,0.2};
        \xtoken{0,-0.2};
        \singdot{0,0};
\end{tikzpicture}
=
\Psi^{-1} \left(- \ \begin{tikzpicture}[centerzero, thick]
        \draw[-] (0,-0.4) -- (0,0.4);
        \bluetoken{0,-0.15};
        \singdot{0,0.15};
\end{tikzpicture}\right).
\end{equation*}
Finally, it is straightforward to check that $ \Psi $ and $ \Psi^{-1} $ are indeed mutual inverses.}

The functor $ \Psi $ induces a superalgebra isomorphism $ \psi_{n} \colon \operatorname{OPol}_{n}(\operatorname{Cl}) \rightarrow \operatorname{Pol}_{n}(\operatorname{Cl}) $ for each $ n \in \mathbb{N}_{+} $. We set $ \psi = \psi_{1} $.

\begin{lem} \label{Psi(H)}
We have that
\begin{equation}
\psi(\mathcal{E}_{\operatorname{OPol}_{1}}) = \mathcal{E}_{\operatorname{Pol}_{1}(\operatorname{Cl})}.
\end{equation}
\end{lem}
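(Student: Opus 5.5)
We prove the lemma by computing $\psi$ explicitly on the even polynomials $x^{2k}$ and then invoking the two descriptions of the sets in question. By Lemma~\ref{sitting} (equivalently \cref{center_Odd(Cl)2}), $\mathcal{E}_{\operatorname{OPol}_{1}} = \mathbb{C}[x^{2}] \setminus 0$. Here $\mathbb{C}[x^{2}]$ is viewed inside $\operatorname{OPol}_{1}(\operatorname{Cl})$ via the inclusion $\operatorname{OPol}_{1} \subseteq \operatorname{OPol}_{1}(\operatorname{Cl})$, so that $\psi$ makes sense on it. By Lemma~\ref{Pol1(Cl)_even_center}, $\mathcal{E}_{\operatorname{Pol}_{1}(\operatorname{Cl})} = \mathbb{C}[x^{2}] \setminus 0$, where now $x$ denotes the even dot in $\operatorname{Pol}_{1}(\operatorname{Cl})$. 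Thus it suffices to show that $\psi$ restricts to a bijection $\mathbb{C}[x^{2}] \to \mathbb{C}[x^{2}]$ sending $0$ to $0$.

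The key computation is $\psi(x^{2})$. By \cref{joker_1}, $\psi(x) = \frac{1}{\sqrt{-2}}\, c x$, where $c x$ means the dot followed by the token, so the token is on top. Then
\[
\psi(x^{2}) = \tfrac{1}{-2}\, c x c x.
\]
Using $c x = -x c$ from \cref{tophat1} and $c^{2}=1$, we get $c x c x = -c^{2}x^{2} = -x^{2}$. Hence $\psi(x^{2}) = \tfrac{1}{2}x^{2}$.

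Since $\psi$ is an algebra homomorphism, it follows that $\psi(x^{2k}) = 2^{-k}x^{2k}$ for every $k$. Therefore
\[
\psi\Bigl(\sum_k \lambda_k x^{2k}\Bigr) = \sum_k 2^{-k}\lambda_k x^{2k}.
\]
This map is a linear bijection of $\mathbb{C}[x^{2}]$ onto itself, with inverse obtained by rescaling the coefficients by $2^{k}$, and it preserves nonzeroness. Consequently $\psi(\mathbb{C}[x^{2}]\setminus 0) = \mathbb{C}[x^{2}]\setminus 0$, which is the claim.

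The only step needing care is the sign bookkeeping in $\psi(x^{2})$. The factor $(\sqrt{-2})^{-2} = -\tfrac12$ must combine with the sign from anticommuting $c$ past $x$, giving the positive scalar $\tfrac12$. Either sign would still give a bijection of $\mathbb{C}[x^{2}]$, so the argument is robust to this.
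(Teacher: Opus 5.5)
Your argument is correct, but it takes a different route from the paper's.

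The paper argues structurally. Since $\psi\colon \operatorname{OPol}_{1}(\operatorname{Cl}) \to \operatorname{Pol}_{1}(\operatorname{Cl})$ is a parity-preserving superalgebra isomorphism, it carries even, central, regular elements to even, central, regular elements. Hence $\psi(\mathcal{E}_{\operatorname{OPol}_{1}(\operatorname{Cl})}) = \mathcal{E}_{\operatorname{Pol}_{1}(\operatorname{Cl})}$, and Lemma~\ref{center_Odd(Cl)} then identifies $\mathcal{E}_{\operatorname{OPol}_{1}(\operatorname{Cl})}$ with $\mathcal{E}_{\operatorname{OPol}_{1}}$. That route needs the even center of $\operatorname{OPol}_{1}(\operatorname{Cl})$. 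It never uses what $\psi$ does to $x^{2}$, nor the description of $\mathcal{E}_{\operatorname{Pol}_{1}(\operatorname{Cl})}$ in Lemma~\ref{Pol1(Cl)_even_center}.

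You instead combine the two explicit descriptions from Lemmas~\ref{sitting} and~\ref{Pol1(Cl)_even_center} with the computation $\psi(x^{2}) = \tfrac{1}{2}x^{2}$. You carry that computation out correctly: the factor $-\tfrac12$ from $(\sqrt{-2})^{-2}$ cancels against $cxcx = -x^{2}$.

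What your route buys is an explicit formula: $\psi(f(x^{2})) = f(x^{2}/2)$ for $f(x^{2}) \in \mathbb{C}[x^{2}]$. So the relabelling $Q \mapsto \psi(Q)$ of red strands in Theorem~\ref{Morita_result} and Corollary~\ref{rooftop} is just the rescaling $x^{2} \mapsto x^{2}/2$, which the paper leaves implicit. The paper's route instead is insensitive to the specific formula for $\psi$, since it only uses that $\psi$ is an isomorphism.
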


\begin{proof}
Since $ \psi \colon \operatorname{OPol}_{1}(\operatorname{Cl}) \rightarrow \operatorname{Pol}_{1}(\operatorname{Cl}) $ is a superalgebra isomorphism, we immediately obtain that $ \psi(\mathcal{E}_{\operatorname{OPol}_{1}(\operatorname{Cl})}) = \mathcal{E}_{\operatorname{Pol}_{1}(\operatorname{Cl})} $. The result now follows by Lemma \ref{center_Odd(Cl)}.
\end{proof}

\begin{lem}
Let $ n \in \mathbb{N}_{+} $ and $ 1 \leq i \leq n $. Then we have in $ \operatorname{Pol}_{n}(\operatorname{Cl}) $ that
\begin{equation} \label{fan_switch}
\psi_{n}(f_{i}) = \psi(f)_{i} \qquad f \in \operatorname{OPol}_{1}(\operatorname{Cl}).
\end{equation}
Furthermore, we have in $ \operatorname{OPol}_{n}(\operatorname{Cl}) $ that
\begin{equation} \label{fan_switch_2}
\psi_{n}^{-1}(f_{i}) = \psi^{-1}(f)_{i} \qquad f \in \operatorname{Pol}_{1}(\operatorname{Cl}).
\end{equation}
\end{lem}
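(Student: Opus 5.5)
The plan is to prove \cref{fan_switch} by reducing to the generators of $\operatorname{OPol}_{1}(\operatorname{Cl})$, and then to obtain \cref{fan_switch_2} formally from it.

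For \cref{fan_switch}, both sides are superalgebra homomorphisms of $f$. Concretely, let $\iota_{i}\colon \operatorname{OPol}_{1}(\operatorname{Cl}) \to \operatorname{OPol}_{n}(\operatorname{Cl})$ and $\iota'_{i}\colon \operatorname{Pol}_{1}(\operatorname{Cl}) \to \operatorname{Pol}_{n}(\operatorname{Cl})$ denote the maps $f \mapsto f_{i}$. Diagrammatically, each is the map $g \mapsto 1_{\go}^{\otimes(i-1)} \otimes g \otimes 1_{\go}^{\otimes(n-i)}$. This map is an algebra homomorphism: composing two such morphisms places them on the same strand, so the super interchange law \cref{interchange} produces no signs, because the identity morphisms are even. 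I will therefore first record that $\iota_{i}$ and $\iota'_{i}$ are superalgebra homomorphisms, extending the notation $f_{i}$ from \cref{homomorphism_in} to these Clifford versions.

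Next, since $\Psi$ is a strict monoidal functor, we have
\[
\psi_{n}\bigl(1_{\go}^{\otimes(i-1)} \otimes g \otimes 1_{\go}^{\otimes(n-i)}\bigr) = 1_{\go}^{\otimes(i-1)} \otimes \Psi(g) \otimes 1_{\go}^{\otimes(n-i)}
\]
for every $g \in \operatorname{End}(\go)$. Hence $\psi_{n}\circ\iota_{i} = \iota'_{i}\circ\psi$ holds on all of $\operatorname{OPol}_{1}(\operatorname{Cl})$, which is exactly \cref{fan_switch}. If one prefers to argue algebraically, it suffices to check the equality on the generators $c$ and $x$, since both sides are homomorphisms. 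For $c$: $\psi_{n}(c_{i}) = c_{i} = \psi(c)_{i}$. For $x$: $\psi_{n}(x_{i}) = \frac{1}{\sqrt{-2}}c_{i}x_{i} = \bigl(\frac{1}{\sqrt{-2}}cx\bigr)_{i} = \psi(x)_{i}$, using \cref{joker_1}.

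For \cref{fan_switch_2}, I will apply \cref{fan_switch} to $\psi^{-1}(f)$, which gives $\psi_{n}(\psi^{-1}(f)_{i}) = f_{i}$. Applying $\psi_{n}^{-1}$ to both sides then yields the claim. Alternatively, the same functoriality argument applied to $\Psi^{-1}$ gives it directly.

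The only point requiring any care is the sign check: I need to confirm that inserting an odd morphism onto strand $i$ via tensoring with identities is multiplicative with no sign. This holds because identities are even, and I do not expect any real obstacle here.
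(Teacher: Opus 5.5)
Your proposal is correct and is essentially the paper's argument: the paper's omitted proof reduces to the generators $c$ and $x$ and computes $\psi_n(x_i)=\frac{1}{\sqrt{-2}}c_ix_i=\psi(x)_i$ exactly as you do, and it checks the second identity separately on generators. You instead deduce the second identity from the first by applying $\psi_n^{-1}$, which is equally valid, and your remark that strict monoidality of $\Psi$ gives $\psi_n\circ\iota_i=\iota'_i\circ\psi$ on all of $\operatorname{OPol}_1(\operatorname{Cl})$ is a clean way to see the same thing without computing anything.
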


\begin{proof}
These are easy to see, and so we omit the proofs here.
\end{proof}

\details{It suffices to check that \cref{fan_switch} holds for $ f = c $ and $ f = x $. If $ f = c $, then both sides of \cref{fan_switch} are equal to $ c_{i} $. If $ f = x $, then we have
\begin{equation*}
\psi_{n}(f_{i}) = \psi_{n}(x_{i}) = \frac{1}{\sqrt{-2}}c_{i}x_{i} = \left( \frac{1}{\sqrt{-2}}cx \right)_{i} = \psi(x)_{i}  = \psi(f)_{i}.
\end{equation*}
Next, it suffices to check that \cref{fan_switch_2} holds for $ f = c $ and $ f = x $. If $ f = c $, then both sides of \cref{fan_switch_2} are equal to $ c_{i} $. If $ f = x $, then we have
\begin{equation*}
\psi_{n}^{-1}(f_{i}) = \psi_{n}^{-1}(x_{i}) = \sqrt{-2}c_{i}x_{i} = \left( \sqrt{-2}cx \right)_{i} = \psi^{-1}(x)_{i}  = \psi^{-1}(f)_{i}.
\end{equation*}}

\begin{lem}
Let $ n \in \mathbb{N}_{+} $ and $ 1 \leq i \leq n-1 $. Then 
\begin{equation} \label{frown}
\psi_{n}(f)(c_{i}-c_{i+1}) = (-1)^{\bar{f}}(c_{i}-c_{i+1})\psi_{n}(f), \qquad f \in \operatorname{OPol}_{n}.
\end{equation}
\end{lem}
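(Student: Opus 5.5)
Since both sides of \cref{frown} are multiplicative in $f$ and $\psi_n$ is a superalgebra homomorphism, I will reduce to generators. Suppose \cref{frown} holds for homogeneous $f,g\in\operatorname{OPol}_{n}$. Writing $c:=c_i-c_{i+1}$, we get
\begin{equation*}
\psi_n(fg)c=\psi_n(f)\psi_n(g)c=(-1)^{\bar g}\psi_n(f)c\,\psi_n(g)=(-1)^{\bar f+\bar g}c\,\psi_n(f)\psi_n(g).
\end{equation*}
So the set of homogeneous $f$ satisfying \cref{frown} is closed under products. It clearly contains $1$, and by linearity it suffices to treat monomials. Hence the problem reduces to checking $f=x_j$ for $1\le j\le n$, where $\bar f=1$. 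The claim to verify is therefore $\psi_n(x_j)c=-c\,\psi_n(x_j)$.

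By \cref{fan_switch} and \cref{joker_1}, we have $\psi_n(x_j)=\frac{1}{\sqrt{-2}}c_jx_j$ in $\operatorname{Pol}_n(\operatorname{Cl})$. I will split into two cases.

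\textbf{Case $j\notin\{i,i+1\}$.} By \cref{tophat3}, $x_j$ commutes with $c_i$ and $c_{i+1}$, and $c_j$ anticommutes with each of them. Hence $c_jx_j$ anticommutes with $c$.

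\textbf{Case $j=i$.} Using \cref{tophat1} and \cref{tophat3}, I compute
\begin{equation*}
c_ix_i c_i = -c_i c_i x_i = -x_i, \qquad c_i c_i x_i = x_i,
\end{equation*}
so $c_ix_i\,c_i = -c_i\,c_ix_i$. I also compute
\begin{equation*}
c_ix_i c_{i+1}=c_ic_{i+1}x_i=-c_{i+1}c_ix_i,
\end{equation*}
so $c_ix_i$ anticommutes with $c_{i+1}$ as well. Together these give $c_ix_i\,c=-c\,c_ix_i$. The case $j=i+1$ is symmetric.

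This is a direct sign check with no real obstacle. The only point needing care is the super sign bookkeeping in the multiplicativity step: $\psi_n$ preserves parity, so $\psi_n(f)$ has parity $\bar f$, which is what the induction uses. In fact, every generator of $\operatorname{Pol}_n(\operatorname{Cl})$ in the image, namely $c_jx_j$, anticommutes with both $c_i$ and $c_{i+1}$ individually, so the statement holds even for each $c_i$ separately.
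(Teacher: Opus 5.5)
Your proof is correct and follows the same route as the paper. You reduce to the generators $f = x_j$, spelling out the closure-under-products step that the paper leaves implicit, and then check directly from \cref{tophat1} and \cref{tophat3} that $\psi_n(x_j) = \frac{1}{\sqrt{-2}}c_jx_j$ anticommutes with $c_i - c_{i+1}$ in each of the cases $j \notin \{i,i+1\}$, $j = i$, $j = i+1$ (your closing remark that $c_jx_j$ anticommutes with every $c_k$ individually is also correct).
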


\begin{proof}
It suffices to check that \cref{frown} holds for $ f = x_{j} $, $ 1 \leq j \leq n $. These computations are straightforward. For example, if $ j = i $, then
\begin{equation*}
\psi_{n}(f)(c_{i}-c_{i+1}) = \frac{1}{\sqrt{-2}}c_{i}x_{i}(c_{i}-c_{i+1}) \ \overset{\mathclap{\cref{tophat1}}}{\underset{\mathclap{\cref{tophat3}}}{=}} \ -\frac{1}{\sqrt{-2}}(c_{i}-c_{i+1})c_{i}x_{i} = -(c_{i}-c_{i+1})\psi_{n}(f). \qedhere
\end{equation*}
\end{proof}

\details{If $ j \neq i,i+1 $, then we have 
\begin{equation*}
\psi_{n}(f)(c_{i}-c_{i+1}) = \frac{1}{\sqrt{-2}}c_{j}x_{j}(c_{i}-c_{i+1}) \ \stackrel{\mathclap{\cref{tophat3}}}{=} \ -\frac{1}{\sqrt{-2}}(c_{i}-c_{i+1})c_{j}x_{j} = -(c_{i}-c_{i+1})\psi_{n}(f).
\end{equation*}
Next, if $ j = i $, then
\begin{equation*}
\psi_{n}(f)(c_{i}-c_{i+1}) = \frac{1}{\sqrt{-2}}c_{i}x_{i}(c_{i}-c_{i+1}) \ \overset{\mathclap{\cref{tophat1}}}{\underset{\mathclap{\cref{tophat3}}}{=}} \ -\frac{1}{\sqrt{-2}}(c_{i}-c_{i+1})c_{i}x_{i} = -(c_{i}-c_{i+1})\psi_{n}(f).
\end{equation*}
Finally, if $ j = i+1 $, then we have
\begin{equation*}
\psi_{n}(f)(c_{i}-c_{i+1}) = \frac{1}{\sqrt{-2}}c_{i+1}x_{i+1}(c_{i}-c_{i+1}) \ \overset{\mathclap{\cref{tophat1}}}{\underset{\mathclap{\cref{tophat3}}}{=}} \ -\frac{1}{\sqrt{-2}}(c_{i}-c_{i+1})c_{i+1}x_{i+1} = -(c_{i}-c_{i+1})\psi_{n}(f).
\end{equation*}}

Recall that, for each $ 1 \leq i \leq n-1 $, we have the odd Demazure operators $ D_{i} \colon \operatorname{OPol}_{n} \rightarrow \operatorname{OPol}_{n} $ from Definition \ref{oddDemazure}, and we have the Clifford Demazure operators $ \partial_{i} \colon \operatorname{Pol}_{n}(\operatorname{Cl}) \rightarrow \operatorname{Pol}_{n}(\operatorname{Cl}) $ from Definition \ref{Cliff_Demazure}.

\begin{lem}
Let $ n \in \mathbb{N}_{+} $ and $ 1 \leq i \leq n-1 $. Then
\begin{equation} \label{seeing}
\psi_{n}(D_{i}(f)) = \frac{1}{\sqrt{-2}}(c_{i}-c_{i+1})\partial_{i}(\psi_{n}(f)), \qquad f \in \operatorname{OPol}_{n}.
\end{equation}
\end{lem}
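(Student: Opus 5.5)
We prove \cref{seeing} by induction on polynomial degree, after checking that both sides obey compatible twisted Leibniz rules. Write $\gamma := \frac{1}{\sqrt{-2}}(c_{i}-c_{i+1})$. Both sides of \cref{seeing} are $\mathbb{C}$-linear in $f$. It therefore suffices to verify \cref{seeing} on $f=1$ and $f=x_{j}$, and then to show that the set of $f$ satisfying \cref{seeing} is closed under products of homogeneous elements. Since $\operatorname{OPol}_{n}$ is spanned by monomials in the $x_{j}$, this will finish the proof.

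\emph{Base cases.} For $f=1$ both sides vanish, because $D_{i}(1)=0$ and $\partial_{i}(1)=0$ (as $1\in\operatorname{Cl}_{n}$).

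For $f=x_{j}$ with $j\neq i,i+1$, both sides are $0$. Indeed $\psi_{n}(x_{j})=\frac{1}{\sqrt{-2}}c_{j}x_{j}$, and the Leibniz rule \cref{Cliff_leibniz} together with $\partial_{i}(c_{j})=0$ and $\partial_{i}(x_{j})=0$ gives $\partial_{i}(c_{j}x_{j})=0$.

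For $f=x_{i}$ the left side is $\psi_{n}(1)=1$. On the right, \cref{Cliff_leibniz} gives $\partial_{i}(c_{i}x_{i})=s_{i}(c_{i})\partial_{i}(x_{i})=c_{i+1}(1+c_{i}c_{i+1})$. Hence the right side equals
\[
-\tfrac12(c_{i}-c_{i+1})c_{i+1}(1+c_{i}c_{i+1}),
\]
which expands to $1$ using \cref{tophat1,tophat3}. The case $f=x_{i+1}$ is the same computation: it uses $\partial_{i}(x_{i+1})=-1+c_{i}c_{i+1}$ and again yields $1$.

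\emph{Inductive step.} Suppose \cref{seeing} holds for homogeneous $f,g$. By \cref{oddLeibnizrule},
\[
\psi_{n}(D_{i}(fg))=\psi_{n}(D_{i}(f))\psi_{n}(g)+(-1)^{\bar f}\psi_{n}(s_{i}(f))\psi_{n}(D_{i}(g)).
\]
Applying the hypothesis to $f$ and $g$ turns this into
\[
\gamma\,\partial_{i}(\psi_{n}f)\,\psi_{n}g+(-1)^{\bar f}\psi_{n}(s_{i}f)\,\gamma\,\partial_{i}(\psi_{n}g).
\]
On the other hand, \cref{Cliff_leibniz} gives
\[
\gamma\,\partial_{i}(\psi_{n}(f)\psi_{n}(g))=\gamma\,\partial_{i}(\psi_{n}f)\,\psi_{n}g+\gamma\, s_{i}(\psi_{n}f)\,\partial_{i}(\psi_{n}g).
\]
The first terms agree, so we must match the second terms. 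For this we need two facts.
\begin{enumerate}
\item[(i)] $\psi_{n}\circ s_{i}=s_{i}\circ\psi_{n}$ on $\operatorname{OPol}_{n}$. This holds because both sides are algebra homomorphisms agreeing on the generators $x_{j}$, since $\psi_{n}(x_{j})=\frac{1}{\sqrt{-2}}c_{j}x_{j}$.
\item[(ii)] $(-1)^{\bar f}\psi_{n}(s_{i}f)\gamma=\gamma\,\psi_{n}(s_{i}f)$. This is \cref{frown} applied to $s_{i}(f)$, whose parity equals $\bar f$.
\end{enumerate}
Using (ii) and then (i), the second term of the first expansion becomes $\gamma\, s_{i}(\psi_{n}f)\,\partial_{i}(\psi_{n}g)$, which is the second term of the second expansion. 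This closes the induction.

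The main obstacle is bookkeeping of signs: the parity-twisted Leibniz rule for $D_{i}$ has to match the untwisted Leibniz rule for $\partial_{i}$, and this is exactly the role of \cref{frown}. The base-case Clifford arithmetic for $x_{i}$ and $x_{i+1}$ is the only genuine computation, and it is routine.
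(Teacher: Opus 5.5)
Your proof is correct and follows essentially the same route as the paper: check the identity on constants and on the generators $x_j$ (the only real computation being $x_i$ and $x_{i+1}$), then induct on products using the twisted Leibniz rule for $D_i$, the relation \cref{frown} applied to $s_i(f)$, the identity $\psi_n \circ s_i = s_i \circ \psi_n$, and the Leibniz rule \cref{Cliff_leibniz} for $\partial_i$. Your justification of $\psi_n \circ s_i = s_i \circ \psi_n$, namely that both are algebra homomorphisms agreeing on the generators, fills in a step the paper only calls straightforward.
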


\begin{proof}
If $ f \in \mathbb{C} $ or $ f = x_{j} $ for some $ j \neq i,i+1 $, then both sides of \cref{seeing} are equal to zero. Furthermore, it is straightforward to check that \cref{seeing} holds when $ f = x_{i} $ or $ f = x_{i+1} $. Now assume that \cref{seeing} holds for some $ f,g \in \operatorname{OPol}_{n} $. Then we compute that
\begin{align*}
\psi_{n}(D_{i}(fg)) &\stackrel{\mathclap{\cref{oddLeibnizrule}}}{=} \ \psi_{n}(D_{i}(f))\psi_{n}(g) + (-1)^{\bar{f}}\psi_{n}(s_{i}(f))\psi_{n}(D_{i}(g))
\\ &\stackrel{\mathclap{\cref{frown}}}{=} \ \frac{1}{\sqrt{-2}}(c_{i}-c_{i+1})\partial_{i}(\psi_{n}(f))\psi_{n}(g) + \frac{1}{\sqrt{-2}}(c_{i}-c_{i+1})s_{i}(\psi_{n}(f))\partial_{i}(\psi_{n}(g))
\\ &\stackrel{\mathclap{\cref{Cliff_leibniz}}}{=} \ \frac{1}{\sqrt{-2}}(c_{i}-c_{i+1})\partial_{i}(\psi_{n}(fg)).
\end{align*}
In the second equality, we used that $ \psi_{n}(s_{i}(f)) = s_{i}(\psi_{n}(f)) $, which is straightforward to see. Thus \cref{seeing} now follows by induction.
\end{proof}

\details{We check here that \cref{seeing} holds for $ f = x_{i} $ and $ f = x_{i+1} $. We compute that
\begin{multline*}
\frac{1}{\sqrt{-2}}(c_{i}-c_{i+1})\partial_{i}(\psi_{n}(x_{i})) = -\frac{1}{2}(c_{i}-c_{i+1})\partial_{i}(c_{i}x_{i}) 
\\ = -\frac{1}{2}(c_{i}-c_{i+1})c_{i+1}(1+c_{i}c_{i+1}) = \frac{1}{2}(c_{i}-c_{i+1})^{2} = 1 = \psi_{n}(1) = \psi_{n}(D_{i}(x_{i})).
\end{multline*}
Similarly, 
\begin{multline*}
\frac{1}{\sqrt{-2}}(c_{i}-c_{i+1})\partial_{i}(\psi_{n}(x_{i+1})) = -\frac{1}{2}(c_{i}-c_{i+1})\partial_{i}(c_{i+1}x_{i+1}) 
\\ = -\frac{1}{2}(c_{i}-c_{i+1})c_{i}(-1+c_{i}c_{i+1}) = \frac{1}{2}(c_{i}-c_{i+1})^{2} = 1 = \psi_{n}(1) = \psi_{n}(D_{i}(x_{i+1})).
\end{multline*}}

\begin{lem} \label{working_656}
Let $ n \in \mathbb{N}_{+} $, $ 1 \leq i \leq n-1 $, and $ f \in \operatorname{Pol}_{n}(\operatorname{Cl}) $. Assume that $ \psi_{n}^{-1}(f) \in \operatorname{OPol}_{n} $. Then 
\begin{equation} \label{working_655}
\psi_{n}^{-1}(\partial_{i}(f)) = -\frac{1}{\sqrt{-2}}(c_{i}-c_{i+1})D_{i}(\psi_{n}^{-1}(f)).
\end{equation}
\end{lem}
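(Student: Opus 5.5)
Set $g := \psi_{n}^{-1}(f) \in \operatorname{OPol}_{n}$, so that $f = \psi_{n}(g)$. The idea is to invert \cref{seeing} by multiplying on the left by $c_{i}-c_{i+1}$. This element squares to $2$, since $c_{i}^{2}=c_{i+1}^{2}=1$ and $c_{i}c_{i+1}+c_{i+1}c_{i}=0$. Applying \cref{seeing} to $g$ gives
\begin{equation*}
\psi_{n}(D_{i}(g)) = \frac{1}{\sqrt{-2}}(c_{i}-c_{i+1})\partial_{i}(f).
\end{equation*}
Multiplying on the left by $(c_{i}-c_{i+1})$ and using $(c_{i}-c_{i+1})^{2}=2$, we get
\begin{equation*}
\partial_{i}(f) = \frac{\sqrt{-2}}{2}(c_{i}-c_{i+1})\psi_{n}(D_{i}(g)) = -\frac{1}{\sqrt{-2}}(c_{i}-c_{i+1})\psi_{n}(D_{i}(g)).
\end{equation*}
The last equality is the identity $\sqrt{-2}/2 = -1/\sqrt{-2}$, which holds because $(\sqrt{-2})^{2}=-2$.

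Next we apply $\psi_{n}^{-1}$, which is a superalgebra isomorphism. The Clifford generators are fixed, since by \cref{fan_switch_2} with $f=c$ we have $\psi_{n}^{-1}(c_{j}) = c_{j}$. Together with $\psi_{n}^{-1}\psi_{n} = \operatorname{id}$, this gives
\begin{equation*}
\psi_{n}^{-1}(\partial_{i}(f)) = -\frac{1}{\sqrt{-2}}(c_{i}-c_{i+1})D_{i}(g) = -\frac{1}{\sqrt{-2}}(c_{i}-c_{i+1})D_{i}(\psi_{n}^{-1}(f)),
\end{equation*}
which is \cref{working_655}.

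The hypothesis $\psi_{n}^{-1}(f)\in\operatorname{OPol}_{n}$ matters only because \cref{seeing} is stated for elements of $\operatorname{OPol}_{n}$ and $D_{i}$ is defined only on $\operatorname{OPol}_{n}$. Here $\operatorname{OPol}_{n}$ is viewed inside $\operatorname{OPol}_{n}(\operatorname{Cl})$ via Proposition~\ref{grassroots}. There is no real obstacle. The only points needing care are the identification $\psi_{n}^{-1}(c_{j})=c_{j}$, which comes from the definition of $\Psi^{-1}$ in \cref{joker_2}, and the scalar identity $\sqrt{-2}/2=-1/\sqrt{-2}$.
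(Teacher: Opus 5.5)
Your proof is correct and follows the paper's argument: you apply \cref{seeing} to $\psi_{n}^{-1}(f)$, then use $(c_{i}-c_{i+1})^{2}=2$ and the fact that $\psi_{n}^{-1}$ fixes each $c_{j}$. The only difference is the order of the last two steps. You multiply by $c_{i}-c_{i+1}$ inside $\operatorname{Pol}_{n}(\operatorname{Cl})$ and then apply $\psi_{n}^{-1}$, whereas the paper applies $\psi_{n}^{-1}$ first and then multiplies by $-\frac{1}{\sqrt{-2}}(c_{i}-c_{i+1})$.
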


\begin{proof}
Since $ \psi_{n}^{-1}(f) \in \operatorname{OPol}_{n} $, it follows by \cref{seeing} that
\begin{equation} \label{working_123}
\psi_{n}(D_{i}(\psi_{n}^{-1}(f))) = \frac{1}{\sqrt{-2}}(c_{i}-c_{i+1})\partial_{i}(f).
\end{equation}
Applying $ \psi_{n}^{-1} $ to both sides of \cref{working_123} gives
\begin{equation} \label{working_124}
D_{i}(\psi_{n}^{-1}(f)) = \frac{1}{\sqrt{-2}}(c_{i}-c_{i+1})\psi_{n}^{-1}(\partial_{i}(f)).
\end{equation}
Finally, multiplying both sides of \cref{working_124} by $ -\frac{1}{\sqrt{-2}}(c_{i}-c_{i+1}) $ gives \cref{working_655}.
\end{proof}

\subsection{Isomorphism of $ \mathcal{AS}(\operatorname{Cl}) $ and $ \mathcal{AH}(\operatorname{Cl}) $}

Recall that we have the supercategory $ \mathcal{AS}(\operatorname{Cl}) $ from Definition \ref{S(Cl)} and the supercategory $ \mathcal{AH}(\operatorname{Cl}) $ from Definition \ref{DSAHCSdef}. We define the following morphism in $ \mathcal{AS}(\operatorname{Cl}) $:
\begin{equation} \label{green_tele_def}
\begin{tikzpicture}[centerzero]
        \draw (-0.2,-0.3) -- (-0.2,0.3);
        \draw (0.2,-0.3) -- (0.2,0.3);
        \xteleport{-0.2,0}{0.2,0};
    \end{tikzpicture}
:=
\begin{tikzpicture}[centerzero]
        \draw (-0.2,-0.3) -- (-0.2,0.3);
        \draw (0.2,-0.3) -- (0.2,0.3);
        \xtoken{-0.2,0};
    \end{tikzpicture}
\ - \
\begin{tikzpicture}[centerzero]
        \draw (-0.2,-0.3) -- (-0.2,0.3);
        \draw (0.2,-0.3) -- (0.2,0.3);
        \xtoken{0.2,0};
    \end{tikzpicture}\ .
\end{equation}
It is straightforward to check using \cref{spinCl123} that
\begin{equation} \label{cards}
\begin{tikzpicture}[centerzero]
        \draw (-0.2,-0.3) -- (-0.2,0.3);
        \draw (0.2,-0.3) -- (0.2,0.3);
        \xteleport{-0.2,0.15}{0.2,0.15};
        \xteleport{-0.2,-0.15}{0.2,-0.15};
    \end{tikzpicture}
= 2 \
\begin{tikzpicture}[centerzero]
        \draw (-0.2,-0.3) -- (-0.2,0.3);
        \draw (0.2,-0.3) -- (0.2,0.3);
    \end{tikzpicture} \ .
\end{equation}
Furthermore, we define the following morphism in $ \mathcal{AH}(\operatorname{Cl}) $:
\begin{equation} \label{blue_tele}
\begin{tikzpicture}[centerzero]
        \draw (-0.2,-0.3) -- (-0.2,0.3);
        \draw (0.2,-0.3) -- (0.2,0.3);
        \teleport{-0.2,0}{0.2,0};
    \end{tikzpicture}
:=
\begin{tikzpicture}[centerzero]
        \draw (-0.2,-0.3) -- (-0.2,0.3);
        \draw (0.2,-0.3) -- (0.2,0.3);
        \bluetoken{-0.2,0};
    \end{tikzpicture}
\ - \
\begin{tikzpicture}[centerzero]
        \draw (-0.2,-0.3) -- (-0.2,0.3);
        \draw (0.2,-0.3) -- (0.2,0.3);
        \bluetoken{0.2,0};
    \end{tikzpicture} \ .
\end{equation}
It is straightforward to check using \cref{Pol(Cl)} that
\begin{equation} \label{tele_squared}
\begin{tikzpicture}[centerzero]
        \draw (-0.2,-0.3) -- (-0.2,0.3);
        \draw (0.2,-0.3) -- (0.2,0.3);
        \teleport{-0.2,0.15}{0.2,0.15};
        \teleport{-0.2,-0.15}{0.2,-0.15};
    \end{tikzpicture}
= 2 \
\begin{tikzpicture}[centerzero]
        \draw (-0.2,-0.3) -- (-0.2,0.3);
        \draw (0.2,-0.3) -- (0.2,0.3);
    \end{tikzpicture} \ .
\end{equation}

\begin{prop} \label{AS-AH-iso}
We have an isomorphism of monoidal supercategories $ \Phi \colon \mathcal{AS}(\operatorname{Cl}) \rightarrow \mathcal{AH}(\operatorname{Cl}) $ which sends $ \go $ to $ \go $, and sends the generating morphisms as follows:
\begin{gather*}
\Phi \left(\begin{tikzpicture}[centerzero, thick]
      \draw[-] (0.6,-0.4) -- (0.6,0.4);
        \xtoken{0.6,0};
\end{tikzpicture}\right) = \begin{tikzpicture}[centerzero, thick]
      \draw[-] (0.6,-0.4) -- (0.6,0.4);
      \bluetoken{0.6,0};
\end{tikzpicture} \ , 
\quad 
\Phi \left(\begin{tikzpicture}[centerzero, thick]
      \draw[-] (0.6,-0.4) -- (0.6,0.4);
      \singdot{0.6,0};
\end{tikzpicture}\right) = \frac{1}{\sqrt{-2}} \
\begin{tikzpicture}[centerzero, thick]
      \draw[-] (0.6,-0.4) -- (0.6,0.4);
      \singdot{0.6,-0.15};
      \bluetoken{0.6,0.15};
\end{tikzpicture} \ ,
\quad 
\Phi \left(\begin{tikzpicture}[centerzero, thick]
      \draw[-] (0.9,-0.4) -- (0.3,0.4);
      \draw[-] (0.3,-0.4) -- (0.9,0.4);
\end{tikzpicture}\right) 
= - \frac{1}{\sqrt{-2}} \
\begin{tikzpicture}[centerzero, thick]
      \draw[-] (0.9,-0.4) -- (0.3,0.4);
      \draw[-] (0.3,-0.4) -- (0.9,0.4);
      \teleport{0.45,0.2}{0.75,0.2};
\end{tikzpicture} \ .
\end{gather*}
Its inverse $ \Phi^{-1} \colon \mathcal{AH}(\operatorname{Cl}) \rightarrow \mathcal{AS}(\operatorname{Cl}) $ sends $ \go $ to $ \go $, and sends the generating morphisms as follows:
\begin{gather*}
\Phi^{-1} \left(\begin{tikzpicture}[centerzero, thick]
      \draw[-] (0.6,-0.4) -- (0.6,0.4);
      \bluetoken{0.6,0};
\end{tikzpicture}\right) = \begin{tikzpicture}[centerzero, thick]
      \draw[-] (0.6,-0.4) -- (0.6,0.4);
      \xtoken{0.6,0};
\end{tikzpicture} \ , 
\quad 
\Phi^{-1} \left(\begin{tikzpicture}[centerzero, thick]
      \draw[-] (0.6,-0.4) -- (0.6,0.4);
      \singdot{0.6,0};
\end{tikzpicture}\right) = \sqrt{-2} \
\begin{tikzpicture}[centerzero, thick]
      \draw[-] (0.6,-0.4) -- (0.6,0.4);
      \singdot{0.6,-0.15};
      \xtoken{0.6,0.15};
\end{tikzpicture} \ ,
\quad 
\Phi^{-1} \left(\begin{tikzpicture}[centerzero, thick]
      \draw[-] (0.9,-0.4) -- (0.3,0.4);
      \draw[-] (0.3,-0.4) -- (0.9,0.4);
\end{tikzpicture}\right) 
= \frac{1}{\sqrt{-2}} \
\begin{tikzpicture}[centerzero, thick]
      \draw[-] (0.9,-0.4) -- (0.3,0.4);
      \draw[-] (0.3,-0.4) -- (0.9,0.4);
      \xteleport{0.45,0.2}{0.75,0.2};
\end{tikzpicture} \ .
\end{gather*}
\end{prop}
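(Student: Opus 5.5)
The plan is to verify directly that $\Phi$ and $\Phi^{-1}$ respect the defining relations, and then check that they are mutually inverse on generators. Since both supercategories are strict monoidal and generated by the single object $\go$, Proposition~\ref{zebra} reduces everything to computations in $H_2^{\operatorname{aff}}(\operatorname{Cl})$, $H_3^{\operatorname{aff}}(\operatorname{Cl})$ and their $\mathcal{AS}(\operatorname{Cl})$ counterparts.

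Write $\sigma$ for the even crossing in $\mathcal{AH}(\operatorname{Cl})$ and $\tau:=c_1-c_2$ for the blue teleporter. Then $\Phi(T_1)=-\tfrac{1}{\sqrt{-2}}\tau\sigma$, and this is odd, as required.

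\textbf{Parity and preliminary identities.} First I check that $\Phi$ preserves parity: the dot image $\tfrac{1}{\sqrt{-2}}c x$ and the crossing image $-\tfrac{1}{\sqrt{-2}}\tau\sigma$ are both odd. Next I record the basic identities I will use repeatedly:
\begin{itemize}
\item From the third relation of \cref{Cliff_hw-1} and \cref{Cliff_hw-5}: $\sigma c_1=c_2\sigma$ and $\sigma c_2=c_1\sigma$. Hence $\tau\sigma=-\sigma\tau$.
\item From \cref{tele_squared}: $\tau^2=2$.
\item From \cref{Pol(Cl)} and \cref{tophat3}: $c_1\tau=-\tau c_2$ and $c_2\tau=-\tau c_1$.
\end{itemize}

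\textbf{Relations for $\Phi$.}
\begin{itemize}
\item \cref{spinCl123}: this is exactly Proposition~\ref{Next}, since $\Phi$ restricts to $\Psi$.
\item \cref{spinCl1}: this follows from the token-teleporter anticommutation identities above, combined with $\sigma c_1=c_2\sigma$.
\item First relation of \cref{spin1}: we get $\Phi(T_1)^2=-\tfrac12\tau\sigma\tau\sigma=\tfrac12\tau^2\sigma^2=1$.
\item Braid relation: expand both sides as $\pm\tfrac{1}{(\sqrt{-2})^3}$ times a product of three teleporters and three crossings. Push all teleporters to the top using $\sigma c_i=c_{s(i)}\sigma$, so each side becomes (teleporter product)$\cdot\sigma_1\sigma_2\sigma_1$. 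It then remains to compare the two resulting products of three teleporters in $\operatorname{Cl}_3$, namely $(c_1-c_2)(c_1-c_3)(c_2-c_3)$ versus the analogous product for the other side, up to the signs coming from moving the odd factors.
\item \cref{spin2}: compute $\Phi(T_1)\Phi(x_1)+\Phi(x_2)\Phi(T_1)$ using \cref{Cliff_hw0}, $\sigma x_1=x_2\sigma-1-c_1c_2$. The $\sigma$-terms should cancel, and the remaining Clifford terms should simplify to $-\tfrac12\tau(c_1+\ldots)$, which equals $1$ after using $\tau^2=2$.
\end{itemize}

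\textbf{Relations for $\Phi^{-1}$, and inverse check.} The same style of computation handles \cref{Pol(Cl)} (again Proposition~\ref{Next}), the three relations of \cref{Cliff_hw-1}, and \cref{Cliff_hw0}. For these I will use \cref{cards} and the fact that the green tokens supercommute with the crossing up to the sign in \cref{spinCl1}. Finally, $\Phi\Phi^{-1}$ and $\Phi^{-1}\Phi$ fix tokens and dots by Proposition~\ref{Next}. On crossings, $\Phi^{-1}\Phi(T_1)$ equals $-\tfrac{1}{-2}$ times a teleporter squared times $T_1$ (after moving the teleporter through, with one sign), which is $\tfrac12\cdot 2\,T_1=T_1$. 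The other composite is handled similarly.

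\textbf{Main obstacle.} I expect the hardest steps to be the sign bookkeeping in the braid relation and in \cref{spin2}/\cref{Cliff_hw0}. These involve odd teleporters passing through crossings and each other, so the super interchange law contributes signs. My first approach is to normalize every expression to the form (element of $\operatorname{Cl}_n$)$\cdot$(dots)$\cdot\sigma_w$ and compare coefficients, using the basis of $H_n^{\operatorname{aff}}(\operatorname{Cl})$ implicitly via its PBW form.
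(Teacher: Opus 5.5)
Your proposal is correct and matches the paper's proof. In both, one verifies the defining relations for $\Phi$ and $\Phi^{-1}$ directly: the braid relations reduce, after sliding the teleporters to the top, to an identity between triple teleporter products in $\operatorname{Cl}_3$, where both sides equal $2(c_1-c_3)$ in your normalization. One then checks the composites on generators; the paper also notes that the result follows from Wang's Thm.~4.1.

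Two harmless slips. In \cref{spin2} the leftover Clifford terms are $\tfrac12\tau c_1-\tfrac12\tau c_2=\tfrac12\tau^2=1$, not $-\tfrac12\tau(c_1+\cdots)$. In $\Phi^{-1}\Phi(T_1)$ no sign from moving a teleporter arises, because the two green teleporters are already adjacent.
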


\begin{proof}
It is straightforward to verify that the given maps respect the defining relations and are mutually inverse. Alternatively, this proposition is a consequence of \cite[Thm.~4.1]{Wang}.
\end{proof}

\details{We first prove the existence of $ \Phi $, which requires checking the relations \cref{spin1}, \cref{spin2}, \cref{spinCl123}, and \cref{spinCl1}. For \cref{spin1}, we first compute that
\begin{equation*}
\Phi \left(\begin{tikzpicture}[anchorbase, thick]
        \draw[-] (0.2,-0.5) to[out=up,in=down] (-0.2,0) to[out=up,in=down] (0.2,0.5);
        \draw[-] (-0.2,-0.5) to[out=up,in=down] (0.2,0) to[out=up,in=down] (-0.2,0.5);
\end{tikzpicture}\right)
= - \frac{1}{2} \
\begin{tikzpicture}[anchorbase, thick]
        \draw[-] (0.2,-0.5) to[out=up,in=down] (-0.2,0) to[out=up,in=down] (0.2,0.5);
        \draw[-] (-0.2,-0.5) to[out=up,in=down] (0.2,0) to[out=up,in=down] (-0.2,0.5);
        \teleport{-0.2,0}{0.2,0};
        \teleport{-0.15,0.35}{0.15,0.35};
\end{tikzpicture}
\quad \overset{\mathclap{\cref{Cliff_hw-1}}}{\underset{\mathclap{\cref{Cliff_hw-5}}}{=}} \ \quad
\frac{1}{2} \
\begin{tikzpicture}[anchorbase, thick]
        \draw[-] (0.2,-0.5) to[out=up,in=down] (-0.2,0) to[out=up,in=down] (0.2,0.5);
        \draw[-] (-0.2,-0.5) to[out=up,in=down] (0.2,0) to[out=up,in=down] (-0.2,0.5);
        \teleport{-0.2,-0.05}{0.2,-0.05};
        \teleport{-0.15,0.15}{0.15,0.15};
\end{tikzpicture}
\quad \stackrel{\mathclap{\cref{tele_squared}}}{=} \quad 
\begin{tikzpicture}[anchorbase, thick]
        \draw[-] (0.2,-0.5) to[out=up,in=down] (-0.2,0) to[out=up,in=down] (0.2,0.5);
        \draw[-] (-0.2,-0.5) to[out=up,in=down] (0.2,0) to[out=up,in=down] (-0.2,0.5);
\end{tikzpicture}
\quad \stackrel{\mathclap{\cref{Cliff_hw-1}}}{=} \quad
\begin{tikzpicture}[anchorbase, thick]
        \draw[-] (0.2,-0.5) -- (0.2,0.5);
        \draw[-] (-0.2,-0.5) -- (-0.2,0.5);
\end{tikzpicture} 
=
\Phi \left( \ \begin{tikzpicture}[anchorbase, thick]
        \draw[-] (0.2,-0.5) -- (0.2,0.5);
        \draw[-] (-0.2,-0.5) -- (-0.2,0.5);
\end{tikzpicture} 
\ \right).
\end{equation*}
Next, for the second relation of \cref{spin1}, we compute that
\begin{align}
\Phi \left(\begin{tikzpicture}[anchorbase, thick]
        \draw[-] (0.8,-1) -- (-0.8,1);
        \draw[-] (0,-1) to[out=up, in=down] (-0.8,0) to[out=up,in=down] (0,1);
        \draw[-] (-0.8,-1) -- (0.8,1);
\end{tikzpicture}\right)
&= \left(\frac{-1}{\sqrt{-2}}\right)^{3} \
\begin{tikzpicture}[anchorbase, thick]
        \draw[-] (0.8,-1) -- (-0.8,1);
        \draw[-] (0,-1) to[out=up, in=down] (-0.8,0) to[out=up,in=down] (0,1);
        \draw[-] (-0.8,-1) -- (0.8,1);
        \teleport{-0.5,0.6}{-0.2,0.6};
        \teleport{-0.15,0.2}{0.15,0.2};
        \teleport{-0.7,-0.3}{-0.2,-0.3};
\end{tikzpicture}
\quad \overset{\mathclap{\cref{Cliff_hw-1}}}{\underset{\mathclap{\cref{Cliff_hw-5}}}{=}} \quad
\left(\frac{-1}{\sqrt{-2}}\right)^{3} \
\begin{tikzpicture}[anchorbase, thick]
        \draw[-] (0.8,-1) -- (-0.8,1);
        \draw[-] (0,-1) to[out=up, in=down] (-0.8,0) to[out=up,in=down] (0,1);
        \draw[-] (-0.8,-1) -- (0.8,1);
        \teleport{-0.75,0.9}{0,0.9};
        \teleport{-0.6,0.75}{0.6,0.75};
        \teleport{0.5,0.6}{-0.2,0.6};
\end{tikzpicture} \ , \label{braidimage456}
\\ \Phi \left(\begin{tikzpicture}[anchorbase, thick]
        \draw[-] (0.8,-1) -- (-0.8,1);
        \draw[-] (0,-1) to[out=up, in=down] (0.8,0) to[out=up,in=down] (0,1);
        \draw[-] (-0.8,-1) -- (0.8,1);
\end{tikzpicture}\right)
&= \left(\frac{-1}{\sqrt{-2}}\right)^{3} \
\begin{tikzpicture}[anchorbase, thick]
        \draw[-] (0.8,-1) -- (-0.8,1);
        \draw[-] (0,-1) to[out=up, in=down] (0.8,0) to[out=up,in=down] (0,1);
        \draw[-] (-0.8,-1) -- (0.8,1);
        \teleport{0.5,0.6}{0.2,0.6};
        \teleport{-0.15,0.2}{0.15,0.2};
        \teleport{0.7,-0.3}{0.2,-0.3};
\end{tikzpicture}
\quad \overset{\mathclap{\cref{Cliff_hw-1}}}{\underset{\mathclap{\cref{Cliff_hw-5}}}{=}} \quad
\left(\frac{-1}{\sqrt{-2}}\right)^{3} \
\begin{tikzpicture}[anchorbase, thick]
        \draw[-] (0.8,-1) -- (-0.8,1);
        \draw[-] (0,-1) to[out=up, in=down] (0.8,0) to[out=up,in=down] (0,1);
        \draw[-] (-0.8,-1) -- (0.8,1);
        \teleport{0.75,0.9}{0,0.9};
        \teleport{-0.6,0.75}{0.6,0.75};
        \teleport{-0.5,0.6}{0.2,0.6};
\end{tikzpicture} \ . \label{braidimage457}
\end{align}
Thus we have that this second relation of \cref{spin1} will follow if we are able to show that
\begin{equation} \label{Stanley2}
\begin{tikzpicture}[anchorbase, thick]
        \draw[-] (-0.4,-0.5) -- (-0.4,0.5);
        \draw[-] (0,-0.5) -- (0,0.5);
        \draw[-] (0.4,-0.5) -- (0.4,0.5);
        \teleport{-0.4,0.25}{0,0.25};
        \teleport{0.4,0}{-0.4,0};
        \teleport{0.4,-0.25}{0,-0.25};
\end{tikzpicture} 
=
\begin{tikzpicture}[anchorbase, thick]
        \draw[-] (-0.4,-0.5) -- (-0.4,0.5);
        \draw[-] (0,-0.5) -- (0,0.5);
        \draw[-] (0.4,-0.5) -- (0.4,0.5);
        \teleport{0,0.25}{0.4,0.25};
        \teleport{-0.4,0}{0.4,0};
        \teleport{0,-0.25}{-0.4,-0.25};
\end{tikzpicture} \ .
\end{equation}
The equation \cref{Stanley2} is straightforward to check. For \cref{spin2}, we compute that
\begin{align*}
\Phi \left(\begin{tikzpicture}[centerzero, thick]
        \draw[-] (-0.5,-0.7) -- (0.5,0.7);
        \draw[-] (0.5,-0.7) -- (-0.5,0.7);
        \singdot{-0.25,-0.35};
\end{tikzpicture}
\ + \
\begin{tikzpicture}[centerzero, thick]
        \draw[-] (-0.5,-0.7) -- (0.5,0.7);
        \draw[-] (0.5,-0.7) -- (-0.5,0.7);
        \singdot{0.25,0.35};
\end{tikzpicture}\right)
&= \frac{1}{2}\left(\begin{tikzpicture}[centerzero, thick]
        \draw[-] (0.5,-0.7) -- (-0.5,0.7);
        \draw[-] (-0.5,-0.7) -- (0.5,0.7);
        \singdot{-0.25,-0.35};
        \teleport{-0.25,0.35}{0.25,0.35};
        \bluetoken{-0.1,-0.15};
\end{tikzpicture}
\ + \
\begin{tikzpicture}[centerzero, thick]
        \draw[-] (0.5,-0.7) -- (-0.5,0.7);
        \draw[-] (-0.5,-0.7) -- (0.5,0.7);
        \singdot{0.23,0.3};
        \teleport{-0.1,0.13}{0.1,0.13};
        \bluetoken{0.35,0.5};
\end{tikzpicture}\right)
\\ &\stackrel{\mathclap{\cref{Pol(Cl)}}}{=} \ \frac{1}{2}\left(\begin{tikzpicture}[centerzero, thick]
        \draw[-] (-0.5,-0.7) -- (0.5,0.7);
        \draw[-] (0.5,-0.7) -- (-0.5,0.7);
        \singdot{-0.25,-0.35};
        \bluetoken{-0.25,0.35};
        \bluetoken{0.25,0.35};
\end{tikzpicture}
\ - \
\begin{tikzpicture}[centerzero, thick]
        \draw[-] (-0.5,-0.7) -- (0.5,0.7);
        \draw[-] (0.5,-0.7) -- (-0.5,0.7);
        \singdot{0.25,0.35};
        \bluetoken{0.35,0.5};
        \bluetoken{-0.35,0.5};
\end{tikzpicture}\right)
- \frac{1}{2}\left(\begin{tikzpicture}[centerzero, thick]
        \draw[-] (-0.5,-0.7) -- (0.5,0.7);
        \draw[-] (0.5,-0.7) -- (-0.5,0.7);
        \singdot{-0.25,-0.35};
\end{tikzpicture} 
\ - \
\begin{tikzpicture}[centerzero, thick]
        \draw[-] (-0.5,-0.7) -- (0.5,0.7);
        \draw[-] (0.5,-0.7) -- (-0.5,0.7);
        \singdot{0.25,0.35};
\end{tikzpicture}\right)
\\ &\stackrel{\mathclap{\cref{Cliff_hw0}}}{=} \ \frac{1}{2}\left( - \ \begin{tikzpicture}[centerzero, thick]
        \draw[-] (-0.3,-0.7) -- (-0.3,0.7);
        \draw[-] (0.3,-0.7) -- (0.3,0.7);
        \bluetoken{-0.3,0};
        \bluetoken{0.3,0};
\end{tikzpicture}
\ + \
\begin{tikzpicture}[centerzero, thick]
        \draw[-] (-0.3,-0.7) -- (-0.3,0.7);
        \draw[-] (0.3,-0.7) -- (0.3,0.7);
\end{tikzpicture} \right)
-\frac{1}{2}\left(- \ \begin{tikzpicture}[centerzero, thick]
        \draw[-] (-0.3,-0.7) -- (-0.3,0.7);
        \draw[-] (0.3,-0.7) -- (0.3,0.7);
\end{tikzpicture}
\ - \
\begin{tikzpicture}[centerzero, thick]
        \draw[-] (-0.3,-0.7) -- (-0.3,0.7);
        \draw[-] (0.3,-0.7) -- (0.3,0.7);
        \bluetoken{-0.3,0};
        \bluetoken{0.3,0};
\end{tikzpicture} \right)
\\ &= \Phi \left( \ \begin{tikzpicture}[centerzero, thick]
        \draw[-] (-0.3,-0.7) -- (-0.3,0.7);
        \draw[-] (0.3,-0.7) -- (0.3,0.7);
\end{tikzpicture} \ \right).
\end{align*}
For \cref{spinCl123}, we compute that
\begin{align*}
\Phi \left(\begin{tikzpicture}[anchorbase, thick]
        \draw[-] (0,0) -- (0,0.7);
        \xtoken{0,0.2};
        \xtoken{0,0.45};
\end{tikzpicture}\right)
&= 
\begin{tikzpicture}[anchorbase, thick]
        \draw[-] (0,0) -- (0,0.7);
        \bluetoken{0,0.2};
        \bluetoken{0,0.45};
\end{tikzpicture}
\quad \stackrel{\mathclap{\cref{Pol(Cl)}}}{=} \quad
\begin{tikzpicture}[anchorbase, thick]
        \draw[-] (0,0) -- (0,0.7);
\end{tikzpicture} \ ,
\\ \Phi \left(\begin{tikzpicture}[centerzero, thick]
        \draw[-] (0,-0.4) -- (0,0.4);
        \xtoken{0,0.15};
        \singdot{0,-0.15};
\end{tikzpicture}\right)
&= \frac{1}{\sqrt{-2}} \ 
\begin{tikzpicture}[centerzero, thick]
        \draw[-] (0,-0.4) -- (0,0.4);
        \bluetoken{0,0.25};
        \bluetoken{0,0};
        \singdot{0,-0.25};
\end{tikzpicture}
\quad \stackrel{\mathclap{\cref{Pol(Cl)}}}{=} \quad
-\frac{1}{\sqrt{-2}} \ \begin{tikzpicture}[centerzero, thick]
        \draw[-] (0,-0.4) -- (0,0.4);
        \bluetoken{0,0.25};
        \bluetoken{0,-0.25};
        \singdot{0,0};
\end{tikzpicture}
= \Phi \left(-
\begin{tikzpicture}[centerzero, thick]
        \draw[-] (0,-0.4) -- (0,0.4);
        \xtoken{0,-0.15};
        \singdot{0,0.15};
\end{tikzpicture}\right).
\end{align*}
For \cref{spinCl1}, we compute that
\begin{gather*}
\Phi \left(\begin{tikzpicture}[centerzero, thick]
        \draw[-] (0.3,-0.4) -- (-0.3,0.4);
        \draw[-] (-0.3,-0.4) -- (0.3,0.4);
        \xtoken{-0.15,-0.2};
\end{tikzpicture}\right)
= -\frac{1}{\sqrt{-2}} \ 
\begin{tikzpicture}[centerzero, thick]
        \draw[-] (0.3,-0.4) -- (-0.3,0.4);
        \draw[-] (-0.3,-0.4) -- (0.3,0.4);
        \bluetoken{-0.15,-0.2};
        \teleport{-0.15,0.2}{0.15,0.2};
\end{tikzpicture}
\quad \overset{\mathclap{\cref{Pol(Cl)}}}{\underset{\mathclap{\cref{Cliff_hw-1}}}{=}} \quad
\frac{1}{\sqrt{-2}} \ 
\begin{tikzpicture}[centerzero, thick]
        \draw[-] (0.3,-0.4) -- (-0.3,0.4);
        \draw[-] (-0.3,-0.4) -- (0.3,0.4);
        \bluetoken{-0.23,0.32};
        \teleport{-0.13,0.15}{0.13,0.15};
\end{tikzpicture}
= \Phi \left(-
\begin{tikzpicture}[centerzero, thick]
        \draw[-] (0.3,-0.4) -- (-0.3,0.4);
        \draw[-] (-0.3,-0.4) -- (0.3,0.4);
        \xtoken{-0.15,0.2};
\end{tikzpicture}\right),
\\ 
\Phi \left(\begin{tikzpicture}[centerzero, thick]
        \draw[-] (0.3,-0.4) -- (-0.3,0.4);
        \draw[-] (-0.3,-0.4) -- (0.3,0.4);
        \xtoken{0.15,-0.2};
\end{tikzpicture}\right)
= -\frac{1}{\sqrt{-2}} \ 
\begin{tikzpicture}[centerzero, thick]
        \draw[-] (0.3,-0.4) -- (-0.3,0.4);
        \draw[-] (-0.3,-0.4) -- (0.3,0.4);
        \bluetoken{0.15,-0.2};
        \teleport{-0.15,0.2}{0.15,0.2};
\end{tikzpicture}
\quad \overset{\mathclap{\cref{Pol(Cl)}}}{\underset{\mathclap{\cref{Cliff_hw-5}}}{=}} \quad
\frac{1}{\sqrt{-2}} \ 
\begin{tikzpicture}[centerzero, thick]
        \draw[-] (0.3,-0.4) -- (-0.3,0.4);
        \draw[-] (-0.3,-0.4) -- (0.3,0.4);
        \bluetoken{0.23,0.32};
        \teleport{-0.13,0.15}{0.13,0.15};
\end{tikzpicture}
= \Phi
\left(-
\begin{tikzpicture}[centerzero, thick]
        \draw[-] (0.3,-0.4) -- (-0.3,0.4);
        \draw[-] (-0.3,-0.4) -- (0.3,0.4);
        \xtoken{0.15,0.2};
\end{tikzpicture}\right).
\end{gather*}
We next prove the existence of $ \Phi^{-1} $, which requires checking the relations \cref{Pol(Cl)}, \cref{Cliff_hw-1}, and \cref{Cliff_hw0}. For \cref{Pol(Cl)}, we compute that 
\begin{gather*}
\Phi^{-1} \left(\begin{tikzpicture}[centerzero, thick]
        \draw[-] (0,-0.4) -- (0,0.4);
        \bluetoken{0,0.15};
        \singdot{0,-0.15};
\end{tikzpicture}\right)
= \sqrt{-2} \
\begin{tikzpicture}[centerzero, thick]
        \draw[-] (0,-0.4) -- (0,0.4);
        \xtoken{0,0.25};
        \xtoken{0,0};
        \singdot{0,-0.25};
\end{tikzpicture}
\quad \stackrel{\mathclap{\cref{spinCl123}}}{=} \quad 
-\sqrt{-2} \
\begin{tikzpicture}[centerzero, thick]
        \draw[-] (0,-0.4) -- (0,0.4);
        \xtoken{0,0.25};
        \xtoken{0,-0.25};
        \singdot{0,0};
\end{tikzpicture}
=
\Phi^{-1} \left(- \begin{tikzpicture}[centerzero, thick]
        \draw[-] (0,-0.4) -- (0,0.4);
        \bluetoken{0,-0.15};
        \singdot{0,0.15};
\end{tikzpicture}\right),
\\ \Phi^{-1} \left(\begin{tikzpicture}[anchorbase, thick]
        \draw[-] (0,0) -- (0,0.7);
        \bluetoken{0,0.2};
        \bluetoken{0,0.45};
\end{tikzpicture}\right)
= 
\begin{tikzpicture}[anchorbase, thick]
        \draw[-] (0,0) -- (0,0.7);
        \xtoken{0,0.2};
        \xtoken{0,0.45};
\end{tikzpicture}
\quad \stackrel{\mathclap{\cref{spinCl123}}}{=} \quad
\begin{tikzpicture}[anchorbase, thick]
        \draw[-] (0,0) -- (0,0.7);
\end{tikzpicture} \ .
\end{gather*}
For the first relation in \cref{Cliff_hw-1}, we have
\begin{equation*}
\Phi^{-1} \left(\begin{tikzpicture}[anchorbase, thick]
        \draw[-] (0.2,-0.5) to[out=up,in=down] (-0.2,0) to[out=up,in=down] (0.2,0.5);
        \draw[-] (-0.2,-0.5) to[out=up,in=down] (0.2,0) to[out=up,in=down] (-0.2,0.5);
\end{tikzpicture}\right)
= - \frac{1}{2} \
\begin{tikzpicture}[anchorbase, thick]
        \draw[-] (0.2,-0.5) to[out=up,in=down] (-0.2,0) to[out=up,in=down] (0.2,0.5);
        \draw[-] (-0.2,-0.5) to[out=up,in=down] (0.2,0) to[out=up,in=down] (-0.2,0.5);
        \xteleport{-0.2,0}{0.2,0};
        \xteleport{-0.15,0.35}{0.15,0.35};
\end{tikzpicture}
\quad \stackrel{\mathclap{\cref{spinCl1}}}{=} \quad
\frac{1}{2} \
\begin{tikzpicture}[anchorbase, thick]
        \draw[-] (0.2,-0.5) to[out=up,in=down] (-0.2,0) to[out=up,in=down] (0.2,0.5);
        \draw[-] (-0.2,-0.5) to[out=up,in=down] (0.2,0) to[out=up,in=down] (-0.2,0.5);
        \xteleport{-0.2,-0.05}{0.2,-0.05};
        \xteleport{-0.15,0.15}{0.15,0.15};
\end{tikzpicture}
\quad \stackrel{\mathclap{\cref{cards}}}{=} \quad 
\begin{tikzpicture}[anchorbase, thick]
        \draw[-] (0.2,-0.5) to[out=up,in=down] (-0.2,0) to[out=up,in=down] (0.2,0.5);
        \draw[-] (-0.2,-0.5) to[out=up,in=down] (0.2,0) to[out=up,in=down] (-0.2,0.5);
\end{tikzpicture}
\quad \stackrel{\mathclap{\cref{spin1}}}{=} \quad
\begin{tikzpicture}[anchorbase, thick]
        \draw[-] (0.2,-0.5) -- (0.2,0.5);
        \draw[-] (-0.2,-0.5) -- (-0.2,0.5);
\end{tikzpicture} 
= 
\Phi^{-1} \left( \ \begin{tikzpicture}[anchorbase, thick]
        \draw[-] (0.2,-0.5) -- (0.2,0.5);
        \draw[-] (-0.2,-0.5) -- (-0.2,0.5);
\end{tikzpicture} \
\right).
\end{equation*}
Next, for the second relation in \cref{Cliff_hw-1}, we have that
\begin{align}
\Phi^{-1} \left(\begin{tikzpicture}[anchorbase, thick]
        \draw[-] (0.8,-1) -- (-0.8,1);
        \draw[-] (0,-1) to[out=up, in=down] (-0.8,0) to[out=up,in=down] (0,1);
        \draw[-] (-0.8,-1) -- (0.8,1);
\end{tikzpicture}\right)
&= \left(\frac{1}{\sqrt{-2}}\right)^{3} \
\begin{tikzpicture}[anchorbase, thick]
        \draw[-] (0.8,-1) -- (-0.8,1);
        \draw[-] (0,-1) to[out=up, in=down] (-0.8,0) to[out=up,in=down] (0,1);
        \draw[-] (-0.8,-1) -- (0.8,1);
        \xteleport{-0.5,0.6}{-0.2,0.6};
        \xteleport{-0.15,0.2}{0.15,0.2};
        \xteleport{-0.7,-0.3}{-0.2,-0.3};
\end{tikzpicture}
\quad \stackrel{\mathclap{\cref{spinCl1}}}{=} \quad
\left(\frac{1}{\sqrt{-2}}\right)^{3} \
\begin{tikzpicture}[anchorbase, thick]
        \draw[-] (0.8,-1) -- (-0.8,1);
        \draw[-] (0,-1) to[out=up, in=down] (-0.8,0) to[out=up,in=down] (0,1);
        \draw[-] (-0.8,-1) -- (0.8,1);
        \xteleport{-0.75,0.9}{0,0.9};
        \xteleport{-0.1,0.75}{0.6,0.75};
        \xteleport{-0.5,0.6}{-0.2,0.6};
\end{tikzpicture} \ , \label{braidimage}
\\ \Phi^{-1}  \left(\begin{tikzpicture}[anchorbase, thick]
        \draw[-] (0.8,-1) -- (-0.8,1);
        \draw[-] (0,-1) to[out=up, in=down] (0.8,0) to[out=up,in=down] (0,1);
        \draw[-] (-0.8,-1) -- (0.8,1);
\end{tikzpicture}\right)
&= \left(\frac{1}{\sqrt{-2}}\right)^{3} \
\begin{tikzpicture}[anchorbase, thick]
        \draw[-] (0.8,-1) -- (-0.8,1);
        \draw[-] (0,-1) to[out=up, in=down] (0.8,0) to[out=up,in=down] (0,1);
        \draw[-] (-0.8,-1) -- (0.8,1);
        \xteleport{0.5,0.6}{0.2,0.6};
        \xteleport{-0.15,0.2}{0.15,0.2};
        \xteleport{0.7,-0.3}{0.2,-0.3};
\end{tikzpicture}
\quad \stackrel{\mathclap{\cref{spinCl1}}}{=} \quad
\left(\frac{1}{\sqrt{-2}}\right)^{3} \
\begin{tikzpicture}[anchorbase, thick]
        \draw[-] (0.8,-1) -- (-0.8,1);
        \draw[-] (0,-1) to[out=up, in=down] (0.8,0) to[out=up,in=down] (0,1);
        \draw[-] (-0.8,-1) -- (0.8,1);
        \xteleport{0.75,0.9}{0,0.9};
        \xteleport{0.1,0.75}{-0.6,0.75};
        \xteleport{0.5,0.6}{0.2,0.6};
\end{tikzpicture} \ . \label{braidimage2}
\end{align}
Thus the second relation of \cref{Cliff_hw-1} will follow if we are able to show that
\begin{equation} \label{Stanley}
\begin{tikzpicture}[anchorbase, thick]
        \draw[-] (-0.4,-0.5) -- (-0.4,0.5);
        \draw[-] (0,-0.5) -- (0,0.5);
        \draw[-] (0.4,-0.5) -- (0.4,0.5);
        \xteleport{-0.4,0.25}{0,0.25};
        \xteleport{0.4,0}{0,0};
        \xteleport{-0.4,-0.25}{0,-0.25};
\end{tikzpicture} 
=
\begin{tikzpicture}[anchorbase, thick]
        \draw[-] (-0.4,-0.5) -- (-0.4,0.5);
        \draw[-] (0,-0.5) -- (0,0.5);
        \draw[-] (0.4,-0.5) -- (0.4,0.5);
        \xteleport{0,0.25}{0.4,0.25};
        \xteleport{-0.4,0}{0,0};
        \xteleport{0,-0.25}{0.4,-0.25};
\end{tikzpicture} \ .
\end{equation}
The equation \cref{Stanley} is straightforward to check. Finally, for the third relation in \cref{Cliff_hw-1}, we have
\begin{equation*}
\Phi^{-1}  \left(\begin{tikzpicture}[centerzero, thick]
        \draw[-] (0.3,-0.4) -- (-0.3,0.4);
        \draw[-] (-0.3,-0.4) -- (0.3,0.4);
        \bluetoken{-0.15,-0.2};
\end{tikzpicture}\right)
= \frac{1}{\sqrt{-2}} \
\begin{tikzpicture}[centerzero, thick]
        \draw[-] (0.3,-0.4) -- (-0.3,0.4);
        \draw[-] (-0.3,-0.4) -- (0.3,0.4);
        \xtoken{-0.15,-0.2};
        \xteleport{-0.13,0.15}{0.13,0.15};
\end{tikzpicture}
\quad \stackrel{\mathclap{\cref{spinCl1}}}{=} \quad \frac{1}{\sqrt{-2}} \
\begin{tikzpicture}[centerzero, thick]
        \draw[-] (0.3,-0.4) -- (-0.3,0.4);
        \draw[-] (-0.3,-0.4) -- (0.3,0.4);
        \xtoken{0.23,0.32};
        \xteleport{-0.13,0.15}{0.13,0.15};
\end{tikzpicture}
=
\Phi^{-1} \left(\begin{tikzpicture}[centerzero, thick]
        \draw[-] (0.3,-0.4) -- (-0.3,0.4);
        \draw[-] (-0.3,-0.4) -- (0.3,0.4);
        \bluetoken{0.15,0.2};
\end{tikzpicture}\right).
\end{equation*}
For \cref{Cliff_hw0}, we compute using \cref{spin2}, \cref{spinCl1} and \cref{green_tele_def} that
\begin{align*}
\Phi^{-1} \left(\begin{tikzpicture}[centerzero, thick]
        \draw[-] (-0.5,-0.7) -- (0.5,0.7);
        \draw[-] (0.5,-0.7) -- (-0.5,0.7);
        \singdot{-0.25,-0.35};
\end{tikzpicture}
\ - \
\begin{tikzpicture}[centerzero, thick]
        \draw[-] (-0.5,-0.7) -- (0.5,0.7);
        \draw[-] (0.5,-0.7) -- (-0.5,0.7);
        \singdot{0.25,0.35};
\end{tikzpicture}\right)
&= \begin{tikzpicture}[centerzero, thick]
        \draw[-] (0.5,-0.7) -- (-0.5,0.7);
        \draw[-] (-0.5,-0.7) -- (0.5,0.7);
        \singdot{-0.25,-0.35};
        \xteleport{-0.25,0.35}{0.25,0.35};
        \xtoken{-0.1,-0.15};
\end{tikzpicture}
\ - \
\begin{tikzpicture}[centerzero, thick]
        \draw[-] (0.5,-0.7) -- (-0.5,0.7);
        \draw[-] (-0.5,-0.7) -- (0.5,0.7);
        \singdot{0.23,0.3};
        \xteleport{-0.1,0.13}{0.1,0.13};
        \xtoken{0.35,0.5};
\end{tikzpicture}
\\ &\overset{\mathclap{\cref{spinCl123}}}{\underset{\mathclap{\cref{spinCl1}}}{=}} \quad - \left(\begin{tikzpicture}[centerzero, thick]
        \draw[-] (-0.5,-0.7) -- (0.5,0.7);
        \draw[-] (0.5,-0.7) -- (-0.5,0.7);
        \singdot{-0.25,-0.35};
\end{tikzpicture}
\ + \
\begin{tikzpicture}[centerzero, thick]
        \draw[-] (-0.5,-0.7) -- (0.5,0.7);
        \draw[-] (0.5,-0.7) -- (-0.5,0.7);
        \singdot{0.25,0.35};
\end{tikzpicture}\right)
- \left(\begin{tikzpicture}[centerzero, thick]
        \draw[-] (-0.5,-0.7) -- (0.5,0.7);
        \draw[-] (0.5,-0.7) -- (-0.5,0.7);
        \singdot{-0.25,-0.35};
        \xtoken{-0.25,0.35};
        \xtoken{0.25,0.35};
\end{tikzpicture} 
\ + \
\begin{tikzpicture}[centerzero, thick]
        \draw[-] (-0.5,-0.7) -- (0.5,0.7);
        \draw[-] (0.5,-0.7) -- (-0.5,0.7);
        \singdot{0.25,0.35};
        \xtoken{-0.35,0.5};
        \xtoken{0.35,0.5};
\end{tikzpicture}\right)
\\ &\stackrel{\mathclap{\cref{spin2}}}{=} \quad - \ 
\begin{tikzpicture}[centerzero, thick]
        \draw[-] (-0.3,-0.7) -- (-0.3,0.7);
        \draw[-] (0.3,-0.7) -- (0.3,0.7);
\end{tikzpicture} 
\ - \
\begin{tikzpicture}[centerzero, thick]
        \draw[-] (-0.3,-0.7) -- (-0.3,0.7);
        \draw[-] (0.3,-0.7) -- (0.3,0.7);
        \xtoken{-0.3,0};
        \xtoken{0.3,0};
\end{tikzpicture} 
\\ &= \Phi^{-1} \left( - \ 
\begin{tikzpicture}[centerzero, thick]
        \draw[-] (-0.3,-0.7) -- (-0.3,0.7);
        \draw[-] (0.3,-0.7) -- (0.3,0.7);
\end{tikzpicture} 
\ - \
\begin{tikzpicture}[centerzero, thick]
        \draw[-] (-0.3,-0.7) -- (-0.3,0.7);
        \draw[-] (0.3,-0.7) -- (0.3,0.7);
        \bluetoken{-0.3,0};
        \bluetoken{0.3,0};
\end{tikzpicture} \right).
\end{align*}
It remains to check that $ \Phi $ and $ \Phi^{-1} $ are indeed mutually inverse. This step is straightforward, and so we omit the details here.}

\subsection{Isomorphism of $ \mathcal{T\!AS}(\operatorname{Cl}) $ and $ \mathcal{T\!AH}(\operatorname{Cl}) $}

It follows from Lemma \ref{Psi(H)} that $ \psi(Q) \in \mathcal{E}_{\operatorname{Pol}_{1}(\operatorname{Cl})} $ for all $ Q \in \mathcal{E}_{\operatorname{OPol}_{1}} $. Also, by Lemma \ref{Psi(H)}, $ \psi^{-1}(Q) \in \mathcal{E}_{\operatorname{OPol}_{1}} $ for all $ Q \in \mathcal{E}_{\operatorname{Pol}_{1}(\operatorname{Cl})} $. We use these facts in the statement of the following theorem. 

\begin{theo} \label{Morita_equiv2}
We have an isomorphism of monoidal supercategories $ \mathcal{F} \colon \mathcal{T\!AS}(\operatorname{Cl}) \rightarrow \mathcal{T\!AH}(\operatorname{Cl}) $ which sends $ \go $ to $ \go $ and $ Q \in \mathcal{E}_{\operatorname{OPol}_{1}} $ to $ \psi(Q) \in \mathcal{E}_{\operatorname{Pol}_{1}(\operatorname{Cl})} $, and sends the generating morphisms as follows:
\begin{gather*}
\mathcal{F} \left(\begin{tikzpicture}[centerzero, thick]
      \draw[-] (0.6,-0.4) -- (0.6,0.4);
        \xtoken{0.6,0};
\end{tikzpicture}\right) = \begin{tikzpicture}[centerzero, thick]
      \draw[-] (0.6,-0.4) -- (0.6,0.4);
      \bluetoken{0.6,0};
\end{tikzpicture} \ , 
\quad 
\mathcal{F} \left(\begin{tikzpicture}[centerzero, thick]
      \draw[-] (0.6,-0.4) -- (0.6,0.4);
      \singdot{0.6,0};
\end{tikzpicture}\right) = \frac{1}{\sqrt{-2}} \
\begin{tikzpicture}[centerzero, thick]
      \draw[-] (0.6,-0.4) -- (0.6,0.4);
      \singdot{0.6,-0.15};
      \bluetoken{0.6,0.15};
\end{tikzpicture} \ ,
\quad 
\mathcal{F} \left(\begin{tikzpicture}[centerzero, thick]
      \draw[-] (0.9,-0.4) -- (0.3,0.4);
      \draw[-] (0.3,-0.4) -- (0.9,0.4);
\end{tikzpicture}\right) 
= - \frac{1}{\sqrt{-2}} \
\begin{tikzpicture}[centerzero, thick]
      \draw[-] (0.9,-0.4) -- (0.3,0.4);
      \draw[-] (0.3,-0.4) -- (0.9,0.4);
      \teleport{0.45,0.2}{0.75,0.2};
\end{tikzpicture} \ ,
\\ \mathcal{F} \left(\begin{tikzpicture}[centerzero, thick]
      \draw[-] (0.9,-0.4) -- (0.3,0.4);
      \draw[-,wei] (0.3,-0.4) -- (0.9,0.4);
      \node at (0.3,-0.6) {\tiny $ Q $};
\end{tikzpicture}\right) 
=
\begin{tikzpicture}[centerzero, thick]
      \draw[-] (0.9,-0.4) -- (0.3,0.4);
      \draw[-,wei] (0.3,-0.4) -- (0.9,0.4);
      \node at (0.3,-0.6) {\tiny $ \psi(Q) $};
\end{tikzpicture} \ ,
\quad 
\mathcal{F}\left(\begin{tikzpicture}[centerzero, thick]
      \draw[-] (0.3,-0.4) -- (0.9,0.4);
      \draw[-,wei] (0.9,-0.4) -- (0.3,0.4);
      \node at (0.9,-0.6) {\tiny $ Q $};
\end{tikzpicture}\right) 
= 
\begin{tikzpicture}[centerzero, thick]
      \draw[-] (0.3,-0.4) -- (0.9,0.4);
      \draw[-,wei] (0.9,-0.4) -- (0.3,0.4);
      \node at (0.9,-0.6) {\tiny $ \psi(Q) $};
\end{tikzpicture} \ .
\end{gather*}
Its inverse $ \mathcal{G} \colon \mathcal{T\!AH}(\operatorname{Cl}) \rightarrow \mathcal{T\!AS}(\operatorname{Cl}) $ sends $ \go $ to $ \go $ and $ Q \in \mathcal{E}_{\operatorname{Pol}_{1}(\operatorname{Cl})} $ to $ \psi^{-1}(Q) \in \mathcal{E}_{\operatorname{OPol}_{1}} $, and sends the generating morphisms as follows:
\begin{gather*}
\mathcal{G} \left(\begin{tikzpicture}[centerzero, thick]
      \draw[-] (0.6,-0.4) -- (0.6,0.4);
      \bluetoken{0.6,0};
\end{tikzpicture}\right) = \begin{tikzpicture}[centerzero, thick]
      \draw[-] (0.6,-0.4) -- (0.6,0.4);
      \xtoken{0.6,0};
\end{tikzpicture} \ , 
\quad 
\mathcal{G} \left(\begin{tikzpicture}[centerzero, thick]
      \draw[-] (0.6,-0.4) -- (0.6,0.4);
      \singdot{0.6,0};
\end{tikzpicture}\right) = \sqrt{-2} \
\begin{tikzpicture}[centerzero, thick]
      \draw[-] (0.6,-0.4) -- (0.6,0.4);
      \singdot{0.6,-0.15};
      \xtoken{0.6,0.15};
\end{tikzpicture} \ ,
\quad 
\mathcal{G} \left(\begin{tikzpicture}[centerzero, thick]
      \draw[-] (0.9,-0.4) -- (0.3,0.4);
      \draw[-] (0.3,-0.4) -- (0.9,0.4);
\end{tikzpicture}\right) 
= \frac{1}{\sqrt{-2}} \
\begin{tikzpicture}[centerzero, thick]
      \draw[-] (0.9,-0.4) -- (0.3,0.4);
      \draw[-] (0.3,-0.4) -- (0.9,0.4);
      \xteleport{0.45,0.2}{0.75,0.2};
\end{tikzpicture} \ ,
\\ \mathcal{G} \left(\begin{tikzpicture}[centerzero, thick]
      \draw[-] (0.9,-0.4) -- (0.3,0.4);
      \draw[-,wei] (0.3,-0.4) -- (0.9,0.4);
      \node at (0.3,-0.6) {\tiny $ Q $};
\end{tikzpicture}\right) 
=
\begin{tikzpicture}[centerzero, thick]
      \draw[-] (0.9,-0.4) -- (0.3,0.4);
      \draw[-,wei] (0.3,-0.4) -- (0.9,0.4);
      \node at (0.3,-0.6) {\tiny $ \psi^{-1}(Q) $};
\end{tikzpicture} \ ,
\quad 
\mathcal{G} \left(\begin{tikzpicture}[centerzero, thick]
      \draw[-] (0.3,-0.4) -- (0.9,0.4);
      \draw[-,wei] (0.9,-0.4) -- (0.3,0.4);
      \node at (0.9,-0.6) {\tiny $ Q $};
\end{tikzpicture}\right) 
= 
\begin{tikzpicture}[centerzero, thick]
      \draw[-] (0.3,-0.4) -- (0.9,0.4);
      \draw[-,wei] (0.9,-0.4) -- (0.3,0.4);
      \node at (0.9,-0.6) {\tiny $ \psi^{-1}(Q) $};
\end{tikzpicture} \ .
\end{gather*}
\end{theo}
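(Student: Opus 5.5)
To prove the theorem, we define $\mathcal{F}$ and $\mathcal{G}$ on generators as stated and check that each respects the defining relations of its source. It then suffices to show that $\mathcal{G}\circ\mathcal{F}$ and $\mathcal{F}\circ\mathcal{G}$ fix every generating object and morphism.

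On objects, $\mathcal{F}$ and $\mathcal{G}$ are mutually inverse monoid isomorphisms of free monoids by Lemma~\ref{Psi(H)}, since $\psi$ is a bijection between $\mathcal{E}_{\operatorname{OPol}_{1}}$ and $\mathcal{E}_{\operatorname{Pol}_1(\operatorname{Cl})}$. Parities also match: the red--black crossings are even on both sides, while the odd dot and crossing are sent to products of an odd token with an even dot or crossing.

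\textbf{Relations for $\mathcal{F}$.} The relations among black strands alone, namely \cref{spin1}, \cref{spin2}, \cref{spinCl123} and \cref{spinCl1}, hold already, because on the full subcategory generated by $\go$ the functor $\mathcal{F}$ restricts to $\Phi$ from Proposition~\ref{AS-AH-iso}. For \cref{spinCl2} and \cref{spin3}, the Clifford token and $\frac{1}{\sqrt{-2}}cx$ slide through red--black crossings by \cref{Cliff_hw1} and \cref{Cliff_hw2}. For \cref{spin4} we use \cref{Cliff_hw3}: the double crossing becomes a pin labelled $\psi(Q)$, and this equals $\mathcal{F}$ of the pin $Q$ by \cref{fan_switch}. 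The first relation of \cref{spin6} follows from the first relation of \cref{Cliff_hw5} after moving the teleporter through the red strand with \cref{Cliff_hw1}.

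The second relation of \cref{spin6}, and \cref{spin8}, are where the real content lies. For the second relation of \cref{spin6}, the teleporter must be carried from below the red strand to above it. The tokens slide freely, so \cref{Cliff_hw5} and \cref{Cliff_hw1} suffice here as well.

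For \cref{spin8}, apply $\mathcal{F}$. The left-hand side is $-\frac{1}{\sqrt{-2}}$ times the $\mathcal{T\!AH}(\operatorname{Cl})$ triangle with a teleporter on top. Slide the teleporter past the red strand using \cref{Cliff_hw1}, then apply \cref{Cliff_green}. This produces $\mathcal{F}$ of the right-hand triangle plus the correction term
\[
\tfrac{1}{\sqrt{-2}}\,(c_1-c_2)\,\partial(\psi(Q)_1),
\]
where the sign comes from the product of the two minus signs. This must equal $\mathcal{F}(D(Q_1)) = \psi_2(D(Q_1))$. That is exactly \cref{seeing} with $f=Q_1$, combined with $\psi_2(Q_1)=\psi(Q)_1$ from \cref{fan_switch}. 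Keeping track of the sign conventions and of the order in which the teleporter appears is the main obstacle, and I would verify it carefully.

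\textbf{Relations for $\mathcal{G}$.} The argument is symmetric. The black-only relations come from $\Phi^{-1}$, and the easy red--black relations are handled as above. For \cref{Cliff_green}, we need $\mathcal{G}$ of the correction term, which is $\psi_2^{-1}(\partial(Q_1))$. Since $\psi^{-1}(Q)\in\operatorname{OPol}_1$, Lemma~\ref{working_656} applies and rewrites this as $-\frac{1}{\sqrt{-2}}(c_1-c_2)D(\psi^{-1}(Q)_1)$, using \cref{fan_switch_2}. This matches the image of the left-hand triangle after the teleporter is moved through the red strand.

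Finally, $\mathcal{G}\mathcal{F}$ and $\mathcal{F}\mathcal{G}$ are the identity on black generators because $\Phi$ and $\Phi^{-1}$ are mutually inverse. On red--black crossings they are the identity because $\psi^{-1}\psi=\mathrm{id}$, and on objects for the same reason.
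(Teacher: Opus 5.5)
Your proposal is correct and follows essentially the same route as the paper: the black-only relations come from Proposition~\ref{AS-AH-iso}, and the substantive checks are \cref{spin4} and \cref{spin8} for $\mathcal{F}$ (via \cref{Cliff_hw3}, \cref{Cliff_green}, \cref{seeing} and \cref{fan_switch}) and \cref{Cliff_hw3} and \cref{Cliff_green} for $\mathcal{G}$ (via Lemma~\ref{working_656} and \cref{fan_switch_2}), with mutual inverseness checked on generators. One small citation fix: in the \cref{spin4} step, the fact you actually use is that $\mathcal{F}$ sends a pin labelled $f$ on a black strand to the pin labelled $\psi(f)$. That is the paper's \cref{wheel_77}, which holds because $\mathcal{F}$ restricts to $\Psi$ on a single black strand; \cref{fan_switch} is not the relevant statement there.
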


The rest of this subsection is dedicated to proving this theorem. We will prove the theorem by checking the defining relations of $ \mathcal{T\!AS}(\operatorname{Cl}) $ and $ \mathcal{T\!AH}(\operatorname{Cl}) $.

\begin{lem} \label{drip}
If $ Q \in \mathcal{E}_{\operatorname{OPol}_{1}} $, then
\begin{align} \label{wheel_77}
\mathcal{F}\left(\begin{tikzpicture}[centerzero, thick]
        \pin{-0.2,0}{-1,0}{f};
        \draw[-] (-0.2,-0.5) -- (-0.2,0.5);
        \draw[-, wei] (0.2,-0.5) -- (0.2,0.5);
        \node at (0.2,-0.7) {\tiny $ Q $};
\end{tikzpicture}\right)
&= 
\begin{tikzpicture}[centerzero, thick]
        \pin{-0.2,0}{-1,0}{\psi(f)};
        \draw[-] (-0.2,-0.5) -- (-0.2,0.5);
        \draw[-, wei] (0.2,-0.5) -- (0.2,0.5);
        \node at (0.2,-0.7) {\tiny $ \psi(Q) $};
\end{tikzpicture} , \quad f \in \operatorname{OPol}_{1}(\operatorname{Cl}),
\\ \mathcal{F}\left(\begin{tikzpicture}[centerzero, thick]
        \node at (0,-0.7) {\tiny $ Q $};
        \draw[-] (-0.4,-0.5) -- (-0.4,0.5);
        \draw[-, wei] (0,-0.5) -- (0,0.5);
        \draw[-] (0.4,-0.5) -- (0.4,0.5);
        \pinpin{0.4,0}{-0.4,0}{1.2,0}{f};
\end{tikzpicture}\right)
&= 
\begin{tikzpicture}[centerzero, thick]
        \node at (0,-0.7) {\tiny $ \psi(Q) $};
        \draw[-] (-0.4,-0.5) -- (-0.4,0.5);
        \draw[-, wei] (0,-0.5) -- (0,0.5);
        \draw[-] (0.4,-0.5) -- (0.4,0.5);
        \pinpin{0.4,0}{-0.4,0}{1.4,0}{\psi_{2}(f)};
\end{tikzpicture} \ , \qquad f \in \operatorname{OPol}_{2}(\operatorname{Cl}). \label{white_paper_79}
\end{align}
\end{lem}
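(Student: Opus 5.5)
The plan is to reduce both identities to the facts that $\mathcal{F}$ is a strict monoidal functor and that, on the black strands alone, it restricts to the isomorphism $\Psi$ of Proposition~\ref{Next}.

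First, I will interpret the pinned diagrams for $f$ in $\operatorname{OPol}_{1}(\operatorname{Cl})$ and $\operatorname{OPol}_{2}(\operatorname{Cl})$. The pin conventions in Definition~\ref{HLDSAHC} were stated for $\operatorname{OPol}_{1}$ and $\operatorname{OPol}_{2}$. I extend them in the obvious way. Via Proposition~\ref{grassroots}, an element $f \in \operatorname{OPol}_{n}(\operatorname{Cl})$, $n=1,2$, corresponds to a morphism $\go^{\otimes n} \to \go^{\otimes n}$ built from dots and Clifford tokens, and pinning $f$ means placing that morphism on the black strand(s). This is well defined because the relations of Proposition~\ref{grassroots} hold in $\mathcal{T\!AS}(\operatorname{Cl})$ by \cref{spinCl123}. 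The same applies to $\operatorname{Pol}_{n}(\operatorname{Cl})$ in $\mathcal{T\!AH}(\operatorname{Cl})$, using Proposition~\ref{racing} and \cref{Pol(Cl)}.

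For \cref{wheel_77}, the left-hand diagram is $(\text{pin } f) \otimes 1_{Q}$. Since $\mathcal{F}$ is strict monoidal and $\mathcal{F}(1_{Q}) = 1_{\psi(Q)}$, it suffices to show $\mathcal{F}(\text{pin } f) = \text{pin } \psi(f)$ on one strand. Both sides are linear and multiplicative in $f$: pinning is an algebra homomorphism from $\operatorname{OPol}_{1}(\operatorname{Cl})$ to $\operatorname{End}(\go)$, $\mathcal{F}$ respects composition, and $\psi$ is an algebra homomorphism. So it is enough to check the generators $f = c$ and $f = x$. These agree by the definitions of $\mathcal{F}$ and $\psi$: the token goes to the token, and the dot goes to $\frac{1}{\sqrt{-2}}$ times the dot followed by the token.

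For \cref{white_paper_79}, the diagram is not a tensor product, because the pin spans a red strand. I handle this by expanding $f$ as a linear combination of products $g_{1}h_{2}$ with $g,h \in \operatorname{OPol}_{1}(\operatorname{Cl})$. Using the super interchange law \cref{interchange}, each such product is the pinned diagram of $g$ on the left strand composed with that of $h$ on the right strand, and a red identity strand sits between them. Applying $\mathcal{F}$, together with \cref{wheel_77} (and its mirror for a red strand on the left), gives $\psi(g)_{1}\psi(h)_{2}$ pinned across $\psi(Q)$. Any interchange signs are identical on both sides, since $\mathcal{F}$ preserves parity. By \cref{fan_switch} and multiplicativity of $\psi_{2}$, this equals $\psi_{2}(g_{1}h_{2})$. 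Linearity then finishes the argument.

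The main obstacle I expect is bookkeeping: making the pin convention over elements with Clifford tokens rigorous, and tracking the super-interchange signs when splitting a two-strand pin into a composite of one-strand pins. The way through is to note that these signs arise in exactly the same way in the source and target diagrams, and to check only the generators $c_{1}, c_{2}, x_{1}, x_{2}$ of $\operatorname{OPol}_{2}(\operatorname{Cl})$.
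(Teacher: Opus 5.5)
Your proposal is correct and takes essentially the same approach as the paper. The paper also notes that $f \mapsto \mathcal{F}(\text{pinned } f)$ is a superalgebra homomorphism out of $\operatorname{OPol}_{1}(\operatorname{Cl})$ (by \cref{spinCl123} and Proposition~\ref{grassroots}), checks only $f=c$ and $f=x$, and calls \cref{white_paper_79} ``similar''. Your factorization of a two-strand pin into one-strand pins $g_{1}h_{2}$, combined with \cref{fan_switch}, is just a more explicit way of running that argument, equivalent to checking the generators $c_{1},c_{2},x_{1},x_{2}$.
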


\begin{proof}
We only explain how one obtains \cref{wheel_77}, since \cref{white_paper_79} is similar. It is straightforward to check that $ \mathcal{F} $ satisfies the relation \cref{spinCl123}. This fact together with Proposition \ref{grassroots} implies that we have a superalgebra homomorphism
\begin{equation*}
\operatorname{OPol}_{1}(\operatorname{Cl}) \rightarrow \operatorname{End}_{\mathcal{T\!AH}(\operatorname{Cl})}(\go \otimes \psi(Q)), \quad f \mapsto \mathcal{F}\left(\begin{tikzpicture}[centerzero, thick]
        \pin{-0.2,0}{-1,0}{f};
        \draw[-] (-0.2,-0.5) -- (-0.2,0.5);
        \draw[-, wei] (0.2,-0.5) -- (0.2,0.5);
        \node at (0.2,-0.7) {\tiny $ Q $};
\end{tikzpicture}\right).
\end{equation*}
In particular, it suffices to show that \cref{wheel_77} holds for $ f = c $ and $ f = x $. These are straightforward to see.
\end{proof}

\details{We have
\begin{gather*}
\mathcal{F}\left(\begin{tikzpicture}[centerzero, thick]
        \pin{-0.2,0}{-1,0}{x};
        \draw[-] (-0.2,-0.5) -- (-0.2,0.5);
        \draw[-, wei] (0.2,-0.5) -- (0.2,0.5);
        \node at (0.2,-0.7) {\tiny $ Q $};
\end{tikzpicture}\right)
=
\mathcal{F}\left(\begin{tikzpicture}[centerzero, thick]
        \draw[-] (-0.2,-0.5) -- (-0.2,0.5);
        \draw[-, wei] (0.2,-0.5) -- (0.2,0.5);
        \node at (0.2,-0.7) {\tiny $ Q $};
        \singdot{-0.2,0};
\end{tikzpicture}\right)
= \frac{1}{\sqrt{-2}} \ 
\begin{tikzpicture}[centerzero, thick]
        \draw[-] (-0.2,-0.5) -- (-0.2,0.5);
        \draw[-, wei] (0.2,-0.5) -- (0.2,0.5);
        \node at (0.2,-0.7) {\tiny $ \psi(Q) $};
        \bluetoken{-0.2,0.2};
        \singdot{-0.2,-0.2};
\end{tikzpicture}
\quad \stackrel{\mathclap{\cref{joker_1}}}{=} \quad
\begin{tikzpicture}[centerzero, thick]
        \pin{-0.2,0}{-1,0}{\psi(x)};
        \draw[-] (-0.2,-0.5) -- (-0.2,0.5);
        \draw[-, wei] (0.2,-0.5) -- (0.2,0.5);
        \node at (0.2,-0.7) {\tiny $ \psi(Q) $};
\end{tikzpicture} \ ,
\\ \mathcal{F}\left(\begin{tikzpicture}[centerzero, thick]
        \pin{-0.2,0}{-1,0}{c};
        \draw[-] (-0.2,-0.5) -- (-0.2,0.5);
        \draw[-, wei] (0.2,-0.5) -- (0.2,0.5);
        \node at (0.2,-0.7) {\tiny $ Q $};
\end{tikzpicture}\right)
=
\mathcal{F}\left(\begin{tikzpicture}[centerzero, thick]
        \draw[-] (-0.2,-0.5) -- (-0.2,0.5);
        \draw[-, wei] (0.2,-0.5) -- (0.2,0.5);
        \node at (0.2,-0.7) {\tiny $ Q $};
        \xtoken{-0.2,0};
\end{tikzpicture}\right)
= \ 
\begin{tikzpicture}[centerzero, thick]
        \draw[-] (-0.2,-0.5) -- (-0.2,0.5);
        \draw[-, wei] (0.2,-0.5) -- (0.2,0.5);
        \node at (0.2,-0.7) {\tiny $ \psi(Q) $};
        \bluetoken{-0.2,0};
\end{tikzpicture}
\quad \stackrel{\mathclap{\cref{joker_1}}}{=} \quad
\begin{tikzpicture}[centerzero, thick]
        \pin{-0.2,0}{-1,0}{\psi(c)};
        \draw[-] (-0.2,-0.5) -- (-0.2,0.5);
        \draw[-, wei] (0.2,-0.5) -- (0.2,0.5);
        \node at (0.2,-0.7) {\tiny $ \psi(Q) $};
\end{tikzpicture} \ .
\end{gather*}}

\begin{lem} \label{Yoneda_346}
If $ Q \in \mathcal{E}_{\operatorname{OPol}_{1}} $, then 
\begin{align} \label{Yoneda_0.5}
\mathcal{F}\left(\begin{tikzpicture}[centerzero, thick]
        \draw[-] (-0.2,-0.5) to[out=up,in=down] (0.2,0) to[out=up,in=down] (-0.2,0.5);
        \draw[-, wei] (0.2,-0.5) to[out=up,in=down] (-0.2,0) to[out=up,in=down] (0.2,0.5);
        \node at (0.2,-0.7) {\tiny $ Q $};
\end{tikzpicture}\right)
=
\mathcal{F}\left(\begin{tikzpicture}[centerzero, thick]
        \pin{-0.2,0}{-1,0}{Q};
        \draw[-] (-0.2,-0.5) -- (-0.2,0.5);
        \draw[-, wei] (0.2,-0.5) -- (0.2,0.5);
        \node at (0.2,-0.7) {\tiny $ Q $};
\end{tikzpicture}\right),
\qquad
\mathcal{F}\left(\begin{tikzpicture}[centerzero, thick]
        \draw[-] (0.2,-0.5) to[out=up,in=down] (-0.2,0) to[out=up,in=down] (0.2,0.5);
        \draw[-, wei] (-0.2,-0.5) to[out=up,in=down] (0.2,0) to[out=up,in=down] (-0.2,0.5);
        \node at (-0.2,-0.7) {\tiny $ Q $};
\end{tikzpicture}\right)
\ = \
\mathcal{F}\left(\begin{tikzpicture}[centerzero, thick]
        \draw[-, wei] (-0.2,-0.5) -- (-0.2,0.5);
        \draw[-] (0.2,-0.5) -- (0.2,0.5);
        \node at (-0.2,-0.7) {\tiny $ Q $};
        \pin{0.2,0}{1,0}{Q};
\end{tikzpicture}\right). 
\end{align}
\end{lem}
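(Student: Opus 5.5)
The plan is to compute both sides of each equation in \cref{Yoneda_0.5} directly from the definition of $\mathcal{F}$ on generators. Then we compare them using the relation \cref{Cliff_hw3} in $\mathcal{T\!AH}(\operatorname{Cl})$ together with Lemma~\ref{drip}. Fix $Q \in \mathcal{E}_{\operatorname{OPol}_{1}}$.

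For the first equation, we use that $\mathcal{F}$ is a monoidal functor and sends each red-black crossing labelled $Q$ to the corresponding red-black crossing labelled $\psi(Q)$. It follows that the left-hand side is the double crossing in $\mathcal{T\!AH}(\operatorname{Cl})$ in which the black strand passes to the right of the red strand labelled $\psi(Q)$ and returns. By the first relation of \cref{Cliff_hw3}, applied with label $\psi(Q) \in \mathcal{E}_{\operatorname{Pol}_{1}(\operatorname{Cl})}$ (legitimate by Lemma~\ref{Psi(H)}), this double crossing equals the diagram with a pin labelled $\psi(Q)$ on the black strand, next to the red strand labelled $\psi(Q)$.

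For the right-hand side, we view $Q \in \operatorname{OPol}_{1} \subseteq \operatorname{OPol}_{1}(\operatorname{Cl})$. Lemma~\ref{drip}, specifically \cref{wheel_77} with $f = Q$, then shows that $\mathcal{F}$ applied to the pinned diagram gives the black strand pinned by $\psi(Q)$, next to the red strand $\psi(Q)$. The two sides therefore agree.

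The second equation is handled the same way, with the red strand now on the left. The left-hand side becomes the other double crossing with label $\psi(Q)$. The second relation of \cref{Cliff_hw3} turns this into a pin $\psi(Q)$ on the black strand to the right of the red strand. On the other side, the mirror version of \cref{wheel_77} (red strand on the left, pin on the black strand) identifies $\mathcal{F}$ of the right-hand side with the same diagram.

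That mirror statement is not written out in Lemma~\ref{drip}, so this is where the only real work lies. It is proved exactly as \cref{wheel_77} is. We first note that $f \mapsto \mathcal{F}(\text{pin } f \text{ on the black strand of } Q\otimes\go)$ is a superalgebra homomorphism from $\operatorname{OPol}_{1}(\operatorname{Cl})$; this uses only that $\mathcal{F}$ respects \cref{spinCl123} and Proposition~\ref{grassroots}. It then suffices to check the generators $f = c$ and $f = x$, and these checks follow immediately from \cref{joker_1}. One subtlety is the placement of pins relative to the red strand: the red strand is even, so the super interchange law introduces no signs. I expect no genuine obstacle here; the main point is simply to confirm that the pin labels match, namely that $\psi(Q)$ is the same element whether regarded as an object label or as a pin.
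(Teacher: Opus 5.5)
Your proposal is correct and follows the paper's proof: apply $\mathcal{F}$ to turn the double crossing into the $\psi(Q)$-labelled double crossing, simplify it with \cref{Cliff_hw3}, and recognize the result as $\mathcal{F}$ of the pinned diagram via \cref{wheel_77}. The paper only says the second equation is ``similar''; you correctly spell out that it needs the mirror version of \cref{wheel_77}, with the red strand on the left, proved by the same check on the generators $c$ and $x$.
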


\begin{proof}
We compute that
\begin{equation*}
\mathcal{F}\left(\begin{tikzpicture}[centerzero, thick]
        \draw[-] (-0.2,-0.5) to[out=up,in=down] (0.2,0) to[out=up,in=down] (-0.2,0.5);
        \draw[-, wei] (0.2,-0.5) to[out=up,in=down] (-0.2,0) to[out=up,in=down] (0.2,0.5);
        \node at (0.2,-0.7) {\tiny $ Q $};
\end{tikzpicture}\right)
= \begin{tikzpicture}[centerzero, thick]
        \draw[-] (-0.2,-0.5) to[out=up,in=down] (0.2,0) to[out=up,in=down] (-0.2,0.5);
        \draw[-, wei] (0.2,-0.5) to[out=up,in=down] (-0.2,0) to[out=up,in=down] (0.2,0.5);
        \node at (0.2,-0.7) {\tiny $ \psi(Q) $};
\end{tikzpicture}
 \stackrel{\mathclap{\cref{Cliff_hw3}}}{=} \
\begin{tikzpicture}[centerzero, thick]
        \pin{-0.2,0}{-1,0}{\psi(Q)};
        \draw[-] (-0.2,-0.5) -- (-0.2,0.5);
        \draw[-, wei] (0.2,-0.5) -- (0.2,0.5);
        \node at (0.2,-0.7) {\tiny $ \psi(Q) $};
\end{tikzpicture} 
\ \stackrel{\mathclap{\cref{wheel_77}}}{=} \
\mathcal{F}\left(\begin{tikzpicture}[centerzero, thick]
        \pin{-0.2,0}{-1,0}{Q};
        \draw[-] (-0.2,-0.5) -- (-0.2,0.5);
        \draw[-, wei] (0.2,-0.5) -- (0.2,0.5);
        \node at (0.2,-0.7) {\tiny $ Q $};
\end{tikzpicture}\right). 
\end{equation*}
The proof of the second equation in \cref{Yoneda_0.5} is similar.
\end{proof}

\begin{lem}
If $ Q \in \mathcal{E}_{\operatorname{OPol}_{1}} $, then 
\begin{equation} \label{Toronto}
\mathcal{F}\left(\begin{tikzpicture}[centerzero, thick]
        \draw[-] (0.4,-0.5) -- (-0.4,0.5);
        \draw[-] (-0.4,-0.5) -- (0.4,0.5);
        \node at (0,-0.7) {\tiny $ Q $};
        \draw[-, wei] (0,-0.5) to[out=up, in=down] (-0.4,0) to[out=up,in=down] (0,0.5);
\end{tikzpicture}\right)
= 
\mathcal{F}\left(\begin{tikzpicture}[centerzero, thick]
        \draw[-] (2,-0.5) -- (1.2,0.5);
        \draw[-] (1.2,-0.5) -- (2,0.5);
        \node at (1.6,-0.7) {\tiny $ Q $};
        \draw[-, wei] (1.6,-0.5) to[out=up, in=down] (2,0) to[out=up,in=down] (1.6,0.5);
\end{tikzpicture}
\ + \
\begin{tikzpicture}[centerzero, thick]
        \node at (0,-0.7) {\tiny $ Q $};
        \draw[-] (-0.4,-0.5) -- (-0.4,0.5);
        \draw[-, wei] (0,-0.5) -- (0,0.5);
        \draw[-] (0.4,-0.5) -- (0.4,0.5);
        \pinpin{0.4,0}{-0.4,0}{1.5,0}{D(Q_{1})};
\end{tikzpicture}\right).
\end{equation}
\end{lem}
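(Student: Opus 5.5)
We verify \cref{Toronto} directly: expand both sides using the definition of $\mathcal{F}$, reduce the left-hand side with \cref{Cliff_green} in $\mathcal{T\!AH}(\operatorname{Cl})$, and match the correction term using \cref{seeing}. Throughout, write $\tau$ for the teleporter \cref{blue_tele} on two adjacent black strands. As an element of $\operatorname{Pol}_{2}(\operatorname{Cl})$, $\tau$ is $c_{1}-c_{2}$.

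\emph{Step 1: the left-hand side.} On the left of \cref{Toronto}, $\mathcal{F}$ sends the two red--black crossings to the red--black crossings labelled $\psi(Q)$. It sends the black--black crossing to $-\frac{1}{\sqrt{-2}}$ times the crossing with $\tau$ drawn just above it. In that picture the red strand has already returned to the middle by the time $\tau$ appears. Both Clifford tokens of $\tau$ sit on black strands, so we can slide $\tau$ up through the upper red--black crossings using \cref{Cliff_hw1}, which carries no sign. Hence $\tau$ can be placed at the very top of the diagram, above the middle red strand. The remaining even diagram is exactly the left-hand side of \cref{Cliff_green} with label $\psi(Q)$. Applying \cref{Cliff_green} gives
\begin{equation*}
\mathcal{F}(\text{LHS}) = -\tfrac{1}{\sqrt{-2}}\,\tau\circ(\text{right triangle}) \;+\; \tfrac{1}{\sqrt{-2}}\,\tau\circ\bigl(\text{pin } \partial(\psi(Q)_{1})\bigr).
\end{equation*}

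\emph{Step 2: the triangle term.} Slide $\tau$ back down through the red--black crossings of the right triangle, again by \cref{Cliff_hw1}, until it sits directly above the black--black crossing. The result is $\mathcal{F}$ of the right triangle in $\mathcal{T\!AS}(\operatorname{Cl})$, because $\mathcal{F}$ of a black--black crossing is $-\frac{1}{\sqrt{-2}}$ times the crossing with $\tau$ on top. So the first term equals $\mathcal{F}$ applied to the first summand on the right of \cref{Toronto}.

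\emph{Step 3: the pin term.} This is the main point. By \cref{white_paper_79},
\begin{equation*}
\mathcal{F}\bigl(\text{pin } D(Q_{1})\bigr) = \text{pin } \psi_{2}(D(Q_{1})).
\end{equation*}
Since $Q_{1}\in\operatorname{OPol}_{2}$, the identity \cref{seeing} gives
\begin{equation*}
\psi_{2}(D(Q_{1})) = \tfrac{1}{\sqrt{-2}}(c_{1}-c_{2})\,\partial(\psi_{2}(Q_{1})).
\end{equation*}
By \cref{fan_switch}, $\psi_{2}(Q_{1})=\psi(Q)_{1}$. So the pin on the right equals $\frac{1}{\sqrt{-2}}\,\tau\circ\text{pin }\partial(\psi(Q)_{1})$, with $\tau$ above the pin, which matches the second term from Step 1.

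The hard part is tracking signs and heights. First, in Step 1 the $\tau$ coming from $\mathcal{F}$ of the crossing must sit above the crossing, and the two minus signs combine to $+\frac{1}{\sqrt{-2}}$. Second, the ordering in \cref{white_paper_79} must put $c_{1}-c_{2}$ to the left of $\partial(\cdot)$, which means vertically above it; this needs a check that left multiplication in $\operatorname{Pol}_{2}(\operatorname{Cl})$ corresponds to stacking on top. Third, no super-interchange signs arise from moving the odd $\tau$ past the red strand: red strands carry only even morphisms, and the tokens move along their own black strands through even red--black crossings. I would check these conventions first on the simplest case $Q=x^{2}$, where everything can be computed by hand. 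The remaining steps are formal.
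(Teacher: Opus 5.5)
Your proposal is correct and takes essentially the same approach as the paper. You apply \cref{Cliff_hw1} and \cref{Cliff_green} to $\mathcal{F}$ of the left triangle, recognize the triangle term as $\mathcal{F}$ of the right triangle, and identify the correction term via \cref{white_paper_79}, \cref{fan_switch}, and \cref{seeing}. One small slip: just above the black--black crossing the red strand is still on the far left, which is exactly why you must slide $\tau$ up through the upper red--black crossing; your remark that it ``has already returned to the middle'' should be dropped, though this does not affect the argument.
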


\begin{proof}
We compute that
\begin{equation*}
\mathcal{F}\left(\begin{tikzpicture}[centerzero, thick]
        \draw[-] (0.4,-0.5) -- (-0.4,0.5);
        \draw[-] (-0.4,-0.5) -- (0.4,0.5);
        \node at (0,-0.7) {\tiny $ Q $};
        \draw[-, wei] (0,-0.5) to[out=up, in=down] (-0.4,0) to[out=up,in=down] (0,0.5);
\end{tikzpicture}\right)
= -\frac{1}{\sqrt{-2}} \
\begin{tikzpicture}[centerzero, thick]
        \draw[-] (0.4,-0.5) -- (-0.4,0.5);
        \draw[-] (-0.4,-0.5) -- (0.4,0.5);
        \node at (0,-0.7) {\tiny $ \psi(Q) $};
        \draw[-, wei] (0,-0.5) to[out=up, in=down] (-0.4,0) to[out=up,in=down] (0,0.5);
        \teleport{-0.13,0.13}{0.13,0.13};
\end{tikzpicture}
\ \overset{\mathclap{\cref{Cliff_hw1}}}{\underset{\mathclap{\cref{Cliff_green}}}{=}} \ 
- \frac{1}{\sqrt{-2}} \
\begin{tikzpicture}[centerzero, thick]
        \draw[-] (2,-0.5) -- (1.2,0.5);
        \draw[-] (1.2,-0.5) -- (2,0.5);
        \node at (1.6,-0.7) {\tiny $ \psi(Q) $};
        \draw[-, wei] (1.6,-0.5) to[out=up, in=down] (2,0) to[out=up,in=down] (1.6,0.5);
        \teleport{1.47,0.13}{1.73,0.13};
\end{tikzpicture}
+ \frac{1}{\sqrt{-2}} \
\begin{tikzpicture}[centerzero, thick]
        \node at (0,-0.7) {\tiny $ \psi(Q) $};
        \draw[-] (-0.4,-0.5) -- (-0.4,0.5);
        \draw[-, wei] (0,-0.5) -- (0,0.5);
        \draw[-] (0.4,-0.5) -- (0.4,0.5);
        \pinpin{0.4,-0.2}{-0.4,-0.2}{1.9,-0.2}{\partial(\psi(Q)_{1})};
        \teleport{-0.4,0.2}{0.4,0.2};
\end{tikzpicture}
\end{equation*}
and
\begin{equation*}
\mathcal{F}\left(\begin{tikzpicture}[centerzero, thick]
        \draw[-] (2,-0.5) -- (1.2,0.5);
        \draw[-] (1.2,-0.5) -- (2,0.5);
        \node at (1.6,-0.7) {\tiny $ Q $};
        \draw[-, wei] (1.6,-0.5) to[out=up, in=down] (2,0) to[out=up,in=down] (1.6,0.5);
\end{tikzpicture}
\ + \
\begin{tikzpicture}[centerzero, thick]
        \node at (0,-0.7) {\tiny $ Q $};
        \draw[-] (-0.4,-0.5) -- (-0.4,0.5);
        \draw[-, wei] (0,-0.5) -- (0,0.5);
        \draw[-] (0.4,-0.5) -- (0.4,0.5);
        \pinpin{0.4,0}{-0.4,0}{1.5,0}{D(Q_{1})};
\end{tikzpicture}\right)
= 
-\frac{1}{\sqrt{-2}} \
\begin{tikzpicture}[centerzero, thick]
        \draw[-] (2,-0.5) -- (1.2,0.5);
        \draw[-] (1.2,-0.5) -- (2,0.5);
        \node at (1.6,-0.7) {\tiny $ \psi(Q) $};
        \draw[-, wei] (1.6,-0.5) to[out=up, in=down] (2,0) to[out=up,in=down] (1.6,0.5);
        \teleport{1.47,0.13}{1.73,0.13};
\end{tikzpicture}
+ \mathcal{F}\left(\begin{tikzpicture}[centerzero, thick]
        \node at (0,-0.7) {\tiny $ Q $};
        \draw[-] (-0.4,-0.5) -- (-0.4,0.5);
        \draw[-, wei] (0,-0.5) -- (0,0.5);
        \draw[-] (0.4,-0.5) -- (0.4,0.5);
        \pinpin{0.4,0}{-0.4,0}{1.5,0}{D(Q_{1})};
\end{tikzpicture}\right).
\end{equation*}
Furthermore,
\begin{equation*}
\mathcal{F}\left(\begin{tikzpicture}[centerzero, thick]
        \node at (0,-0.7) {\tiny $ Q $};
        \draw[-] (-0.4,-0.5) -- (-0.4,0.5);
        \draw[-, wei] (0,-0.5) -- (0,0.5);
        \draw[-] (0.4,-0.5) -- (0.4,0.5);
        \pinpin{0.4,0}{-0.4,0}{1.5,0}{D(Q_{1})};
\end{tikzpicture}\right)
\ \stackrel{\mathclap{\cref{white_paper_79}}}{=} \
\begin{tikzpicture}[centerzero, thick]
        \node at (0,-0.7) {\tiny $ \psi(Q) $};
        \draw[-] (-0.4,-0.5) -- (-0.4,0.5);
        \draw[-, wei] (0,-0.5) -- (0,0.5);
        \draw[-] (0.4,-0.5) -- (0.4,0.5);
        \pinpin{0.4,0}{-0.4,0}{1.8,0}{\psi_{2}(D(Q_{1}))};
\end{tikzpicture}
\ \overset{\mathclap{\cref{fan_switch}}}{\underset{\mathclap{\cref{seeing}}}{=}} \ \frac{1}{\sqrt{-2}} \
\begin{tikzpicture}[centerzero, thick]
        \node at (0,-0.7) {\tiny $ \psi(Q) $};
        \draw[-] (-0.4,-0.5) -- (-0.4,0.5);
        \draw[-, wei] (0,-0.5) -- (0,0.5);
        \draw[-] (0.4,-0.5) -- (0.4,0.5);
        \pinpin{0.4,-0.2}{-0.4,-0.2}{1.9,-0.2}{\partial(\psi(Q)_{1})};
        \teleport{-0.4,0.2}{0.4,0.2};
\end{tikzpicture} \ ,
\end{equation*}
and so the result follows. 
\end{proof}

\begin{lem}
If $ Q \in \mathcal{E}_{\operatorname{Pol}_{1}(\operatorname{Cl})} $, then 
\begin{equation} \label{coffee}
\mathcal{G}\left(\begin{tikzpicture}[centerzero, thick]
        \draw[-] (0.4,-0.5) -- (-0.4,0.5);
        \draw[-] (-0.4,-0.5) -- (0.4,0.5);
        \node at (0,-0.7) {\tiny $ Q $};
        \draw[-, wei] (0,-0.5) to[out=up, in=down] (-0.4,0) to[out=up,in=down] (0,0.5);
\end{tikzpicture}\right)
=
\mathcal{G}\left(\begin{tikzpicture}[centerzero, thick]
        \draw[-] (2,-0.5) -- (1.2,0.5);
        \draw[-] (1.2,-0.5) -- (2,0.5);
        \node at (1.6,-0.7) {\tiny $ Q $};
        \draw[-, wei] (1.6,-0.5) to[out=up, in=down] (2,0) to[out=up,in=down] (1.6,0.5);
\end{tikzpicture}
\ - \
\begin{tikzpicture}[centerzero, thick]
        \node at (0,-0.7) {\tiny $ Q $};
        \draw[-] (-0.4,-0.5) -- (-0.4,0.5);
        \draw[-, wei] (0,-0.5) -- (0,0.5);
        \draw[-] (0.4,-0.5) -- (0.4,0.5);
        \pinpin{0.4,0}{-0.4,0}{1.5,0}{\partial(Q_{1})};
\end{tikzpicture}\right).
\end{equation}
\end{lem}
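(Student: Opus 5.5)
We mirror the proof of \cref{Toronto}, now for $\mathcal{G}$, and compute both sides of \cref{coffee} separately. Throughout, set $P := \psi^{-1}(Q) \in \mathcal{E}_{\operatorname{OPol}_{1}}$, which is legitimate by Lemma~\ref{Psi(H)}. We will use the analogue of Lemma~\ref{drip} for $\mathcal{G}$: a pin labelled $f \in \operatorname{Pol}_{2}(\operatorname{Cl})$ on the two black strands beside a red strand labelled $Q$ is sent to a pin labelled $\psi_{2}^{-1}(f)$ beside a red strand labelled $P$. This follows exactly as in Lemma~\ref{drip}. One checks that $\mathcal{G}$ respects \cref{Pol(Cl)}, so $f \mapsto \mathcal{G}(\text{pin}_f)$ is a superalgebra homomorphism out of $\operatorname{Pol}_{2}(\operatorname{Cl})$, and it then suffices to verify the claim on $c_i$ and $x_i$.

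\textbf{Left-hand side.} By definition,
\[
\mathcal{G}(\text{LHS}) = \tfrac{1}{\sqrt{-2}}\,\big(\text{black--black crossing with a green teleporter just above it, and the red strand } P \text{ passing on the left}\big).
\]
We slide the green teleporter down through the red strand using \cref{spinCl2}. We then apply \cref{spin8} to the crossing with the red strand on the left, which yields two terms:
\[
\tfrac{1}{\sqrt{-2}}\,(\text{red strand on the right, with teleporter}) \;+\; \tfrac{1}{\sqrt{-2}}\,(\text{teleporter}) \cdot D(P_{1}).
\]
The teleporter now sits above the pin $D(P_1)$ on the outer strands. The first term is exactly $\mathcal{G}$ of the first diagram on the right-hand side of \cref{coffee}, since \cref{spinCl2} again lets the teleporter pass the red strand.

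\textbf{Remaining term.} It remains to show
\[
\tfrac{1}{\sqrt{-2}}\,(c_{1}-c_{2})\,D(P_{1}) = -\,\psi_{2}^{-1}\big(\partial(Q_{1})\big),
\]
where $c_{i}$ denotes green tokens. Since $\psi_{2}^{-1}(Q_{1}) = P_{1} \in \operatorname{OPol}_{2}$ by \cref{fan_switch_2}, Lemma~\ref{working_656} gives
\[
\psi_{2}^{-1}(\partial(Q_{1})) = -\tfrac{1}{\sqrt{-2}}\,(c_{1}-c_{2})\,D(P_{1}),
\]
which is precisely the required identity. The pin of $\psi_{2}^{-1}(\partial(Q_1))$ is then $\mathcal{G}$ of the $\partial(Q_1)$ pin by the Lemma~\ref{drip}-type statement above, so the right-hand side of \cref{coffee} matches.

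\textbf{Main obstacle.} The delicate step is the sign and ordering bookkeeping. The teleporter is odd while the pin $D(P_1)$ is odd, so the super interchange law \cref{intlaw} matters. We must check that the teleporter ends up to the left of (above) $D(P_1)$ in the same order as in Lemma~\ref{working_656}, or else pick up a sign from \cref{frown}. We must also confirm that \cref{spinCl2} moves the green tokens past red crossings without a sign, which holds because the red crossings are even.
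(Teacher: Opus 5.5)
Your proposal is correct and follows the paper's proof essentially step for step. Like the paper, you compute $\mathcal{G}$ of both sides, apply \cref{spin8} and \cref{spinCl2} to the left-hand side, and identify the leftover pin term via \cref{fan_switch_2} and Lemma~\ref{working_656}. You also make explicit the two implicit ingredients: that $\psi_{2}^{-1}(Q_{1}) = \psi^{-1}(Q)_{1} \in \operatorname{OPol}_{2}$, and the Lemma~\ref{drip}-type identity for $\mathcal{G}$.

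One small slip: the teleporter should be slid \emph{up} through the red--black crossing, as your later remark that ``the teleporter now sits above the pin'' indicates. Sliding it down would also pass the black--black crossing, which needs \cref{spinCl1}; that works too, but only with compensating signs.
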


\begin{proof}
We compute that
\begin{equation*}
\mathcal{G}\left(\begin{tikzpicture}[centerzero, thick]
        \draw[-] (0.4,-0.5) -- (-0.4,0.5);
        \draw[-] (-0.4,-0.5) -- (0.4,0.5);
        \node at (0,-0.7) {\tiny $ Q $};
        \draw[-, wei] (0,-0.5) to[out=up, in=down] (-0.4,0) to[out=up,in=down] (0,0.5);
\end{tikzpicture}\right)
= \frac{1}{\sqrt{-2}} \
\begin{tikzpicture}[centerzero, thick]
        \draw[-] (0.4,-0.5) -- (-0.4,0.5);
        \draw[-] (-0.4,-0.5) -- (0.4,0.5);
        \node at (0,-0.7) {\tiny $ \psi^{-1}(Q) $};
        \draw[-, wei] (0,-0.5) to[out=up, in=down] (-0.4,0) to[out=up,in=down] (0,0.5);
        \xteleport{-0.13,0.13}{0.13,0.13};
\end{tikzpicture}
\ \overset{\mathclap{\cref{spin8}}}{\underset{\mathclap{\cref{spinCl2}}}{=}} \
\frac{1}{\sqrt{-2}} 
\begin{tikzpicture}[centerzero, thick]
        \draw[-] (2,-0.5) -- (1.2,0.5);
        \draw[-] (1.2,-0.5) -- (2,0.5);
        \node at (1.6,-0.7) {\tiny $ \psi^{-1}(Q) $};
        \draw[-, wei] (1.6,-0.5) to[out=up, in=down] (2,0) to[out=up,in=down] (1.6,0.5);
        \xteleport{1.47,0.13}{1.73,0.13};
\end{tikzpicture}
+ \frac{1}{\sqrt{-2}} 
\begin{tikzpicture}[centerzero, thick]
        \node at (0,-0.7) {\tiny $ \psi^{-1}(Q) $};
        \draw[-] (-0.4,-0.5) -- (-0.4,0.5);
        \draw[-, wei] (0,-0.5) -- (0,0.5);
        \draw[-] (0.4,-0.5) -- (0.4,0.5);
        \pinpin{0.4,-0.2}{-0.4,-0.2}{1.9,-0.2}{D(\psi^{-1}(Q)_{1})};
        \xteleport{-0.4,0.2}{0.4,0.2};
\end{tikzpicture}
\end{equation*}
and
\begin{equation*}
\mathcal{G}\left(\begin{tikzpicture}[centerzero, thick]
        \draw[-] (2,-0.5) -- (1.2,0.5);
        \draw[-] (1.2,-0.5) -- (2,0.5);
        \node at (1.6,-0.7) {\tiny $ Q $};
        \draw[-, wei] (1.6,-0.5) to[out=up, in=down] (2,0) to[out=up,in=down] (1.6,0.5);
\end{tikzpicture}
\ - \
\begin{tikzpicture}[centerzero, thick]
        \node at (0,-0.7) {\tiny $ Q $};
        \draw[-] (-0.4,-0.5) -- (-0.4,0.5);
        \draw[-, wei] (0,-0.5) -- (0,0.5);
        \draw[-] (0.4,-0.5) -- (0.4,0.5);
        \pinpin{0.4,0}{-0.4,0}{1.5,0}{\partial(Q_{1})};
\end{tikzpicture}\right)
= 
\frac{1}{\sqrt{-2}} 
\begin{tikzpicture}[centerzero, thick]
        \draw[-] (2,-0.5) -- (1.2,0.5);
        \draw[-] (1.2,-0.5) -- (2,0.5);
        \node at (1.6,-0.7) {\tiny $ \psi^{-1}(Q) $};
        \draw[-, wei] (1.6,-0.5) to[out=up, in=down] (2,0) to[out=up,in=down] (1.6,0.5);
        \xteleport{1.47,0.13}{1.73,0.13};
\end{tikzpicture}
- \mathcal{G}\left(\begin{tikzpicture}[centerzero, thick]
        \node at (0,-0.7) {\tiny $ Q $};
        \draw[-] (-0.4,-0.5) -- (-0.4,0.5);
        \draw[-, wei] (0,-0.5) -- (0,0.5);
        \draw[-] (0.4,-0.5) -- (0.4,0.5);
        \pinpin{0.4,0}{-0.4,0}{1.5,0}{\partial(Q_{1})};
\end{tikzpicture}\right).
\end{equation*}
Furthermore, we have
\begin{equation*} 
\mathcal{G}\left(\begin{tikzpicture}[centerzero, thick]
        \node at (0,-0.7) {\tiny $ Q $};
        \draw[-] (-0.4,-0.5) -- (-0.4,0.5);
        \draw[-, wei] (0,-0.5) -- (0,0.5);
        \draw[-] (0.4,-0.5) -- (0.4,0.5);
        \pinpin{0.4,0}{-0.4,0}{1.5,0}{\partial(Q_{1})};
\end{tikzpicture}\right)
\ = \ 
\begin{tikzpicture}[centerzero, thick]
        \node at (0,-0.7) {\tiny $ \psi^{-1}(Q) $};
        \draw[-] (-0.4,-0.5) -- (-0.4,0.5);
        \draw[-, wei] (0,-0.5) -- (0,0.5);
        \draw[-] (0.4,-0.5) -- (0.4,0.5);
        \pinpin{0.4,0}{-0.4,0}{2,0}{\psi_{2}^{-1}(\partial(Q_{1}))};
\end{tikzpicture}
\ \overset{\mathclap{\cref{fan_switch_2}}}{\underset{\mathclap{\cref{working_655}}}{=}} \ 
-\frac{1}{\sqrt{-2}} 
\begin{tikzpicture}[centerzero, thick]
        \node at (0,-0.7) {\tiny $ \psi^{-1}(Q) $};
        \draw[-] (-0.4,-0.5) -- (-0.4,0.5);
        \draw[-, wei] (0,-0.5) -- (0,0.5);
        \draw[-] (0.4,-0.5) -- (0.4,0.5);
        \pinpin{0.4,-0.2}{-0.4,-0.2}{1.9,-0.2}{D(\psi^{-1}(Q)_{1})};
        \xteleport{-0.4,0.2}{0.4,0.2};
\end{tikzpicture} \ ,
\end{equation*}
and so the equation \cref{coffee} follows.
\end{proof}

\details{Note that, in the first equality of final equation in the above proof, we used the fact that
\begin{equation*}
\mathcal{G}\left(\begin{tikzpicture}[centerzero, thick]
        \node at (0,-0.7) {\tiny $ Q $};
        \draw[-] (-0.4,-0.5) -- (-0.4,0.5);
        \draw[-, wei] (0,-0.5) -- (0,0.5);
        \draw[-] (0.4,-0.5) -- (0.4,0.5);
        \pinpin{0.4,0}{-0.4,0}{1.2,0}{f};
\end{tikzpicture}\right)
= 
\begin{tikzpicture}[centerzero, thick]
        \node at (0,-0.7) {\tiny $ \psi^{-1}(Q) $};
        \draw[-] (-0.4,-0.5) -- (-0.4,0.5);
        \draw[-, wei] (0,-0.5) -- (0,0.5);
        \draw[-] (0.4,-0.5) -- (0.4,0.5);
        \pinpin{0.4,0}{-0.4,0}{1.4,0}{\psi_{2}^{-1}(f)};
\end{tikzpicture} \ , \qquad f \in \operatorname{Pol}_{2}(\operatorname{Cl}),
\end{equation*}
which can be proven by applying a similar argument to Lemma~\ref{drip}. Furthermore, in the second equality, we implicitly used that $ \psi_{2}^{-1}(Q_{1}) \in \operatorname{OPol}_{2} $ (since, when applying \cref{working_655}, one requires that the assumptions of Lemma \ref{working_656} hold). This can be obtained from the fact that $ \psi^{-1}(Q) \in \operatorname{OPol}_{1} $ (see Lemma \ref{Psi(H)}), and the fact that $ \psi_{2}^{-1}(Q_{1}) = \psi^{-1}(Q)_{1} $ (see \cref{fan_switch_2}).}

\begin{proof}[Proof of Theorem~\ref{Morita_equiv2}]
Verifying the existence of $ \mathcal{F} $ requires checking the relations \cref{spin1}, \cref{spin2}, \cref{spin3,spin4,spin6,spin8}, \cref{spinCl123}, \cref{spinCl1}, and \cref{spinCl2}. We have that $ \mathcal{F} $ satisfies \cref{spin4} by \cref{Yoneda_0.5}, and $ \mathcal{F} $ also satisfies \cref{spin8} by \cref{Toronto}. Furthermore, $ \mathcal{F} $ satisfies the relations \cref{spin1}, \cref{spin2}, \cref{spinCl123}, and \cref{spinCl1} by Proposition~\ref{AS-AH-iso}. We omit the proofs of the remaining relations, since these are straightforward.
\\ \indent Next, verifying the existence of $ \mathcal{G} $ requires checking the relations \cref{Pol(Cl)}, \cref{Cliff_hw-1}, \cref{Cliff_hw0}, and \cref{Cliff_hw1,Cliff_hw2,Cliff_hw3,Cliff_hw5,Cliff_green}. We have that $ \mathcal{G} $ satisfies \cref{Cliff_green} by \cref{coffee}, and one can prove using a similar argument to Lemma~\ref{Yoneda_346} that $ \mathcal{G} $ satisfies \cref{Cliff_hw3}. Furthermore, $ \mathcal{G} $ satisfies \cref{Pol(Cl)}, \cref{Cliff_hw-1} and \cref{Cliff_hw0} by Proposition~\ref{AS-AH-iso}. We omit the proof of the remaining relations, since these are straightforward. 
\\ \indent Finally, it is straightforward to verify that $ \mathcal{F} $ and $ \mathcal{G} $ are inverses to each other.
\end{proof}

\subsection{Morita superequivalences}

Restricting to objects, the functor $ \mathcal{F} $ induces a monoid isomorphism $ \mathcal{F} \colon F(\widehat{\mathcal{E}}_{\operatorname{OPol}_{1}}) \rightarrow F(\widehat{\mathcal{E}}_{\operatorname{Pol}_{1}(\operatorname{Cl})}) $. This isomorphism satisfies $ \mathcal{F}(\mathbf{Q}) \in F(\mathcal{E}_{\operatorname{Pol}_{1}(\operatorname{Cl})}) $ for all $ \mathbf{Q} \in F(\mathcal{E}_{\operatorname{OPol}_{1}}) $.

\begin{lem}
Let $ d \in \mathbb{N}_{+} $ and $ \mathbf{Q} \in F(\mathcal{E}_{\operatorname{OPol}_{1}}) $. Then 
\begin{gather} \label{warm_weather}
\mathcal{F}(\Gamma_{d,\mathbf{Q}}) = \Lambda_{d,\mathcal{F}(\mathbf{Q})}.
\end{gather}
\end{lem}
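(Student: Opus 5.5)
The plan is to reduce the statement to Lemma~\ref{pencil_sharp_2}. Restricted to objects, $\mathcal{F}$ is the monoid homomorphism $F(\widehat{\mathcal{E}}_{\operatorname{OPol}_{1}}) \rightarrow F(\widehat{\mathcal{E}}_{\operatorname{Pol}_{1}(\operatorname{Cl})})$ determined on generators by $\go \mapsto \go$ and $Q \mapsto \psi(Q)$. It is a monoid isomorphism because the object map of the inverse functor $\mathcal{G}$ from Theorem~\ref{Morita_equiv2} sends $\go \mapsto \go$ and $Q \mapsto \psi^{-1}(Q)$. This is well defined, since Lemma~\ref{Psi(H)} gives $\psi(\mathcal{E}_{\operatorname{OPol}_{1}}) = \mathcal{E}_{\operatorname{Pol}_{1}(\operatorname{Cl})}$. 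It is a mutual inverse on generators, hence on all words. Equivalently, the map is the free-monoid map induced by the bijection of generating sets $\widehat{\mathcal{E}}_{\operatorname{OPol}_{1}} \rightarrow \widehat{\mathcal{E}}_{\operatorname{Pol}_{1}(\operatorname{Cl})}$ that extends $\psi|_{\mathcal{E}_{\operatorname{OPol}_{1}}}$ by $\go \mapsto \go$.

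Next, by Definition~\ref{fluffy_spin} we have $\Gamma_{d,\mathbf{Q}} = F(\widehat{\mathcal{E}}_{\operatorname{OPol}_{1}})_{\go^{d},\mathbf{Q}}$. Applying Lemma~\ref{pencil_sharp_2} with $h = \mathcal{F}$, $u = \go^{d}$ and $v = \mathbf{Q}$ gives
\begin{equation*}
\mathcal{F}(\Gamma_{d,\mathbf{Q}}) = F(\widehat{\mathcal{E}}_{\operatorname{Pol}_{1}(\operatorname{Cl})})_{\mathcal{F}(\go^{d}),\mathcal{F}(\mathbf{Q})}.
\end{equation*}
Since $\mathcal{F}(\go^{d}) = \go^{d}$ and $\mathcal{F}(\mathbf{Q}) \in F(\mathcal{E}_{\operatorname{Pol}_{1}(\operatorname{Cl})})$, the right-hand side is exactly $\Lambda_{d,\mathcal{F}(\mathbf{Q})}$ by \cref{Lambda_def}. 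This proves \cref{warm_weather}.

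There is essentially no obstacle. The only point needing care is confirming that $\mathcal{F}$ on objects is genuinely a monoid isomorphism, i.e.\ that $\psi$ restricts to a bijection between the label sets. That is Lemma~\ref{Psi(H)} together with the injectivity of $\psi$, which holds because $\psi$ is a superalgebra isomorphism. If one prefers not to cite Lemma~\ref{pencil_sharp_2}, the direct argument also works. A letter-preserving bijection of alphabets carries the position partition $I \cup J$ witnessing a $(\go^{d},\mathbf{Q})$-shuffle to a partition witnessing a $(\go^{d},\mathcal{F}(\mathbf{Q}))$-shuffle. The same holds for $\mathcal{G}$, which gives the reverse inclusion.
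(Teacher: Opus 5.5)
Your proof is correct and matches the paper's argument: apply Lemma~\ref{pencil_sharp_2} to the object-level monoid isomorphism induced by $\mathcal{F}$, with $u = \go^{d}$ and $v = \mathbf{Q}$, then use $\mathcal{F}(\go^{d}) = \go^{d}$ and \cref{Lambda_def}. Your check that $\mathcal{F}$ is a monoid isomorphism on objects (via Lemma~\ref{Psi(H)} and the object map of $\mathcal{G}$) spells out what the paper simply asserts in the sentence before the lemma.
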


\begin{proof}
We compute that 
\begin{gather*} 
\mathcal{F}(\Gamma_{d,\mathbf{Q}}) \ \stackrel{\mathclap{\cref{fluffy_5678}}}{=} \ \mathcal{F}(F(\widehat{\mathcal{E}}_{\operatorname{OPol}_{1}})_{\go^{d},\mathbf{Q}}) = F(\widehat{\mathcal{E}}_{\operatorname{Pol}_{1}(\operatorname{Cl})})_{\mathcal{F}(\go^{d}),\mathcal{F}(\mathbf{Q})} = F(\widehat{\mathcal{E}}_{\operatorname{Pol}_{1}(\operatorname{Cl})})_{\go^{d},\mathcal{F}(\mathbf{Q})} \ \stackrel{\mathclap{\cref{Lambda_def}}}{=} \ \Lambda_{d,\mathcal{F}(\mathbf{Q})}.
\end{gather*}
Here, the second equality follows from Lemma \ref{pencil_sharp_2}, and the third equality follows from the fact that $ \mathcal{F} $ fixes $ \go $.
\end{proof}

\begin{theo} \label{Morita_result}
Let $ d \in \mathbb{N} $ and $ \mathbf{Q} \in F(\mathcal{E}_{\operatorname{OPol}_{1}}) $. Then we have superalgebra isomorphisms
\begin{align}
H_{d,\mathcal{F}(\mathbf{Q})}^{\operatorname{aff}}(\operatorname{Cl}) \cong \operatorname{SH}^{\operatorname{aff}}_{d,\mathbf{Q}}(\operatorname{Cl}) \cong \operatorname{Cl}_{d} \otimes \operatorname{SH}^{\operatorname{aff}}_{d,\mathbf{Q}}, \label{isomorphism_123}
\\ H_{d,\mathcal{F}(\mathbf{Q})}^{\operatorname{cyc}}(\operatorname{Cl}) \cong \operatorname{SH}^{\operatorname{cyc}}_{d,\mathbf{Q}}(\operatorname{Cl}) \cong \operatorname{Cl}_{d} \otimes \operatorname{SH}^{\operatorname{cyc}}_{d,\mathbf{Q}}. \label{isomorphism_223}
\end{align}
In particular, the $ (d,\mathbf{Q}) $-degenerate spin affine Hecke superalgebra $ \operatorname{SH}^{\operatorname{aff}}_{d,\mathbf{Q}} $ is Morita superequivalent to the $ (d,\mathcal{F}(\mathbf{Q})) $-degenerate affine Hecke--Clifford superalgebra $ H_{d,\mathcal{F}(\mathbf{Q})}^{\operatorname{aff}}(\operatorname{Cl}) $, and the cyclotomic $ (d,\mathbf{Q}) $-spin Hecke superalgebra $ \operatorname{SH}^{\operatorname{cyc}}_{d,\mathbf{Q}} $ is Morita superequivalent to the cyclotomic $ (d,\mathcal{F}(\mathbf{Q})) $-Hecke--Clifford superalgebra $ H_{d,\mathcal{F}(\mathbf{Q})}^{\operatorname{cyc}}(\operatorname{Cl}) $.
\end{theo}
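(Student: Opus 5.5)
The plan is to obtain the first isomorphisms in \cref{isomorphism_123} and \cref{isomorphism_223} from the monoidal isomorphism $\mathcal{F}$ of Theorem~\ref{Morita_equiv2}. The second isomorphisms in each line are exactly Proposition~\ref{greenleaves}, so they need no further work. The Morita statements then follow from Corollary~\ref{baguette345}, applied with $A = \operatorname{SH}^{\operatorname{aff}}_{d,\mathbf{Q}}$ (respectively $\operatorname{SH}^{\operatorname{cyc}}_{d,\mathbf{Q}}$), $B = H_{d,\mathcal{F}(\mathbf{Q})}^{\operatorname{aff}}(\operatorname{Cl})$ (respectively the cyclotomic version), and $n=d$. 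For this we use the swap isomorphism $\operatorname{Cl}_{d}\otimes A \cong A \otimes \operatorname{Cl}_{d}$ recalled in the preliminaries.

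For the affine case, an isomorphism of monoidal supercategories extends to an isomorphism of additive envelopes $\operatorname{Add}(\mathcal{T\!AS}(\operatorname{Cl})) \to \operatorname{Add}(\mathcal{T\!AH}(\operatorname{Cl}))$. It therefore induces a superalgebra isomorphism
\[
\operatorname{End}\Bigl(\bigoplus_{\mathbf{i}\in\Gamma_{d,\mathbf{Q}}}\mathbf{i}\Bigr) \xrightarrow{\ \cong\ } \operatorname{End}\Bigl(\bigoplus_{\mathbf{i}\in\Gamma_{d,\mathbf{Q}}}\mathcal{F}(\mathbf{i})\Bigr).
\]
By \cref{warm_weather}, $\mathcal{F}$ restricts to a bijection $\Gamma_{d,\mathbf{Q}} \to \Lambda_{d,\mathcal{F}(\mathbf{Q})}$, because $\mathcal{F}$ is injective on objects, being an isomorphism. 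Hence the target is $H_{d,\mathcal{F}(\mathbf{Q})}^{\operatorname{aff}}(\operatorname{Cl})$. Explicitly, the map sends a matrix $(a_{\mathbf{j},\mathbf{i}})$ of morphisms to $(\mathcal{F}(a_{\mathbf{j},\mathbf{i}}))$, and its inverse is induced by $\mathcal{G}$. The case $d=0$ is trivial and is covered by the same argument, since that lemma only needs that $\mathcal{F}$ fixes $\go$.

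For the cyclotomic case, the induced isomorphism sends each idempotent $1_{\mathbf{i}}$ to $1_{\mathcal{F}(\mathbf{i})}$. Since $\mathcal{F}$ sends $\go$ to $\go$ and red labels to red labels, the first letter of $\mathcal{F}(\mathbf{i})$ is $\go$ exactly when the first letter of $\mathbf{i}$ is $\go$. So the isomorphism maps the generating set $\{1_{\mathbf{i}} : \mathbf{i}_{1}=\go\}$ of the cyclotomic ideal bijectively onto the corresponding generating set in $H_{d,\mathcal{F}(\mathbf{Q})}^{\operatorname{aff}}(\operatorname{Cl})$. It therefore carries the one two-sided ideal onto the other and descends to $\operatorname{SH}^{\operatorname{cyc}}_{d,\mathbf{Q}}(\operatorname{Cl}) \cong H_{d,\mathcal{F}(\mathbf{Q})}^{\operatorname{cyc}}(\operatorname{Cl})$. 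Here $d\in\mathbb{N}_{+}$, as required for the cyclotomic quotients to be defined.

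There is no real obstacle here; the substantive work was done in Theorem~\ref{Morita_equiv2} and Proposition~\ref{greenleaves}. The one point that needs care is the bookkeeping identifying the index set $\mathcal{F}(\Gamma_{d,\mathbf{Q}})$ with $\Lambda_{d,\mathcal{F}(\mathbf{Q})}$ and checking that first letters are preserved. Both facts come from the object-level monoid isomorphism fixing $\go$, together with Lemma~\ref{pencil_sharp_2}.
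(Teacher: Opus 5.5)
Your proposal is correct and follows essentially the same route as the paper. It transports the endomorphism algebra through the additive-envelope isomorphism induced by $\mathcal{F}$, identifies $\mathcal{F}(\Gamma_{d,\mathbf{Q}})$ with $\Lambda_{d,\mathcal{F}(\mathbf{Q})}$, checks that the idempotents generating the cyclotomic ideals correspond, and then invokes Proposition~\ref{greenleaves} and Corollary~\ref{baguette345}. Your extra remarks are bookkeeping the paper leaves implicit: injectivity of $\mathcal{F}$ on objects, the tensor swap needed to match the form $A \otimes \operatorname{Cl}_{n}$, and the $d=0$ and $d \in \mathbb{N}_{+}$ caveats.
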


\begin{proof}
The final assertion follows from \cref{isomorphism_123}, \cref{isomorphism_223}, and Corollary~\ref{baguette345}. The functor $ \mathcal{F} $  induces an isomorphism between the additive envelopes $ \operatorname{Add}(\mathcal{T\!AS}(\operatorname{Cl})) $ and $ \operatorname{Add}(\mathcal{T\!AH}(\operatorname{Cl})) $. This isomorphism in turn induces a superalgebra isomorphism 
\begin{multline*}
\operatorname{SH}^{\operatorname{aff}}_{d,\mathbf{Q}}(\operatorname{Cl}) \quad \stackrel{\mathclap{\cref{HLDSAHA22_Cliff}}}{=} \quad \operatorname{End}_{\operatorname{Add}(\mathcal{T\!AS}(\operatorname{Cl}))}\left(\bigoplus_{\mathbf{i} \in \Gamma_{d,\mathbf{Q}}} \mathbf{i} \right) \cong \operatorname{End}_{\operatorname{Add}(\mathcal{T\!AH}(\operatorname{Cl}))}\left(\bigoplus_{\mathbf{i} \in \Gamma_{d,\mathbf{Q}}} \mathcal{F}(\mathbf{i}) \right)
\\ \stackrel{\mathclap{\cref{warm_weather}}}{=} \quad \operatorname{End}_{\operatorname{Add}(\mathcal{T\!AH}(\operatorname{Cl}))}\left(\bigoplus_{\mathbf{j} \in \Lambda_{d,\mathcal{F}(\mathbf{Q})}} \mathbf{j}\right) \quad \stackrel{\mathclap{\cref{higher_Clifford_Hecke_def}}}{=} \quad H_{d,\mathcal{F}(\mathbf{Q})}^{\operatorname{aff}}(\operatorname{Cl}). 
\end{multline*}
Then this isomorphism together with \cref{isomorphism_1} yields \cref{isomorphism_123}. Next, suppose that $ \mathbf{i} \in \Gamma_{d,\mathbf{Q}} $ is a word whose first entry is equal to $ \go $. Then $ \mathcal{F}(1_{\mathbf{i}}) = 1_{\mathcal{F}(\mathbf{i})} $, where the 
first entry in the word $ \mathcal{F}(\mathbf{i}) \in \Lambda_{d,\mathcal{F}(\mathbf{Q})} $ is also equal to $ \go $. Thus the isomorphism $ H_{d,\mathcal{F}(\mathbf{Q})}^{\operatorname{aff}}(\operatorname{Cl}) \cong \operatorname{SH}^{\operatorname{aff}}_{d,\mathbf{Q}}(\operatorname{Cl}) $ induces an isomorphism $ H_{d,\mathcal{F}(\mathbf{Q})}^{\operatorname{cyc}}(\operatorname{Cl}) \cong \operatorname{SH}^{\operatorname{cyc}}_{d,\mathbf{Q}}(\operatorname{Cl}) $ on the cyclotomic quotients. This fact together with \cref{isomorphism_2} yields \cref{isomorphism_223}. 
\end{proof}

Specializing $ \mathbf{Q} $, we obtain the following two results. The first of these results is Wang's isomorphism \cite[Thm.~4.1]{Wang}.

\begin{cor}
If $ d \in \mathbb{N} $, then we have a superalgebra isomorphism
\begin{equation*}
H_{d}^{\operatorname{aff}}(\operatorname{Cl}) \cong \operatorname{Cl}_{d} \otimes \operatorname{SH}^{\operatorname{aff}}_{d}.
\end{equation*}
In particular, the degenerate spin affine Hecke superalgebra $ \operatorname{SH}^{\operatorname{aff}}_{d} $ is Morita superequivalent to the degenerate affine Hecke--Clifford superalgebra $ H_{d}^{\operatorname{aff}}(\operatorname{Cl}) $.
\end{cor}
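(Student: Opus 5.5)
This is the specialization $\mathbf{Q} = \varnothing$ of Theorem~\ref{Morita_result}, so I will deduce it from that theorem and then identify the algebras for the empty word. With $\mathbf{Q} = \varnothing$, we have $\mathcal{F}(\mathbf{Q}) = \varnothing$, and Theorem~\ref{Morita_result} gives
\[
H_{d,\varnothing}^{\operatorname{aff}}(\operatorname{Cl}) \cong \operatorname{Cl}_{d} \otimes \operatorname{SH}^{\operatorname{aff}}_{d,\varnothing}.
\]
It remains to identify $\operatorname{SH}^{\operatorname{aff}}_{d,\varnothing}$ with $\operatorname{SH}^{\operatorname{aff}}_{d}$, and $H_{d,\varnothing}^{\operatorname{aff}}(\operatorname{Cl})$ with $H_{d}^{\operatorname{aff}}(\operatorname{Cl})$.

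For the empty word, $\Gamma_{d,\varnothing}$ consists of the single object $\go^{d}$, so $\operatorname{SH}^{\operatorname{aff}}_{d,\varnothing} = \operatorname{End}_{\mathcal{T\!AS}}(\go^{\otimes d})$. This is exactly the case $\mathbf{Q}=\varnothing$ of Lemma~\ref{salt3}, where $\boldsymbol{\omega} = \go^{d}$. Concretely, the natural map $\operatorname{SH}^{\operatorname{aff}}_{d} \to \operatorname{End}_{\mathcal{T\!AS}}(\go^{\otimes d})$ induced by $\mathcal{AS}\to\mathcal{T\!AS}$ sends the basis of Proposition~\ref{cell} onto the basis ${}_{\go^d}\mathcal{B}_{\go^d}$ of Theorem~\ref{HL-basis_Spin}, so it is an isomorphism.

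Similarly, $\Lambda_{d,\varnothing} = \{\go^{d}\}$, so $H_{d,\varnothing}^{\operatorname{aff}}(\operatorname{Cl}) = \operatorname{End}_{\mathcal{T\!AH}(\operatorname{Cl})}(\go^{\otimes d})$. I then need the natural map $H_{d}^{\operatorname{aff}}(\operatorname{Cl}) \to \operatorname{End}_{\mathcal{T\!AH}(\operatorname{Cl})}(\go^{\otimes d})$ to be an isomorphism. This is the step needing the most care, since the paper does not state a basis theorem for $\mathcal{T\!AH}(\operatorname{Cl})$. To get around this, I will transport the question along the isomorphisms $\mathcal{F}$ (Theorem~\ref{Morita_equiv2}) and $\Phi$ (Proposition~\ref{AS-AH-iso}), which restrict compatibly to $\mathcal{AS}(\operatorname{Cl}) \to \mathcal{AH}(\operatorname{Cl})$. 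This reduces the claim to showing that $\operatorname{SH}^{\operatorname{aff}}_{d}(\operatorname{Cl}) \to \operatorname{End}_{\mathcal{T\!AS}(\operatorname{Cl})}(\go^{\otimes d})$ is bijective.

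That map is bijective by comparing bases: the source is $\operatorname{Cl}_{d}\otimes\operatorname{SH}^{\operatorname{aff}}_{d}$, with basis coming from Proposition~\ref{cell}, and the target has basis ${}_{\go^d}\mathcal{C}_{\go^d}$ by Proposition~\ref{HL-basis_Spin-Cl}, and the map matches one basis with the other. Alternatively, one could cite the basis theorem \cite[Thm.~5.18]{Moran} directly.

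Combining these identifications gives $H_{d}^{\operatorname{aff}}(\operatorname{Cl}) \cong \operatorname{Cl}_{d}\otimes\operatorname{SH}^{\operatorname{aff}}_{d}$. The Morita superequivalence then follows from Corollary~\ref{baguette345}.
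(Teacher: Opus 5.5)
Your argument is correct and follows the paper's proof, which simply specializes Theorem~\ref{Morita_result} to $\mathbf{Q}=\varnothing$. You also make explicit the identifications $\operatorname{SH}^{\operatorname{aff}}_{d,\varnothing}\cong\operatorname{SH}^{\operatorname{aff}}_{d}$ and $H^{\operatorname{aff}}_{d,\varnothing}(\operatorname{Cl})\cong H^{\operatorname{aff}}_{d}(\operatorname{Cl})$, which the paper leaves implicit. Your transport along $\mathcal{F}$ and $\Phi$ is a valid way to obtain the second of these from the paper's own results.

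Note, however, that the two ingredients you use there already prove the statement on their own: the isomorphism $\Phi$ of Proposition~\ref{AS-AH-iso}, and $\operatorname{SH}^{\operatorname{aff}}_{d}(\operatorname{Cl})\cong\operatorname{Cl}_{d}\otimes\operatorname{SH}^{\operatorname{aff}}_{d}$. Together they give $H^{\operatorname{aff}}_{d}(\operatorname{Cl})\cong\operatorname{SH}^{\operatorname{aff}}_{d}(\operatorname{Cl})\cong\operatorname{Cl}_{d}\otimes\operatorname{SH}^{\operatorname{aff}}_{d}$ directly, so the detour through the tensor product categories is not needed.
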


\begin{proof}
This follows by taking $ \mathbf{Q} $ to be the empty word in Theorem~\ref{Morita_result}.
\end{proof}

\begin{cor} \label{rooftop}
Let $ d \in \mathbb{N}_{+} $ and $ Q \in \mathcal{E}_{\operatorname{OPol}_{1}} $. Then we have a superalgebra isomorphism
\begin{equation*}
H_{d}^{\psi(Q)}(\operatorname{Cl}) \cong \operatorname{Cl}_{d} \otimes \operatorname{SH}^{Q}_{d}.
\end{equation*}
In particular, the cyclotomic spin Hecke superalgebra $ \operatorname{SH}^{Q}_{d} $ is Morita superequivalent to the cyclotomic Hecke--Clifford superalgebra $ H_{d}^{\psi(Q)}(\operatorname{Cl}) $.
\end{cor}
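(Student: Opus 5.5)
The plan is to specialize Theorem~\ref{Morita_result} to a word of length one, $\mathbf{Q} = Q$, and then replace the tensor product cyclotomic quotients by the classical ones. Theorem~\ref{Morita_result} gives
\begin{equation*}
H_{d,\mathcal{F}(Q)}^{\operatorname{cyc}}(\operatorname{Cl}) \cong \operatorname{Cl}_{d} \otimes \operatorname{SH}^{\operatorname{cyc}}_{d,Q}.
\end{equation*}
Since $\mathcal{F}$ sends the object $Q$ to $\psi(Q)$, the word $\mathcal{F}(\mathbf{Q})$ is the one-letter word $\psi(Q)$. Here $\psi(Q) \in \mathcal{E}_{\operatorname{Pol}_{1}(\operatorname{Cl})}$ by Lemma~\ref{Psi(H)}.

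Next, I will identify the two sides with the classical cyclotomic algebras. By Proposition~\ref{Cliff_cyclotomic_result} applied to $\psi(Q)$, we have $H_{d,\psi(Q)}^{\operatorname{cyc}}(\operatorname{Cl}) \cong H_{d}^{\psi(Q)}(\operatorname{Cl})$. By Proposition~\ref{frontlines}, we have $\operatorname{SH}^{\operatorname{cyc}}_{d,Q} \cong \operatorname{SH}^{Q}_{d}$. Tensoring the second isomorphism with $\operatorname{Cl}_{d}$ is again a superalgebra isomorphism, since the tensor product of superalgebra isomorphisms respects the signed multiplication. Chaining these three isomorphisms yields $H_{d}^{\psi(Q)}(\operatorname{Cl}) \cong \operatorname{Cl}_{d} \otimes \operatorname{SH}^{Q}_{d}$.

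The Morita superequivalence then follows from Corollary~\ref{baguette345}, taking $A = \operatorname{SH}^{Q}_{d}$ and $n = d$. One also needs $\operatorname{Cl}_{d} \otimes A \cong A \otimes \operatorname{Cl}_{d}$, which is the symmetry isomorphism recalled in the preliminaries.

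I do not expect a serious obstacle. The only point needing care is bookkeeping: the hypothesis $d \in \mathbb{N}_{+}$ is required by the cyclotomic definitions, and one must check that $\mathcal{F}$ acts on a one-letter word exactly as $\psi$.
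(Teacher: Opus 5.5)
Your proposal is correct and follows the paper's own proof: you specialize Theorem~\ref{Morita_result} to the one-letter word $\mathbf{Q}=Q$ and then apply Propositions~\ref{frontlines} and~\ref{Cliff_cyclotomic_result}. The Morita superequivalence then comes from Corollary~\ref{baguette345}, as in the paper.
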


\begin{proof}
This follows by taking $ \mathbf{Q} = Q $ in Theorem~\ref{Morita_result}, and then applying Proposition~\ref{frontlines} and Proposition~\ref{Cliff_cyclotomic_result}.
\end{proof}



\bibliographystyle{alphaurl}
\bibliography{references_4}

@article {Maksimau-Stroppel,
    AUTHOR = {Maksimau, Ruslan and Stroppel, Catharina},
     TITLE = {Higher level affine {S}chur and {H}ecke algebras},
   JOURNAL = {J. Pure Appl. Algebra},
  FJOURNAL = {Journal of Pure and Applied Algebra},
    VOLUME = {225},
      YEAR = {2021},
    NUMBER = {8},
     PAGES = {Paper No. 106442, 44},
      ISSN = {0022-4049,1873-1376},
   MRCLASS = {20C08 (18M30 20B30)},
  MRNUMBER = {4182993},
MRREVIEWER = {Mahir\ Bilen\ Can},
       DOI = {10.1016/j.jpaa.2020.106442},
       URL = {https://doi.org/10.1016/j.jpaa.2020.106442},
       ARCHIVEPREFIX = {arXiv},
       EPRINT = {1805.02425},
}

@article {Savage,
    AUTHOR = {Savage, Alistair},
     TITLE = {Affine wreath product algebras},
   JOURNAL = {Int. Math. Res. Not. IMRN},
  FJOURNAL = {International Mathematics Research Notices. IMRN},
      YEAR = {2020},
    NUMBER = {10},
     PAGES = {2977--3041},
      ISSN = {1073-7928,1687-0247},
   MRCLASS = {20C08 (16W55 18N25)},
  MRNUMBER = {4098632},
MRREVIEWER = {Shoumin\ Liu},
       DOI = {10.1093/imrn/rny092},
       URL = {https://doi.org/10.1093/imrn/rny092},
       ARCHIVEPREFIX = {arXiv},
       EPRINT = {1709.02998},
}

@article {Webster,
    AUTHOR = {Webster, Ben},
     TITLE = {Knot invariants and higher representation theory},
   JOURNAL = {Mem. Amer. Math. Soc.},
  FJOURNAL = {Memoirs of the American Mathematical Society},
    VOLUME = {250},
      YEAR = {2017},
    NUMBER = {1191},
     PAGES = {v+141},
      ISSN = {0065-9266,1947-6221},
      ISBN = {978-1-4704-2650-7; 978-1-4704-4206-4},
   MRCLASS = {57M27 (17B10 18D05 57M25)},
  MRNUMBER = {3709726},
MRREVIEWER = {Stefan\ K.\ Friedl},
       DOI = {10.1090/memo/1191},
       URL = {https://doi.org/10.1090/memo/1191},
       ARCHIVEPREFIX = {arXiv},
       EPRINT = {1309.3796},
}

@article {Wang,
    AUTHOR = {Wang, Weiqiang},
     TITLE = {Double affine {H}ecke algebras for the spin symmetric group},
   JOURNAL = {Math. Res. Lett.},
  FJOURNAL = {Mathematical Research Letters},
    VOLUME = {16},
      YEAR = {2009},
    NUMBER = {6},
     PAGES = {1071--1085},
      ISSN = {1073-2780},
   MRCLASS = {20C08},
  MRNUMBER = {2576694},
MRREVIEWER = {Maarten\ Sander\ Solleveld},
       DOI = {10.4310/MRL.2009.v16.n6.a14},
       URL = {https://doi.org/10.4310/MRL.2009.v16.n6.a14},
       ARCHIVEPREFIX = {arXiv},
       EPRINT = {math/0608074},
}

@article {Ellis-Khovanov-Lauda,
    AUTHOR = {Ellis, Alexander P. and Khovanov, Mikhail and Lauda, Aaron D.},
     TITLE = {The odd nil{H}ecke algebra and its diagrammatics},
   JOURNAL = {Int. Math. Res. Not. IMRN},
  FJOURNAL = {International Mathematics Research Notices. IMRN},
      YEAR = {2014},
    NUMBER = {4},
     PAGES = {991--1062},
      ISSN = {1073-7928,1687-0247},
   MRCLASS = {20C08},
  MRNUMBER = {3168401},
MRREVIEWER = {Anton\ Cox},
       DOI = {10.1093/imrn/rns240},
       URL = {https://doi.org/10.1093/imrn/rns240},
       ARCHIVEPREFIX = {arXiv},
       EPRINT = {1111.1320},
}

@article {Webster2,
    AUTHOR = {Webster, Ben},
     TITLE = {On graded presentations of {H}ecke algebras and their
              generalizations},
   JOURNAL = {Algebr. Comb.},
  FJOURNAL = {Algebraic Combinatorics},
    VOLUME = {3},
      YEAR = {2020},
    NUMBER = {1},
     PAGES = {1--38},
      ISSN = {2589-5486},
   MRCLASS = {20C08 (20G43)},
  MRNUMBER = {4068742},
MRREVIEWER = {Li\ Luo},
       DOI = {10.5802/alco.84},
       URL = {https://doi.org/10.5802/alco.84},
       ARCHIVEPREFIX = {arXiv},
       EPRINT = {1305.0599},
}

@article {Kang-Kashiwara-Tsuchioka,
    AUTHOR = {Kang, Seok-Jin and Kashiwara, Masaki and Tsuchioka, Shunsuke},
     TITLE = {Quiver {H}ecke superalgebras},
   JOURNAL = {J. Reine Angew. Math.},
  FJOURNAL = {Journal f\"ur die Reine und Angewandte Mathematik. [Crelle's
              Journal]},
    VOLUME = {711},
      YEAR = {2016},
     PAGES = {1--54},
      ISSN = {0075-4102,1435-5345},
   MRCLASS = {81R50 (17B10 17B37 20C08)},
  MRNUMBER = {3456757},
MRREVIEWER = {Se-jin\ Oh},
       DOI = {10.1515/crelle-2013-0089},
       URL = {https://doi.org/10.1515/crelle-2013-0089},
       ARCHIVEPREFIX = {arXiv},
       EPRINT = {1107.1039},
}

@article {Savage-Stuart,
    AUTHOR = {Savage, Alistair and Stuart, John},
     TITLE = {Frobenius nil-{H}ecke algebras},
   JOURNAL = {Pacific J. Math.},
  FJOURNAL = {Pacific Journal of Mathematics},
    VOLUME = {311},
      YEAR = {2021},
    NUMBER = {2},
     PAGES = {455--473},
      ISSN = {0030-8730,1945-5844},
   MRCLASS = {20C08},
  MRNUMBER = {4292937},
MRREVIEWER = {Xun\ Xie},
       DOI = {10.2140/pjm.2021.311.455},
       URL = {https://doi.org/10.2140/pjm.2021.311.455},
       ARCHIVEPREFIX = {arXiv},
       EPRINT = {2008.07977},
}

@article {Wang2,
    AUTHOR = {Wang, Weiqiang},
     TITLE = {Spin {H}ecke algebras of finite and affine types},
   JOURNAL = {Adv. Math.},
  FJOURNAL = {Advances in Mathematics},
    VOLUME = {212},
      YEAR = {2007},
    NUMBER = {2},
     PAGES = {723--748},
      ISSN = {0001-8708,1090-2082},
   MRCLASS = {20C08},
  MRNUMBER = {2329318},
MRREVIEWER = {Pramod\ N.\ Achar},
       DOI = {10.1016/j.aim.2006.11.007},
       URL = {https://doi.org/10.1016/j.aim.2006.11.007},
       ARCHIVEPREFIX = {arXiv},
       EPRINT = {math/0611950},
}

@book {Kleshchev,
    AUTHOR = {Kleshchev, Alexander},
     TITLE = {Linear and projective representations of symmetric groups},
    SERIES = {Cambridge Tracts in Mathematics},
    VOLUME = {163},
 PUBLISHER = {Cambridge University Press, Cambridge},
      YEAR = {2005},
     PAGES = {xiv+277},
      ISBN = {0-521-83703-0},
   MRCLASS = {20C30 (20C08)},
  MRNUMBER = {2165457},
MRREVIEWER = {Christine\ Bessenrodt},
       DOI = {10.1017/CBO9780511542800},
       URL = {https://doi.org/10.1017/CBO9780511542800},
}

@misc{Song-XWang,
  author       = {Linliang Song and Xingyu Wang},
  title        = {Affine Web of type {$Q$}},
  year         = {2025},
       ARCHIVEPREFIX = {arXiv},
       EPRINT = {2506.09729},
}

@article {Nazarov,
    AUTHOR = {Nazarov, Maxim},
     TITLE = {Young's symmetrizers for projective representations of the
              symmetric group},
   JOURNAL = {Adv. Math.},
  FJOURNAL = {Advances in Mathematics},
    VOLUME = {127},
      YEAR = {1997},
    NUMBER = {2},
     PAGES = {190--257},
      ISSN = {0001-8708,1090-2082},
   MRCLASS = {20C25},
  MRNUMBER = {1448714},
MRREVIEWER = {Jorn\ B.\ Olsson},
       DOI = {10.1006/aima.1997.1621},
       URL = {https://doi.org/10.1006/aima.1997.1621},
}

@article {Brundan-Ellis,
    AUTHOR = {Brundan, Jonathan and Ellis, Alexander P.},
     TITLE = {Monoidal supercategories},
   JOURNAL = {Comm. Math. Phys.},
  FJOURNAL = {Communications in Mathematical Physics},
    VOLUME = {351},
      YEAR = {2017},
    NUMBER = {3},
     PAGES = {1045--1089},
      ISSN = {0010-3616,1432-0916},
   MRCLASS = {18D10 (17B37 18D05)},
  MRNUMBER = {3623246},
MRREVIEWER = {Robert\ Harold\ McRae},
       DOI = {10.1007/s00220-017-2850-9},
       URL = {https://doi.org/10.1007/s00220-017-2850-9},
       ARCHIVEPREFIX = {arXiv},
       EPRINT = {1603.05928},
}

@misc{Liu,
  author       = {Bingyan Liu},
  title        = {Presentations of linear monoidal categories and their endomorphism algebras},
  year         = {2018},
       ARCHIVEPREFIX = {arXiv},
       EPRINT = {1810.10988},
}

@article {Brundan-Kleshchev,
    AUTHOR = {Brundan, Jonathan and Kleshchev, Alexander},
     TITLE = {Projective representations of symmetric groups via {S}ergeev
              duality},
   JOURNAL = {Math. Z.},
  FJOURNAL = {Mathematische Zeitschrift},
    VOLUME = {239},
      YEAR = {2002},
    NUMBER = {1},
     PAGES = {27--68},
      ISSN = {0025-5874,1432-1823},
   MRCLASS = {20C25 (17B10 20C30)},
  MRNUMBER = {1879328},
MRREVIEWER = {Christine\ Bessenrodt},
       DOI = {10.1007/s002090100282},
       URL = {https://doi.org/10.1007/s002090100282},
}

@article {Brundan-Savage-Webster2,
    AUTHOR = {Brundan, Jonathan and Savage, Alistair and Webster, Ben},
     TITLE = {Foundations of {F}robenius {H}eisenberg categories},
   JOURNAL = {J. Algebra},
  FJOURNAL = {Journal of Algebra},
    VOLUME = {578},
      YEAR = {2021},
     PAGES = {115--185},
      ISSN = {0021-8693,1090-266X},
   MRCLASS = {18M05 (17B10 17B65)},
  MRNUMBER = {4234799},
       DOI = {10.1016/j.jalgebra.2021.02.025},
       URL = {https://doi.org/10.1016/j.jalgebra.2021.02.025},
       ARCHIVEPREFIX = {arXiv},
       EPRINT = {2007.01642},
}

@article {Brundan-Kleshchev2,
    AUTHOR = {Brundan, Jonathan and Kleshchev, Alexander},
     TITLE = {Hecke-{C}lifford superalgebras, crystals of type
              {$A_{2l}^{(2)}$} and modular branching rules for {$\hat S_n$}},
   JOURNAL = {Represent. Theory},
  FJOURNAL = {Representation Theory. An Electronic Journal of the American
              Mathematical Society},
    VOLUME = {5},
      YEAR = {2001},
     PAGES = {317--403},
      ISSN = {1088-4165},
   MRCLASS = {17B67 (17B10 20C08 20C20)},
  MRNUMBER = {1870595},
MRREVIEWER = {Dmitry\ A.\ Timash\"ev},
       DOI = {10.1090/S1088-4165-01-00123-6},
       URL = {https://doi.org/10.1090/S1088-4165-01-00123-6},
       ARCHIVEPREFIX = {arXiv},
       EPRINT = {math/0103060},
}

@article{Schur,
author = {Schur, J.},
journal = {Journal für die reine und angewandte Mathematik},
pages = {155-250},
title = {Über die Darstellung der symmetrischen und der alternierenden Gruppe durch gebrochene lineare Substitutionen.},
url = {http://eudml.org/doc/149348},
volume = {139},
year = {1911},
}

@article {Jones-Nazarov,
    AUTHOR = {Jones, A. R. and Nazarov, M. L.},
     TITLE = {Affine {S}ergeev algebra and {$q$}-analogues of the {Y}oung
              symmetrizers for projective representations of the symmetric
              group},
   JOURNAL = {Proc. London Math. Soc. (3)},
  FJOURNAL = {Proceedings of the London Mathematical Society. Third Series},
    VOLUME = {78},
      YEAR = {1999},
    NUMBER = {3},
     PAGES = {481--512},
      ISSN = {0024-6115,1460-244X},
   MRCLASS = {20C25 (05E10)},
  MRNUMBER = {1674836},
MRREVIEWER = {A.\ S.\ Kondrat\cprime ev},
       DOI = {10.1112/S002461159900177X},
       URL = {https://doi.org/10.1112/S002461159900177X},
       ARCHIVEPREFIX = {arXiv},
       EPRINT = {q-alg/9712041},
}

@misc{Moran,
  author       = {Thomas Moran},
  title        = {Higher-level affine wreath product algebras},
  year         = {2026},
       ARCHIVEPREFIX = {arXiv},
       EPRINT = {2605.04303},
}

@article {Wang-Zhao,
    AUTHOR = {Wang, Weiqiang and Zhao, Lei},
     TITLE = {Representations of {L}ie superalgebras in prime characteristic
              {II}: {T}he queer series},
   JOURNAL = {J. Pure Appl. Algebra},
  FJOURNAL = {Journal of Pure and Applied Algebra},
    VOLUME = {215},
      YEAR = {2011},
    NUMBER = {10},
     PAGES = {2515--2532},
      ISSN = {0022-4049,1873-1376},
   MRCLASS = {17B50},
  MRNUMBER = {2793955},
MRREVIEWER = {A.\ L.\ Onishchik},
       DOI = {10.1016/j.jpaa.2011.02.011},
       URL = {https://doi.org/10.1016/j.jpaa.2011.02.011},
       ARCHIVEPREFIX = {arXiv},
       EPRINT = {0902.2758},
}

@article {Kleshchev-Livesey,
    AUTHOR = {Kleshchev, Alexander S. and Livesey, Michael},
     TITLE = {Ro{CK} blocks for double covers of symmetric groups and quiver
              {H}ecke superalgebras},
   JOURNAL = {Mem. Amer. Math. Soc.},
  FJOURNAL = {Memoirs of the American Mathematical Society},
    VOLUME = {309},
      YEAR = {2025},
    NUMBER = {1564},
     PAGES = {v+182},
      ISSN = {0065-9266,1947-6221},
      ISBN = {978-1-4704-7354-9; 978-1-4704-8157-5},
   MRCLASS = {20C20 (18N25 20C25 20C30)},
  MRNUMBER = {4913465},
MRREVIEWER = {Mary\ Schaps},
       DOI = {10.1090/memo/1564},
       URL = {https://doi.org/10.1090/memo/1564},
       ARCHIVEPREFIX = {arXiv},
       EPRINT = {2201.06870},
}

\end{document}